\numberwithin{equation}{section}
\setlist[enumerate]{label=\textup{(\alph*)}}
\DeclareMathOperator{\adj}{Ad}
\DeclareMathOperator{\aut}{Aut}
\DeclareMathOperator{\generallinear}{GL}
\DeclareMathOperator{\giso}{\mathcal{G}_{iso}}
\DeclareMathOperator{\id}{id}
\DeclareMathOperator{\indrep}{Ind}
\DeclareMathOperator{\irr}{Irr}
\DeclareMathOperator{\morph}{Mor}
\DeclareMathOperator{\pol}{Pol}
\DeclareMathOperator{\prglin}{PGL}
\DeclareMathOperator{\repcat}{Rep}
\DeclareMathOperator{\res}{Res}
\DeclareMathOperator{\selfmorph}{End}
\DeclareMathOperator{\supp}{supp}
\DeclareMathOperator{\tr}{Tr}
\DeclarePairedDelimiter\abs{\lvert}{\rvert}
\DeclarePairedDelimiter\norm{\lVert}{\rVert}
\DeclarePairedDelimiter\prosca{\lparen}{\rparen}
\DeclarePairedDelimiterX{\pair}[2]{\lparen}{\rparen}{#1\,,\,#2}
\DeclarePairedDelimiterX{\pairing}[2]{\langle}{\rangle}{#1\,,\,#2}
\DeclarePairedDelimiterX\set[1]\lbrace\rbrace{\def\given{\;:\;}#1}
\definecolor{mylinkcolor}{rgb}{0.1, 0.2, 0.6}
\theoremstyle{plain}
\newtheorem{theo}{Theorem}[section]
\newtheorem{lemm}[theo]{Lemma}
\newtheorem{prop}[theo]{Proposition}
\newtheorem{coro}[theo]{Corollary}
\newtheorem{nota}[theo]{Notations}
\theoremstyle{definition}
\newtheorem{defi}[theo]{Definition}
\newtheorem{rema}[theo]{Remark}
\newcommand{\leadmathskip}{\hspace{1.2em}}
\newcommand{\dmslskip}{\hspace{3em}}
\newcommand{\dmslmidskip}{\hspace{8em}}
\newcommand{\dmsllongskip}{\hspace{12em}}
   \def\MR#1{}
\begin{document}
\title[Representation theory of semidirect products of a CQG with a finite
group]{Representation theory of semidirect products of a compact quantum group
  with a finite group} \author{Hua Wang} \email{hua.wang@imj-prg.fr}
\address{Institut de mathématiques de Jussieu – Paris Rive Gauche}

\begin{abstract}
  We study unitary representations of semidirect products of a compact
  quantum group with a finite group. We give a classification of all
  irreducible unitary representations, a description of the conjugate
  representation of irreducible unitary representations in terms of
  this classification, and the fusion rules for the semidirect
  product.
\end{abstract}

\maketitle

\tableofcontents{}

\section*{Introduction}
\label{sec:1dc81828c3fedbc9}

It is often the case that one can retrieve significant information about
representations of a group \( G \) from representations of some subgroups of
\( G \). As a trivial example, the study of representations of a direct product
\( G \times H \) of groups of \( G \) and \( H \) can be easily reduced to the study
of representations of \( G \) and \( H \) separately. However, when one replaces
direct products with the more ubiquitous semidirect products, the situation
quickly becomes complicated. To get a taste of this complication, the classic
\cite[\S 8.2]{MR0450380} treats representations of a semidirect product
\( G \rtimes H \) in the special case where \( G, H \) are both finite and
\( G \) is abelian.

In the setting of locally compact groups and their unitary representations, via
the theories of systems of imprimitivity, induced representations, projective
representations (a.k.a.\ ray representations), etc., George Mackey developed a
heavy machinery of techniques, often referred as Mackey's analysis, Mackey's
machine or the little group method (which is also due to Wigner), to attack such
kind of problems. Subsequent works based on Mackey's analysis emerge rapidly,
making it one of the most powerful tools to study unitary representations of
locally compact groups. For an introduction of this development, we refer the
reader to~\cite{MR0098328,MR44536,MR0031489,MR936629,MR3012851} among the large
volumes of literature on this subject.

The author's own interest of this subject comes from the joint
work~\cite{2018arXiv181204078F} with Pierre Fima. In~\cite{2018arXiv181204078F},
we systematically studied the permanence of property (RD) and polynomial growth
of the dual of a bicrossed product of a matched pair consisting of a second
countable compact group and a countable discrete group. The natural subsequent
question of constructing examples of nontrivial bicrossed products with or
without (RD) leads one to study closely the representation theory of semidirect
products \( G \rtimes \Lambda \) of a compact group \( G \) with a finite group
\( \Lambda \). More precisely, as required by the study of length functions relevant
to these properties, we need a classification of all irreducible unitary
representations of \( G \rtimes \Lambda \), the conjugate (which, when we adopt
the point of view of topological quantum groups as in this paper, is also the
contragredient since classic groups are of Kac type) of irreducible
representations in terms of this classification, and most importantly, the
fusion rules of \( G \rtimes \Lambda \), i.e.\ how the tensor product of two
irreducible representations decomposes into a direct sum of irreducible
representations. While the first two questions can be settled using Mackey's
machine as mentioned above, the fusion rules, however, to the best of the
author's limited knowledge, are never calculated in the literature.

This paper treats these questions in the more general setting of semidirect
products of the form \( \mathbb{G} \rtimes \Lambda \), where \( \mathbb{G} \) is
a compact quantum group and \( \Lambda \) a finite group. However, instead of
using systems of imprimitivity, we introduce the notion of representation
parameters (see Definition~\ref{defi:a5316bfdaeb22cfa}), which appears naturally
when we try to analyze the rigid \( C^{\ast} \)-tensor category
\( \repcat(\mathbb{G} \rtimes \Lambda) \). Roughly speaking, a representation
parameter is a triple \( (u, V, v) \), where \( u \) is an irreducible
representation of \( \mathbb{G} \) on some finite dimensional Hilbert space
\( \mathscr{H} \), \( V \) is a unitary projective representation of a certain
subgroup \( \Lambda_{0} \) of \( \Lambda \) on the same space \( \mathscr{H} \),
and \( v \) is a unitary projective representation of the same \( \Lambda_{0} \)
on some other finite dimensional Hilbert space, such that \( V \) is covariant
with \( u \) in a certain sense, and \( V \) and \( v \) have opposing
cocycles. Here, the subgroup \( \Lambda_{0} \) arises as an isotropy subgroup of
a natural action \( \Lambda \curvearrowright \irr(\mathbb{G}) \), and the
projective representation \( V \) is then determined by Schur's lemma on
irreducible representations.

As the precise formulation of our main results are long and complicated, we give
here only a crude summary of these results in terms of representation parameters
(see Definition~\ref{defi:a5316bfdaeb22cfa}) mentioned above.

\begin{enumerate}[label=\textup{(\Alph*)}]
\item \label{item:4389d27a59ad7bdf} Up to equivalence, irreducible unitary
  representations of \( \mathbb{G} \rtimes \Lambda \) are classified by
  (equivalence classes of) representation parameters (see
  Theorem~\ref{theo:91ac3dad0bd1219d} for the precise formulation);
\item \label{item:bac6c72841631b57} The classification in
  \ref{item:4389d27a59ad7bdf} is compatible with the conjugate operation--- the
  conjugate\footnote{The conjugate should \emph{not} be confused with the
    contragredient, with the contragredient not necessarily unitary if the
    quantum group is not unimodular (of Kac-type).}of an irreducible
  representation of \( \mathbb{G} \rtimes \Lambda \) parameterized by some
  representation parameter \( (u, V, v) \) is itself parameterized by the
  conjugate of \( (u, V, v) \) (see Theorem~\ref{theo:e6d75eb838fadd50} for the
  precise formulation);
\item \label{item:68d5bf644d090876} The fusion rules of
  \( \mathbb{G} \rtimes \Lambda \) is calculated by summing a series of
  incidence numbers, where all of these numbers can be calculated using unitary
  projective representations of some suitable subgroup of \( \Lambda \) through
  an explicit reduction procedure (see Theorem~\ref{theo:ff9839aa14503b7c} for
  the precise formulation), where the reduction procedure itself is determined
  by the representation theory of \( \mathbb{G} \) and the action of
  \( \Lambda \) acting on \( \mathbb{G} \), with respect to which we form the
  semidirect product.
\end{enumerate}

While \ref{item:4389d27a59ad7bdf} and \ref{item:bac6c72841631b57} may well be
regarded as the quantum analogue of the corresponding results of Mackey's
analysis in the classical case of groups, our result \ref{item:68d5bf644d090876}
is new, even in the case where \( \mathbb{G} \) is another finite group.  We
should mention that our main idea of this paper starts with reformulating
\( \repcat(\mathbb{G}) \) as a semisimple rigid
\( C^{\ast} \)\nobreakdash-tensor category for an arbitrary compact quantum
group \( \mathbb{G} \), which is the modern point of view; however, Mackey's
ingenious ideas, such as studying the dynamics of a naturally appeared group
action on the representations of a normal subgroup, and using projective
representations of the isotropy subgroups of this action, still play an
essential part in the development of this theory.

We now describe the organization of this paper. The numerous sections of this
paper are roughly divided into the following four parts.  In the first part
(\S~\ref{sec:8407664c09390193} and \S~\ref{sec:cac0f6b0354689f8}), we lay out
the basic properties and constructions of the objects to be studied in this
paper---semidirect products of a compact quantum group by a finite group and
their unitary representations. In \S~\ref{sec:cac0f6b0354689f8}, the problem of
describing unitary representations of these semidirect products is reduced to
the study of the so-called covariant pair of representations for each of the
factors. The second part
(\S\S~\ref{sec:90f8ecd4790ea7ac}\nobreakdash-~\ref{sec:51325cc5d5808588}) gives
a self-contained treatment of induced representation which will be used later in
this paper. We are aware that there are already much more general theory for
induced representations in the quantum setting, e.g.~\cite{MR1934609} based on
the classic work \cite{MR0353003}. Moreover S.\ Vaes has generalizes a large
part of Mackey's theory of imprimitivity to locally compact quantum groups in
\cite{MR2182592}. Besides the obvious reason for fixing the notations, the
treatment of the induced representation here is specially tailored to the
various calculations in the later half of this paper. The third part
(\S\S~\ref{sec:deea2fa481e1836a}\nobreakdash-\ref{sec:51cb2ca9f8d7976c}) is the
technical core of this paper. The treatment here is largely inspired by
Woronowicz's Krein-Tannaka reconstruction~\cite{MR943923} of a compact quantum
group from its representation category. Here instead of directly attacking the
representation category \( \repcat{\mathbb{G} \rtimes \Lambda} \) of the
semidirect product, we introduce and study a family of rigid
\( C^{\ast} \)\nobreakdash-tensor categories (called the category of covariant
systems of representations and denoted by \( \mathcal{CSR}_{\Lambda_{0}} \) with
respect to some suitable subgroup of \( \Lambda \)), each of which has a simpler
structure. Combining the information we have on these simpler
\( C^{\ast} \)\nobreakdash-tensor categories allows us not only to classify the
irreducible unitary representations of \( \mathbb{G} \rtimes \Lambda \), but
also to calculate the fusion rules of \( \mathbb{G} \rtimes \Lambda \). The
details of this classification and calculation are given in the fourth part
(\S\S~\ref{sec:f7e4fdcede696c78}\nobreakdash-\ref{sec:20a173060e7c2f82}).

Before we proceed further, we feel that we should say a little more about
\S~\ref{sec:8407664c09390193} for the experts. We emphasize our construction of
semidirect products as the axiomatically more elaborate \emph{algebraic} compact
quantum groups, the theory of which is developed by van Daele
\cite{MR1658585,MR1378538,MR1220906}, instead of the more modern and standard
formulation, due to Woronowicz \cite{MR1616348,MR943923,MR901157}, using
\( C^{\ast} \)\nobreakdash-algebras. Of course, these two approaches are
essentially equivalent---one passes from Woronowicz's approach to van Daele's
via the Peter-Weyl theory for compact quantum groups, and from van Daele's
approach to Woronowicz's via the famous GNS construction with respect to the
Haar integral. The reasons we prefer van Daele's algebraic theory here are
two-fold: on the one hand, one has the advantage of having direct access to the
Haar state and the antipode, as well as the polynomial algebra, which are
powerful tools for our purposes of studying the representations of these objects
(or corepresentations if one insists on viewing these essentially analytic
objects as Hopf algebras); on the other hand, when one tries to restrict
representations to certain (quantum) subgroups of these semidirect products, as
we will do later, one will need to use the counit, which is always everywhere
defined in the more elaborate algebraic approach of van Daele, but is merely
densely defined if the compact quantum group in the sense of Woronowicz is not
universal. We also point out here that the term \emph{semidirect product} in the
quantum setting has an unfortunate ambiguity. Nowadays many use this term to
refer to the crossed product, as first defined and studied by S.\
Wang~\cite{MR1347410}. In the case of classical groups, it is long known that
this crossed product construction yields the \emph{convolution algebra} of the
semidirect of groups. So if we believe classic compact groups are exactly
compact quantum groups whose algebra is commutative, then this is not the
correct formulation for semidirect products, even though these are closely
related via the convolution operation (which is a manifestation of the quantum
version of Pontryagin's duality as developed in \cite{MR1832993}, preceded by
many important works along the lines of the Kac's program, a history of which is
described in the introduction of the above article). To be more precise, this
crossed product of S.\ Wang is in fact a special case of the bicrossed product
as described in \cite{MR1970242} where one of the actions for the matched pair
is trivial; what we call semidirect in this paper is a special case of the
double crossed product as described in \cite{MR2115071} where again one of the
actions for the matched pair is trivial. We don't pursue the full generality of
the bicrossed product construction and double crossed product construction here,
but merely point out that they are all based on the notion of matched pair of
(quantum) groups (\cite{MR1045735,MR1092128},\cite{MR611561}). We also mention
in passing works such as \cite{MR1235438}, \cite{MR1098985}, \cite{MR320056},
\cite{MR1745504}, and \cite{MR1970242} in the direction of bicrossed products,
and works such as \cite{MR1235438}, \cite{MR1098985} and \cite{MR2115071} in the
direction of double crossed products. We hope these backgrounds provide some
justification of our choice of terminology for semidirect products by its
consistency with the classical group case. We also note that representation
theory for compact bicrossed products (which includes the crossed product as a
special case) of a matched pair of classical groups are thoroughly investigated
in the author's joint work with P.\ Fima \cite{2018arXiv181204078F}, and as one
can see by comparing the results there and the results of this paper, the
representation theory for semidirect products are significantly more delicate
than crossed products, even for classical finite groups.

We conclude this introduction by making some conventions. All representations
and projective representations in this paper are finite dimensional. All of
them are unitary, except the contragredient of a unitary representation, which
may not be unitary when the compact quantum group is not of Kac-type. We also
assume all (projective) representations are over a finite dimensional Hilbert
space instead of a mere complex vector space. Terminologies and notations
concerning compact quantum groups and \( C^{\ast} \)-tensor categories are
largely in consistent with those in~\cite{MR3204665}. We also use freely the
Peter-Weyl theory for projective representations of finite groups as presented
in~\cite{MR3299063}. We also freely use the Heyenmann-Sweedler notation in
performing calculations on comultiplications. The unitary group of unitary
transformations from a Hilbert space \( \mathscr{H} \) to itself is denoted by
\( \mathcal{U}(\mathscr{H}) \). From \S~\ref{sec:0bd5d546e466b2bc} on,
\( \mathbb{T} \) denotes the circle group, i.e.\ the abelian compact group
\( \set*{z \in \mathbb{C} \given \abs*{z} = 1} \) viewed as a subgroup of
\( \mathbb{C}^{\times} \). Since we often view a representation of compact
quantum groups as an operator, we denote the tensor product of representations
using \( \times \) instead of \( \otimes \), as the latter is reserved to denote
tensor products of spaces, algebras, linear operators, etc. Finally, throughout
this paper, we fix a compact quantum group \( \mathbb{G} = (A, \Delta) \), a
finite group \( \Lambda \), and an antihomomorphism of groups
\( \alpha^{\ast} \colon \Lambda \rightarrow \aut\bigl( C(\mathbb{G}), \Delta
\bigr) \), where \( \aut\bigl( C(\mathbb{G}), \Delta \bigr) \) is the
subgroup\footnote{Note that the notation \( \aut(\mathbb{G}) \) has a certain
  ambiguity which we try to avoid: one the one hand, if we let \( \mathbb{G} \)
  to be a classical compact group, then elements of \( \aut(\mathbb{G}) \) are
  group automorphisms, and the group law of \( \aut(\mathbb{G}) \) is given by
  composition of \emph{set-theoretic} mappings; on the other hand, if we view
  \( \mathbb{G} \) as a Hopf-\( C^{\ast} \)-algebra, say
  \( \bigl(C(\mathbb{G}), \Delta\bigr) \), then \( \aut(\mathbb{G}) \) can also
  be mean the automorphism group of this
  Hopf-\( C^{\ast} \)\nobreakdash-algebra, whose group law is given by
  composition of \emph{Hopf algebraic}-morphisms. This is the reason we prefer
  the more cumbersome notation \( \aut\bigl( C(\mathbb{G}), \Delta \bigr) \)
  instead of the ambiguous but more succinct \( \aut(\mathbb{G}) \).} of
\( \aut\bigl( C(\mathbb{G}) \bigr) \) consisting of automorphisms of the
\( C^{\ast} \)\nobreakdash-algebra \( C(\mathbb{G}) \) that intertwines the
comultiplication \( \Delta \).

\textbf{Acknowledgment.} This work is funded by the ANR Project
(No. ANR-19-CE40-0002), to which the author expresses his sincere gratitude. The
author especially thanks Pierre Fima for his encouragement and various helpful
discussions duringthe production of this work.

\section{Semidirect product of a compact quantum group with a
  finite group}
\label{sec:8407664c09390193}

Let \( \mathbb{G} = ( A, \Delta ) \) be a compact quantum group, \( \Lambda \) a
finite group. An action of \( \Lambda \) on \( \mathbb{G} \) via quantum
automorphisms is an antihomomorphism
\( \alpha^{\ast} \colon \Lambda \rightarrow \aut\bigl( C(\mathbb{G}), \Delta
\bigr) \).  One can then form the semi-direct
\( \mathbb{G} \rtimes_{\alpha^{\ast}} \Lambda \), or simply
\( \mathbb{G} \rtimes \Lambda \) if the action \( \alpha^{\ast} \) is clear from
the context, which is again a compact quantum group. The underlying
{\( C^{\ast} \)}\nobreakdash-algebra \( \mathscr{A} \) of
\( \mathbb{G} \rtimes \Lambda \) is \( A \otimes C( \Lambda ) \), and the
comultiplication \( \widetilde{\Delta} \) on \( \mathscr{A} \) is determined by
\begin{equation}
  \label{eq:0f23581b02e7ab34}
  \widetilde{\Delta} (a \otimes \delta_{r}) = \sum_{s \in \Lambda} %
  {\left[( \id_{A} \otimes \alpha^{\ast}_{s} ) \Delta(a)\right]}_{13} %
  {( \delta_{s} \otimes \delta_{s^{-1}r})}_{24} %
  \in A \otimes C( \Lambda ) \otimes A \otimes C( \Lambda )
\end{equation}
for any \( a \in A \) and \( r \in \Lambda \). As we've mentioned at the end of
the introduction, from now on, \( \mathbb{G} \), \( \Lambda \) and the action
\( \alpha^{\ast} \) are fixed until the end of the paper.

It is clear that \( \widetilde{\Delta} \) is a unital
\( \ast \)\nobreakdash-morphism. We now check that in the six-fold tensor
product
\( A \otimes C(\Lambda) \otimes A \otimes C(\Lambda) \otimes A \otimes
C(\Lambda) \), we have

\begin{equation}
  \forall a \in A, r \in \Lambda, \qquad
  \label{eq:aa3dd430575f2bc3}
  (\id \otimes \id \otimes \widetilde{\Delta})
  [\widetilde{\Delta}(a \otimes \delta_{r})]
  = (\widetilde{\Delta} \otimes \id \otimes \id) %
  [\widetilde{\Delta}(a \otimes \delta_{r})],
\end{equation}
i.e.\ our new comultiplication \( \widetilde{\Delta} \) is
coassociative. Indeed, put
\( \Delta^{(2)}:= (\id \otimes \Delta) \Delta = (\Delta \otimes \id) \Delta \),
since \( \alpha^{\ast}_{s} \in \aut\bigl( C(\mathbb{G}), \Delta \bigr) \) for
all \( s \in \Lambda \), we have
\( (\alpha_{s}^{\ast} \otimes \alpha_{s}^{\ast}) \circ \Delta = \Delta \circ
\alpha_{s}^{\ast} \),
\begin{equation}
  \label{eq:0eeb26f70674d5c0}
  \begin{split}
    & \leadmathskip (\id \otimes \id \otimes \widetilde{\Delta}) %
    [\widetilde{\Delta}(a \otimes \delta_{r})] \\
    &= (\id \otimes \id \otimes \widetilde{\Delta}) %
    \left( %
      \sum_{s \in \Lambda} {[(\id \otimes \alpha_{s}^{\ast}) \Delta(a)]}_{13} %
      {(\delta_{s} \otimes \delta_{s^{-1}r})}_{24} %
    \right) \\
    &= \sum_{s,t \in \Lambda} %
    {\Bigl\{ %
      \bigl[\bigl( %
      \id \otimes \alpha_{s}^{\ast} \otimes (\alpha_{t}^{\ast} \circ
      \alpha_{s}^{\ast})\bigr) (\id \otimes \Delta)\Delta\bigr] (a) %
      \Bigr\}}_{135}
    {(\delta_{s} \otimes \delta_{t} \otimes \delta_{t^{-1}s^{-1}r})}_{246} \\
    &= \sum_{s,t \in \Lambda} %
    {\Bigl\{ %
      \bigl[(\id \otimes \alpha_{s}^{\ast} \otimes \alpha_{st}^{\ast})
      \Delta^{(2)}\bigr] (a) %
      \Bigr\}}_{135} {(\delta_{s} \otimes \delta_{t} \otimes
      \delta_{t^{-1}s^{-1}r})}_{246}.
  \end{split}
\end{equation}
On the other hand,
\begin{equation}
  \label{eq:6ae689469c41ffe8}
  \begin{split}
    & \leadmathskip (\widetilde{\Delta} \otimes \id \otimes \id) %
    [\widetilde{\Delta}(a \otimes \delta_{r})] \\
    &= (\widetilde{\Delta} \otimes \id \otimes \id) %
    \left( %
      \sum_{s \in \Lambda} {[(\id \otimes \alpha_{s}^{\ast}) \Delta(a)]}_{13} %
      {(\delta_{s} \otimes \delta_{s^{-1}r})}_{24} %
    \right) \\
    &= \sum_{s, t \in \Lambda}\Bigl\{ %
    \bigl[ %
    (\id \otimes \alpha_{t}^{\ast} \otimes \alpha_{s}^{\ast}) %
    (\Delta \otimes \id) \Delta %
    \bigr] (a) \Bigr\}_{135} %
    {(\delta_{t} \otimes \delta_{t^{-1}s} \otimes \delta_{s^{-1}r})}_{246} \\
    & \leadmathskip (s' = t, t'=t^{-1}s \iff s = s't', t = s') \\
    &= \sum_{s', t' \in \Lambda}\Bigl\{ %
    \bigl[ %
    (\id \otimes \alpha_{s'}^{\ast} \otimes \alpha_{s't'}^{\ast}) %
    \Delta^{(2)} \bigr] (a) \Bigr\}_{135} %
    {(\delta_{s'} \otimes \delta_{t'} \otimes \delta_{t'^{-1}s'^{-1}r})}_{246}.
  \end{split}
\end{equation}
Now \eqref{eq:aa3dd430575f2bc3} follows from \eqref{eq:0eeb26f70674d5c0} and
\eqref{eq:6ae689469c41ffe8}.

Since \( \pol(\mathbb{G}) \otimes C( \Lambda ) \) is dense in
\( A \otimes C( \Lambda ) \), in order to prove that
\( \mathbb{G} \rtimes \Lambda \) is indeed a compact quantum group, it suffices
to show that \( (\pol(\mathbb{G}) \otimes C( \Lambda ), \widetilde{\Delta}) \)
is an algebraic compact quantum group, i.e.\ a Hopf
\( \ast \)\nobreakdash-algebra with an invariant state (called the Haar state or
Haar integral).

First of all, since
\( \alpha_{s}^{\ast} \in \aut\bigl( C(\mathbb{G}), \Delta \bigr) \) for all
\( s \in \Lambda \), we have
\( \alpha_{s}^{\ast}\bigl(\pol(\mathbb{G})\bigr) = \pol(\mathbb{G}) \), and
\( \widetilde{\Delta} \) indeed restricts to a well-defined comultiplication on
\( \pol(\mathbb{G}) \otimes C(\Lambda) \).

Let \( \epsilon, S \) be the counit and the antipode respectively for the Hopf
\( \ast \)\nobreakdash-algebra \( \pol(\mathbb{G}) \). Denoting the neutral
element of the group \( \Lambda \) by \( e \), we define
\begin{equation}
  \label{eq:e0375ae7a578342a}
  \begin{split}
    \widetilde{\epsilon} \colon \pol(\mathbb{G}) \otimes C( \Lambda ) %
    & \rightarrow \mathbb{C} \\
    \sum_{r \in \Lambda} x_{r} \otimes \delta_{r} & \mapsto \epsilon(x_{e}),
  \end{split}
\end{equation}
and
\begin{equation}
  \label{eq:e8daacc3f7b72fa2}
  \begin{split}
    \widetilde{S} \colon \pol(\mathbb{G}) \otimes C( \Lambda ) & \rightarrow
    \pol(\mathbb{G}) \otimes C( \Lambda ) \\
    \sum_{r \in \Lambda} x_{r} \otimes \delta_{r} & \mapsto \sum_{r \in \Lambda}
    \alpha_{r}^{\ast}(S(x_{r})) \otimes \delta_{r^{-1}} = %
    \sum_{r \in \Lambda} S( \alpha_{r^{-1}}^{\ast}(x_{r^{-1}}) ) \otimes
    \delta_{r}.
  \end{split}
\end{equation}
Since \( \epsilon \) is a \( \ast \)-morphism of algebras, so is
\( \widetilde{\epsilon} \). Moreover, for any \( x \in \pol(\mathbb{G}) \) and
\( r \in \Lambda \), we have
\begin{align*}
  & \leadmathskip (\widetilde{\epsilon} \otimes \id) %
    \widetilde{\Delta} (x \otimes \delta_{r})
    =  (\widetilde{\epsilon} \otimes \id) %
    \sum_{s \in \Lambda} \sum x_{(1)} \otimes \delta_{s} %
    \otimes \alpha_{s}^{\ast}(x_{(2)})  \otimes \delta_{s^{-1}r} \\
  &= \sum \epsilon(x_{(1)}) \alpha_{e}^{\ast}(x_{(2)}) \otimes \delta_{r} %
    = \sum \epsilon(x_{(1)}) x_{(2)} %
    \otimes \delta_{r} = x \otimes \delta_{r} \\
  &= \sum x_{(1)} \epsilon(x_{(2)}) \otimes \delta_{r} %
    = \sum x_{(1)} \epsilon( \alpha_{r}^{\ast}(x_{(2)})) \otimes \delta_{r} \\
  &= (\id \otimes \widetilde{\epsilon}) %
    \sum_{s \in \Lambda} \sum x_{(1)} \otimes \delta_{s} %
    \otimes \alpha_{s}^{\ast}(x_{(2)})  \otimes \delta_{s^{-1}r} %
    = (\id \otimes \widetilde{\epsilon}) %
    \widetilde{\Delta} (x \otimes \delta_{r}).
\end{align*}
Hence \( \widetilde{\epsilon} \) is the counit for \( \widetilde{\Delta} \). Let
\( m \colon \pol(G) \otimes \pol(G) \rightarrow \pol(G) \) be the multiplication
map, and \( \widetilde{m} \) the multiplication map on
\( \pol(\mathbb{G}) \otimes C( \Lambda ) \), then
\begin{align*}
  \widetilde{m}(\widetilde{S} \otimes \id) %
  \widetilde{\Delta}(x \otimes \delta_{r}) %
  &= \widetilde{m}(\widetilde{S} \otimes \id) %
    \sum_{s \in \Lambda} \sum x_{(1)} \otimes \delta_{s} %
    \otimes \alpha_{s}^{\ast}(x_{(2)})  \otimes \delta_{s^{-1}r} \\
  &= \widetilde{m} \sum_{s \in \Lambda} \sum %
    \alpha_{s}^{\ast}(S(x_{(1)})) \otimes \delta_{s^{-1}} \otimes
    \alpha_{s}^{\ast}(x_{(2)}) \otimes \delta_{s^{-1}r} \\
  &= \sum_{s \in \Lambda} \left[m(S \otimes \id) %
    ( \alpha_{s}^{\ast} \otimes \alpha_{s}^{\ast}) \Delta(x)
    \right] \otimes \delta_{s^{-1}} \cdot \delta_{s^{-1}r} \\
  &= \delta_{e, r} \sum_{s \in \Lambda} \left[m(S \otimes \id) %
    \Delta( \alpha_{s}^{\ast}(x) )\right] \otimes
    \delta_{s^{-1}} \\
  &= \delta_{e, r} \sum_{s \in \Lambda} \epsilon( \alpha_{s}^{\ast}(x) ) %
    1_{A} \otimes \delta_{s^{-1}} \\
  &= \delta_{e, r} \epsilon(x) 1_{A} %
    \otimes \sum_{s \in \Lambda} \delta_{s^{-1}} \\
  &= \delta_{e,r} \epsilon(x) 1_{A} \otimes 1_{C( \Lambda )} %
    = \widetilde{\epsilon}(x \otimes \delta_{r}) 1_{A} \otimes 1_{C( \Lambda )}.
\end{align*}
Similarly, since for any \( s \in \Lambda \),
\begin{displaymath}
  \alpha_{s^{-1}r}^{\ast} S \alpha_{s}^{\ast} %
  = \alpha_{s^{-1}r}^{\ast} \alpha_{s}^{\ast} S =
  \alpha_{r}^{\ast} S,
\end{displaymath}
we have
\begin{align*}
  \widetilde{m}(\id \otimes \widetilde{S}) %
  \widetilde{\Delta} (x \otimes \delta_{r}) %
  &= \widetilde{m} (\id \otimes \widetilde{S}) %
    \sum_{s \in \Lambda} \sum x_{(1)} \otimes \delta_{s} %
    \otimes \alpha_{s}^{\ast}(x_{(2)})  \otimes \delta_{s^{-1}r} \\
  &= \widetilde{m} \sum_{s \in \Lambda} \sum x_{(1)} \otimes \delta_{s} %
    \otimes ( \alpha_{s^{-1}r}^{\ast} S \alpha_{s}^{\ast}) %
    (x_{(2)}) \otimes \delta_{r^{-1}s} \\
  &= \widetilde{m} \sum_{s \in \Lambda} \sum x_{(1)} \otimes \delta_{s} %
    \otimes ( \alpha_{r}^{\ast} S )(x_{2}) \otimes \delta_{r^{-1} s} \\
  &= \delta_{e, r} \sum_{s \in \Lambda} \sum %
    x_{(1)} [S(x_{(2)})] \otimes \delta_{s} %
    = \delta_{e, r} \sum_{s \in \Lambda} \epsilon(x) 1_{A} \otimes \delta_{s} \\
  &= \delta_{e, r} \epsilon(x) 1_{A} \otimes 1_{C( \Lambda )} %
    = \widetilde{\epsilon}(x \otimes \delta_{r}) 1_{A} \otimes 1_{C( \Lambda )}.
\end{align*}
Therefore, \( \widetilde{S} \) is the antipode for
\( (\pol(\mathbb{G}) \otimes C( \Lambda ), \widetilde{\Delta}) \).

It remains to construct the Haar state on the Hopf
\( \ast \)\nobreakdash-algebra \( \pol(\mathbb{G}) \otimes C(\Lambda)
\). Suppose \( h \colon A \rightarrow \mathbb{C} \) is the Haar state on
\( \mathbb{G} \), define
\begin{equation}
  \label{eq:1c67d64dafdd1948}
  \begin{split}
    \widetilde{h} \colon \pol(\mathbb{G}) \otimes C( \Lambda ) %
    & \to \mathbb{C} \\
    \sum_{r} x_{r} \otimes \delta_{r} %
    & \mapsto \abs*{\Lambda}^{-1} \sum_{r \in \Lambda} h(x_{r}).
  \end{split}
\end{equation}
It is obvious that \( \widetilde{h} \) is a state. For any
\( x \in \pol(\mathbb{G}) \), \( r \in \Lambda \),
\begin{align*}
  ( \widetilde{h} \otimes \id) \widetilde{\Delta}(x \otimes \delta_{r}) %
  &= \abs*{\Lambda}^{-1}\sum_{s \in \Lambda} \sum %
    h(x_{(1)}) \alpha_{s}^{\ast}(x_{(2)}) \otimes
    \delta_{s^{-1}r} \\
  &= \abs*{\Lambda}^{-1} \sum_{s \in \Lambda} \alpha_{s}^{\ast}\bigl(\sum
    h(x_{(1)})x_{(2)}\bigr) \otimes \delta_{s^{-1}r} \\
  &= \abs*{\Lambda}^{-1} \sum_{s \in \Lambda} \alpha_{s}^{\ast}(h(x) 1_{A}) %
    \otimes \delta_{s^{-1}r} \\
  &= \abs*{\Lambda}^{-1} h(x) \sum_{s \in \Lambda} %
    1_{A} \otimes \delta_{s^{-1} r} \\
  &= \widetilde{h}(x \otimes \delta_{r}) 1_{A} \otimes 1_{C( \Lambda )}.
\end{align*}
The uniqueness of the Haar state implies that
\( h \circ \alpha_{s}^{\ast} = h \) for any \( s \in \Lambda \), hence
\begin{align*}
  (\id \otimes \widetilde{h}) \widetilde{\Delta}(x \otimes \delta_{r}) %
  &= \abs*{\Lambda}^{-1} \sum_{s \in \Lambda} \sum x_{(1)} %
    h\bigl( \alpha_{s}^{\ast}(x_{(2)})
    \bigr) \otimes \delta_{s} \\
  &= \abs*{\Lambda}^{-1} \sum_{s \in \Lambda} \sum %
    x_{(1)} h(x_{(2)}) \otimes \delta_{s} \\
  &= \abs*{\Lambda}^{-1} h(x) 1_{A} \otimes \sum_{s \in \Lambda} \delta_{s} \\
  &= \widetilde{h}(x \otimes \delta_{r}) 1_{A} \otimes 1_{C( \Lambda )}.
\end{align*}
Therefore, \( \widetilde{h} \) is indeed the Haar state on
\( (\pol(\mathbb{G}) \otimes C( \Lambda ), \widetilde{\Delta}) \). So far, we've
established that
\( (\pol(\mathbb{G}) \otimes C( \Lambda ), \widetilde{\Delta}) \) is an
algebraic compact quantum group (cf.~\cite[paper \( 3 \)]{MR2397671}).

Now the density of \( \pol(\mathbb{G}) \otimes C( \Lambda ) \) in
\( A \otimes C( \Lambda ) \) implies that
\( (A \otimes C( \Lambda ), \widetilde{\Delta}) \) is indeed a compact quantum
group, with
\begin{equation}
  \label{eq:bc55df6d41c99f75}
  \pol(\mathbb{G} \rtimes \Lambda) = \pol(\mathbb{G}) \otimes C( \Lambda ),
\end{equation}
and Haar state (which we still denote by \( \widetilde{h} \))
\begin{equation}
  \label{eq:60aee55ee4c2168e}
  \begin{split}
    \widetilde{h} : A \otimes C( \Lambda ) & \rightarrow \mathbb{C} \\
    \sum x_{r} \otimes \delta_{r} %
    & \mapsto \abs*{\Lambda}^{-1} \sum_{r \in \Lambda} h(x_{r}).
  \end{split}
\end{equation}
Furthermore, we've seen that the counit \( \widetilde{\epsilon} \) and the
antipode \( \widetilde{S} \) of the Hopf \( \ast \)\nobreakdash-algebra
\( \pol(\mathbb{G} \rtimes \Lambda) \) are given by \eqref{eq:e0375ae7a578342a}
and~\eqref{eq:e8daacc3f7b72fa2} respectively (cf.~\cite[\S 5.4.2]{MR2397671}).

\begin{defi}
  \label{defi:cdbab2a2a6c441a2}
  Using the above notations, it is well-known that the analytic compact quantum
  group \( (\mathscr{A}, \widetilde{\Delta}) \) and the algebraic compact
  quantum group \( (A \otimes C(\Lambda), \widetilde{\Delta}) \) are equivalent
  descriptions of the same object, which we call \textbf{the semidirect product}
  of \( \mathbb{G} \) and \( \Lambda \) with respect to the action
  \( \alpha^{\ast} \), and is denoted by
  \( \mathbb{G} \rtimes_{\alpha^{\ast}} \Lambda \), or simply
  \( \mathbb{G} \rtimes \Lambda \) if the underlying action \( \alpha^{\ast} \)
  is clear from context.
\end{defi}

\begin{rema}
  \label{rema:9fe237a324b85a98}
  There is a faster way of establishing \( \mathbb{G} \rtimes \Lambda \) as a
  compact quantum group, which we refer to as the analytic approach. Namely, one
  might use~\eqref{eq:0f23581b02e7ab34} directly to define a comultiplication on
  the {\( C^{\ast} \)}\nobreakdash-algebra \( A \otimes C(\Lambda) \) and show
  that this comultiplication satisfy the density condition in the definition of
  a compact quantum group in the sense of Woronowicz (cf.~\cite{MR1616348}). We
  prefer the more algebraic approach presented above as it provides more insight
  for our purpose of studying representations of
  \( \mathbb{G} \rtimes \Lambda \). As an illustration, from our treatment, one
  knows immediately that
  \( \pol(\mathbb{G} \rtimes \Lambda) = \pol(\mathbb{G}) \rtimes \Lambda \), a
  fact that is not clear from the faster analytic approach.
\end{rema}

\begin{rema}
  \label{rema:1db1819693329372}
  When \( \mathbb{G} \) comes from a genuine compact group \( G \), it is easy
  to check via Gelfand theory that the antihomomorphism
  \( \alpha^{\ast} \colon \Lambda \rightarrow \aut\bigl( C(\mathbb{G}), \Delta
  \bigr) \) comes from the pull-back of a group morphism
  \( \alpha \colon \Lambda \rightarrow \aut\bigl( C(\mathbb{G}), \Delta \bigr)
  \), and \( \mathbb{G} \rtimes \Lambda \) is exactly the compact group
  \( G \rtimes_{\alpha} \Lambda \) viewed as a compact quantum group, where the
  group law on \( G \times \Lambda \) is defined by
  \begin{equation}
    \label{eq:6521b852c75464b9}
    \forall g, h \in G, \, r, s \in \Lambda, \qquad
    (g, r)(h, s) = \bigl(g \alpha_{r}(h), rs\bigr).q
  \end{equation}
\end{rema}

In treating the dual objects of some rigid \( C^{\ast} \)-tensor to be presented
later, the following result will be useful.
\begin{prop}
  \label{prop:22073f2b39059e9e}
  The compact quantum group \( \mathbb{G} \rtimes \Lambda \) is of Kac type if
  and only if \( \mathbb{G} \) is of Kac type.
\end{prop}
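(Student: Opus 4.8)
The plan is to use the standard characterization that a compact quantum group is of Kac type precisely when its antipode is involutive, i.e.\ its square is the identity on the polynomial algebra (equivalently, when its Haar state is tracial). Since we already have the explicit formula~\eqref{eq:e8daacc3f7b72fa2} for the antipode $\widetilde{S}$ of $\mathbb{G} \rtimes \Lambda$, the whole argument reduces to computing $\widetilde{S}^2$ and comparing it with $S^2$.

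First I would evaluate $\widetilde{S}^2$ on a simple tensor $x \otimes \delta_r$. Applying~\eqref{eq:e8daacc3f7b72fa2} once gives $\widetilde{S}(x \otimes \delta_r) = \alpha_r^*(S(x)) \otimes \delta_{r^{-1}}$, and applying it a second time to this element yields $\widetilde{S}^2(x \otimes \delta_r) = \alpha_{r^{-1}}^*\bigl(S(\alpha_r^*(S(x)))\bigr) \otimes \delta_r$. Two facts then collapse this expression. Since each $\alpha_s^*$ lies in $\aut\bigl(C(\mathbb{G}), \Delta\bigr)$, it is a Hopf-algebra automorphism and hence commutes with the antipode, so that $S \circ \alpha_r^* = \alpha_r^* \circ S$; and since $\alpha^*$ is an antihomomorphism, $\alpha_{r^{-1}}^* \circ \alpha_r^* = \alpha_e^* = \id$. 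Combining these, $\widetilde{S}^2(x \otimes \delta_r) = S^2(x) \otimes \delta_r$, so that $\widetilde{S}^2 = S^2 \otimes \id$ on $\pol(\mathbb{G}) \otimes C(\Lambda)$.

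From this identity the equivalence is immediate: because $C(\Lambda) \neq 0$, the operator $S^2 \otimes \id$ equals the identity if and only if $S^2 = \id$ on $\pol(\mathbb{G})$. Thus $\widetilde{S}^2 = \id$ holds exactly when $S^2 = \id$, which is precisely the assertion that $\mathbb{G} \rtimes \Lambda$ is of Kac type if and only if $\mathbb{G}$ is.

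I do not expect a genuine obstacle here; the only points needing care are the bookkeeping of the group elements $r$, $r^{-1}$, $e$ in the antipode formula and the justification that a quantum automorphism commutes with the antipode, which follows from the uniqueness of the antipode as a Hopf-theoretic invariant. As a cross-check, one could instead verify directly from~\eqref{eq:60aee55ee4c2168e} that the Haar state $\widetilde{h}$ is tracial if and only if $h$ is: since the algebra $\pol(\mathbb{G}) \otimes C(\Lambda)$ carries the untwisted tensor-product multiplication, one computes $\widetilde{h}\bigl((x \otimes \delta_r)(y \otimes \delta_s)\bigr) = \delta_{r,s}\,\abs*{\Lambda}^{-1} h(xy)$, and traciality of $\widetilde{h}$ reduces at once to traciality of $h$.
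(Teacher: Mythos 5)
Your proposal is correct. It follows the same overall strategy as the paper---invoke a standard algebraic characterization of Kac type and read it off from the explicit antipode formula~\eqref{eq:e8daacc3f7b72fa2}---but with a different choice of characterization, which changes how much work is left to do. The paper uses the fact that a compact quantum group is of Kac type if and only if the antipode of its polynomial algebra preserves adjoints; with that choice the claim is immediate, since each \( \alpha_{r}^{\ast} \) is a \( \ast \)-automorphism, so \( \widetilde{S} \) preserves adjoints exactly when \( S \) does. You instead use involutivity of the antipode, \( S^{2} = \id \), which costs you the computation \( \widetilde{S}^{2} = S^{2} \otimes \id \); that computation is correct, and the two facts it rests on---\( S \circ \alpha_{r}^{\ast} = \alpha_{r}^{\ast} \circ S \) (automatic for Hopf \( \ast \)-algebra automorphisms) and \( \alpha_{r^{-1}}^{\ast} \circ \alpha_{r}^{\ast} = \alpha_{e}^{\ast} = \id \) (from \( \alpha^{\ast} \) being an antihomomorphism)---are both available and indeed already used in \S~\ref{sec:8407664c09390193} when verifying the antipode axioms for \( \widetilde{S} \). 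Your cross-check via traciality of the Haar state is also valid, and is arguably the cleanest of the three, precisely because the multiplication on \( A \otimes C(\Lambda) \) is the untwisted tensor-product one, so \( \widetilde{h}\bigl((x \otimes \delta_{r})(y \otimes \delta_{s})\bigr) = \delta_{r,s}\abs*{\Lambda}^{-1}h(xy) \) and \( \widetilde{h} \) is tracial if and only if \( h \) is.
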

\begin{proof}
  Of the many equivalent characterization for a compact quantum group to be of
  Kac type\footnote{see e.g.~\cite[\S 1.7]{MR3204665}}, we use the fact that
  such a quantum group is of Kac type if and only if the antipode of its
  polynomial algebra preserves adjoints. The proposition now becomes trivial in
  view of \eqref{eq:e8daacc3f7b72fa2}.
\end{proof}

\section{A first look at unitary representations of
  \texorpdfstring{\( \mathbb{G} \rtimes \Lambda \)}{G x| Lambda}}
\label{sec:cac0f6b0354689f8}

A unitary representation \( U \) of a classic compact semidirect product
\( G \rtimes \Lambda \) is determined by the restrictions \( U_{G} \) and
\( U_{\Lambda} \) on the subgroups \( G \times 1_{\Lambda} \simeq G \) and
\( 1_{G} \times \Lambda \simeq \Lambda \) respectively. It is easy to see that
(cf.\ \eqref{eq:6521b852c75464b9})
\begin{equation}
  \label{eq:45f4b7519a2de266}
  \begin{split}
    \forall g \in G, r \in \Lambda,%
    &\leadmathskip U_{G}( \alpha_{r}(g) ) U_{\Lambda}(r)
    = U( \alpha_{r}(g), r ) \\
    &= U((1, r)(g, 1)) = U_{\Lambda}(r) U_{G}(g).
  \end{split}
\end{equation}
Conversely, suppose \( U_{G} \), \( U_{\Lambda} \) are unitary representations
on the same Hilbert space of \( G \) and \( \Lambda \) respectively,
if~\eqref{eq:45f4b7519a2de266} is satisfied, then
\( U(g, r) := U_{G}(g) U_{\Lambda}(r) \) defines a unitary representation of
\( G \rtimes \Lambda \). When \( G \) is replaced by a general compact quantum
group \( \mathbb{G} \), even though the ``elements'' of \( \mathbb{G} \) are no
longer available, one can still establish a reasonable quantum analogue. We
begin with a simple lemma.

\begin{lemm}
  \label{lemm:d93132078264edaa}
  Let \( \epsilon \) be the counit for \( \pol(\mathbb{G}) \),
  \( \epsilon_{\Lambda} \) the counit for \( C( \Lambda ) \), then
  \( \epsilon \otimes \id_{C( \Lambda )} \) is a Hopf
  \( \ast \)\nobreakdash-algebra morphism from
  \( \pol(\mathbb{G}) \otimes C( \Lambda ) \) onto \( C( \Lambda ) \), and
  \( \id_{\pol(\mathbb{G})} \otimes \epsilon_{\Lambda} \) is a Hopf
  \( \ast \)\nobreakdash-algebra morphism from
  \( \pol(\mathbb{G}) \otimes C( \Lambda ) \) onto \( \pol(\mathbb{G}) \).
\end{lemm}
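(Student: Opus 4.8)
The plan is to verify, for each of the two maps, the defining properties of a morphism of Hopf \( \ast \)-algebras: that it is a unital \( \ast \)-algebra homomorphism onto its target, that it intertwines the comultiplications and the counits, and that it commutes with the antipodes. The \( \ast \)-algebra and surjectivity assertions are immediate, since each map is a tensor product of the \( \ast \)-algebra morphisms \( \epsilon \), \( \id \) and \( \epsilon_{\Lambda} \), which are surjective onto \( \mathbb{C} \), \( \pol(\mathbb{G}) \), \( C(\Lambda) \) respectively; so the real content is the coalgebra and antipode compatibility. Throughout I will use the Hopf \( \ast \)-algebra structure of \( C(\Lambda) \), namely \( \Delta_{\Lambda}(\delta_{r}) = \sum_{s \in \Lambda}\delta_{s} \otimes \delta_{s^{-1}r} \), \( \epsilon_{\Lambda}(\delta_{r}) = \delta_{e,r} \) and \( S_{\Lambda}(\delta_{r}) = \delta_{r^{-1}} \), together with the Sweedler form \( \widetilde{\Delta}(x \otimes \delta_{r}) = \sum_{s \in \Lambda}\sum x_{(1)} \otimes \delta_{s} \otimes \alpha^{\ast}_{s}(x_{(2)}) \otimes \delta_{s^{-1}r} \) already extracted in the computations preceding \eqref{eq:e0375ae7a578342a}.

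The key input is that each \( \alpha^{\ast}_{s} \), being the restriction to \( \pol(\mathbb{G}) \) of an element of \( \aut\bigl(C(\mathbb{G}), \Delta\bigr) \), is a Hopf \( \ast \)-algebra automorphism, so by uniqueness of the counit and antipode it satisfies \( \epsilon \circ \alpha^{\ast}_{s} = \epsilon \) and \( \alpha^{\ast}_{s} \circ S = S \circ \alpha^{\ast}_{s} \). For the first map \( \Phi := \epsilon \otimes \id \), I would apply \( \Phi \otimes \Phi \) to the Sweedler form of \( \widetilde{\Delta}(x \otimes \delta_{r}) \); the factor \( \epsilon(\alpha^{\ast}_{s}(x_{(2)})) = \epsilon(x_{(2)}) \) collapses via the counit axiom to leave \( \epsilon(x)\sum_{s}\delta_{s} \otimes \delta_{s^{-1}r} = \epsilon(x)\Delta_{\Lambda}(\delta_{r}) \), which is exactly \( \Delta_{\Lambda}(\Phi(x \otimes \delta_{r})) \). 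For the second map \( \Psi := \id \otimes \epsilon_{\Lambda} \), applying \( \Psi \otimes \Psi \) to the same expression forces \( s = e \) through \( \epsilon_{\Lambda}(\delta_{s}) = \delta_{e,s} \), after which \( \alpha^{\ast}_{e} = \id \) yields \( \delta_{e,r}\sum x_{(1)} \otimes x_{(2)} = \delta_{e,r}\Delta(x) = \Delta(\Psi(x \otimes \delta_{r})) \). This establishes compatibility with the comultiplications.

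It then remains to match counits and antipodes, which is short bookkeeping against \eqref{eq:e0375ae7a578342a} and \eqref{eq:e8daacc3f7b72fa2}. For counits, \( \epsilon_{\Lambda}(\Phi(x \otimes \delta_{r})) = \epsilon(x)\delta_{e,r} \) and \( \epsilon(\Psi(x \otimes \delta_{r})) = \epsilon(x)\delta_{e,r} \) both equal \( \widetilde{\epsilon}(x \otimes \delta_{r}) \). For antipodes, feeding \( \widetilde{S}(x \otimes \delta_{r}) = \alpha^{\ast}_{r}(S(x)) \otimes \delta_{r^{-1}} \) into each map and using \( \epsilon \circ \alpha^{\ast}_{r} = \epsilon \), \( \epsilon \circ S = \epsilon \) (for \( \Phi \)) and \( \alpha^{\ast}_{e} = \id \) (for \( \Psi \), after \( \epsilon_{\Lambda}(\delta_{r^{-1}}) \) selects \( r = e \)) gives \( \Phi \widetilde{S} = S_{\Lambda}\Phi \) and \( \Psi \widetilde{S} = S \Psi \). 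I do not expect a genuine obstacle here: the only point requiring care is the equivariance of \( \alpha^{\ast}_{s} \) with respect to \( \epsilon \) and \( S \), which must be invoked rather than assumed, since \( \alpha^{\ast}_{s} \) is a priori only required to intertwine \( \Delta \); everything else is a direct substitution into \eqref{eq:0f23581b02e7ab34}, \eqref{eq:e0375ae7a578342a} and \eqref{eq:e8daacc3f7b72fa2}.
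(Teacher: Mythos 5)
Your proposal is correct and takes essentially the same route as the paper: the heart of both arguments is the identical Sweedler-notation computation showing that the two maps intertwine the comultiplications, using \( \epsilon \circ \alpha^{\ast}_{s} = \epsilon \) to collapse the sum for \( \epsilon \otimes \id \), and \( \epsilon_{\Lambda}(\delta_{s}) = \delta_{e,s} \) to force \( s = e \) for \( \id \otimes \epsilon_{\Lambda} \). The only difference is that the paper dispenses with your explicit counit and antipode verifications by invoking the general principle that these compatibilities come for free once a \( \ast \)-algebra morphism is shown to preserve comultiplication, whereas you check them directly against \eqref{eq:e0375ae7a578342a} and \eqref{eq:e8daacc3f7b72fa2}; both treatments are valid.
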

\begin{proof}
  Since the antipodes are \( \ast \)-morphisms of involutive algebras, it
  suffices to check that both morphisms preserve comultiplication.

  Take any \( a \in \pol(\mathbb{G}) \), \( r \in \Lambda \), we have
  \begin{align*}
    & \leadmathskip [( \epsilon \otimes \id) \otimes ( \epsilon \otimes \id )] %
      \Delta_{\mathbb{G} \rtimes \Lambda}(a \otimes
      \delta_{r}) \\
    &= \sum_{s \in \Lambda} \sum \epsilon(a_{(1)}) %
      \epsilon( \alpha_{s}^{\ast}(a_{(2)})) \delta_{s} \otimes
      \delta_{s^{-1}r} \\
    &= \sum_{s \in \Lambda} \sum \epsilon(a_{(1)}) \epsilon(a_{(2)})
      \delta_{s} \otimes \delta_{s^{-1}r} \\
    &= \sum_{s \in \Lambda} \epsilon(a) \delta_{s} \otimes \delta_{s^{-1} t} %
      = \Delta_{\Lambda}( \epsilon \otimes \id )(a \otimes \delta_{r}),
  \end{align*}
  where \( \Delta_{\Lambda} \) is the comultiplication for \( \Lambda \) viewed
  as a compact quantum group. Thus \( \epsilon \otimes \id \) preserves
  comultiplication. On the other hand, note that
  \( \epsilon_{\Lambda}( \delta_{r} ) = \delta_{r, 1_{\Lambda}} \), we have
  \begin{align*}
    & \leadmathskip %
      [(\id \otimes \epsilon_{\Lambda})
      \otimes (\id \otimes \epsilon_{\Lambda})] %
      \Delta_{\mathbb{G} \otimes \Lambda}(a \otimes \delta_{r}) \\
    &= \sum_{s \in \Lambda} \delta_{s, 1_{\Lambda}} %
      \delta_{s^{-1}r, 1_{\Lambda}} \sum a_{(1)} \otimes
      \alpha_{s}^{\ast}(a_{(2)}) \\
    &= \delta_{r, 1_{\Lambda}} \sum a_{(1)} \otimes a_{(2)} \\
    &= \delta_{r, 1_{\Lambda}} \Delta(a)  %
      = \Delta [(\id \otimes \epsilon_{\Lambda})(a \otimes \delta_{r})].
  \end{align*}
  Thus \( \id \otimes \epsilon_{\Lambda} \) preserves comultiplication too.
\end{proof}

Let
\( U \in \mathcal{B}(\mathscr{H}) \otimes \pol(\mathbb{G}) \otimes C( \Lambda )
\) be a finite dimensional unitary representation of
\( \mathbb{G} \rtimes \Lambda \). Define the unitaries
\begin{displaymath}
  \res_{\mathbb{G}}(U) \colon = (\id_{\mathcal{B}(\mathscr{H})} \otimes
  \id_{\pol(\mathbb{G})} \otimes \epsilon_{\Lambda}) (U) %
  \in \mathcal{B}(\mathscr{H}) \otimes
  \pol(\mathbb{G}),
\end{displaymath}
and
\begin{displaymath}
  \res_{\Lambda}(U) \colon %
  = (\id_{\mathcal{B}(\mathscr{H})} \otimes \epsilon_{\mathbb{G}} \otimes
  \id_{C( \Lambda )})(U) \in \mathcal{B}(\mathscr{H}) \otimes C( \Lambda ).
\end{displaymath}
Then by Lemma~\ref{lemm:d93132078264edaa}, we see that
\( \res_{\mathbb{G}}(U) \) is a finite dimensional unitary representation of
\( \mathbb{G} \) and \( \res_{\Lambda}(U) \) a finite dimensional unitary
representation of \( \Lambda \). We call \( \res_{\mathbb{G}}(U) \) (resp.\
\( \res_{\Lambda}(U) \)) the restriction of \( U \) to \( \mathbb{G} \) (resp.\
\( \Lambda \)). For reasons to be explained presently, we also write
\( U_{\mathbb{G}} \) for \( \res_{\mathbb{G}}(U) \) and \( U_{\Lambda} \) for
\( \res_{\Lambda}(U) \).

\begin{prop}
  \label{prop:3693cca5392c7dd3}
  Using the above notations, we have
  \begin{equation}
    \label{eq:1611871ef2500d9a}
    \forall  r_{0} \in \Lambda, \quad
    (U_{\Lambda}(r_{0}) \otimes 1_{A}) U_{\mathbb{G}} %
    =
    [(\id_{\mathcal{B}(\mathscr{H})} \otimes
    \alpha_{r_{0}}^{\ast})(U_{\mathbb{G}})] (U_{\Lambda}(r_{0}) \otimes 1_{A}) %
  \end{equation}
  in \( \mathcal{B}(\mathscr{H}) \otimes \pol(\mathbb{G}) \). Moreover,
  \begin{equation}
    \label{eq:050021bf02f8bfac}
    U = {(U_{\mathbb{G}})}_{12} {(U_{\Lambda})}_{13} \in
    \mathcal{B}(\mathscr{H}) \otimes \pol(\mathbb{G}) \otimes C( \Lambda ).
  \end{equation}

  Conversely, suppose \( U_{\mathbb{G}} \) and \( U_{\Lambda} \) are finite
  dimensional unitary representations of \( \mathbb{G} \) and \( \Lambda \)
  respectively on the same Hilbert space \( \mathscr{H} \), if
  \( U_{\mathbb{G}} \) and \( U_{\Lambda} \)
  satisfy~\eqref{eq:1611871ef2500d9a}, then~\eqref{eq:050021bf02f8bfac} defines
  a finite dimensional unitary representation \( U \) of
  \( \mathbb{G} \rtimes \Lambda \) on \( \mathscr{H} \). Moreover,
  \begin{subequations}
    \begin{equation}
      \label{eq:538a1df8eb219db1}
      U_{\mathbb{G}} = (\id_{\mathcal{B}(\mathscr{H})} %
      \otimes \id_{\pol(\mathbb{G})} \otimes \epsilon_{\Lambda})(U) %
      \in \mathcal{B}(\mathscr{H}) \otimes \pol(\mathbb{G}),
    \end{equation}
    \begin{equation}
      \label{eq:ccf0a0e79115abc1}
      U_{\Lambda} %
      = (\id_{\mathcal{B}(\mathscr{H})} \otimes \epsilon_{\mathbb{G}} \otimes
      \id_{C( \Lambda )}) (U) \in \mathcal{B}(\mathscr{H}) \otimes C( \Lambda ).
    \end{equation}
  \end{subequations}

\end{prop}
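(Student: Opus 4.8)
The plan is to treat a finite-dimensional unitary representation of \( \mathbb{G} \rtimes \Lambda \) as a unitary \( U \in \mathcal{B}(\mathscr{H}) \otimes \pol(\mathbb{G}) \otimes C(\Lambda) \) satisfying \( (\id \otimes \widetilde{\Delta})(U) = U_{12}U_{13} \), and to exploit the two Hopf \( \ast \)-algebra morphisms of Lemma~\ref{lemm:d93132078264edaa}, abbreviated \( \pi_{\mathbb{G}} = \id_{\pol(\mathbb{G})} \otimes \epsilon_{\Lambda} \) and \( \pi_{\Lambda} = \epsilon \otimes \id_{C(\Lambda)} \). By definition \( U_{\mathbb{G}} = (\id \otimes \pi_{\mathbb{G}})(U) \) and \( U_{\Lambda} = (\id \otimes \pi_{\Lambda})(U) \), which are representations of \( \mathbb{G} \) and \( \Lambda \) as observed before the statement. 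The point that makes everything run is that \( \pi_{\mathbb{G}} \) and \( \pi_{\Lambda} \), being characters/algebra morphisms, render every slice map built from them multiplicative.

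For the factorization~\eqref{eq:050021bf02f8bfac}, the crucial identity is \( (\pi_{\mathbb{G}} \otimes \pi_{\Lambda}) \circ \widetilde{\Delta} = \id \) on \( \pol(\mathbb{G}) \otimes C(\Lambda) \), which is the quantum shadow of the set-theoretic factorization \( (g,r) = (g,e)(e,r) \). I would verify it by a one-line Sweedler computation on~\eqref{eq:0f23581b02e7ab34}, the only input being \( \epsilon \circ \alpha_{s}^{\ast} = \epsilon \) (each \( \alpha_{s}^{\ast} \) being a Hopf automorphism). Applying \( \id \otimes \pi_{\mathbb{G}} \otimes \pi_{\Lambda} \) to the representation identity \( (\id \otimes \widetilde{\Delta})(U) = U_{12}U_{13} \) then produces \( U \) on the left, while multiplicativity sends \( U_{12} \mapsto (U_{\mathbb{G}})_{12} \) and \( U_{13} \mapsto (U_{\Lambda})_{13} \) on the right, giving~\eqref{eq:050021bf02f8bfac}. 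To extract the covariance~\eqref{eq:1611871ef2500d9a}, I would decompose \( U = \sum_{r} X_{r} \otimes \delta_{r} \) with \( X_{r} \in \mathcal{B}(\mathscr{H}) \otimes \pol(\mathbb{G}) \), so that \( U_{\mathbb{G}} = X_{e} \) and \( U_{\Lambda}(r) = (\id \otimes \epsilon)(X_{r}) \). Reading off the representation identity in the two \( C(\Lambda) \)-legs yields, for all \( r, t \), the relation \( (\id \otimes \id \otimes \alpha_{r}^{\ast})(\id \otimes \Delta)(X_{rt}) = (X_{r})_{12}(X_{t})_{13} \); contracting the second \( \pol(\mathbb{G}) \)-leg with \( \epsilon \) gives \( X_{rt} = X_{r}(U_{\Lambda}(t) \otimes 1) \) (hence \( X_{r} = U_{\mathbb{G}}(U_{\Lambda}(r) \otimes 1) \) via~\eqref{eq:050021bf02f8bfac}), while contracting the first \( \pol(\mathbb{G}) \)-leg with \( \epsilon \) and taking \( t = e \) gives \( (\id \otimes \alpha_{r}^{\ast})(X_{r}) = (U_{\Lambda}(r) \otimes 1)U_{\mathbb{G}} \); substituting the former into the latter is exactly~\eqref{eq:1611871ef2500d9a}.

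For the converse, unitarity of \( U := (U_{\mathbb{G}})_{12}(U_{\Lambda})_{13} \) is immediate, being a product of two unitaries of \( \mathcal{B}(\mathscr{H}) \otimes (A \otimes C(\Lambda)) \). The substance is the representation identity, which I would verify by computing \( (\id \otimes \widetilde{\Delta})(U) \) through multiplicativity, using \( \widetilde{\Delta}(a \otimes 1) = \sum_{s} a_{(1)} \otimes \delta_{s} \otimes \alpha_{s}^{\ast}(a_{(2)}) \otimes 1 \) (genuinely twisted, since \( 1_{C(\Lambda)} = \sum_{r} \delta_{r} \)) together with \( \widetilde{\Delta}(1 \otimes \delta_{r}) = \sum_{t} (1 \otimes \delta_{t}) \otimes (1 \otimes \delta_{t^{-1}r}) \), and separately expanding \( U_{12}U_{13} \) in \( \mathcal{B}(\mathscr{H}) \otimes \mathscr{A} \otimes \mathscr{A} \). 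Matching the two sides leg by leg reduces to the single identity \( (\id \otimes \id \otimes \alpha_{r}^{\ast})(\id \otimes \Delta)(X_{rt}) = (X_{r})_{12}(X_{t})_{13} \) for \( X_{r} = U_{\mathbb{G}}(U_{\Lambda}(r) \otimes 1) \); here the covariance~\eqref{eq:1611871ef2500d9a} does the work, letting one commute \( U_{\Lambda}(r) \) past \( U_{\mathbb{G}} \) at the cost of the twist \( \alpha_{r}^{\ast} \), after which the representation property of \( U_{\mathbb{G}} \) closes the computation. Finally I would recover~\eqref{eq:538a1df8eb219db1} and~\eqref{eq:ccf0a0e79115abc1} by applying the multiplicative maps \( \id \otimes \id \otimes \epsilon_{\Lambda} \) and \( \id \otimes \epsilon_{\mathbb{G}} \otimes \id \) to the factorization and invoking the counit conditions \( (\id \otimes \epsilon)(U_{\mathbb{G}}) = 1 \) and \( U_{\Lambda}(e) = 1 \).

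The hard part will be the converse representation identity: the bookkeeping across the tensor legs of \( \mathscr{A} \otimes \mathscr{A} \), with the twisted comultiplication of \( \pol(\mathbb{G}) \otimes 1 \) and the noncommutativity of the \( \mathcal{B}(\mathscr{H}) \)-leg, is delicate, and the whole content is that the two sides agree precisely because of~\eqref{eq:1611871ef2500d9a}. Everything else---unitarity, the factorization~\eqref{eq:050021bf02f8bfac}, and the restriction formulas---reduces to multiplicativity of counit-type slice maps and short Sweedler manipulations.
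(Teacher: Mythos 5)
Your proposal is correct: every step checks out, including the splitting identity, the two counit contractions, and the converse verification. But it is organized differently from the paper's proof, and the difference is worth recording. The paper works entirely with matrix units: it writes \( U = \sum_{i,j} e_{ij} \otimes U_{ij} \), \( U_{ij} = \sum_{r} U_{ij,r} \otimes \delta_{r} \), derives the scalar-coefficient identity \eqref{eq:8f374b723a1ebd77} and its counit contractions \eqref{eq:ac66a15872bdcc9d}, and then proves covariance, the factorization \eqref{eq:050021bf02f8bfac} (via the computation \eqref{eq:dd48b4afaec2999b}), and the converse (via the long entry-wise verification \eqref{eq:7f124108a934a3f9}) by assembling these coefficients, after normalizing a basis so that \( \epsilon(u_{ij}) = \delta_{i,j} \). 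You instead keep everything operator-valued: the decomposition \( U = \sum_{r} X_{r} \otimes \delta_{r} \) with \( X_{r} \in \mathcal{B}(\mathscr{H}) \otimes \pol(\mathbb{G}) \) is indexed only by the group, and your identity \( (\id \otimes \id \otimes \alpha_{r}^{\ast})(\id \otimes \Delta)(X_{rt}) = (X_{r})_{12}(X_{t})_{13} \) is exactly \eqref{eq:8f374b723a1ebd77} assembled over matrix units, with your two contractions playing the role of \eqref{eq:ac66a15872bdcc9d}. Your genuinely new ingredient is the splitting identity \( (\pi_{\mathbb{G}} \otimes \pi_{\Lambda}) \circ \widetilde{\Delta} = \id \), which has no counterpart in the paper and yields \eqref{eq:050021bf02f8bfac} in one line by multiplicativity of the slice maps; note that you implicitly made the right (and necessary) choice of ordering here, since \( (\pi_{\Lambda} \otimes \pi_{\mathbb{G}}) \circ \widetilde{\Delta} \) is \emph{not} the identity---it sends \( a \otimes \delta_{r} \) to \( \delta_{r} \otimes \alpha_{r}^{\ast}(a) \) up to the flip---which is precisely why \( U_{\mathbb{G}} \) must sit on the left in the factorization. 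In the converse, applying covariance as an operator identity lets you commute \( U_{\Lambda}(r) \) past \( U_{\mathbb{G}} \) in one stroke, where the paper does the same commutation entry by entry. What your route buys is brevity and conceptual transparency (the splitting identity really is the quantum form of \( (g,r) = (g,e)(e,r) \)); what the paper's route buys is the explicit coefficient formulas, which are not incidental---they are reused later (e.g.\ in Proposition~\ref{prop:38cc8753970bd4b5} and in the character computations of \S~\ref{sec:d471d5544009eb55}), so the matrix-unit bookkeeping done here amortizes over the rest of the paper. One small remark: your slice-map derivation of \eqref{eq:538a1df8eb219db1} from the factorization only needs \( U_{\Lambda}(1_{\Lambda}) = \id_{\mathscr{H}} \), which shows that the paper's statement that this step ``must resort to'' condition~\eqref{eq:1611871ef2500d9a} is not actually forced once the factorization is taken as the definition of \( U \).
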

\begin{proof}
  Let \( d = \dim \mathscr{H} \), and fix a Hilbert basis
  \( (e_{1}, \ldots, e_{d}) \) for \( \mathscr{H} \). Let
  \( (e_{ij}, i, j = 1, \ldots, d) \) be the corresponding matrix units (i.e.\
  \( e_{ij} \in \mathcal{B}(\mathscr{H}) \) is characterized by
  \( e_{ij}(e_{k}) = \delta_{j,k} e_{i} \)). Then there is a unique
  \( U_{ij} \in \pol(\mathbb{G}) \otimes C( \Lambda ) \) for each pair of
  \( i, j \), such that
  \begin{displaymath}
    U = \sum_{i,j} e_{ij} \otimes U_{ij},
  \end{displaymath}
  with each \( U_{ij} \) decomposed further as
  \( U_{ij} = \sum_{r \in \Lambda} U_{ij, r} \otimes \delta_{r} \), where each
  \( U_{ij, r} \in \pol(\mathbb{G}) \).  Since \( U \) is a finite dimensional
  unitary representation of \( \mathbb{G} \rtimes \Lambda \), for any
  \( i,j \in \set*{1,\ldots, d} \), we have
  \begin{equation}
    \label{eq:cc7d223f6265c3fe}
    \Delta_{\mathbb{G} \rtimes \Lambda}(U_{ij}) %
    = \sum_{k=1}^{d} U_{ik} \otimes U_{kj},
  \end{equation}
  where in
  \( \pol(\mathbb{G}) \otimes C( \Lambda ) \otimes \pol(\mathbb{G}) \otimes C(
  \Lambda )\), we have
  \begin{equation}
    \label{eq:d39d6aeeff6fc8ca}
    \begin{split}
      \Delta_{\mathbb{G} \rtimes \Lambda} (U_{ij}) &%
      = \sum_{\substack{r, s, t \in \Lambda, \\ r = st}} %
      {\left[(\id_{A} \otimes \alpha_{s}^{\ast}) \Delta(U_{ij,
            r})\right]}_{13}(1_{A} \otimes
      \delta_{s} \otimes 1_{A} \otimes \delta_{t}) \\
      &= \sum_{s,t \in \Lambda} {\left[(\id_{A} \otimes \alpha_{s}^{\ast})
          \Delta(U_{ij, st})\right]}_{13}(1_{A} \otimes \delta_{s} \otimes 1_{A}
      \otimes \delta_{t})
    \end{split}
  \end{equation}
  and
  \begin{equation}
    \label{eq:f66121ea3fa81543}
    \sum_{k=1}^{d} U_{ik} \otimes U_{kj}
    = \sum_{k=1}^{d} \sum_{s,t \in \Lambda} U_{ik, s} \otimes
    \delta_{s} \otimes U_{kj, t} \otimes \delta_{t}.
  \end{equation}
  Comparing~\eqref{eq:cc7d223f6265c3fe},~\eqref{eq:d39d6aeeff6fc8ca}
  and~\eqref{eq:f66121ea3fa81543}, we get
  \begin{equation}
    \label{eq:8f374b723a1ebd77}
    (\id_{A} \otimes \alpha_{s}^{\ast}) \Delta(U_{ij, st})
    =  \sum_{k=1}^{d} U_{ik,
      s} \otimes U_{kj, t} \in A \otimes A
  \end{equation}
  or equivalently (by applying \( (\id_{A} \otimes \alpha_{s^{-1}}^{\ast}) \) on
  both sides)
  \begin{equation}
    \label{eq:34d7d2460dff9955}
    \Delta(U_{ij, st})
    =  \sum_{k=1}^{d} U_{ik, s} \otimes \alpha_{s^{-1}}^{\ast}
    (U_{kj, t})
  \end{equation}
  for every \( s,t \in \Lambda \).  Since
  \( (\id \otimes \epsilon) \Delta = \id = ( \epsilon \otimes \id ) \Delta \),
  we have
  \begin{equation}
    \label{eq:ac66a15872bdcc9d}
    U_{ij, st} =  \sum_{k=1}^{d} \epsilon(U_{ik,s})
    \alpha_{s^{-1}}^{\ast}(U_{kj,t}) %
    = \sum_{k=1}^{d} \epsilon(U_{kj,s}) U_{ik,t}
  \end{equation}
  for any \( i,j \in \set*{1, \ldots, d} \), \( s,t \in \Lambda \).

  We have \( \epsilon_{\Lambda}( \delta_{r} ) = \delta_{r, 1_{\Lambda}} \), thus
  by definition
  \begin{equation}
    \label{eq:48b8d35e145b4bed}
    U_{\mathbb{G}} = \sum_{i,j = 1}^{d} e_{ij} \otimes U_{ij, 1_{\Lambda}} \in
    \mathcal{B}(\mathscr{H}) \otimes \pol(\mathbb{G}).
  \end{equation}
  Similarly,
  \begin{equation}
    \label{eq:1962e45312618a38}
    U_{\Lambda} = \sum_{r \in \Lambda} \sum_{i,j=1}^{d} %
    \epsilon(U_{ij, r}) e_{ij} \otimes \delta_{r} \in
    \mathcal{B}(\mathscr{H}) \otimes C( \Lambda ).
  \end{equation}
  Thus
  \begin{equation}
    \label{eq:187e572ceb733907}
    U_{\Lambda}(r_{0}) = \sum_{i,j=1}^{d} \epsilon(U_{ij, r_{0}}) e_{ij}
    \in \mathcal{B}(\mathscr{H}).
  \end{equation}
  Hence,
  \begin{equation}
    \label{eq:58ab5f576a30cbb4}
    \begin{split}
      (U_{\Lambda}(r_{0}) \otimes 1_{A}) U_{\mathbb{G}} %
      &= \sum_{i,j,k,l=1}^{d} \delta_{j,k} \epsilon(U_{ij, r_{0}}) %
      e_{il} \otimes U_{kl, 1_{\Lambda}} \\
      &= \sum_{i, l=1}^{d} e_{il} \otimes %
      \sum_{k=1}^{d} \epsilon(U_{ik, r_{0}}) U_{kl, 1_{\Lambda}} \\
      &= \sum_{i, l=1}^{d} e_{il} \otimes %
      \alpha_{r_{0}}^{\ast} \left( \sum_{k=1}^{d} \epsilon(U_{ik, r_{0}}) %
        \alpha_{r_{0}^{-1}}^{\ast}(U_{kl, 1_{\Lambda}}) \right) \\
      &= \sum_{i, l=1}^{d} e_{il} \otimes \alpha_{r_{0}}^{\ast}(U_{il, r_{0}})
    \end{split}
  \end{equation}
  where the last equality follows from~\eqref{eq:ac66a15872bdcc9d}; and
  \begin{equation}
    \label{eq:9323ac3bb551c22c}
    \begin{split}
      [(\id \otimes \alpha_{r_{0}}^{\ast})U_{\mathbb{G}}] %
      (U_{\Lambda}(r_{0}) \otimes 1_{A}) %
      &= \sum_{i,j,k,l=1}^{d} \delta_{j,k} \epsilon(U_{kl, r_{0}}) %
      e_{il} \otimes \alpha_{r_{0}}^{\ast}(U_{ik, 1_{\Lambda}}) \\
      &= \sum_{i,l = 1} e_{il} \otimes \sum_{k=1}^{d} \epsilon(U_{kl, r_{0}}) %
      \alpha_{r_{0}}^{\ast}(U_{ik, 1_{\Lambda}}) \\
      &= \sum_{i,l = 1} e_{il} \otimes \alpha_{r_{0}}^{\ast} %
      \left( \sum_{k=1}^{d} \epsilon(U_{ik, r_{0}})
        U_{kj, 1_{\Lambda}} \right) \\
      &= \sum_{i,l = 1} e_{il} \otimes \alpha_{r_{0}}^{\ast}(U_{il, r_{0}})
    \end{split}
  \end{equation}
  where~\eqref{eq:ac66a15872bdcc9d} is used again in the last equality.

  Combining~\eqref{eq:58ab5f576a30cbb4} and~\eqref{eq:9323ac3bb551c22c} finishes
  the proof of~\eqref{eq:1611871ef2500d9a}.

  By~\eqref{eq:48b8d35e145b4bed},~\eqref{eq:1962e45312618a38}
  and~\eqref{eq:ac66a15872bdcc9d}, one has
  \begin{equation}
    \label{eq:dd48b4afaec2999b}
    \begin{split}
      {(U_{\mathbb{G}})}_{12} {(U_{\Lambda})}_{13} %
      &= \sum_{i,j,k,l=1}^{d} \sum_{r \in \Lambda} \delta_{j,k} %
      \epsilon(U_{kl, r}) e_{il}
      \otimes U_{ij, 1_{\Lambda}} \otimes \delta_{r} \\
      &= \sum_{i,l = 1}^{d} e_{il} \otimes \sum_{r \in \Lambda} %
      \left( \sum_{k=1}^{d} \epsilon(U_{kl, r})U_{ik, 1_{\Lambda}}
      \right) \otimes \delta_{r} \\
      &= \sum_{i,l = 1}^{d} e_{il} \otimes \sum_{r \in \Lambda} U_{il, r}
      \otimes \delta_{r} %
      = U
    \end{split}
  \end{equation}
  in
  \( \mathcal{B}(\mathscr{H}) \otimes \pol(\mathbb{G}) \otimes C( \Lambda )
  \). This proves~\eqref{eq:050021bf02f8bfac}.

  Conversely, suppose \( U_{\mathbb{G}} \) and \( U_{\Lambda} \) are unitary
  representations on some finite dimensional Hilbert space \( \mathscr{H} \).
  We still use \( (e_{1}, \ldots, e_{d}) \) to denote a Hilbert basis for
  \( \mathscr{H} \), where \( d = \dim \mathscr{H} \), and
  \( (e_{ij}, i,j=1,\ldots,d) \) the corresponding matrix unit of
  \( \mathcal{B}(\mathscr{H}) \). Then for each pair \( i, j \), one has a
  unique \( u_{ij} \in \pol(\mathbb{G}) \) and a unique
  \( f_{ij} \in C( \Lambda ) \), such that
  \( U_{\mathbb{G}} = \sum_{i,j} e_{ij} \otimes u_{ij} \),
  \( U_{\Lambda} = \sum_{ij} e_{ij} \otimes f_{ij} \). By suitably choosing the
  basis \( (e_{1}, \ldots, e_{d}) \), we may and do assume
  \( \epsilon(u_{ij}) = \delta_{i,j} \). Since these are representations, we
  have
  \begin{subequations}
    \begin{equation}
      \label{eq:b68c12bf557b2401}
      \Delta(u_{ij}) = \sum_{k=1}^{d} u_{ik} \otimes u_{kj},
    \end{equation}
    \begin{equation}
      \label{eq:6ffec2247b657e85}
      \Delta_{\Lambda}{(f_{ij})} = \sum_{k=1}^{d} f_{ik} \otimes f_{kj}.
    \end{equation}
  \end{subequations}
  By definition,
  \begin{equation}
    \label{eq:4ed86e037b84532d}
    U = \sum_{i,j,k,l=1}^{d} \delta_{jk} e_{il} \otimes u_{ij} \otimes f_{kl} %
    = \sum_{i,j = 1}^{d} e_{ij} \otimes U_{ij}
  \end{equation}
  with
  \begin{equation}
    \label{eq:5500fee400ee124f}
    U_{ij} = \sum_{k=1}^{d} u_{ik} \otimes f_{kj}
    = \sum_{r \in \Lambda} \sum_{k=1}^{d}
    f_{kj}(r) u_{ik} \otimes \delta_{r}.
  \end{equation}
  Since \( U_{\mathbb{G}} \) and \( U_{\Lambda} \) are unitary, so is \( U \).
  Using \( \epsilon(u_{ij}) = \delta_{i,j} \), one has
  \begin{equation}
    \label{eq:6c864f88f58eadf6}
    (\id_{\mathcal{B}(\mathscr{H})}
    \otimes \epsilon \otimes \id_{C( \Lambda )})(U)
    = \sum_{i,j=1}^{d} e_{ij} \otimes \sum_{k=1}^{d} \delta_{i,k} f_{kj}
    = \sum_{i,j}^{d} e_{ij} \otimes f_{ij} = U_{\Lambda}.
  \end{equation}
  This proves~\eqref{eq:ccf0a0e79115abc1}. The proof
  of~\eqref{eq:538a1df8eb219db1} is more involved and must resort to
  condition~\eqref{eq:1611871ef2500d9a}, which using the above notations,
  translates to
  \begin{equation}
    \label{eq:7d6af848b8bde7f3}
    \forall r \in \Lambda, \quad
    \sum_{i,j} e_{ij} \otimes \sum_{k}f_{ik}(r)u_{kj} %
    = \sum_{i,j} e_{ij} \otimes \sum_{k} f_{kj}(r) \alpha_{r}^{\ast}(u_{ik}),
  \end{equation}
  or equivalently,
  \begin{equation}
    \label{eq:16eca2e60cbf97b9}
    \forall r \in \Lambda, i,j \in \set*{1, \ldots, d}, \quad
    \sum_{k=1}^{d} f_{ik}(r) u_{kj}
    = \sum_{k=1}^{d} f_{kj}(r) \alpha_{r}^{\ast}(u_{ik}).
  \end{equation}
  Since \( U_{\Lambda}(1_{\Lambda}) = \id_{\mathscr{H}} \), one has
  \( f_{ij}( 1_{\Lambda} ) = \delta_{i,j} \). Taking \( r = 1_{\Lambda} \)
  in~\eqref{eq:16eca2e60cbf97b9} yields
  \begin{equation}
    \label{eq:abf017c9bd3df263}
    \begin{split}
      & \leadmathskip (\id_{\mathcal{B}(\mathscr{H})} \otimes
      \id_{\pol(\mathbb{G})} \otimes \epsilon_{\Lambda})(U) %
      = \sum_{i,j=1}^{d} e_{ij} \otimes \sum_{k=1}^{d} %
      f_{kj}(1_{\Lambda}) u_{ik} \\
      &= \sum_{i,j=1}^{d} e_{ij} \otimes \sum_{k=1}^{d} \delta_{k,j} u_{ik} %
      = \sum_{i,j=1}^{d} e_{ij} \otimes u_{ij} = U_{\mathbb{G}},
    \end{split}
  \end{equation}
  which proves~\eqref{eq:538a1df8eb219db1}. To finishes the proof of the
  proposition, it remains to check that the unitary \( U \) is indeed a
  representation of \( \mathbb{G} \rtimes \Lambda \).

  Using~\eqref{eq:b68c12bf557b2401},~\eqref{eq:5500fee400ee124f}
  and~\eqref{eq:16eca2e60cbf97b9}, one has
  \begin{equation}
    \label{eq:7f124108a934a3f9}
    \begin{split}
      \Delta_{\mathbb{G} \rtimes \Lambda}(U_{ij}) %
      &= \sum_{r \in \Lambda} \sum_{s \in \Lambda} %
      {\left[(\id_{\pol(\mathbb{G})} \otimes \alpha_{s}^{\ast}) %
          \Delta\left(\sum_{k=1}^{d} f_{kj}(r)u_{ik}\right)\right]}_{13} %
      {\left( \delta_{s} \otimes \delta_{s^{-1}1}\right)}_{24} \\
      &= \sum_{r, s \in \Lambda} \sum_{k,l=1}^{d} f_{kj}(r)u_{il} \otimes
      \delta_{s} %
      \otimes \alpha_{s}^{\ast}(u_{lk}) \otimes \delta_{s^{-1}r} \\
      &= \sum_{r, s \in \Lambda} \sum_{l=1}^{d} u_{il} \otimes \delta_{s} %
      \otimes \left[\sum_{k=1}^{d} f_{kj}(r) \alpha_{s}^{\ast}(u_{lk})\right] %
      \otimes \delta_{s^{-1}r} \\
      &= \sum_{s,t \in \Lambda} \sum_{k,l =1 }^{d} u_{il} \otimes \delta_{s} %
      \otimes \left[f_{kj}(st) \alpha_{s}^{\ast}(u_{lk}) \right] %
      \otimes \delta_{t} \\
      &= \sum_{s,t \in \Lambda} \sum_{k,l =1 }^{d} u_{il} \otimes \delta_{s} %
      \otimes \left[\sum_{h=1}^{d} f_{hj}(t)f_{kh}(s)
        \alpha_{s}^{\ast}(u_{lk})\right]
      \otimes \delta_{t} \\
      &= \sum_{s,t \in \Lambda} \sum_{h,l =1 }^{d} u_{il} \otimes \delta_{s} %
      \otimes \left[f_{hj}(t) \sum_{k=1}^{d} f_{kh}(s)
        \alpha_{s}^{\ast}(u_{lk})\right]
      \otimes \delta_{t} \\
      &= \sum_{s,t \in \Lambda} \sum_{h,l =1 }^{d} u_{il} \otimes \delta_{s} %
      \otimes \left[f_{hj}(t) \sum_{k=1}^{d} f_{lk}(s) u_{kh} \right]
      \otimes \delta_{t} \\
      &= \sum_{h,k,l=1}^{d} u_{il} \otimes f_{lk} %
      \otimes u_{kh} \otimes f_{hj} \\
      &= \sum_{h=1}^{d} \left( \sum_{l=1}^{d} u_{il} \otimes f_{lk}\right) %
      \otimes \left( \sum_{k=1}^{d} u_{kh} \otimes f_{hj} \right) %
      = \sum_{h=1}^{d}U_{ih} \otimes U_{hj}.
    \end{split}
  \end{equation}
  Thus \( U \) is indeed a (unitary) representation.
\end{proof}

\begin{defi}
  \label{defi:8e2ca3d66e4a106c}
  Let \( U_{\mathbb{G}} \in \mathcal{B}(\mathscr{H}) \otimes \pol(\mathbb{G}) \)
  be a finite dimensional unitary representation of \( \mathbb{G} \),
  \( U_{\Lambda} \in \mathcal{B}(\mathscr{H}) \otimes C( \Lambda ) \) a finite
  dimensional unitary representation of \( \Lambda \) on the same space
  \( \mathscr{H} \), we say \( U_{\mathbb{G}} \) and \( U_{\Lambda} \) are
  \textbf{covariant} if they satisfy condition~\eqref{eq:1611871ef2500d9a}.
\end{defi}

We track here a simple criterion for two representations to be covariant using
matrix units and matrix coefficients.
\begin{prop}
  \label{prop:38cc8753970bd4b5}
  Let
  \( U_{\mathbb{G}} \in \mathcal{B}(\mathscr{H}) \otimes \pol(\mathbb{G}) \),
  \( U_{\Lambda} \in \mathcal{B}(\mathscr{H}) \otimes C( \Lambda ) \) be
  finite-dimensional unitary representations of \( \mathbb{G} \) and
  \( \Lambda \) respectively. Let \( (e_{1}, \ldots, e_{d}) \) be a Hilbert
  basis of \( \mathscr{H} \), \( e_{ij} \in \mathcal{B}(\mathscr{H}) \) the
  operator with \( e_{ij}(e_{k}) = \delta_{j,k} e_{i} \), and
  \( U_{\mathbb{G}} = \sum_{i,j} e_{ij} \otimes u_{ij} \),
  \( U_{\Lambda} = \sum_{i,j} e_{ij} \otimes f_{ij} \), then
  \( U_{\mathbb{G}} \) and \( U_{\Lambda} \) are covariant if and only if
  \begin{equation}
    \label{eq:c97618b75ad566e3}
    \forall r \in \Lambda, i,j \in \set*{1, \ldots, d}, \quad
    \sum_{k=1}^{d} f_{ik}(r) u_{kj} = \sum_{k=1}^{d} f_{kj}(r)
    \alpha_{r}^{\ast}(u_{ik}).
  \end{equation}
\end{prop}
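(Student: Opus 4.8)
The plan is to unwind the defining covariance condition~\eqref{eq:1611871ef2500d9a} in the matrix-unit basis and read off~\eqref{eq:c97618b75ad566e3} by comparing coefficients. This is in essence the computation already carried out in the converse part of the proof of Proposition~\ref{prop:3693cca5392c7dd3} (compare~\eqref{eq:7d6af848b8bde7f3} and~\eqref{eq:16eca2e60cbf97b9}), so no new idea is required; it only remains to organize it cleanly in the present notation.

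First I would record that evaluating \( U_{\Lambda} \) at a group element \( r \in \Lambda \) yields the operator \( U_{\Lambda}(r) = \sum_{i,j} f_{ij}(r)\, e_{ij} \in \mathcal{B}(\mathscr{H}) \), since the coefficients \( f_{ij} \) lie in \( C(\Lambda) \) and are evaluated at \( r \). With this in hand, both sides of~\eqref{eq:1611871ef2500d9a} are elements of \( \mathcal{B}(\mathscr{H}) \otimes \pol(\mathbb{G}) \) that I can expand using the matrix-unit relation \( e_{ij} e_{kl} = \delta_{j,k}\, e_{il} \).

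For the left-hand side I would compute \( (U_{\Lambda}(r) \otimes 1_{A}) U_{\mathbb{G}} = \sum_{i,j} e_{ij} \otimes \bigl( \sum_{k} f_{ik}(r)\, u_{kj} \bigr) \), the delta \( \delta_{j,k} \) collapsing one of the four summation indices. Applying \( \id \otimes \alpha_{r}^{\ast} \) to \( U_{\mathbb{G}} \) first and then multiplying on the right by \( U_{\Lambda}(r) \otimes 1_{A} \), I would likewise obtain \( [(\id \otimes \alpha_{r}^{\ast})(U_{\mathbb{G}})]\,(U_{\Lambda}(r) \otimes 1_{A}) = \sum_{i,j} e_{ij} \otimes \bigl( \sum_{k} f_{kj}(r)\, \alpha_{r}^{\ast}(u_{ik}) \bigr) \). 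The only point that needs a little care is the index bookkeeping forced by the two Kronecker deltas, but this is entirely routine.

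Finally I would invoke the linear independence of the matrix units \( \{ e_{ij} \} \) in \( \mathcal{B}(\mathscr{H}) \): the two expansions coincide if and only if, for every \( r \in \Lambda \) and every pair \( (i,j) \), the corresponding \( \pol(\mathbb{G}) \)-valued coefficients agree, which is precisely~\eqref{eq:c97618b75ad566e3}. Since the covariance condition~\eqref{eq:1611871ef2500d9a} ranges over all \( r_{0} \in \Lambda \), this establishes the equivalence. There is no genuine obstacle here; the whole content of the proposition is this coefficient-wise reformulation of the operator identity, resting only on the elementary multiplication of matrix units together with the evaluation \( U_{\Lambda}(r) = \sum_{i,j} f_{ij}(r)\, e_{ij} \).
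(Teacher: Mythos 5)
Your proof is correct and is essentially the paper's own argument: the paper disposes of this proposition with the single remark that it is ``just a restatement of condition~\eqref{eq:1611871ef2500d9a}'', the underlying matrix-unit computation being the one already displayed in~\eqref{eq:7d6af848b8bde7f3} and~\eqref{eq:16eca2e60cbf97b9} in the proof of Proposition~\ref{prop:3693cca5392c7dd3}, which is exactly what you carry out. Your write-up merely makes that coefficient comparison explicit, which is fine.
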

\begin{proof}
  This is just a restatement of condition~\eqref{eq:1611871ef2500d9a}.
\end{proof}

By Proposition~\ref{prop:3693cca5392c7dd3}, unitary representations of
\( \mathbb{G} \rtimes \Lambda \), at least the finite dimensional ones,
correspond bijectively to pairs of covariant unitary representations of
\( \mathbb{G} \) and \( \Lambda \).

\section{Principal subgroups of \texorpdfstring{\( \mathbb{G} \rtimes \Lambda \)
  }{G x| Lambda}}
\label{sec:90f8ecd4790ea7ac}

\begin{defi}
  \label{defi:2f7e12385791f036}
  Let \( \mathbb{H} = (B, \Delta_{B}) \), \( \mathbb{K} = (C, \Delta_{C}) \) be
  compact quantum groups, we say \( \mathbb{K} \) is isomorphic to a closed
  quantum subgroup of \( \mathbb{H} \), or simply \( \mathbb{K} \) is a closed
  subgroup of \( \mathbb{H} \), if there exists a \emph{surjective} mapping
  \( \varphi : \pol(\mathbb{H}) \to \pol(\mathbb{K}) \) such that \( \varphi \)
  is a morphism of Hopf \( \ast \)\nobreakdash-algebras.
\end{defi}

When \( \mathcal{\mathbb{H}} \) is universal, then
Definition~\ref{defi:2f7e12385791f036} can be reformulated as the existence of a
surjective unital \( C^{\ast} \)\nobreakdash-algebra morphism
\( \varphi : B \to C \) such that
\( (\varphi \otimes \varphi) \Delta_{B} = \Delta_{C} \varphi \).

In the context of compact quantum groups, we will use the terms ``quantum closed
subgroup'' and ``closed subgroup'', sometimes even ``subgroup'', interchangeably
without further explanation.

\begin{rema}
  \label{rema:a1a33b079ed90648}
  If \( \mathbb{H} \) and \( \mathbb{K} \) are commutative, i.e.\ they come from
  genuine compact groups, then \( \mathbb{K} \) being isomorphic to a closed
  subgroup, says exactly that there exists a continuous injective map
  \( \varphi_{\ast} \) from \( \operatorname{Spec}(C) \), the underlying space
  of the compact group \( \mathbb{K} \), into \( \operatorname{Spec}(B) \), the
  underlying space of the compact group \( \mathbb{H} \), such that
  \( \varphi_{\ast} \) preserves multiplication. Thus the above definition for
  closed (quantum) subgroups is consistent with the classical case of compact
  groups.
\end{rema}

Recall that \( \mathbb{G} = (A, \Delta) \),
\( C(\mathbb{G} \rtimes \Lambda) = A \otimes C( \Lambda ) \), and
\( \pol(\mathbb{G} \rtimes \Lambda) = C(\mathbb{G}) \rtimes C(\Lambda) \).
\begin{prop}
  \label{prop:e672494fc51990cf}
  Let \( \Lambda_{0} \) be a subgroup of \( \Lambda \), then the mapping
  \begin{align*}
    \varphi \colon A \otimes C( \Lambda )
    & \rightarrow A \otimes C( \Lambda_{0}) \\
    \sum_{r \in \Lambda} a_{r} \otimes \delta_{r}
    &\mapsto \sum_{r \in \Lambda_{0}} a_{r} \otimes \delta_{r}
  \end{align*}
  is a unital surjective morphism\footnote{Note that \( \delta_{r} \) has
    different meanings when viewed as functions in \( C( \Lambda ) \) and in
    \( C( \Lambda_{0} ) \)} of \( C^{\ast} \)\nobreakdash-algebras that also
  intertwines the comultiplications on \( \mathbb{G} \rtimes \Lambda_{0} \) and
  \( \mathbb{G} \rtimes \Lambda \). In particular,
  \( \mathbb{G} \rtimes \Lambda_{0} \) is a closed subgroup of
  \( \mathbb{G} \rtimes \Lambda \).
\end{prop}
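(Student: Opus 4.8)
The plan is to recognize \( \varphi \) as the tensor product \( \id_{A} \otimes \pi \), where \( \pi \colon C(\Lambda) \to C(\Lambda_{0}) \) is the restriction-of-functions map, i.e.\ the pullback along the inclusion \( \Lambda_{0} \hookrightarrow \Lambda \), determined on the canonical basis by \( \pi(\delta_{r}) = \delta_{r} \) when \( r \in \Lambda_{0} \) and \( \pi(\delta_{r}) = 0 \) otherwise. Since \( \pi \) is a unital surjective \( \ast \)-homomorphism of (finite-dimensional, commutative) \( C^{\ast} \)-algebras, and tensoring with \( \id_{A} \) preserves unitality, surjectivity, and the \( \ast \)-homomorphism property, the claim that \( \varphi \) is a unital surjective morphism of \( C^{\ast} \)-algebras follows immediately.

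The heart of the matter is the intertwining relation. Writing \( \widetilde{\Delta} \) and \( \widetilde{\Delta}_{0} \) for the comultiplications of \( \mathbb{G} \rtimes \Lambda \) and \( \mathbb{G} \rtimes \Lambda_{0} \), I would verify \( (\varphi \otimes \varphi)\widetilde{\Delta} = \widetilde{\Delta}_{0} \varphi \) by evaluating both sides on a generator \( a \otimes \delta_{r} \). Using \eqref{eq:0f23581b02e7ab34} and the Sweedler notation \( \Delta(a) = \sum a_{(1)} \otimes a_{(2)} \), the left-hand side becomes
\begin{equation*}
  (\varphi \otimes \varphi)\widetilde{\Delta}(a \otimes \delta_{r})
  = \sum_{s \in \Lambda} \sum a_{(1)} \otimes \pi(\delta_{s})
    \otimes \alpha_{s}^{\ast}(a_{(2)}) \otimes \pi(\delta_{s^{-1}r}),
\end{equation*}
whose surviving terms are precisely those indexed by \( s \) with \( s \in \Lambda_{0} \) and \( s^{-1}r \in \Lambda_{0} \). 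On the right-hand side, \( \varphi \) first annihilates \( a \otimes \delta_{r} \) unless \( r \in \Lambda_{0} \), and then \eqref{eq:0f23581b02e7ab34} for \( \mathbb{G} \rtimes \Lambda_{0} \) contributes a sum over \( s \in \Lambda_{0} \). The two expressions agree exactly because \( \Lambda_{0} \) is a subgroup: the conjunction \( s \in \Lambda_{0} \) and \( s^{-1}r \in \Lambda_{0} \) is equivalent to \( r \in \Lambda_{0} \) together with \( s \in \Lambda_{0} \). This closure under multiplication and inverses is the single place where the subgroup hypothesis enters, and it is the crux of the argument; for a mere subset \( \Lambda_{0} \) the two sides would genuinely differ.

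Finally, to obtain the ``in particular'' clause through Definition~\ref{defi:2f7e12385791f036}, I would restrict \( \varphi \) to \( \pol(\mathbb{G} \rtimes \Lambda) = \pol(\mathbb{G}) \otimes C(\Lambda) \), where it surjects onto \( \pol(\mathbb{G}) \otimes C(\Lambda_{0}) = \pol(\mathbb{G} \rtimes \Lambda_{0}) \). Compatibility with the counits is immediate from \eqref{eq:e0375ae7a578342a}, since \( e \in \Lambda_{0} \) and \( \widetilde{\epsilon} \) merely reads off the coefficient of \( \delta_{e} \); compatibility with the antipodes is then automatic for a bialgebra morphism between Hopf algebras. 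Hence \( \varphi \) restricts to a surjective morphism of Hopf \( \ast \)-algebras, so \( \mathbb{G} \rtimes \Lambda_{0} \) is a closed subgroup of \( \mathbb{G} \rtimes \Lambda \). I do not anticipate any real obstacle beyond bookkeeping; the only conceptual point is the subgroup-closure observation highlighted above.
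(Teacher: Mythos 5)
Your proposal is correct and follows essentially the same route as the paper's proof: the same evaluation of \( (\varphi \otimes \varphi)\widetilde{\Delta} \) on generators, with the identical key observation that \( s \in \Lambda_{0} \) and \( s^{-1}r \in \Lambda_{0} \) together are equivalent to \( r \in \Lambda_{0} \) and \( s \in \Lambda_{0} \), which is exactly the parenthetical subgroup-closure step in the paper's computation~\eqref{eq:14d5653b2f9f560e}. The only cosmetic differences are that you justify the \( C^{\ast} \)-morphism claim by factoring \( \varphi = \id_{A} \otimes \pi \) and spell out the counit/antipode compatibility that the paper leaves implicit; just note, as the paper does, that the Sweedler computation takes place on the dense subalgebra \( \pol(\mathbb{G}) \otimes C(\Lambda) \) and extends to \( A \otimes C(\Lambda) \) by continuity.
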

\begin{proof}
  Obviously \( \varphi \) is a unital surjective morphism of
  {\( C^{\ast} \)}\nobreakdash-algebras. We need to show that \( \varphi \)
  intertwines the comultiplication \( \widetilde{\Delta} \) on
  \( \mathbb{G} \rtimes \Lambda \) and the comultiplication
  \( \widetilde{\Delta}_{0} \) on \( \mathbb{G} \rtimes \Lambda_{0} \). For
  this, by density, it suffices to prove that the restriction
  \begin{equation}
    \label{eq:358657f5787e0893}
    \begin{split}
      \varphi \colon \pol(\mathbb{G}) \otimes C( \Lambda ) & \rightarrow
      \pol(\mathbb{G}) \otimes C(
      \Lambda_{0} ) \\
      \sum_{r \in \Lambda} a_{r} \otimes \delta_{r} & \mapsto \sum_{r \in
        \Lambda_{0}} a_{r} \otimes \delta_{r}
    \end{split}
  \end{equation}
  intertwines the comultiplications. Indeed, given an arbitrary
  \( a_{r} \in \pol(G) \) for any \( r \in \Lambda \), note that for any
  \( a \in \pol(\mathbb{G}) \) and \( \lambda \in \Lambda \),
  \( \varphi(a \otimes \delta_{\lambda}) = 0 \) whenever
  \( \lambda \notin \Lambda_{0} \), we have
  \begin{equation}
    \label{eq:14d5653b2f9f560e}
    \begin{split}
      & \leadmathskip ( \varphi \otimes \varphi ) \widetilde{\Delta} \left(
        \sum_{r \in \Lambda} a_{r} \otimes \delta_{r}
      \right) \\
      &= ( \varphi \otimes \varphi ) \sum_{r \in \Lambda} \sum_{s \in \Lambda}
      \sum {(a_{r})}_{(1)} \otimes \delta_{s} \otimes
      \alpha_{s}^{\ast}\bigl({(a_{r})}_{(2)}\bigr) \otimes \delta_{s^{-1} r} \\
      &= \sum_{r \in \Lambda_{0}} \sum_{s \in \Lambda_{0}} \sum {(a_{r})}_{(1)}
      \otimes \delta_{s} \otimes
      \alpha_{s}^{\ast}\bigl({(a_{r})}_{(2)}\bigr) \otimes \delta_{s^{-1} r} \\
      & \text{(Since \( s, s^{-1}r \in \Lambda_{0} \) implies
        \( r = s(s^{-1}r) \in \Lambda_{0}\))} \\
      &= \widetilde{\Delta}_{0} \left( \sum_{r \in \Lambda_{0}} a_{r} \otimes
        \delta_{r} \right) = \widetilde{\Delta}_{0} \varphi \left( \sum_{r \in
          \Lambda} a_{r} \otimes \delta_{r} \right).
    \end{split}
  \end{equation}
  This shows that \( \varphi \) indeed intertwines comultiplications and
  finishes the proof.
\end{proof}

\begin{defi}
  \label{defi:f8208962169ffe31}
  A closed subgroup of \( \mathbb{G} \rtimes \Lambda \) of the form
  \( \mathbb{G} \rtimes \Lambda_{0} \), where \( \Lambda_{0} \) is a subgroup
  \( \Lambda \), is called a \textbf{principal subgroup} of
  \( \mathbb{G} \rtimes \Lambda \).
\end{defi}

\begin{rema}
  \label{rema:a255d16de9b29654}
  If we let \( p_{0} = \sum_{r \in \Lambda_{0}} \delta_{r} \in C( \Lambda ) \),
  then \( p_{0} \) is a projection in \( C( \Lambda ) \), thus
  \( 1 \otimes p_{0} \) is a central projection in \( A \otimes C( \Lambda ) \).
  The morphism \( \varphi \) is in fact given by the ``compression'' map
  \( (1 \otimes p_{0})( \cdot )(1 \otimes p_{0}) \). Essentially, these data
  says that the principal subgroup \( \mathbb{G} \rtimes \Lambda_{0} \) is in
  fact an \textbf{open} subgroup of \( \mathbb{G} \rtimes \Lambda \). As we
  don't really need the general theory of open subgroups of topological quantum
  groups in this paper, we won't recall the relevant notions here and refer
  the interested reader to the articles~\cite{MR2980506, MR3552528} for a
  treatment in the more general setting of locally compact quantum groups.
\end{rema}

\begin{coro}
  \label{coro:75db6f2d370caba0}
  Using the notations in Proposition~\ref{prop:e672494fc51990cf}, if
  \( U \in \mathcal{B}(\mathscr{H}) \otimes A \otimes C( \Lambda ) \) is a
  (unitary) representation of \( \mathbb{G} \rtimes \Lambda \), then
  \( (\id \otimes \varphi)(U) \) is a (unitary) representation of
  \( \mathbb{G} \rtimes \Lambda_{0} \).
\end{coro}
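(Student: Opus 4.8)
The plan is to verify the two defining conditions of a finite-dimensional unitary representation of $\mathbb{G} \rtimes \Lambda_{0}$ directly for the element $V := (\id_{\mathcal{B}(\mathscr{H})} \otimes \varphi)(U)$, where throughout I regard the coefficient algebra as the single Hopf $\ast$\nobreakdash-algebra $\mathscr{A} := \pol(\mathbb{G} \rtimes \Lambda) = \pol(\mathbb{G}) \otimes C(\Lambda)$ (and likewise $\mathscr{A}_{0}$ for $\Lambda_{0}$), so that $U \in \mathcal{B}(\mathscr{H}) \otimes \mathscr{A}$ and $V \in \mathcal{B}(\mathscr{H}) \otimes \mathscr{A}_{0}$. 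The two conditions to check are that $V$ is unitary and that it is comultiplicative, i.e.\ $(\id \otimes \widetilde{\Delta}_{0})(V) = V_{12} V_{13}$. Unitarity is immediate: by Proposition~\ref{prop:e672494fc51990cf} the map $\varphi$ is a unital morphism of $C^{\ast}$\nobreakdash-algebras, so $\id_{\mathcal{B}(\mathscr{H})} \otimes \varphi$ is a unital $\ast$\nobreakdash-homomorphism and therefore carries the unitary $U$ to a unitary $V$.

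For comultiplicativity, I would start from the representation property of $U$, namely $(\id \otimes \widetilde{\Delta})(U) = U_{12} U_{13}$, and the intertwining relation $(\varphi \otimes \varphi)\widetilde{\Delta} = \widetilde{\Delta}_{0}\varphi$ proved in Proposition~\ref{prop:e672494fc51990cf}. Tensoring this relation with $\id_{\mathcal{B}(\mathscr{H})}$ and evaluating at $U$ gives
\[
  (\id \otimes \widetilde{\Delta}_{0})(V)
  = (\id \otimes \widetilde{\Delta}_{0}\varphi)(U)
  = \bigl(\id \otimes (\varphi \otimes \varphi)\widetilde{\Delta}\bigr)(U)
  = (\id \otimes \varphi \otimes \varphi)\bigl[(\id \otimes \widetilde{\Delta})(U)\bigr]
  = (\id \otimes \varphi \otimes \varphi)(U_{12} U_{13}).
\]
Since $\id \otimes \varphi \otimes \varphi$ is an algebra homomorphism, the last term factors as $\bigl[(\id \otimes \varphi \otimes \varphi)(U_{12})\bigr]\bigl[(\id \otimes \varphi \otimes \varphi)(U_{13})\bigr]$, and using unitality of $\varphi$ on the trivial legs one identifies $(\id \otimes \varphi \otimes \varphi)(U_{12}) = V_{12}$ and $(\id \otimes \varphi \otimes \varphi)(U_{13}) = V_{13}$, yielding $V_{12} V_{13}$ as required.

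The computation is routine; the only point that demands care---and which I regard as the main, albeit minor, obstacle---is the leg-numbering bookkeeping. One must treat $\mathscr{A}$ as a single coefficient leg (rather than splitting it into $A$ and $C(\Lambda)$) so that the identities $[(\id \otimes \varphi)(U)]_{12} = (\id \otimes \varphi \otimes \varphi)(U_{12})$ and its counterpart for the third leg hold on the nose, the latter relying precisely on $\varphi$ being unital so that $\varphi(1_{\mathscr{A}}) = 1_{\mathscr{A}_{0}}$. Once the legs are tracked consistently, the argument closes, and $V = (\id \otimes \varphi)(U)$ is a unitary representation of $\mathbb{G} \rtimes \Lambda_{0}$.
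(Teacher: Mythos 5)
Your proposal is correct and is essentially the paper's own argument: the paper proves this corollary in one line by citing that the restriction of $\varphi$ to $\pol(\mathbb{G}) \otimes C(\Lambda)$ is a morphism of Hopf $\ast$\nobreakdash-algebras, and your computation simply unpacks exactly what that citation uses---unitality and multiplicativity of $\varphi$ for unitarity, and the intertwining relation $(\varphi \otimes \varphi)\widetilde{\Delta} = \widetilde{\Delta}_{0}\varphi$ from Proposition~\ref{prop:e672494fc51990cf} for comultiplicativity. The leg-numbering point you flag is handled correctly, so nothing is missing.
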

\begin{proof}
  This follows directly from the fact that the restriction of the mapping
  \( \varphi \) as specified in~\eqref{eq:358657f5787e0893} is a morphism of
  Hopf \( \ast \)-algebras.
\end{proof}

\begin{defi}
  \label{defi:3cf91dacf8289790}
  Using the above notations, the representation \( (\id \otimes \varphi)(U) \)
  is called the restriction of \( U \) to \( \mathbb{G} \rtimes \Lambda_{0} \),
  and is denoted by \( U \vert_{\mathbb{G} \rtimes \Lambda_{0}} \).
\end{defi}

\begin{rema}
  \label{rema:bd179f3ea46227d4}
  Again, when \( \mathbb{G} \) is an classical compact group \( G \), we recover
  the classical notion of restriction of a representation of
  \( G \rtimes \Lambda \) to the subgroup \( G \rtimes \Lambda_{0} \).
\end{rema}

There is a natural ``conjugate'' relation between principal subgroups of the
form \( \mathbb{G} \rtimes \Lambda_{0} \) where \( \Lambda_{0} \) is a subgroup
of \( \Lambda \), which will be used to simplify some calculations in our later
treatment of representations. This relation is described in the following
proposition.

\begin{prop}
  \label{prop:e4e616da0a0995e6}
  Let \( \Lambda_{0} \) be a subgroup of \( \Lambda \), \( r \in \Lambda \),
  \( \adj_{r} \colon \Lambda_{0} \rightarrow r \Lambda_{0} r^{-1} \) the
  isomorphism \( s \mapsto r s r^{-1} \). Then
  \( \alpha_{r}^{\ast} \otimes \adj_{r}^{\ast} \) is an isomorphism of compact
  quantum groups from \( \mathbb{G} \rtimes \Lambda_{0} \) to
  \( \mathbb{G} \rtimes r \Lambda_{0} r^{-1} \).
\end{prop}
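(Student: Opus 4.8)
The plan is to realise $\alpha_r^* \otimes \adj_r^*$ as a $\ast$-isomorphism of the underlying $C^{\ast}$-algebras and then verify that it intertwines the two comultiplications. Recall from Definition~\ref{defi:2f7e12385791f036} and the remark following it that a morphism of compact quantum groups corresponds contravariantly to a morphism of Hopf $\ast$-algebras in the opposite direction, so an isomorphism from $\mathbb{G}\rtimes\Lambda_0$ to $\mathbb{G}\rtimes r\Lambda_0 r^{-1}$ is precisely a Hopf $\ast$-algebra isomorphism $\psi\colon\pol(\mathbb{G}\rtimes r\Lambda_0 r^{-1})\to\pol(\mathbb{G}\rtimes\Lambda_0)$. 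On generators this $\psi:=\alpha_r^*\otimes\adj_r^*$ acts by $a\otimes\delta_t\mapsto\alpha_r^*(a)\otimes\delta_{r^{-1}tr}$ for $t\in r\Lambda_0 r^{-1}$. Since $\alpha_r^*$ is a $\ast$-automorphism of $C(\mathbb{G})$ and $\adj_r$ is a group isomorphism $\Lambda_0\to r\Lambda_0 r^{-1}$ (so that $\adj_r^*$ permutes the $\delta$-basis and is a $\ast$-isomorphism of the finite-dimensional commutative algebras), $\psi$ is a $\ast$-isomorphism; by linearity and density it then suffices to check $(\psi\otimes\psi)\widetilde{\Delta}_{r\Lambda_0 r^{-1}}=\widetilde{\Delta}_{\Lambda_0}\,\psi$ on elements $a\otimes\delta_t$.

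First I would expand the right-hand side. Writing $s_0:=r^{-1}tr\in\Lambda_0$ and applying the defining formula~\eqref{eq:0f23581b02e7ab34} to $\widetilde{\Delta}_{\Lambda_0}(\alpha_r^*(a)\otimes\delta_{s_0})$, I invoke the two structural facts already established in \S\ref{sec:8407664c09390193}: each $\alpha_r^*$ lies in $\aut(C(\mathbb{G}),\Delta)$, giving $\Delta\alpha_r^*=(\alpha_r^*\otimes\alpha_r^*)\Delta$, and $\alpha^*$ is an antihomomorphism, so $\alpha_u^*\alpha_r^*=\alpha_{ru}^*$. These two identities collapse the right-hand side to $\sum_{u\in\Lambda_0}\sum\alpha_r^*(a_{(1)})\otimes\delta_u\otimes\alpha_{ru}^*(a_{(2)})\otimes\delta_{u^{-1}s_0}$.

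Next I would expand the left-hand side, applying~\eqref{eq:0f23581b02e7ab34} to $\widetilde{\Delta}_{r\Lambda_0 r^{-1}}(a\otimes\delta_t)$ with summation index $v\in r\Lambda_0 r^{-1}$ and then pushing $\psi\otimes\psi$ through; here the antihomomorphism property gives $\alpha_r^*\alpha_v^*=\alpha_{vr}^*$ in the third leg, while $\adj_r^*$ turns $\delta_v$ into $\delta_{r^{-1}vr}$ and $\delta_{v^{-1}t}$ into $\delta_{r^{-1}v^{-1}tr}$. The decisive step is the reindexing $u:=r^{-1}vr$, a bijection $r\Lambda_0 r^{-1}\to\Lambda_0$, under which all three group labels normalise simultaneously: $r^{-1}vr=u$, the conjugation identity $vr=ru$ converts $\alpha_{vr}^*$ into $\alpha_{ru}^*$, and $r^{-1}v^{-1}tr=u^{-1}s_0$. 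After this substitution the left-hand side becomes term-by-term identical to the right-hand side computed above, establishing the intertwining.

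The only genuine difficulty is bookkeeping: one must hold fixed the antihomomorphism convention $\alpha_{gh}^*=\alpha_h^*\alpha_g^*$ and check that the conjugation $\adj_r$ inserted into the $C(\Lambda)$-leg meshes with it exactly, i.e.\ that the single substitution $u=r^{-1}vr$ simultaneously tidies $\delta_{r^{-1}vr}$, the subscript of $\alpha_{vr}^*$, and $\delta_{r^{-1}v^{-1}tr}$. Once the two expansions match, $\psi$ is a $\ast$-isomorphism intertwining the comultiplications, which by the contravariant dictionary above is exactly an isomorphism of compact quantum groups $\mathbb{G}\rtimes\Lambda_0\to\mathbb{G}\rtimes r\Lambda_0 r^{-1}$, as claimed.
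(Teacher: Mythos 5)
Your proposal is correct and takes essentially the same route as the paper's own proof: both reduce by density to verifying that the unital \( \ast \)-isomorphism \( \alpha_{r}^{\ast} \otimes \adj_{r}^{\ast} \colon \pol(\mathbb{G}) \otimes C(r\Lambda_{0}r^{-1}) \rightarrow \pol(\mathbb{G}) \otimes C(\Lambda_{0}) \) intertwines the comultiplications, and both do so by expanding the defining formula~\eqref{eq:0f23581b02e7ab34}, using the antihomomorphism property of \( \alpha^{\ast} \) together with \( \Delta \alpha_{r}^{\ast} = (\alpha_{r}^{\ast} \otimes \alpha_{r}^{\ast})\Delta \), and reindexing the sum via conjugation by \( r \). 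The only cosmetic difference is that the paper substitutes \( v = r\mu r^{-1} \) at the outset and runs a single chain of equalities, whereas you expand the two sides separately and reindex at the end.
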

\begin{proof}
  By density, it suffices to prove that the unital \( \ast \)-isomorphism
  \begin{displaymath}
    \alpha_{r}^{\ast} \otimes \adj_{r}^{\ast} \colon \pol(\mathbb{G}) %
    \otimes C( r \Lambda_{0} r^{-1} ) %
    \rightarrow \pol(\mathbb{G}) \otimes C(\Lambda_{0})
  \end{displaymath}
  of involutive algebras preserves comultiplication. To fix the notations, let
  \( \Delta_{0} \) (resp.\ \( \Delta_{r} \)) be the comultiplication on
  \( \pol(\mathbb{G}) \otimes C(\Lambda_{0}) \) (resp.\
  \( \pol(\mathbb{G}) \otimes C(r \Lambda_{0} r^{-1}) \)).  For any
  \( x \in \pol(\mathbb{G}) \), \( \lambda \in \Lambda_{0} \), we have
  \begin{equation}
    \label{eq:2a2413697f1f2ace}
    \begin{split}
      & \leadmathskip %
      (\alpha_{r}^{\ast} \otimes \adj_{r}^{\ast} %
      \otimes \alpha_{r}^{\ast} \otimes \adj_{r}^{\ast}) %
      \Delta_{r}(x \otimes \delta_{r \lambda r^{-1}}) \\
      &= \sum_{\mu \in \Lambda_{0}} {\bigl[ %
        \bigl( %
        \alpha_{r}^{\ast} \otimes (\alpha_{r}^{\ast}\alpha_{r\mu r^{-1}}^{\ast})
        \bigr) \Delta(x) %
        \bigr]}_{13} %
      {((\adj_{r}^{\ast}\delta_{r\mu r^{-1}}) \otimes %
        (\adj_{r}^{\ast}\delta_{r \mu^{-1}\lambda r^{-1}}))}_{24} \\
      &= \sum_{\mu \in \Lambda_{0}} {\bigl[ %
        \bigl( %
        \alpha_{r}^{\ast} \otimes \alpha_{r\mu}^{\ast} \bigr) \Delta(x) %
        \bigr]}_{13} %
      {((\adj_{r}^{\ast}\delta_{r\mu r^{-1}}) \otimes %
        (\adj_{r}^{\ast}\delta_{r \mu^{-1}\lambda r^{-1}}))}_{24} \\
      &= \sum_{\mu \in \Lambda_{0}} {\bigl[ %
        \bigl( %
        (\id \otimes \alpha_{\mu}^{\ast}) %
        [(\alpha_{r}^{\ast} \otimes \alpha_{r}^{\ast}) \Delta(x)] %
        \bigr) %
        \bigr]}_{13} %
      {(\delta_{\mu} \otimes \delta_{\mu^{-1}\lambda})}_{24} \\
      &= \sum_{\mu \in \Lambda_{0}} {\bigl[ %
        \bigl( %
        (\id \otimes \alpha_{\mu}^{\ast}) %
        \Delta(\alpha_{r}^{\ast}(x)) %
        \bigr) %
        \bigr]}_{13} %
      {(\delta_{\mu} \otimes \delta_{\mu^{-1}\lambda})}_{24} \\
      &= \Delta_{0} \bigl(\alpha_{r}^{\ast}(x) \otimes \delta_{\lambda}\bigr) =
      [\Delta_{0}(\alpha_{r}^{\ast} \otimes \adj_{r}^{\ast})] %
      (x \otimes \delta_{r\lambda r^{-1}}).
    \end{split}
  \end{equation}
  Thus \( \alpha_{r}^{\ast} \otimes \adj_{r}^{\ast} \) indeed preserves
  comultiplication.
\end{proof}

\section{Induced representations of principal subgroups}
\label{sec:3e39eff392b139a7}

We begin by describing an outline of our approach to induced representations of
principal subgroups of \( \mathbb{G} \rtimes \Lambda \). Let \( \Lambda_{0} \)
be a subgroup of \( \Lambda \),
\( U \in \mathcal{B}(\mathscr{H}) \otimes \pol(\mathbb{G}) \otimes C(
\Lambda_{0} ) \) a finite dimensional unitary representation of
\( \mathbb{G} \rtimes \Lambda_{0} \). We want to construct the induced
representation
\( \indrep^{\mathbb{G} \rtimes \Lambda}_{\mathbb{G} \rtimes \Lambda_{0}}(U) \)
of the larger quantum group \( \mathbb{G} \rtimes \Lambda \).  The idea of the
construction goes as follows: by the results in \S~\ref{sec:cac0f6b0354689f8},
we know \( U \) is determined by its restrictions
\( U_{\mathbb{G}} = \res_{\mathbb{G}}(U) \) and
\( U_{\Lambda_{0}} = \res_{\Lambda_{0}}(U) \). While one may not be able to
directly extend the representation \( U_{\Lambda_{0}} \) of \( \Lambda_{0} \) to
a representation of \( \Lambda \) on the same space \( \mathscr{H} \), we do
have the right-regular representation \( \widetilde{W}_{\Lambda} \) of
\( \Lambda \) on \( \ell^{2}(\Lambda) \otimes \mathscr{H} \) using the group
structure of \( \Lambda \). On the other hand, the direct sum
\( \widetilde{W}_{\mathbb{G}} \) of various copies of \( U_{\mathbb{G}} \)
placed suitably in \( \ell^{2}(\Lambda) \otimes \mathscr{H} \) will give a
representation of \( \mathbb{G} \) on
\( \ell^{2}(\Lambda) \otimes \mathscr{H} \). It is then easy to check that
\( \widetilde{W}_{\mathbb{G}} \) and \( \widetilde{W}_{\Lambda} \) are
covariant, thus determine a representation \( \widetilde{W} \) of
\( \mathbb{G} \rtimes \Lambda \) on \( \ell^{2}(\Lambda) \otimes \mathscr{H}
\). To retrieve the information of \( U_{\Lambda_{0}} \), which is implicitly
encoded in the \( \mathscr{H} \) factor of
\( \ell^{2}(\Lambda) \otimes \mathscr{H} \), we consider the subspace
\( \mathscr{K} \) of \( \ell^{2}(\Lambda) \otimes \mathscr{H} \) consisting of
vectors which behave in a covariant way with the representation
\( U_{\Lambda_{0}} \) on \( \mathscr{H} \). More precisely, \( \mathscr{K} \) is
defined by
\begin{equation}
  \label{eq:5d413640e021805b}
  \mathscr{K} = \set*{\sum_{r \in \Lambda} \delta_{r} \otimes \xi_{r} %
    \given \forall r_{0} \in \Lambda_{0}, \forall r \in \Lambda,
    \, \xi_{r_{0}r} = U_{\Lambda_{0}}(r_{0}) \xi_{r}}.
\end{equation}
One checks that \( \mathscr{K} \) is an invariant subspace for both
\( \widetilde{W}_{\Lambda} \) and \( \widetilde{W}_{\mathbb{G}} \), hence
\( \mathscr{K} \) is a subrepresentation \( W \) of \( \widetilde{W} \), and we
define \( W \) to be the induced representation \( \indrep(U) \). We now proceed
to carry out this idea precisely.

\begin{defi}
  \label{defi:f314227fa95c43c3}
  Let \( U \), \( \mathscr{H} \), \( \Lambda_{0} \) retain their meanings as
  above, and let \( (e_{r,s}; \, r, s \in \Lambda) \) be the matrix unit of
  \( \mathcal{B}(\ell^{2}(\Lambda)) \) associated with the standard Hilbert
  basis \( (\delta_{r}; \, r \in \Lambda) \) for \( \ell^{2}(\Lambda) \), i.e.\
  \( e_{r, s} \delta_{t} = \delta_{s, t} \delta_{r} \) for all
  \( r, s, t \in \Lambda \). The right regular representation
  \( \widetilde{W}_{\Lambda} \) of \( \Lambda \) on
  \( \ell^{2} \otimes \mathscr{H} \) is an operator in
  \( \mathcal{B}(\ell^{2}(\Lambda)) \otimes \mathcal{B}(\mathscr{H}) \otimes
  C(\Lambda) \) defined by
  \begin{equation}
    \label{eq:93f09a241bf0115a}
    \widetilde{W}_{\Lambda} = \sum_{r, s \in \Lambda} e_{rs^{-1}, r} \otimes
    \id_{\mathscr{H}} \otimes \delta_{s}.
  \end{equation}
\end{defi}

It is easy to see that if we regard \( \ell^{2}(\Lambda) \otimes \mathscr{H} \)
as \( \ell^{2}(\Lambda, \mathscr{H}) \), then for any \( s \in \Lambda \),
\( \widetilde{W}_{\Lambda}(s) \) is the operator in
\( \mathcal{B}(\ell^{2}(\Lambda, \mathscr{H})) \) sending each
\( F \colon \Lambda \rightarrow \mathscr{H} \) to \( F \circ R_{s} \), where
\( R_{s} \colon \Lambda \rightarrow \Lambda \) is the right multiplication by
\( s \). Hence \( \widetilde{W}_{\Lambda} \) is indeed a unitary representation
of \( \Lambda \) on \( \ell^{2}(\Lambda) \otimes \mathscr{H} \). By definition,
for any \( s \in \Lambda \), the unitary operator
\( \widetilde{W}_{\Lambda}(s) \in \mathcal{U}(\ell^{2}(\Lambda) \otimes
\mathscr{H}) \) is characterized by
\begin{equation}
  \label{eq:15bcfeb39da18428}
  \begin{split}
    \widetilde{W}_{\Lambda}(s) \colon \ell^{2}(\Lambda) \otimes \mathscr{H} %
    & \rightarrow \ell^{2}(\Lambda) \otimes \mathscr{H} \\
    \delta_{r} \otimes \xi & \mapsto \delta_{rs^{-1}} \otimes \xi,
  \end{split}
\end{equation}
or equivalently
\begin{equation}
  \label{eq:210cee9951f8d9f5}
  \begin{split}
    \widetilde{W}_{\Lambda}(s) \colon \ell^{2}(\Lambda) \otimes \mathscr{H} %
    & \rightarrow \ell^{2}(\Lambda) \otimes \mathscr{H} \\
    \sum_{r \in \Lambda} \delta_{r} \otimes \xi_{r} & \mapsto \sum_{r \in
      \Lambda} \delta_{rs^{-1}} \otimes \xi_{r} %
    = \sum_{r \in \Lambda} \delta_{r} \otimes \xi_{rs}.
  \end{split}
\end{equation}

\begin{prop}
  \label{prop:18c6664e737a486a}
  Using the above notations, the unitary operator
  \begin{equation}
    \label{eq:b6a527744a7eb0a7}
    \widetilde{W}_{\mathbb{G}} := %
    \sum_{s \in \Lambda} e_{s,s} %
    \otimes [(\id \otimes \alpha^{\ast}_{s})(U_{\mathbb{G}})] %
    \in \mathcal{B}(\ell^{2}(\Lambda)) \otimes %
    \mathcal{B}(\mathscr{H}) \otimes \pol(\mathbb{G})
  \end{equation}
  is a unitary representation of \( \mathbb{G} \) on
  \( \ell^{2}(\Lambda) \otimes \mathscr{H} \). Furthermore, for every
  \( s \in \Lambda \), \( \delta_{s} \otimes \mathscr{H} \) is invariant under
  \( \widetilde{W}_{\mathbb{G}} \), and the subrepresentation
  \( \delta_{s} \otimes \mathscr{H} \) of \( \widetilde{W}_{\mathbb{G}} \) is
  unitarily equivalent to the unitary representation
  \( (\id \otimes \alpha^{\ast}_{s})(U_{\mathbb{G}}) \) of \( \mathbb{G} \). In
  particular,
  \( \widetilde{W}_{\mathbb{G}} \simeq \bigoplus_{s \in \Lambda} (\id \otimes
  \alpha_{s}^{\ast})(U_{\mathbb{G}}) \).
\end{prop}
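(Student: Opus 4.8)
The plan is to recognize $\widetilde{W}_{\mathbb{G}}$ as \emph{literally} the direct sum of the representations $(\id \otimes \alpha_s^{\ast})(U_{\mathbb{G}})$, indexed by $s \in \Lambda$, under the orthogonal decomposition $\ell^{2}(\Lambda) \otimes \mathscr{H} = \bigoplus_{s \in \Lambda}(\delta_s \otimes \mathscr{H})$, and then to read off every assertion from this block structure. First I would record that each summand $V^{(s)} := (\id_{\mathcal{B}(\mathscr{H})} \otimes \alpha_s^{\ast})(U_{\mathbb{G}}) \in \mathcal{B}(\mathscr{H}) \otimes \pol(\mathbb{G})$ is itself a unitary representation of $\mathbb{G}$. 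Indeed, $\alpha_s^{\ast} \in \aut\bigl(C(\mathbb{G}), \Delta\bigr)$ is a $\ast$-automorphism, so $\id \otimes \alpha_s^{\ast}$ carries the unitary $U_{\mathbb{G}}$ to a unitary; and since $\alpha_s^{\ast}$ intertwines the comultiplication, $(\alpha_s^{\ast} \otimes \alpha_s^{\ast}) \circ \Delta = \Delta \circ \alpha_s^{\ast}$, applying $\id \otimes \Delta$ and using the representation identity $(\id \otimes \Delta)(U_{\mathbb{G}}) = (U_{\mathbb{G}})_{12}(U_{\mathbb{G}})_{13}$ gives
\begin{displaymath}
(\id \otimes \Delta)(V^{(s)}) = (\id \otimes \alpha_s^{\ast} \otimes \alpha_s^{\ast})\bigl[(U_{\mathbb{G}})_{12}(U_{\mathbb{G}})_{13}\bigr] = (V^{(s)})_{12}(V^{(s)})_{13}.
\end{displaymath}

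Next I would exploit that the diagonal matrix units $e_{s,s} \in \mathcal{B}(\ell^{2}(\Lambda))$ are mutually orthogonal projections with $\sum_{s} e_{s,s} = \id_{\ell^{2}(\Lambda)}$. Viewing $\mathcal{B}(\ell^{2}(\Lambda)) \otimes \mathcal{B}(\mathscr{H})$ as the single operator leg of a prospective representation, unitarity of $\widetilde{W}_{\mathbb{G}} = \sum_s e_{s,s} \otimes V^{(s)}$ follows at once from $e_{s,s} e_{t,t} = \delta_{s,t} e_{s,s}$ and the unitarity of each $V^{(s)}$, since the cross terms vanish and each diagonal term contributes $e_{s,s} \otimes (V^{(s)})^{\ast} V^{(s)} = e_{s,s} \otimes 1$. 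The same orthogonality collapses $(\widetilde{W}_{\mathbb{G}})_{12}(\widetilde{W}_{\mathbb{G}})_{13} = \sum_{s,t} e_{s,s}e_{t,t} \otimes (V^{(s)})_{12}(V^{(t)})_{13}$ to $\sum_s e_{s,s} \otimes (V^{(s)})_{12}(V^{(s)})_{13}$, which by the previous paragraph equals $(\id \otimes \Delta)(\widetilde{W}_{\mathbb{G}}) = \sum_s e_{s,s} \otimes (\id \otimes \Delta)(V^{(s)})$; hence $\widetilde{W}_{\mathbb{G}}$ is a unitary representation of $\mathbb{G}$.

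Finally, since $e_{s,s} \otimes \id_{\mathscr{H}}$ is the orthogonal projection onto $\delta_s \otimes \mathscr{H}$ and commutes with every $e_{t,t} \otimes \id_{\mathscr{H}}$, it commutes with $\widetilde{W}_{\mathbb{G}}$; thus each $\delta_s \otimes \mathscr{H}$ is invariant, and the unitary $\delta_s \otimes \xi \mapsto \xi$ identifies the restriction of $\widetilde{W}_{\mathbb{G}}$ to this block with $V^{(s)} = (\id \otimes \alpha_s^{\ast})(U_{\mathbb{G}})$. Summing over $s$ yields the asserted decomposition $\widetilde{W}_{\mathbb{G}} \simeq \bigoplus_{s \in \Lambda}(\id \otimes \alpha_s^{\ast})(U_{\mathbb{G}})$. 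I do not expect any genuine obstacle here: the whole statement is a block-diagonal bookkeeping argument whose substance lies entirely in the fact that each $\alpha_s^{\ast}$ preserves $\Delta$. The only point demanding care is the leg indexing, namely keeping track of the fact that the operator leg is itself the tensor product $\mathcal{B}(\ell^{2}(\Lambda)) \otimes \mathcal{B}(\mathscr{H})$ while the comultiplication acts only on the $\pol(\mathbb{G})$ factor.
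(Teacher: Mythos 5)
Your proposal is correct and follows essentially the same route as the paper: both arguments rest on the block-diagonal structure of \( \widetilde{W}_{\mathbb{G}} \) along the orthogonal decomposition \( \ell^{2}(\Lambda) \otimes \mathscr{H} = \bigoplus_{s} \delta_{s} \otimes \mathscr{H} \), the fact that each \( (\id \otimes \alpha_{s}^{\ast})(U_{\mathbb{G}}) \) is a representation because \( \alpha_{s}^{\ast} \) intertwines \( \Delta \), and the commutation of \( \widetilde{W}_{\mathbb{G}} \) with the projections \( e_{s,s} \otimes \id_{\mathscr{H}} \otimes 1 \). The only difference is one of explicitness: you verify the unitarity and the comultiplication identity for \( \widetilde{W}_{\mathbb{G}} \) directly from the orthogonality of the matrix units, whereas the paper leaves these as immediate consequences of the direct sum decomposition.
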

\begin{proof}
  For each \( s \in \Lambda \), since
  \( \alpha^{\ast}_{s} \in \aut\bigl( C(\mathbb{G}), \Delta \bigr) \), the
  unitary operator \( (\id \otimes \alpha^{\ast}_{s})(U_{\mathbb{G}}) \) is
  indeed a representation of \( \mathbb{G} \) on \( \mathscr{H} \). It is easy
  to see that
  \begin{displaymath}
    e_{s,s}(\ell^{2}(\Lambda)) \otimes \mathscr{H}
    = \mathbb{C} \delta_{s} \otimes \mathscr{H} =
    \delta_{s} \otimes \mathscr{H},
  \end{displaymath}
  hence \( e_{s,s} \otimes \id_{\mathscr{H}} \) is the orthogonal projection in
  \( \mathcal{B}(\ell^{2}(\Lambda) \otimes \mathscr{H}) \) onto the subspace
  \( \delta_{s} \otimes \mathscr{H} \) of
  \( \ell^{2}(\Lambda) \otimes \mathscr{H} \) (Here and below, we abuse the
  notation \( \delta_{s} \otimes \mathscr{H} \) to denote the subspace
  \( \set{\delta_{s} \otimes \xi \given \xi \in \mathscr{H}} \) of
  \( \ell^{2}(\Lambda) \otimes \mathscr{H} \)). We also have the intertwining
  relation
  \begin{equation}
    \label{eq:4774a5c9ef074e08}
    (e_{s, s} \otimes \id_{\mathscr{H}} \otimes 1_{A})
    \widetilde{W}_{\mathbb{G}} %
    = e_{s, s} \otimes [(\id \otimes \alpha^{\ast}_{s})(U_{\mathbb{G}})]
    = \widetilde{W}_{\mathbb{G}}
    (e_{s, s} \otimes \id_{\mathscr{H}} \otimes 1_{A}).
  \end{equation}
  Now the theorem follows from \eqref{eq:4774a5c9ef074e08}, the direct sum
  decomposition
  \begin{equation}
    \label{eq:a2cad66555bfb966}
    \ell^{2}(\Lambda) \otimes \mathscr{H} %
    = \bigoplus_{s \in \Lambda} e_{s, s}(\ell^{2}(\Lambda)) \otimes
    \mathscr{H} = \bigoplus_{s \in \Lambda} \delta_{s} \otimes \mathscr{H},
  \end{equation}
  and the obvious fact that the unitary operator
  \( \delta_{s} \otimes \mathscr{H} \to \mathscr{H} \),
  \( \delta_{s} \otimes \xi \mapsto \xi \) intertwines the representation
  \( (\id \otimes \delta_{s})(U_{\mathbb{G}}) \) and the subrepresentation of
  \( \widetilde{W}_{\mathbb{G}} \) determined by the subspace
  \( \delta_{s} \otimes \mathscr{H} \) of
  \( \ell^{2}(\Lambda) \otimes \mathscr{H} \).
\end{proof}

\begin{prop}
  \label{prop:11a935d2017b9167}
  The representations \( \widetilde{W}_{\mathbb{G}} \) and
  \( \widetilde{W}_{\Lambda} \) are covariant.
\end{prop}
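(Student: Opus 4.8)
The plan is to verify directly the covariance identity of Definition~\ref{defi:8e2ca3d66e4a106c}, i.e.\ to check that condition~\eqref{eq:1611871ef2500d9a} holds with \( \widetilde{W}_{\mathbb{G}} \) and \( \widetilde{W}_{\Lambda} \) playing the roles of \( U_{\mathbb{G}} \) and \( U_{\Lambda} \). Since \( \widetilde{W}_{\Lambda} \) is an honest unitary representation of \( \Lambda \), the operators \( \widetilde{W}_{\Lambda}(r_{0}) \in \mathcal{U}(\ell^{2}(\Lambda) \otimes \mathscr{H}) \) are well defined for each \( r_{0} \in \Lambda \), and it suffices to establish, for every \( r_{0} \in \Lambda \), the relation
\begin{displaymath}
  (\widetilde{W}_{\Lambda}(r_{0}) \otimes 1_{A}) \widetilde{W}_{\mathbb{G}}
  = [(\id \otimes \alpha_{r_{0}}^{\ast})(\widetilde{W}_{\mathbb{G}})]
  (\widetilde{W}_{\Lambda}(r_{0}) \otimes 1_{A})
\end{displaymath}
in \( \mathcal{B}(\ell^{2}(\Lambda)) \otimes \mathcal{B}(\mathscr{H}) \otimes \pol(\mathbb{G}) \). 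It is worth noting in advance that no property of the original pair \( (U_{\mathbb{G}}, U_{\Lambda_{0}}) \) beyond \( U_{\mathbb{G}} \) being a representation of \( \mathbb{G} \) enters the argument: the covariance here is a purely structural feature of how the ``twisted diagonal'' \( \widetilde{W}_{\mathbb{G}} \) interacts with the shift operator \( \widetilde{W}_{\Lambda} \).

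To carry this out, I would first put both operators into matrix-unit form. Writing \( V_{s} := (\id \otimes \alpha_{s}^{\ast})(U_{\mathbb{G}}) \in \mathcal{B}(\mathscr{H}) \otimes \pol(\mathbb{G}) \), definition~\eqref{eq:b6a527744a7eb0a7} gives \( \widetilde{W}_{\mathbb{G}} = \sum_{s \in \Lambda} e_{s,s} \otimes V_{s} \), while evaluating~\eqref{eq:93f09a241bf0115a} at \( r_{0} \) yields \( \widetilde{W}_{\Lambda}(r_{0}) = \sum_{r \in \Lambda} e_{rr_{0}^{-1}, r} \otimes \id_{\mathscr{H}} \). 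The left-hand side is then computed using the matrix-unit rule \( e_{rr_{0}^{-1}, r} e_{s,s} = \delta_{r,s} e_{sr_{0}^{-1}, s} \), which collapses the double sum to \( \sum_{s \in \Lambda} e_{sr_{0}^{-1}, s} \otimes V_{s} \).

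For the right-hand side the decisive ingredient is that \( \alpha^{\ast} \) is an \emph{anti}homomorphism, so that \( \alpha_{r_{0}}^{\ast} \circ \alpha_{s}^{\ast} = \alpha_{sr_{0}}^{\ast} \); applying \( \id \otimes \alpha_{r_{0}}^{\ast} \) to the \( \pol(\mathbb{G}) \)-leg of \( \widetilde{W}_{\mathbb{G}} \) therefore sends \( V_{s} \) to \( V_{sr_{0}} \), giving \( (\id \otimes \alpha_{r_{0}}^{\ast})(\widetilde{W}_{\mathbb{G}}) = \sum_{s} e_{s,s} \otimes V_{sr_{0}} \). Multiplying on the right by \( \widetilde{W}_{\Lambda}(r_{0}) \) and using \( e_{s,s} e_{rr_{0}^{-1}, r} = \delta_{s, rr_{0}^{-1}} e_{s, r} \) collapses the sum to \( \sum_{s} e_{s, sr_{0}} \otimes V_{sr_{0}} \); the substitution \( t = sr_{0} \) rewrites this as \( \sum_{t} e_{tr_{0}^{-1}, t} \otimes V_{t} \), which is exactly the left-hand side found above. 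Hence the two sides coincide for every \( r_{0} \), and covariance follows. The only real content is the anti-homomorphism bookkeeping that makes the twist \( \alpha_{s}^{\ast} \) built into \( \widetilde{W}_{\mathbb{G}} \) shift in lockstep with the index shift \( s \mapsto sr_{0} \) produced by right translation; I expect this index matching to be the single spot where the order of composition of the \( \alpha^{\ast} \)'s must be watched carefully, but there is no genuine obstacle beyond keeping that convention straight.
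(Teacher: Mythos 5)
Your proof is correct and takes essentially the same approach as the paper: both are direct matrix-unit verifications of the covariance condition~\eqref{eq:1611871ef2500d9a}, with the antihomomorphism identity \( \alpha_{r_{0}}^{\ast} \circ \alpha_{s}^{\ast} = \alpha_{sr_{0}}^{\ast} \) doing the index bookkeeping. The only cosmetic difference is that you compute the two sides separately and match them after reindexing, whereas the paper transforms the left-hand side into the right-hand side in a single chain of equalities.
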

\begin{proof}
  For any \( s \in \Lambda \), by definition,
  \begin{equation}
    \label{eq:62dcdb5142f576cb}
    \widetilde{W}_{\Lambda}(s) = \sum_{r \in \Lambda} e_{rs^{-1}, r} \otimes
    \id_{\mathscr{H}} %
    \in \mathcal{B}(\ell^{2}(\Lambda)) \otimes \mathcal{B}(\mathscr{H})
    = \mathcal{B}(\ell^{2}(\Lambda) \otimes \mathscr{H}).
  \end{equation}
  Thus
  \begin{equation}
    \label{eq:ecb4a7429be89210}
    \begin{split}
      & \leadmathskip \left(\widetilde{W}_{\Lambda}(s) \otimes 1\right)
      \widetilde{W}_{\mathbb{G}} \\
      &= \left( \sum_{r \in \Lambda} e_{rs^{-1}, r} %
        \otimes \id_{\mathscr{H}} \otimes 1_{A} \right) %
      \sum_{t \in \Lambda} e_{t, t} \otimes [(\id \otimes
      \alpha^{\ast}_{t})(U_{\mathbb{G}})]
      \\
      &= \sum_{r, t \in \Lambda} \delta_{r, t} e_{rs^{-1}, t} \otimes [(\id
      \otimes \alpha^{\ast}_{t})(U_{\mathbb{G}})] %
      = \sum_{r \in \Lambda} e_{rs^{-1}, r} \otimes [(\id \otimes
      \alpha^{\ast}_{r})(U_{\mathbb{G}})] \\
      &= (\id \otimes \id \otimes \alpha^{\ast}_{s}) \sum_{r \in \Lambda}
      e_{rs^{-1}, r} \otimes [(\id \otimes
      \alpha^{\ast}_{rs^{-1}})(U_{\mathbb{G}})] \\
      &= (\id \otimes \id \otimes \alpha^{\ast}_{s}) \left[\left(\sum_{t \in
            \Lambda} e_{t, t} \otimes [(\id_{\mathscr{H}} \otimes
          \alpha^{\ast}_{t})(U_{\mathbb{G}})]\right) %
        \left(%
          \sum_{r \in \Lambda} e_{rs^{-1}, r} \otimes \id_{\mathscr{H}} \otimes
          1_{A} \right)
      \right] \\
      &= [(\id \otimes \id \otimes
      \alpha^{\ast}_{s})(\widetilde{W}_{\mathbb{G}})]
      \bigl(\widetilde{W}_{\Lambda}(s) \otimes 1\bigr).
    \end{split}
  \end{equation}
  This proves that \( \widetilde{W}_{\mathbb{G}} \) and
  \( \widetilde{W}_{\Lambda} \) are indeed covariant.
\end{proof}

\begin{coro}
  \label{coro:a2b4b06e5632471c}
  The unitary operator
  \begin{equation}
    \label{eq:90563ed2f5a65ac2}
    \begin{split}
      \widetilde{W} : &=
      {(\widetilde{W}_{\mathbb{G}})}_{123}{(\widetilde{W}_{\Lambda})}_{124} =
      \sum_{r,s,t \in \Lambda} e_{t,t} e_{rs^{-1}, r} \otimes [(\id \otimes
      \alpha^{\ast}_{t})(U_{\mathbb{G}})] \otimes \delta_{s} \\
      &= \sum_{r, s \in \Lambda} e_{rs^{-1}, r} \otimes [(\id \otimes
      \alpha^{\ast}_{rs^{-1}})(U_{\mathbb{G}})] \otimes \delta_{s} \\
      &\in \mathcal{B}(\ell^{2}(\Lambda)) \otimes \mathcal{B}(\mathscr{H})
      \otimes \pol(\mathbb{G}) \otimes C(\Lambda)
    \end{split}
  \end{equation}
  is a representation of \( \mathbb{G} \rtimes \Lambda \) on
  \( \ell^{2}(\Lambda) \otimes \mathscr{H} \).
\end{coro}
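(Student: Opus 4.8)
The plan is to recognize Corollary~\ref{coro:a2b4b06e5632471c} as a direct application of the converse half of Proposition~\ref{prop:3693cca5392c7dd3}, which promotes any covariant pair of unitary representations of \( \mathbb{G} \) and \( \Lambda \) on a common Hilbert space to a unitary representation of \( \mathbb{G} \rtimes \Lambda \). All three hypotheses of that proposition have already been discharged for the pair \( (\widetilde{W}_{\mathbb{G}}, \widetilde{W}_{\Lambda}) \) living on the common space \( \ell^{2}(\Lambda) \otimes \mathscr{H} \): Proposition~\ref{prop:18c6664e737a486a} shows that \( \widetilde{W}_{\mathbb{G}} \) is a unitary representation of \( \mathbb{G} \), the discussion following Definition~\ref{defi:f314227fa95c43c3} shows that \( \widetilde{W}_{\Lambda} \) is a unitary representation of \( \Lambda \), and Proposition~\ref{prop:11a935d2017b9167} establishes their covariance. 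Thus the only work left is formal.

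The one point requiring a moment's care is the bookkeeping of tensor legs. In Proposition~\ref{prop:3693cca5392c7dd3} the operator algebra \( \mathcal{B}(\mathscr{H}) \) of the representation space occupies a single leg, so the representation is written \( {(U_{\mathbb{G}})}_{12} {(U_{\Lambda})}_{13} \). Here the representation space is \( \ell^{2}(\Lambda) \otimes \mathscr{H} \), whose operator algebra factors as \( \mathcal{B}(\ell^{2}(\Lambda)) \otimes \mathcal{B}(\mathscr{H}) \) and hence occupies the first two legs of \( \mathcal{B}(\ell^{2}(\Lambda)) \otimes \mathcal{B}(\mathscr{H}) \otimes \pol(\mathbb{G}) \otimes C(\Lambda) \). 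Applying Proposition~\ref{prop:3693cca5392c7dd3} verbatim with \( \mathscr{H} \) replaced by \( \ell^{2}(\Lambda) \otimes \mathscr{H} \), the single operator leg \( 1 \) expands to the pair \( 12 \), so the \( \pol(\mathbb{G}) \)-factor sits in leg \( 3 \) and the \( C(\Lambda) \)-factor in leg \( 4 \); the defining product is then exactly \( \widetilde{W} = {(\widetilde{W}_{\mathbb{G}})}_{123} {(\widetilde{W}_{\Lambda})}_{124} \). This immediately yields that \( \widetilde{W} \) is a unitary representation of \( \mathbb{G} \rtimes \Lambda \) on \( \ell^{2}(\Lambda) \otimes \mathscr{H} \).

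It remains only to confirm the explicit closed form asserted in the statement, which I would do by a one-line matrix-unit computation. Since \( e_{t,t} e_{rs^{-1}, r} = \delta_{t, rs^{-1}} e_{rs^{-1}, r} \), the triple sum \( \sum_{r,s,t \in \Lambda} e_{t,t} e_{rs^{-1}, r} \otimes [(\id \otimes \alpha^{\ast}_{t})(U_{\mathbb{G}})] \otimes \delta_{s} \) collapses onto the diagonal \( t = rs^{-1} \), producing \( \sum_{r,s \in \Lambda} e_{rs^{-1}, r} \otimes [(\id \otimes \alpha^{\ast}_{rs^{-1}})(U_{\mathbb{G}})] \otimes \delta_{s} \), as claimed. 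There is essentially no obstacle here: the substance of the argument---that each factor is a representation and that the two are covariant---has been established in the preceding propositions, and the present statement follows formally together with this routine simplification. The only subtlety, as noted, is the leg-numbering translation between the abstract formulation of Proposition~\ref{prop:3693cca5392c7dd3} and the concrete four-fold tensor product appearing here.
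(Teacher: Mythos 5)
Your proof is correct and follows essentially the same route as the paper, which likewise deduces the corollary directly from Proposition~\ref{prop:3693cca5392c7dd3} (the converse direction, promoting a covariant pair to a representation of \( \mathbb{G} \rtimes \Lambda \)) combined with Proposition~\ref{prop:11a935d2017b9167}. Your added remarks on the leg-numbering translation and the matrix-unit collapse \( e_{t,t}e_{rs^{-1},r} = \delta_{t,rs^{-1}}e_{rs^{-1},r} \) are exactly the routine bookkeeping the paper leaves implicit.
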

\begin{proof}
  This follows from Proposition~\ref{prop:3693cca5392c7dd3} and
  Proposition~\ref{prop:11a935d2017b9167}.
\end{proof}

We now proceed to prove the invariance of the subspace \( \mathscr{K} \) defined
in \eqref{eq:5d413640e021805b} under \( \widetilde{W}_{\mathbb{G}} \) and
\( \widetilde{W}_{\Lambda} \).

\begin{lemm}
  \label{lemm:8636ddbbcbafe76a}
  Using the above notations, the following hold:
  \begin{enumerate}
  \item \label{item:2ce0298769aca408} the orthogonal projection
    \( \pi \in \mathcal{B}( \ell^{2}( \Lambda ) \otimes \mathscr{H}) \) with
    range \( \mathscr{K} \) is given by\footnote{Recall that we've identified
      \( \mathcal{B}( \ell^{2}( \Lambda ) \otimes \mathscr{H} ) \) with
      \( \mathcal{B}( \ell^{2}( \Lambda )) \otimes \mathcal{B}( \mathscr{ H})
      \)} the following formula:
    \begin{equation}
      \label{eq:1cfcb4b787ac3478}
      \begin{split}
        \pi \colon \ell^{2}( \Lambda ) \otimes \mathscr{H} & \rightarrow
        \ell^{2}( \Lambda ) \otimes \mathscr{H}
        \\
        \delta_{r} \otimes \xi & \mapsto \abs*{\Lambda_{0}}^{-1} \sum_{r_{0} \in
          \Lambda_{0}} \delta_{r_{0} r} \otimes U_{\Lambda_{0}}(r_{0}) \xi.
      \end{split}
    \end{equation}
    In other words,
    \begin{equation}
      \label{eq:3c57a5fc33525cfd}
      \pi = \abs*{\Lambda_{0}}^{-1}
      \sum_{r_{0} \in \Lambda_{0}} \sum_{s \in \Lambda}
      e_{r_{0}s, s} \otimes U_{\Lambda_{0}}(r_{0});
    \end{equation}
  \item \label{item:4a8b15b53e251b8b} \( \mathscr{K} \) is invariant under both
    \( \widetilde{W}_{\mathbb{G}} \) and \( \widetilde{W}_{\Lambda} \), i.e.\
    \begin{subequations}
      \begin{equation}
        \label{eq:0d4d16dfcc1731dc}
        (\pi \otimes 1) \widetilde{W}_{\mathbb{G}} %
        = \widetilde{W}_{\mathbb{G}} (\pi \otimes 1) %
        = (\pi \otimes 1) \widetilde{W}_{\mathbb{G}} (\pi \otimes 1),
      \end{equation}
      \begin{equation}
        \label{eq:31ef088a6c0e308b}
        (\pi \otimes 1) \widetilde{W}_{\Lambda}
        = \widetilde{W}_{\Lambda}(\pi \otimes 1) = (\pi \otimes 1)
        \widetilde{W}_{\Lambda}(\pi \otimes 1).
      \end{equation}
    \end{subequations}
    In particular, we have
    \begin{equation}
      \label{eq:6b8a2b0066da754a}
      (\pi \otimes 1 \otimes 1) \widetilde{W}
      = \widetilde{W}(\pi \otimes 1 \otimes 1) %
      = (\pi \otimes 1 \otimes 1) \widetilde{W} (\pi \otimes 1 \otimes 1).
    \end{equation}
  \end{enumerate}
\end{lemm}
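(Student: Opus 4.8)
The plan is to dispatch the two parts in turn, deriving the projection formula in \ref{item:2ce0298769aca408} first and then the two invariances in \ref{item:4a8b15b53e251b8b}, from which the composite statement \eqref{eq:6b8a2b0066da754a} for $\widetilde{W}$ follows formally. For \ref{item:2ce0298769aca408} I would first verify that the operator $\pi$ in \eqref{eq:3c57a5fc33525cfd} is a self-adjoint idempotent, so that it is automatically the orthogonal projection onto its range. Self-adjointness comes from $e_{r_{0}s,s}^{\ast} = e_{s,r_{0}s}$ together with $U_{\Lambda_{0}}(r_{0})^{\ast} = U_{\Lambda_{0}}(r_{0}^{-1})$ and the reindexing $r_{0}\mapsto r_{0}^{-1}$, $s\mapsto r_{0}s$; idempotency $\pi^{2}=\pi$ is a one-line averaging computation over $\Lambda_{0}$, using $U_{\Lambda_{0}}(r_{1})U_{\Lambda_{0}}(r_{0}) = U_{\Lambda_{0}}(r_{1}r_{0})$ and the fact that left multiplication permutes $\Lambda_{0}$. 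It then remains to identify the range with $\mathscr{K}$. That $\pi$ fixes $\mathscr{K}$ pointwise is immediate: feeding $\sum_{r}\delta_{r}\otimes\xi_{r}\in\mathscr{K}$ into \eqref{eq:1cfcb4b787ac3478} and using $\xi_{r_{0}r}=U_{\Lambda_{0}}(r_{0})\xi_{r}$ followed by the substitution $t=r_{0}r$ returns the same vector. Conversely, for a single $\delta_{r}\otimes\xi$ the coordinate of $\pi(\delta_{r}\otimes\xi)$ at $t$ is nonzero only for $t\in\Lambda_{0}r$, where it equals $\abs*{\Lambda_{0}}^{-1}U_{\Lambda_{0}}(tr^{-1})\xi$, and a direct check shows these coordinates satisfy the covariance condition in \eqref{eq:5d413640e021805b}; since such vectors span, the range lies in $\mathscr{K}$.

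For the invariance of $\mathscr{K}$ under $\widetilde{W}_{\Lambda}$, the defining condition of $\mathscr{K}$ is a left $\Lambda_{0}$-equivariance, which is manifestly preserved by the right translations \eqref{eq:210cee9951f8d9f5}. Concretely, I would expand both $(\pi\otimes 1)\widetilde{W}_{\Lambda}$ and $\widetilde{W}_{\Lambda}(\pi\otimes 1)$ using \eqref{eq:3c57a5fc33525cfd} and \eqref{eq:93f09a241bf0115a}, collapse the matrix units via $e_{a,b}e_{c,d}=\delta_{b,c}e_{a,d}$, and reindex one of the two sums by $r\mapsto r_{0}r$; the two products then coincide, giving the first equality of \eqref{eq:31ef088a6c0e308b}, and the second equality follows from $(\pi\otimes 1)^{2}=\pi\otimes 1$. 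The heart of the proof is the corresponding statement \eqref{eq:0d4d16dfcc1731dc} for $\widetilde{W}_{\mathbb{G}}$. Here I would again expand both products using \eqref{eq:3c57a5fc33525cfd} and \eqref{eq:b6a527744a7eb0a7}, collapse the matrix units, and reindex $\widetilde{W}_{\mathbb{G}}(\pi\otimes 1)$ by $t\mapsto r_{0}t$ so that \emph{both} sides become sums over $e_{r_{0}t,t}$. Matching the coefficients then reduces the claim to the identity
\begin{equation*}
  (U_{\Lambda_{0}}(r_{0})\otimes 1)\,[(\id\otimes\alpha_{t}^{\ast})(U_{\mathbb{G}})]
  = [(\id\otimes\alpha_{r_{0}t}^{\ast})(U_{\mathbb{G}})]\,(U_{\Lambda_{0}}(r_{0})\otimes 1)
\end{equation*}
in $\mathcal{B}(\mathscr{H})\otimes\pol(\mathbb{G})$, for every $r_{0}\in\Lambda_{0}$ and $t\in\Lambda$.

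This last identity is the one step that is not pure bookkeeping, and it is exactly where the covariance of the pair $(U_{\mathbb{G}},U_{\Lambda_{0}})$ enters. I would obtain it by applying the algebra homomorphism $\id_{\mathcal{B}(\mathscr{H})}\otimes\alpha_{t}^{\ast}$ to the covariance relation \eqref{eq:1611871ef2500d9a}, read for the representation $U$ of $\mathbb{G}\rtimes\Lambda_{0}$ so that $r_{0}$ ranges over $\Lambda_{0}$. Since $\alpha_{t}^{\ast}$ is multiplicative and fixes $1_{A}$, the left-hand side becomes $(U_{\Lambda_{0}}(r_{0})\otimes 1)(\id\otimes\alpha_{t}^{\ast})(U_{\mathbb{G}})$, while on the right-hand side the composition $\alpha_{t}^{\ast}\circ\alpha_{r_{0}}^{\ast}$ collapses to $\alpha_{r_{0}t}^{\ast}$ by the \emph{anti}homomorphism property of $\alpha^{\ast}$, yielding precisely the displayed identity. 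Granting \eqref{eq:0d4d16dfcc1731dc} and \eqref{eq:31ef088a6c0e308b}, the relations \eqref{eq:6b8a2b0066da754a} for $\widetilde{W}=(\widetilde{W}_{\mathbb{G}})_{123}(\widetilde{W}_{\Lambda})_{124}$ follow by commuting $\pi\otimes 1\otimes 1$ past each factor in turn and invoking idempotency once more. The only genuine subtlety to monitor throughout is the antihomomorphism convention for $\alpha^{\ast}$: it is what produces the twisted index $r_{0}t$ rather than $tr_{0}$ and makes the two sides align.
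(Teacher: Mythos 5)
Your proposal is correct and takes essentially the same route as the paper: expand \( \pi \), \( \widetilde{W}_{\mathbb{G}} \) and \( \widetilde{W}_{\Lambda} \) in matrix units, collapse the products, and reduce the \( \widetilde{W}_{\mathbb{G}} \)-invariance to the covariance relation~\eqref{eq:1611871ef2500d9a} combined with the antihomomorphism property \( \alpha_{r_{0}t}^{\ast} = \alpha_{t}^{\ast} \circ \alpha_{r_{0}}^{\ast} \), which is exactly how the paper's computation works (the paper factors \( \alpha^{\ast} \) out of the sums and applies covariance inside, whereas you twist the covariance identity by \( \alpha_{t}^{\ast} \) first and then match coefficients---the same calculation in a different order). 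The only other difference is cosmetic: in part~\ref{item:2ce0298769aca408} you verify that \( \pi \) is a self-adjoint idempotent algebraically and then identify its range with \( \mathscr{K} \), while the paper asserts the range identification and checks self-adjointness via a positivity computation on elementary tensors; your version is, if anything, the more complete of the two.
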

\begin{proof}
  It is easy to see that \( \pi( \ell^{2}( \Lambda ) \otimes \mathscr{H} ) \) is
  precisely \( \mathscr{K} \) and \( \pi_{\mathscr{K}} = \id_{\mathscr{K}} \).
  To finish the proof of \ref{item:2ce0298769aca408}, it suffices to check that
  \( \pi \) is self-adjoint (or even stronger, positive). Since
  \begin{equation}
    \label{eq:cd748833a9aa7751}
    \begin{split}
      \prosca*{ \pi ( \delta_{r} \otimes \xi_{r} ), \delta_{r} \otimes
        \xi_{r}} %
      &= \abs*{\Lambda_{0}}^{-1} \sum_{r_{0} \in \Lambda_{0}} \prosca*{
        \delta_{r_{0}r} \otimes
        U_{\Lambda_{0}}(r_{0}) \xi, \delta_{r} \otimes \xi} \\
      &= \abs*{\Lambda_{0}}^{-1} \norm*{\xi}^{2} \geq 0,
    \end{split}
  \end{equation}
  \( \pi \) is indeed positive.

  We now prove \ref{item:4a8b15b53e251b8b}.  The invariance of \( \mathscr{K} \)
  under \( \widetilde{W}_{\Lambda} \) (equation \eqref{eq:31ef088a6c0e308b})
  follows from \eqref{eq:5d413640e021805b} and~\eqref{eq:210cee9951f8d9f5}. We
  now prove the invariance of \( \mathscr{K} \) under
  \( \widetilde{W}_{\mathbb{G}} \) (equation \eqref{eq:0d4d16dfcc1731dc}). By
  the definitions of \( \pi \) and \( \widetilde{W}_{\mathbb{G}} \), we have
  \begin{equation}
    \label{eq:4fa54d277c8a8c3a}
    \begin{split}
      & \leadmathskip
      \abs*{\Lambda_{0}} (\pi \otimes 1) \widetilde{W}_{\mathbb{G}} \\
      &= \sum_{r_{0} \in \Lambda_{0}} \sum_{r, s \in \Lambda} \bigl(e_{r_{0}s,
        s} \otimes U_{\Lambda_{0}}(r_{0}) \otimes 1\bigr) \bigl(e_{r,r} \otimes
      [(\id \otimes
      \alpha^{\ast}_{r})(U_{\mathbb{G}})] \bigr) \\
      &= \sum_{r \in \Lambda} (\id \otimes \id \otimes \alpha^{\ast}_{r}) \left(
        \sum_{r_{0} \in \Lambda_{0}} \sum_{s \in \Lambda} e_{r_{0}s, s} e_{r,r}
        \otimes \bigl[\bigl(U_{\Lambda_{0}}(r_{0}) \otimes 1\bigr)
        U_{\mathbb{G}}\bigr]\right) \\
      &= \sum_{r \in \Lambda} (\id \otimes \id \otimes \alpha^{\ast}_{r}) \left(
        \sum_{r_{0} \in \Lambda_{0}} e_{r_{0}r, r} \otimes
        \bigl[\bigl(U_{\Lambda_{0}}(r_{0}) \otimes 1\bigr)
        U_{\mathbb{G}}\bigr]\right);
    \end{split}
  \end{equation}
  and
  \begin{equation}
    \label{eq:ba962bf6f2ea04b7}
    \begin{split}
      & \leadmathskip
      \abs*{\Lambda_{0}} \widetilde{W}_{\mathbb{G}} (\pi \otimes 1) \\
      &= \sum_{r_{0} \in \Lambda_{0}} \sum_{r, s \in \Lambda} \Bigl(e_{r, r}
      \otimes \bigl[(\id \otimes \alpha^{\ast}_{r})(U_{\mathbb{G}})\bigr]\Bigr)
      \bigl(e_{r_{0}s, s} \otimes
      U_{\Lambda_{0}}(r_{0}) \otimes 1\bigr) \\
      &= \sum_{r_{0} \in \Lambda_{0}} \sum_{r, s \in \Lambda} (\id \otimes \id
      \otimes \alpha^{\ast}_{r}) \left( e_{r, r} e_{r_{0}s, s} \otimes
        \bigl[U_{\mathbb{G}}(U_{\Lambda_{0}}(r_{0})
        \otimes 1)\bigr]\right) \\
      &= \sum_{r_{0} \in \Lambda_{0}} \sum_{s \in \Lambda} (\id \otimes \id
      \otimes \alpha^{\ast}_{r_{0}s}) \left( e_{r_{0}s, s} \otimes
        \bigl[U_{\mathbb{G}}(U_{\Lambda_{0}}(r_{0}) \otimes
        1)\bigr]\right) \\
      &= \sum_{s \in \Lambda} (\id \otimes \id \otimes \alpha^{\ast}_{s})
      \left[(\id \otimes \id \otimes \alpha^{\ast}_{r_{0}}) \left(\sum_{r_{0}
            \in \Lambda_{0}} e_{r_{0}s, s} \otimes
          \bigl[U_{\mathbb{G}}(U_{\Lambda_{0}}(r_{0}) \otimes 1)\bigr]
        \right)\right] \\
      &= \sum_{s \in \Lambda} (\id \otimes \id \otimes \alpha^{\ast}_{s}) \left(
        \sum_{r_{0} \in \Lambda_{0}} e_{r_{0}s, s} \otimes \Bigl(\bigl[(\id
        \otimes
        \alpha^{\ast}_{r_{0}})(U_{\mathbb{G}})\bigr](U_{\Lambda_{0}}(r_{0})
        \otimes
        1)\Bigr)\right) \\
      &= \sum_{s \in \Lambda} (\id \otimes \id \otimes \alpha^{\ast}_{s}) \left(
        \sum_{r_{0} \in \Lambda_{0}} e_{r_{0}s, s} \otimes
        \bigl[\bigl(U_{\Lambda_{0}}(r_{0}) \otimes 1\bigr)
        (U_{\mathbb{G}})\bigr]\right),
    \end{split}
  \end{equation}
  where the last equality used the covariance of \( U_{\mathbb{G}} \) and
  \( U_{\Lambda} \). Combining \eqref{eq:4fa54d277c8a8c3a} and
  \eqref{eq:ba962bf6f2ea04b7} proves
  \begin{equation}
    \label{eq:5c584d28e4c59b50}
    (\pi \otimes 1) \widetilde{W}_{\mathbb{G}} = \widetilde{W}_{\mathbb{G}}(\pi
    \otimes 1),
  \end{equation}
  from which~\eqref{eq:0d4d16dfcc1731dc} follows by noting that \( \pi \) is a
  projection. Now \eqref{eq:6b8a2b0066da754a} follows from
  \eqref{eq:0d4d16dfcc1731dc}, \eqref{eq:31ef088a6c0e308b} and
  \eqref{eq:90563ed2f5a65ac2}. This proves \ref{item:4a8b15b53e251b8b}.
\end{proof}

\begin{prop}
  \label{prop:5f3365d2f8dc1e89}
  Using the above notations, let
  \( c_{\pi} \colon \mathcal{B}(\ell^{2}(\Lambda) \otimes \mathscr{H}) \to
  \mathcal{B}(\mathscr{K}) \) be the compression by the projection \( \pi \)
  (i.e.\ the graph of \( c_{\pi}(A) \) is the intersection of the graph of
  \( \pi A \pi \) with \( \mathscr{K} \times \mathscr{K} \)), then the following
  holds:
  \begin{enumerate}
  \item \label{item:f0f0a1532f3a3e35} the unitary operator
    \begin{displaymath}
      W = \left(c_{\pi} \otimes \id_{\pol(\mathbb{G})} \otimes
        \id_{C(\Lambda)}\right)\left(\widetilde{W}_{\mathbb{G}}\right) \in
      \mathcal{B}(\mathscr{K}) \otimes \pol(\mathbb{G}) \otimes C(\Lambda)
    \end{displaymath}
    is a unitary representation of \( \mathbb{G} \rtimes \Lambda \) on
    \( \mathscr{K} \);
  \item \label{item:45dc99a4ffdb624e} The subrepresentation \( \mathscr{K} \) of
    \( \widetilde{W}_{\mathbb{G}} \) (resp.\ \( \widetilde{W}_{\Lambda} \)) is
    given by
    \( W_{\mathbb{G}} = (c_{\pi} \otimes
    \id)\left(\widetilde{W}_{\mathbb{G}}\right) \) (resp.\
    \( W_{\Lambda} = (c_{\pi} \otimes \id)\left(\widetilde{W}_{\Lambda}\right)
    \)), and
    \begin{equation}
      \label{eq:5958791d3f21dd65}
      W_{\mathbb{G}} = \res_{\mathbb{G}}(W), \quad
      W_{\Lambda} = \res_{\Lambda}(W).
    \end{equation}
  \end{enumerate}
\end{prop}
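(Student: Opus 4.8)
The plan is to view $W$ as the subrepresentation of the $\mathbb{G} \rtimes \Lambda$-representation $\widetilde{W}$ of Corollary~\ref{coro:a2b4b06e5632471c} cut out by the invariant projection $\pi$, and to run all three assertions through a single mechanism: \emph{compressing a representation by a commuting projection is multiplicative on the operators involved, hence preserves unitarity and the representation identity}. First I would isolate this mechanism abstractly. Let $\mathbb{K}$ be any of $\mathbb{G}$, $\Lambda$, or $\mathbb{G} \rtimes \Lambda$, let $V$ be a unitary representation of $\mathbb{K}$ on $\ell^{2}(\Lambda) \otimes \mathscr{H}$, and suppose $(\pi \otimes 1) V = V (\pi \otimes 1) = (\pi \otimes 1) V (\pi \otimes 1)$. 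Identifying $\mathcal{B}(\mathscr{K})$ with the corner $\pi \mathcal{B}(\ell^{2}(\Lambda) \otimes \mathscr{H}) \pi$, the element $(c_{\pi} \otimes \id)(V)$ is just $(\pi \otimes 1) V (\pi \otimes 1)$, and on operators commuting with $\pi$ the compression agrees with honest restriction to $\mathscr{K}$. Unitarity is then the computation $(c_{\pi} \otimes \id)(V)^{\ast}(c_{\pi} \otimes \id)(V) = (\pi \otimes 1) V^{\ast} V (\pi \otimes 1) = \id_{\mathscr{K}} \otimes 1$ (using $\pi^{2} = \pi$ and the commutation relation), and symmetrically for the other order.

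For the representation identity I would apply $\id \otimes \Delta_{\mathbb{K}}$; since $\pi$ sits in a leg disjoint from the one on which $\Delta_{\mathbb{K}}$ acts, it commutes with the coproduct slice, giving $(\id \otimes \Delta_{\mathbb{K}})\bigl((c_{\pi} \otimes \id)(V)\bigr) = (\pi \otimes 1 \otimes 1)(\id \otimes \Delta_{\mathbb{K}})(V)(\pi \otimes 1 \otimes 1) = (\pi \otimes 1 \otimes 1) V_{12} V_{13} (\pi \otimes 1 \otimes 1)$. Because $\pi$ commutes with both $V_{12}$ and $V_{13}$, one may insert $\pi^{2} = \pi$ in the middle and factor this as $[(c_{\pi} \otimes \id)(V)]_{12}[(c_{\pi} \otimes \id)(V)]_{13}$, which is exactly the representation identity on $\mathscr{K}$. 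Part~(a) is the instance $\mathbb{K} = \mathbb{G} \rtimes \Lambda$, $V = \widetilde{W}$, whose commutation with $\pi$ is precisely~\eqref{eq:6b8a2b0066da754a}; feeding the four-leg expansion~\eqref{eq:90563ed2f5a65ac2} into the mechanism shows that $W = (c_{\pi} \otimes \id \otimes \id)(\widetilde{W})$ is a unitary representation of $\mathbb{G} \rtimes \Lambda$ on $\mathscr{K}$. The instances $\mathbb{K} = \mathbb{G}$, $V = \widetilde{W}_{\mathbb{G}}$ (commutation~\eqref{eq:0d4d16dfcc1731dc}) and $\mathbb{K} = \Lambda$, $V = \widetilde{W}_{\Lambda}$ (commutation~\eqref{eq:31ef088a6c0e308b}) show that $W_{\mathbb{G}}$ and $W_{\Lambda}$ are representations of $\mathbb{G}$ and $\Lambda$ on $\mathscr{K}$, which is the first assertion of part~(b).

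It then remains to identify $\res_{\mathbb{G}}(W) = W_{\mathbb{G}}$ and $\res_{\Lambda}(W) = W_{\Lambda}$. Here I would use that the restriction maps $\res_{\mathbb{G}} = \id \otimes \id \otimes \epsilon_{\Lambda}$ and $\res_{\Lambda} = \id \otimes \epsilon \otimes \id$ act only on the quantum-group legs, whereas $c_{\pi}$ acts only on the Hilbert-space leg, so the two operations commute. This yields $\res_{\mathbb{G}}(W) = (c_{\pi} \otimes \id)\bigl((\id \otimes \id \otimes \epsilon_{\Lambda})(\widetilde{W})\bigr)$ and $\res_{\Lambda}(W) = (c_{\pi} \otimes \id)\bigl((\id \otimes \epsilon \otimes \id)(\widetilde{W})\bigr)$. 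Since $\widetilde{W} = {(\widetilde{W}_{\mathbb{G}})}_{123}{(\widetilde{W}_{\Lambda})}_{124}$ is assembled from the covariant pair $(\widetilde{W}_{\mathbb{G}}, \widetilde{W}_{\Lambda})$ of Propositions~\ref{prop:18c6664e737a486a} and~\ref{prop:11a935d2017b9167}, the ``Moreover'' part of Proposition~\ref{prop:3693cca5392c7dd3}, namely~\eqref{eq:538a1df8eb219db1} and~\eqref{eq:ccf0a0e79115abc1}, gives $(\id \otimes \id \otimes \epsilon_{\Lambda})(\widetilde{W}) = \widetilde{W}_{\mathbb{G}}$ and $(\id \otimes \epsilon \otimes \id)(\widetilde{W}) = \widetilde{W}_{\Lambda}$. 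Substituting these produces $\res_{\mathbb{G}}(W) = (c_{\pi} \otimes \id)(\widetilde{W}_{\mathbb{G}}) = W_{\mathbb{G}}$ and $\res_{\Lambda}(W) = (c_{\pi} \otimes \id)(\widetilde{W}_{\Lambda}) = W_{\Lambda}$, finishing part~(b).

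The one place demanding genuine care, and the step I expect to be the main obstacle, is the bookkeeping of leg indices when passing between $\widetilde{W}$ regarded as a two-leg object over $\pol(\mathbb{G} \rtimes \Lambda)$ and its four-leg form~\eqref{eq:90563ed2f5a65ac2} over $\mathcal{B}(\ell^{2}(\Lambda)) \otimes \mathcal{B}(\mathscr{H}) \otimes \pol(\mathbb{G}) \otimes C(\Lambda)$, together with the verification that $c_{\pi}$ really is multiplicative on the $\pi$-commuting operators at hand, so that compressing the product $V_{12} V_{13}$ equals the product of the compressions. Everything else is formal manipulation licensed entirely by the commutation relations already established in Lemma~\ref{lemm:8636ddbbcbafe76a}.
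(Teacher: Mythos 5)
Your proof is correct and takes essentially the same route as the paper's, which simply cites Proposition~\ref{prop:3693cca5392c7dd3}, Corollary~\ref{coro:a2b4b06e5632471c}, Lemma~\ref{lemm:8636ddbbcbafe76a} and the definition of subrepresentations; your argument just makes explicit the compression mechanism that those citations encapsulate. You also correctly read \( W \) as the compression of \( \widetilde{W} \) rather than of \( \widetilde{W}_{\mathbb{G}} \) (the latter is evidently a typo in the statement, since the asserted target space \( \mathcal{B}(\mathscr{K}) \otimes \pol(\mathbb{G}) \otimes C(\Lambda) \) forces the argument to be \( \widetilde{W} \)).
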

\begin{proof}
  This follows from Proposition~\ref{prop:3693cca5392c7dd3},
  Corollary~\ref{coro:a2b4b06e5632471c}, Lemma~\ref{lemm:8636ddbbcbafe76a} and
  the definition of subrepresentations.
\end{proof}

\begin{defi}
  \label{defi:700ef7b2d5ffafd8}
  Using the above notations, we call \( W \) the induced representation of
  \( U \), and denote it by
  \( \indrep^{\mathbb{G} \rtimes \Lambda}_{\mathbb{G} \rtimes \Lambda_{0}} %
  (U) \), or simply \( \indrep(U) \) when the underlying compact quantum groups
  \( \mathbb{G} \rtimes \Lambda_{0} \) and \( \mathbb{G} \rtimes \Lambda \) are
  clear from context.
\end{defi}

\section{Some character formulae}
\label{sec:d471d5544009eb55}

Let \( \Lambda_{0} \) be a subgroup of \( \Lambda \), \( U \) a finite
dimensional unitary representation of \( \mathbb{G} \rtimes \Lambda_{0} \),
\( \indrep^{\mathbb{G} \rtimes \Lambda}_{\mathbb{G} \rtimes \Lambda_{0}}(U) \)
the induced representation of the global compact quantum group
\( \mathbb{G} \rtimes \Lambda \). In this section, we aim to calculate the
character of the induced representation
\( \indrep^{\mathbb{G} \rtimes \Lambda}_{\mathbb{G} \rtimes \Lambda_{0}}(U)
\). The approach adopted here emphasizes the underlying group action of
\( \Lambda \) on the characters of the conjugacy class of the principal subgroup
\( \mathbb{G} \rtimes \Lambda_{0} \) as described in
Proposition~\ref{prop:e4e616da0a0995e6}.

For any subgroup \( \Lambda_{1} \) and any \( f_{0} \in C( \Lambda_{1} ) \), we
use \( E_{\Lambda_{1}}(f_{0}) \) to denote the function in \( C( \Lambda ) \)
with \( [E_{\Lambda_{1}}(f_{0})](r) = 0 \) if \( r \notin \Lambda_{1} \) and
\( [E_{\Lambda_{1}}(f_{0})](r) = f_{0}(r) \) if \( r \in \Lambda_{1} \). Then
\( E_{\Lambda_{1}} \colon C( \Lambda_{1} ) \rightarrow C( \Lambda ) \) is a
morphism of {\( C^{\ast} \)}\nobreakdash-algebras, which is not unital unless
\( \Lambda_{1} = \Lambda \), in which case
\( E_{\Lambda_{1}} = \id_{C( \Lambda )} \). By
Proposition~\ref{prop:e4e616da0a0995e6}, we have an action
\begin{equation}
  \label{eq:c378b50e92dd51b2}
  \begin{split}
    \Lambda & \curvearrowright \set*{\mathbb{G} \rtimes r \Lambda_{0} r^{-1}
      \given r \in \Lambda} \\
    s & \mapsto \left\{ \mathbb{G} \rtimes r \Lambda_{0} r^{-1} \mapsto
      \mathbb{G} \rtimes sr \Lambda_{0} {(sr)}^{-1} \right\}
  \end{split}
\end{equation}
of \( \Lambda \) on the set of subgroups of \( \mathbb{G} \rtimes \Lambda \)
conjugate to \( \mathbb{G} \rtimes \Lambda_{0} \) via elements in \( \Lambda \)
(the term conjugate is justified by considering the case when \( \mathbb{G} \)
is a genuine compact group).

Our main result in this section is the following proposition.

\begin{prop}
  \label{prop:56963e422c66f26b}
  Let \( \Lambda_{0} \) be a subgroup of \( \Lambda \),
  \( U \in \mathcal{B}(\mathscr{H}) \otimes \pol(\mathbb{G}) \otimes C(
  \Lambda_{0} ) \) a finite dimensional unitary representation of
  \( \mathbb{G} \rtimes \Lambda_{0} \), \( W \) the induced representation
  \( \indrep^{\mathbb{G} \rtimes \Lambda}_{\mathbb{G} \rtimes \Lambda_{0}}( U )
  \). Suppose \( \chi \) is the character of the unitary representation \( U \)
  of \( \mathbb{G} \rtimes \Lambda_{0} \), and for each \( r \), define
  \begin{equation}
    \label{eq:8716c8ee3e3d259b}
    r \cdot U :=
    ( \id_{\mathscr{H}} \otimes  \alpha_{r^{-1}}^{\ast}
    \otimes \adj_{r^{-1}}^{\ast}) (U)
    \in \mathcal{B}( \mathscr{H} )
    \otimes \pol(\mathbb{G}) \otimes C( r \Lambda_{0}r^{-1} ).
  \end{equation}
  Then \( r \cdot U \) is a unitary representation of
  \( \mathbb{G} \rtimes r \Lambda_{0} r^{-1} \) with \( 1 \cdot U = U \), and
  \( (rs) \cdot U = r \cdot (s \cdot U)\) for all \( r, s \in \Lambda \). Denote
  the character of \( r \cdot U \) by \( \chi_{r} \) (so
  \( \chi_{1_{\Lambda}} = \chi \)), then
  \begin{equation}
    \label{eq:0391706cde602b93}
    \chi_{W} = \abs*{\Lambda_{0}}^{-1}
    \sum_{r \in \Lambda} (\id_{A} \otimes E_{r \Lambda_{0} r^{-1}}) \chi_{r},
  \end{equation}
  where \( \chi_{W} \) is the character of \( W \).
\end{prop}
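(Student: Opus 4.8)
The plan is to derive the whole statement from the single identity $\chi_W = (\tr \otimes \id \otimes \id)\bigl[(\pi \otimes 1 \otimes 1)\widetilde{W}\bigr]$ for the character of the compression, and I would first dispose of the three easy assertions. That $r \cdot U$ is a unitary representation of $\mathbb{G} \rtimes r\Lambda_{0} r^{-1}$ follows from Proposition~\ref{prop:e4e616da0a0995e6}: the map $\alpha_{r^{-1}}^{\ast} \otimes \adj_{r^{-1}}^{\ast}$ is exactly the inverse of the Hopf $\ast$-isomorphism $\alpha_{r}^{\ast} \otimes \adj_{r}^{\ast}$ provided there, since $\alpha^{\ast}$ is an antihomomorphism (so $(\alpha_{r}^{\ast})^{-1} = \alpha_{r^{-1}}^{\ast}$) and $\adj_{r}^{-1} = \adj_{r^{-1}}$ as group isomorphisms; applying this isomorphism to the coefficients of $U$ produces a representation. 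The relations $1 \cdot U = U$ and $(rs)\cdot U = r \cdot (s \cdot U)$ are immediate from $\alpha_{r^{-1}}^{\ast}\alpha_{s^{-1}}^{\ast} = \alpha_{(rs)^{-1}}^{\ast}$ and $\adj_{s^{-1}}\adj_{r^{-1}} = \adj_{(rs)^{-1}}$. Finally, as the trace acts only on the $\mathscr{H}$-leg while $\alpha_{r^{-1}}^{\ast}$ and $\adj_{r^{-1}}^{\ast}$ act on the remaining legs, the operations commute, giving $\chi_{r} = (\alpha_{r^{-1}}^{\ast} \otimes \adj_{r^{-1}}^{\ast})(\chi)$, which in coordinates reads $\chi_{r} = \sum_{\lambda \in \Lambda_{0}} \alpha_{r^{-1}}^{\ast}(\chi(\lambda)) \otimes \delta_{r\lambda r^{-1}}$ in $\pol(\mathbb{G}) \otimes C(r\Lambda_{0} r^{-1})$, where $\chi(\lambda) \in \pol(\mathbb{G})$ denotes the value of the character $\chi$ of $U$ at $\lambda$.

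For the character formula proper, the starting point is Lemma~\ref{lemm:8636ddbbcbafe76a}: since $\pi$ commutes with $\widetilde{W}$, the compression $W = c_{\pi}(\widetilde{W})$ satisfies $\pi\widetilde{W}\pi = (\pi\otimes1\otimes1)\widetilde{W}$, so $\chi_{W} = (\tr \otimes \id \otimes \id)\bigl[(\pi\otimes1\otimes1)\widetilde{W}\bigr]$, the partial trace being over $\ell^{2}(\Lambda)\otimes\mathscr{H}$. I would then substitute the explicit expressions~\eqref{eq:3c57a5fc33525cfd} for $\pi$ and~\eqref{eq:90563ed2f5a65ac2} for $\widetilde{W}$, multiply out, and evaluate the two partial traces separately.

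The trace over $\ell^{2}(\Lambda)$ is the decisive bookkeeping step: after multiplying, the $\ell^{2}(\Lambda)$-leg of the generic summand is a matrix unit $e_{r_{0} rs^{-1},\, r}$, whose trace $\delta_{r_{0} rs^{-1},\, r}$ forces $s = r^{-1}r_{0} r$, so that the surviving $C(\Lambda)$-label is $\delta_{r^{-1}r_{0} r}$ and the $\mathbb{G}$-twist becomes $\alpha^{\ast}_{rs^{-1}} = \alpha^{\ast}_{r_{0}^{-1}r}$. For the trace over $\mathscr{H}$ I would use $U = (U_{\mathbb{G}})_{12}(U_{\Lambda_{0}})_{13}$ together with cyclicity of the trace in the $\mathscr{H}$-leg to recognize, for any $g \in \Lambda$,
\[
  (\tr\otimes\id)\bigl[(U_{\Lambda_{0}}(r_{0})\otimes1)(\id\otimes\alpha_{g}^{\ast})(U_{\mathbb{G}})\bigr]
  = \alpha_{g}^{\ast}\bigl(\chi(r_{0})\bigr).
\]
Combining the two traces gives
\[
  \chi_{W} = \abs*{\Lambda_{0}}^{-1}\sum_{r_{0}\in\Lambda_{0}}\sum_{r\in\Lambda}
  \alpha_{r_{0}^{-1}r}^{\ast}\bigl(\chi(r_{0})\bigr)\otimes\delta_{r^{-1}r_{0} r}.
\]

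The final step, which I expect to be where one must be most careful, is matching this against the right-hand side of~\eqref{eq:0391706cde602b93}. Expanding $(\id_{A}\otimes E_{r\Lambda_{0} r^{-1}})\chi_{r}$ from the coordinate form of $\chi_{r}$ and summing over $r$ yields $\abs*{\Lambda_{0}}^{-1}\sum_{r\in\Lambda}\sum_{\lambda\in\Lambda_{0}}\alpha_{r^{-1}}^{\ast}(\chi(\lambda))\otimes\delta_{r\lambda r^{-1}}$, the label now viewed in $C(\Lambda)$. I would reconcile the two by the change of summation variable $r\mapsto r_{0} r$ in the formula for $\chi_{W}$, which leaves $\delta_{r^{-1}r_{0} r}$ unchanged but turns the twist $\alpha^{\ast}_{r_{0}^{-1}r}$ into $\alpha^{\ast}_{r}$; the further substitution $r\mapsto r^{-1}$ in the expanded target sum then makes the two sums agree term by term under $\lambda\leftrightarrow r_{0}$. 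The only genuine subtlety throughout is keeping the group indices and the direction of $\adj$ straight; once these variable changes are carried out correctly, the identity~\eqref{eq:0391706cde602b93} drops out.
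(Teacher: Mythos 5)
Your proposal is correct and follows essentially the same route as the paper: both compute \( \chi_{W} \) as the partial trace of the projected operator built from \( \pi \) and \( \widetilde{W} \), using the explicit matrix-unit formulas \eqref{eq:3c57a5fc33525cfd} and \eqref{eq:90563ed2f5a65ac2}, and then identify the result with \( \abs*{\Lambda_{0}}^{-1}\sum_{r}(\id \otimes E_{r\Lambda_{0}r^{-1}})\chi_{r} \) by a change of summation variables. The only divergence is one of efficiency rather than of ideas: you invoke the commutation relation \eqref{eq:6b8a2b0066da754a} together with \( \pi^{2} = \pi \) to reduce to a \emph{single} insertion of \( \pi \) before expanding, so your sum carries only one index \( r_{0} \in \Lambda_{0} \), whereas the paper expands \( \pi_{12}(\widetilde{W}_{\mathbb{G}})_{123}\pi_{12}(\widetilde{W}_{\Lambda})_{124}\pi_{12} \) with three insertions and then collapses the resulting \( \abs*{\Lambda_{0}}^{3} \)-fold redundancy using multiplicativity of \( U_{\Lambda_{0}} \) --- the same computation with heavier bookkeeping.
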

\begin{proof}
  That \( r \cdot U \) is a finite dimensional unitary representation of
  \( \mathbb{G} \rtimes r \Lambda_{0}r^{-1} \) follows from the fact
  (Proposition~\ref{prop:e4e616da0a0995e6}) that
  \begin{displaymath}
    \alpha_{r}^{\ast} \otimes \adj_{r}^{\ast} \colon A \otimes r^{-1}
    \Lambda_{0} r \rightarrow A \otimes \Lambda_{0}
  \end{displaymath}
  is an isomorphism of compact quantum groups for any \( r \in \Lambda \). The
  identities \( 1_{\Lambda} \cdot U = U \) and
  \( r \cdot (s \cdot U) = (rs) \cdot U\) follows directly from definitions.  We
  proceed to prove the character formula~\eqref{eq:0391706cde602b93}.

  For any \( r \in \Lambda \), let \( {(r \cdot U)}_{\mathbb{G}} \) be the
  restriction of \( r \cdot U \) to \( \mathbb{G} \), and
  \( {(r \cdot U)}_{r\Lambda_{0}r^{-1}} \) the restriction of \( r \cdot U \) to
  \( r \Lambda_{0} r^{-1} \). We denote the character of
  \( {(r \cdot U)}_{\mathbb{G}} \) (resp.\
  \( {(r \cdot U)}_{r \Lambda_{0} r^{-1}} \)) by \( \chi_{r, \mathbb{G}} \)
  (resp.\ \( \chi_{r, r \Lambda_{0}r^{-1}} \)). One easily checks that
  \( \chi_{r, \mathbb{G}} = \alpha_{r^{-1}}^{\ast}(
  \chi_{1_{\Lambda},\mathbb{G}} ) \) and
  \( \chi_{r, r \Lambda_{0} r^{-1}} = \adj_{r^{-1}}^{\ast}( \chi_{1_{\Lambda},
    \Lambda_{0}}) \).  Fix a Hilbert basis \( (e_{1}, \ldots, e_{d}) \) for
  \( \mathscr{H} \), and let \( (e_{ij}, i,j=1,\ldots,d) \) be the corresponding
  matrix unit for \( \mathcal{B}(\mathscr{H}) \). Using this matrix unit, we can
  write
  \begin{subequations}
    \begin{equation}
      \label{eq:9d5af321ab99c204}
      U_{\mathbb{G}} =
      \sum_{i,j=1}^{d} e_{i,j} \otimes u_{ij},\; u_{ij} \in \pol(\mathbb{G});
    \end{equation}
    \begin{equation}
      U_{\Lambda_{0}} =
      \sum_{r_{0} \in \Lambda_{0}} U_{\Lambda_{0}}(r_{0})
      \otimes \delta_{r_{0}}.
    \end{equation}
  \end{subequations}
  Let \( e_{r,s} \), \( \pi \), \( \mathscr{K} \),
  \( \widetilde{W}_{\mathbb{G}} \), \( \widetilde{W}_{\Lambda} \),
  \( W_{\mathbb{G}} \) and \( W_{\Lambda} \) have the same meaning as in
  \S~\ref{sec:3e39eff392b139a7}, then the construction in
  \S~\ref{sec:3e39eff392b139a7} tells us that
  \begin{equation}
    \label{eq:de3a56f01a978237}
    \begin{split}
      \chi_{W} &= (\tr_{\ell^{2}( \Lambda )} \otimes \tr_{\mathscr{H}} \otimes
      \id_{A} \otimes \id_{C( \Lambda )}) \left[ \pi_{12} \cdot
        {(\widetilde{W}_{\mathbb{G}})}_{123} \cdot \pi_{12} \cdot
        {(\widetilde{W}_{\Lambda})}_{124} \cdot \pi_{12} \right].
    \end{split}
  \end{equation}
  In the following calculations, we often omit the subscripts of the trace
  functions \( \tr \) on \( \ell^{2}( \Lambda ) \) or on \( \mathscr{H} \), and
  also the subscripts for the multiplicative neutral element \( 1 \) of various
  algebras, whenever it is a trivial task to decipher to which trace and
  multiplicative neutral element we are referring. The same goes with \( \id \)
  without subscripts.

  Note that for any \( r, s \in \Lambda \),
  \( \adj_{r}^{\ast}( \delta_{s} ) = \delta_{r^{-1}sr} \).  With these
  preparations, we now have
  \begin{equation}
    \label{eq:1e1851295aeba376}
    \begin{split}
      \chi_{r} &= ( \alpha_{r^{-1}}^{\ast}
      \otimes \adj_{r^{-1}}^{\ast} )( \chi ) \\
      &= ( \alpha_{r^{-1}}^{\ast} \otimes \adj_{r^{-1}}^{\ast} ) \left(
        \sum_{i,j=1}^{d} \sum_{r_{0} \in \Lambda_{0}} \tr \Bigl(e_{i,j}
        U_{\Lambda_{0}}(r_{0})\Bigr) u_{ij}  \otimes \delta_{r_{0}}\right) \\
      &= \sum_{i,j = 1}^{d} \sum_{r_{0} \in \Lambda_{0}} \tr\Bigl( e_{i,j}
      U_{\Lambda_{0}}(r_{0}) \Bigr) \alpha_{r^{-1}}^{\ast}(u_{ij}) \otimes
      \delta_{rr_{0}r^{-1}}.
    \end{split}
  \end{equation}

  By \eqref{eq:210cee9951f8d9f5}, \eqref{eq:90563ed2f5a65ac2} and
  \eqref{eq:3c57a5fc33525cfd}, we deduce from \eqref{eq:de3a56f01a978237} that
  \begin{equation}
    \label{eq:867fccfe24eca142}
    \begin{split}
      \abs*{\Lambda_{0}}^{3} \chi_{W} &= \sum_{a_{0},b_{0},c_{0} \in
        \Lambda_{0}} \sum_{a,b,c \in \Lambda} \sum_{r,s,t \in \Lambda}
      \sum_{i,j=1}^{d} \tr(e_{a_{0}a, a}e_{r,r}
      e_{b_{0}b,b} e_{st^{-1},s} e_{c_{0}c,c}) \\
      & \hspace{8ex} \tr\Bigl(U_{\Lambda_{0}}(a_{0}) e_{i,j}
      U_{\Lambda_{0}}(b_{0})U_{\Lambda_{0}}(c_{0})\Bigr)
      \alpha_{r}^{\ast}(u_{ij}) \otimes \delta_{t}.
    \end{split}
  \end{equation}
  On the right side of the above sum, the first trace doesn't vanish if and only
  if it is \( 1 \), which happens exactly when
  \begin{equation}
    \label{eq:1b6272277497e48a}
    \begin{split}
      & \phantom{\iff} a = r = b_{0}b, \; b = st^{-1}, \;
      s=c_{0}c,\; a_{0}a = c \\
      & \iff b = b_{0}^{-1} a, \; c = a_{0}a, \; r = a,\; s = c_{0}a_{0}a,\; t =
      b^{-1}s = a^{-1}b_{0}c_{0}a_{0}a.
    \end{split}
  \end{equation}
  Using this condition in~\eqref{eq:867fccfe24eca142}, we get
  \begin{equation}
    \label{eq:cb61b437d9c99a05}
    \begin{split}
      & \leadmathskip \abs*{\Lambda_{0}}^{3} \chi_{W} \\
      &= \sum_{a_{0},b_{0},c_{0} \in \Lambda_{0}} \sum_{a \in \Lambda}
      \sum_{i,j=1}^{d} \tr\Bigl(U_{\Lambda_{0}}(a_{0}) e_{i,j}
      U_{\Lambda_{0}}(b_{0})U_{\Lambda_{0}}(c_{0})\Bigr)
      \alpha_{a}^{\ast}(u_{ij}) \otimes
      \delta_{a^{-1}b_{0}c_{0}a_{0}a} \\
      &= \sum_{a_{0},b_{0},c_{0} \in \Lambda_{0}} \sum_{a \in \Lambda}
      \sum_{i,j=1}^{d} \tr\Bigl( e_{i,j}
      U_{\Lambda_{0}}(b_{0})U_{\Lambda_{0}}(c_{0}) U_{\Lambda_{0}}(a_{0}) \Bigr)
      \alpha_{a}^{\ast}(u_{ij}) \otimes
      \delta_{a^{-1}b_{0}c_{0}a_{0}a} \\
      &= \sum_{a_{0},b_{0},c_{0} \in \Lambda_{0}} \sum_{a \in \Lambda}
      \sum_{i,j=1}^{d} \tr\Bigl( e_{i,j} U_{\Lambda_{0}}(b_{0}c_{0}a_{0}) \Bigr)
      \alpha_{a}^{\ast}(u_{ij}) \otimes
      \delta_{a^{-1}b_{0}c_{0}a_{0}a} \\
      &= \abs*{\Lambda_{0}}^{2} \sum_{a \in \Lambda} \sum_{r_{0} \in
        \Lambda_{0}} \sum_{i,j=1}^{d} \tr\Bigl( e_{i,j} U_{\Lambda_{0}}(r_{0})
      \Bigr) \alpha_{a}^{\ast}(u_{ij}) \otimes
      \delta_{a^{-1} r_{0} a} \\
      &= \abs*{\Lambda_{0}}^{2} \sum_{r \in \Lambda} (\id \otimes E_{r^{-1}
        \Lambda_{0} r}) ( \chi_{r}),
    \end{split}
  \end{equation}
  where the last line uses~\eqref{eq:1e1851295aeba376} and the change of
  variable \( r = a^{-1} \). Dividing \( \abs*{\Lambda_{0}}^{3} \) on both sides
  of~\eqref{eq:cb61b437d9c99a05} proves~\eqref{eq:0391706cde602b93}.
\end{proof}

\begin{coro}
  \label{coro:7b7b679f656174f9}
  Using the notations in Proposition~\ref{prop:56963e422c66f26b}, \( U \) and
  \( r \cdot U \) induce equivalent unitary representations of
  \( \mathbb{G} \rtimes \Lambda \) for all \( r \cdot U \).
\end{coro}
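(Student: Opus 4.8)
The plan is to deduce the equivalence from an equality of characters. Since \( \mathbb{G} \rtimes \Lambda \) is a compact quantum group, two finite dimensional unitary representations are unitarily equivalent if and only if their characters coincide; thus it suffices to show that \( W = \indrep^{\mathbb{G} \rtimes \Lambda}_{\mathbb{G} \rtimes \Lambda_{0}}(U) \) and \( \indrep^{\mathbb{G} \rtimes \Lambda}_{\mathbb{G} \rtimes r \Lambda_{0} r^{-1}}(r \cdot U) \) have the same character. Both characters are computed by the formula~\eqref{eq:0391706cde602b93} of Proposition~\ref{prop:56963e422c66f26b}, once applied to the representation \( U \) of the principal subgroup \( \mathbb{G} \rtimes \Lambda_{0} \), and once applied to the representation \( r \cdot U \) defined in~\eqref{eq:8716c8ee3e3d259b}, viewed as a representation of the principal subgroup \( \mathbb{G} \rtimes r \Lambda_{0} r^{-1} \). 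The latter is legitimate since, by Proposition~\ref{prop:e4e616da0a0995e6}, \( r \Lambda_{0} r^{-1} \) is again a subgroup of \( \Lambda \) and \( r \cdot U \) is a genuine unitary representation of \( \mathbb{G} \rtimes r \Lambda_{0} r^{-1} \).

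Applying~\eqref{eq:0391706cde602b93} to \( U \) gives \( \chi_{W} = \abs*{\Lambda_{0}}^{-1} \sum_{s \in \Lambda} (\id \otimes E_{s \Lambda_{0} s^{-1}})(\chi_{s}) \), where \( \chi_{s} \) is the character of \( s \cdot U \). Applying the same formula to \( r \cdot U \), with \( \Lambda_{0} \) replaced by \( \Lambda_{0}' := r \Lambda_{0} r^{-1} \), yields \( \chi_{\indrep(r \cdot U)} = \abs*{\Lambda_{0}'}^{-1} \sum_{s \in \Lambda} (\id \otimes E_{s \Lambda_{0}' s^{-1}})(\chi_{s}') \), where now \( \chi_{s}' \) is the character of \( s \cdot (r \cdot U) \). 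At this point I would use three elementary bookkeeping facts: first, \( \abs*{\Lambda_{0}'} = \abs*{\Lambda_{0}} \); second, \( s \Lambda_{0}' s^{-1} = (sr) \Lambda_{0} (sr)^{-1} \); and third, the compositional identity \( s \cdot (r \cdot U) = (sr) \cdot U \) recorded in Proposition~\ref{prop:56963e422c66f26b}, so that \( \chi_{s}' = \chi_{sr} \).

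Substituting these identities turns the second sum into \( \abs*{\Lambda_{0}}^{-1} \sum_{s \in \Lambda} (\id \otimes E_{(sr) \Lambda_{0} (sr)^{-1}})(\chi_{sr}) \). The final step is the change of variable \( t = sr \), which is a bijection of \( \Lambda \); under it this sum becomes exactly \( \abs*{\Lambda_{0}}^{-1} \sum_{t \in \Lambda} (\id \otimes E_{t \Lambda_{0} t^{-1}})(\chi_{t}) = \chi_{W} \). Hence \( \chi_{\indrep(r \cdot U)} = \chi_{W} \), and the two induced representations are equivalent.

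There is no serious analytic difficulty here: the entire content is already packaged in the character formula~\eqref{eq:0391706cde602b93}, and the corollary is essentially its naturality under the conjugation action~\eqref{eq:8716c8ee3e3d259b}. The one point demanding care is the indexing of the conjugated subgroups: one must check that applying the proposition to \( r \cdot U \) really produces the terms \( E_{(sr) \Lambda_{0} (sr)^{-1}} \) attached to the shifted characters \( \chi_{sr} \), which rests on correctly composing the action of \( \alpha^{\ast} \) (an antihomomorphism) with that of \( \adj \) so that \( s \cdot (r \cdot U) = (sr) \cdot U \). Once this matching is in place, the change of variable \( t = sr \) finishes the argument immediately.
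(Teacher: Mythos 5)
Your proof is correct and takes essentially the same route as the paper: the paper's own proof is the one-line observation that, by Proposition~\ref{prop:56963e422c66f26b}, \( \indrep(U) \) and \( \indrep(r \cdot U) \) have the same character, and your write-up supplies precisely the bookkeeping behind that line — applying formula~\eqref{eq:0391706cde602b93} to \( r \cdot U \) over the subgroup \( r\Lambda_{0}r^{-1} \), using \( s \cdot (r \cdot U) = (sr) \cdot U \) and \( s(r\Lambda_{0}r^{-1})s^{-1} = (sr)\Lambda_{0}(sr)^{-1} \), and reindexing by \( t = sr \). The only step left implicit in both arguments, that equal characters imply unitary equivalence for finite dimensional representations of a compact quantum group, is standard (it follows from the orthogonality relations), so nothing is missing.
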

\begin{proof}
  By Proposition~\ref{prop:56963e422c66f26b}, we see that \( \indrep(U) \) and
  \( \indrep(r \cdot U) \) have the same character.
\end{proof}

It is worth pointing out that there are in fact many repetitions in the terms of
the right side of formula~\eqref{eq:0391706cde602b93}, as is shown by the
following lemma.

\begin{lemm}
  \label{lemm:96218436efb1ec44}
  Using the notations of Proposition~\ref{prop:56963e422c66f26b}, the following
  holds:
  \begin{enumerate}
  \item \label{item:798c8845ec549eea} for any \( r \in \Lambda \), we have
    \begin{equation}
      \label{eq:ab9447ac2beea186}
      (\id \otimes E_{r \Lambda_{0} r^{-1}}) \chi_{r} = %
      \bigl( \alpha_{r^{-1}}^{\ast} \otimes \adj_{r^{-1}}^{\ast} \bigr) %
      \left[(\id \otimes E_{\Lambda_{0}}) ( \chi ) \right];
    \end{equation}
    in \( \pol(\mathbb{G}) \otimes C( \Lambda ) \);
  \item \label{item:38dcc9ada3065523} for any \( r, s \in \Lambda \), if
    \( r^{-1}s \in \Lambda_{0} \), i.e.\ \( r \Lambda_{0} = s \Lambda_{0} \) and
    \( r \Lambda_{0} r^{-1} = s \Lambda_{0} s^{-1} \), then
    \begin{equation}
      \label{eq:48a692f4e0c50430}
      (\id \otimes E_{r \Lambda_{0} r^{-1}}) \chi_{r}
      = (\id \otimes E_{s \Lambda_{0} s^{-1}}) \chi_{s}
    \end{equation}
    in \( \pol(\mathbb{G}) \otimes C( \Lambda ) \). In particular,
    \begin{equation}
      \label{eq:58712915d021c218}
      \chi_{r} = \chi_{s},
    \end{equation}
    or equivalently, \( r \cdot U \) and \( s \cdot U \) are unitarily
    equivalent unitary representations of the same compact quantum group
    \( \mathbb{G} \rtimes r \Lambda_{0}r^{-1} \).
  \end{enumerate}
\end{lemm}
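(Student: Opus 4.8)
The plan is to treat \ref{item:798c8845ec549eea} as a direct bookkeeping computation and to isolate the real content of \ref{item:38dcc9ada3065523} in one representation-theoretic fact: for \( \mu \in \Lambda_{0} \), the representation \( \mu \cdot U \) of \( \mathbb{G} \rtimes \Lambda_{0} \) is unitarily equivalent to \( U \).

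For \ref{item:798c8845ec549eea} I would start from \( \chi_{r} = (\alpha_{r^{-1}}^{\ast} \otimes \adj_{r^{-1}}^{\ast})(\chi) \), already recorded in \eqref{eq:1e1851295aeba376}, where on the right \( \adj_{r^{-1}}^{\ast} \) is the isomorphism \( C(\Lambda_{0}) \to C(r\Lambda_{0}r^{-1}) \) sending \( \delta_{r_{0}} \mapsto \delta_{rr_{0}r^{-1}} \). Applying \( \id \otimes E_{r\Lambda_{0}r^{-1}} \) simply views this right leg inside \( C(\Lambda) \). Since conjugation by \( r \) carries \( \Lambda_{0} \) bijectively onto \( r\Lambda_{0}r^{-1} \), the key observation is the commutation \( E_{r\Lambda_{0}r^{-1}} \circ \adj_{r^{-1}}^{\ast} = \adj_{r^{-1}}^{\ast} \circ E_{\Lambda_{0}} \) as maps \( C(\Lambda_{0}) \to C(\Lambda) \): both send each \( \delta_{r_{0}} \) (\( r_{0} \in \Lambda_{0} \)) to \( \delta_{rr_{0}r^{-1}} \in C(\Lambda) \), where on the right \( \adj_{r^{-1}}^{\ast} \) now denotes the conjugation pullback on all of \( C(\Lambda) \). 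Tensoring with \( \alpha_{r^{-1}}^{\ast} \) on the \( \pol(\mathbb{G}) \)-leg and feeding in \( \chi \) yields \eqref{eq:ab9447ac2beea186} at once.

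The lemma underlying \ref{item:38dcc9ada3065523} is that \( U_{\Lambda_{0}}(\mu^{-1}) \in \mathcal{U}(\mathscr{H}) \) intertwines \( U \) and \( \mu \cdot U \) for every \( \mu \in \Lambda_{0} \). Writing \( U = {(U_{\mathbb{G}})}_{12}{(U_{\Lambda_{0}})}_{13} \), it suffices to intertwine the two legs separately. On the \( \mathbb{G} \)-leg one has \( (\mu \cdot U)_{\mathbb{G}} = (\id \otimes \alpha_{\mu^{-1}}^{\ast})(U_{\mathbb{G}}) \), and the covariance relation \eqref{eq:1611871ef2500d9a} evaluated at \( r_{0} = \mu^{-1} \) reads exactly \( (U_{\Lambda_{0}}(\mu^{-1}) \otimes 1)U_{\mathbb{G}} = (\mu \cdot U)_{\mathbb{G}}\,(U_{\Lambda_{0}}(\mu^{-1}) \otimes 1) \). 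On the \( \Lambda_{0} \)-leg one has \( (\mu \cdot U)_{\Lambda_{0}}(t) = U_{\Lambda_{0}}(\mu^{-1}t\mu) \), so the homomorphism property of \( U_{\Lambda_{0}} \) gives \( U_{\Lambda_{0}}(\mu^{-1})\,U_{\Lambda_{0}}(t) = U_{\Lambda_{0}}(\mu^{-1}t) = U_{\Lambda_{0}}(\mu^{-1}t\mu)\,U_{\Lambda_{0}}(\mu^{-1}) \). Thus \( U_{\Lambda_{0}}(\mu^{-1}) \) is a unitary intertwiner, whence \( \mu \cdot U \simeq U \) and therefore \( \chi_{\mu} = \chi \).

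Finally I would assemble \ref{item:38dcc9ada3065523}. Write \( s = r\mu \) with \( \mu = r^{-1}s \in \Lambda_{0} \); then \( r\Lambda_{0}r^{-1} = s\Lambda_{0}s^{-1} \). Since \( \alpha^{\ast} \) is an \emph{anti}homomorphism while conjugation \( r \mapsto \adj_{r} \) is a homomorphism, from \( s^{-1} = \mu^{-1}r^{-1} \) we get \( \alpha_{s^{-1}}^{\ast} = \alpha_{r^{-1}}^{\ast}\alpha_{\mu^{-1}}^{\ast} \) and \( \adj_{s^{-1}}^{\ast} = \adj_{r^{-1}}^{\ast}\adj_{\mu^{-1}}^{\ast} \) on \( C(\Lambda) \), hence \( (\alpha_{s^{-1}}^{\ast} \otimes \adj_{s^{-1}}^{\ast}) = (\alpha_{r^{-1}}^{\ast} \otimes \adj_{r^{-1}}^{\ast})(\alpha_{\mu^{-1}}^{\ast} \otimes \adj_{\mu^{-1}}^{\ast}) \). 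Setting \( \Phi := (\id \otimes E_{\Lambda_{0}})(\chi) \) and applying \ref{item:798c8845ec549eea} to both \( \chi_{r} \) and \( \chi_{s} \), the identity \eqref{eq:48a692f4e0c50430} reduces, by injectivity of the isomorphism \( \alpha_{r^{-1}}^{\ast} \otimes \adj_{r^{-1}}^{\ast} \), to \( (\alpha_{\mu^{-1}}^{\ast} \otimes \adj_{\mu^{-1}}^{\ast})(\Phi) = \Phi \); as \( \mu \) conjugates \( \Lambda_{0} \) onto itself, this operation commutes with \( E_{\Lambda_{0}} \) and equals \( (\id \otimes E_{\Lambda_{0}})(\chi_{\mu}) = \Phi \) by \( \chi_{\mu} = \chi \). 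Then \eqref{eq:58712915d021c218} follows because \( E_{r\Lambda_{0}r^{-1}} = E_{s\Lambda_{0}s^{-1}} \) is injective, with the unitary equivalence of \( r \cdot U \) and \( s \cdot U \) read off from equality of characters. (Alternatively, \eqref{eq:58712915d021c218} is immediate from the cocycle identity \( s \cdot U = (r\mu) \cdot U = r \cdot (\mu \cdot U) \simeq r \cdot U \).) The main obstacle I anticipate is purely in bookkeeping: fixing the \( \mu \) versus \( \mu^{-1} \) direction and the antihomomorphism orderings correctly, and keeping the two roles of \( \adj^{\ast} \)---a map between distinct function algebras in \ref{item:798c8845ec549eea} versus the conjugation pullback on \( C(\Lambda) \)---cleanly apart.
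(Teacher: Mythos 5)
Your proof is correct, and for \ref{item:798c8845ec549eea} it coincides in substance with the paper's argument: the paper verifies \eqref{eq:ab9447ac2beea186} by expanding matrix coefficients, which is exactly your commutation identity \( E_{r\Lambda_{0}r^{-1}} \circ \adj_{r^{-1}}^{\ast} = \adj_{r^{-1}}^{\ast} \circ E_{\Lambda_{0}} \) applied to \( \chi_{r} = (\alpha_{r^{-1}}^{\ast} \otimes \adj_{r^{-1}}^{\ast})(\chi) \). For \ref{item:38dcc9ada3065523}, however, you take a genuinely different route. Both you and the paper reduce the claim, via \ref{item:798c8845ec549eea} and the (anti)homomorphism composition laws, to the invariance of \( (\id \otimes E_{\Lambda_{0}})\chi \) under \( \alpha_{s_{0}}^{\ast} \otimes \adj_{s_{0}}^{\ast} \) for \( s_{0} \in \Lambda_{0} \) (the paper's \eqref{eq:52f9e26b7886c145}). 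The paper proves this invariance by a character computation: expand in matrix units, reindex the sum by \( r_{0} \mapsto s_{0}r_{0}s_{0}^{-1} \), and use cyclicity of the trace together with the covariance relation in matrix-coefficient form \eqref{eq:9a914bb57b68de88} to cancel \( U_{\Lambda_{0}}(s_{0}^{-1})U_{\Lambda_{0}}(s_{0}) \). You instead isolate the structural fact underlying that computation: \( U_{\Lambda_{0}}(\mu^{-1}) \) is a unitary element of \( \morph_{\mathbb{G}\rtimes\Lambda_{0}}(U, \mu\cdot U) \) for every \( \mu \in \Lambda_{0} \), with covariance handling the \( \mathbb{G} \)-leg, the homomorphism property of \( U_{\Lambda_{0}} \) handling the \( \Lambda_{0} \)-leg, and Proposition~\ref{prop:50b58e1fd8af4aa0} (or the factorization \( U = {(U_{\mathbb{G}})}_{12}{(U_{\Lambda_{0}})}_{13} \)) assembling the two. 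Taking characters of your intertwining relation recovers precisely the paper's trace identity, so the two proofs share the same kernel, namely the covariance condition \eqref{eq:1611871ef2500d9a}; but yours delivers the final clause of the lemma---the unitary equivalence of \( r \cdot U \) and \( s \cdot U \)---directly from \( s\cdot U = r\cdot(\mu\cdot U) \simeq r\cdot U \), without invoking the fact that equal characters imply equivalence, whereas the paper obtains the equivalence only as a consequence of the character identity \eqref{eq:58712915d021c218}. Conversely, the paper's computation never needs to name an intertwiner and stays entirely inside the character calculus, which is the currency used throughout \S\S~\ref{sec:d471d5544009eb55}--\ref{sec:51325cc5d5808588}.
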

\begin{proof}
  Using the same notations as in the proof of
  Proposition~\ref{prop:56963e422c66f26b}, it is clear that
  \begin{subequations}
    \begin{equation}
      \label{eq:5c2a358aceaa9749}
      {(r \cdot U)}_{\mathbb{G}}
      = \sum_{i,j = 1}^{d} e_{i,j} \otimes \alpha_{r^{-1}}^{\ast}(u_{ij}),
    \end{equation}
    \begin{equation}
      \label{eq:bb7d35412572cc19}
      {(r \cdot U)}_{r \Lambda_{0} r^{-1}}
      = \sum_{r_{0} \in \Lambda_{0}} U_{\Lambda_{0}}(r_{0})
      \otimes \delta_{rr_{0}r^{-1}}.
    \end{equation}
  \end{subequations}
  Calculating in \( \pol(\mathbb{G}) \otimes C( \Lambda ) \), we have
  \begin{equation}
    \label{eq:73b5c68e7ad31238}
    \begin{split}
      (\id \otimes E_{r\Lambda_{0}r^{-1}}) \chi_{r} %
      &= \sum_{i,j=1}^{d} \sum_{r_{0} \in \Lambda_{0}}
      \tr\bigl(e_{i,j}U_{\Lambda_{0}}(r_{0})\bigr)
      \otimes \alpha_{r^{-1}}^{\ast}(u_{ij}) \otimes \delta_{rr_{0}r^{-1}} \\
      &= \bigl( \alpha_{r^{-1}}^{\ast} \otimes \adj_{r^{-1}}^{\ast} \bigr) %
      \sum_{i,j=1}^{d} \sum_{r_{0} \in \Lambda_{0}} \tr\bigl(e_{i,j}
      U_{\Lambda_{0}}(r_{0})\bigr) \otimes u_{ij} \otimes \delta_{r_{0}} \\
      &= \bigl( \alpha_{r^{-1}}^{\ast} \otimes \adj_{r^{-1}} \bigr) %
      \left[(\id \otimes E_{\Lambda_{0}}) \chi\right].
    \end{split}
  \end{equation}
  This proves \ref{item:798c8845ec549eea}.

  By \ref{item:798c8845ec549eea}, to establish \ref{item:38dcc9ada3065523}, it
  suffices to show that
  \begin{equation}
    \label{eq:52f9e26b7886c145}
    \forall s_{0} \in \Lambda_{0}, \quad
    (\id \otimes E_{\Lambda_{0}}) \chi = %
    \left( \alpha_{s_{0}}^{\ast} \otimes \adj_{s_{0}}^{\ast} \right) %
    \left[(\id \otimes E_{\Lambda_{0}}) \chi\right].
  \end{equation}
  Calculating the right side gives
  \begin{equation}
    \label{eq:902eb3db52bc8ac7}
    \begin{split}
      & \leadmathskip \bigl( \alpha_{s_{0}}^{\ast} \otimes \adj_{s_{0}}^{\ast}
      \bigr) %
      \left[(\id \otimes E_{\Lambda_{0}}) \chi\right] \\
      &= \sum_{i,j=1}^{d} \sum_{r_{0} \in \Lambda_{0}} \tr\bigl(
      e_{i,j}U_{\Lambda_{0}}(r_{0})\bigr) \otimes \alpha_{s_{0}}^{\ast}(u_{ij})
      \otimes
      \delta_{s_{0}^{-1}r_{0}s_{0}} \\
      &= \sum_{i,j=1}^{d} \sum_{r_{0} \in \Lambda_{0}} \tr\bigl(
      e_{i,j}U_{\Lambda_{0}}(s_{0}r_{0}s_{0}^{-1}) \bigr) \otimes
      \alpha_{s_{0}}^{\ast}(u_{ij}) \otimes \delta_{r_{0}} \\
      &= \sum_{i,j=1}^{d} \sum_{r_{0} \in \Lambda_{0}} \tr\Bigl(U(r_{0})
      U_{\Lambda_{0}}(s_{0}^{-1}) e_{i,j} U(s_{0}) \Bigr) \otimes
      \alpha_{s_{0}}^{\ast}(u_{ij}) \otimes \delta_{r_{0}}.
    \end{split}
  \end{equation}
  Since \( U_{\Lambda_{0}} \) and \( U_{\mathbb{G}} \) are covariant, we have
  \begin{equation}
    \label{eq:9a914bb57b68de88}
    \sum_{i,j}^{d} U(s_{0}) e_{i,j} \otimes u_{ij} = \sum_{i,j=1}^{d}
    e_{i,j}U(s_{0}) \otimes \alpha^{\ast}_{s_{0}}(u_{ij}).
  \end{equation}
  Combining~\eqref{eq:902eb3db52bc8ac7} and~\eqref{eq:9a914bb57b68de88}, we have
  \begin{equation}
    \label{eq:a253f7226c88cdef}
    \begin{split}
      & \leadmathskip \bigl( \alpha_{s_{0}}^{\ast} \otimes \adj_{s_{0}}^{\ast}
      \bigr) %
      \left[(\id \otimes E_{\Lambda_{0}}) \chi\right] \\
      &= \sum_{i,j=1}^{d} \sum_{r_{0} \in \Lambda_{0}} \tr\Bigl(U(r_{0})
      U_{\Lambda_{0}}(s_{0}^{-1}) e_{i,j} U(s_{0}) \Bigr) \otimes
      \alpha_{s_{0}}^{\ast}(u_{ij}) \otimes \delta_{r_{0}} \\
      &= \sum_{i,j=1}^{d} \sum_{r_{0} \in \Lambda_{0}} \tr\Bigl(U(r_{0})
      U_{\Lambda_{0}}(s_{0}^{-1}) U(s_{0}) e_{i,j} \Bigr) \otimes
      u_{ij} \otimes \delta_{r_{0}} \\
      &= \sum_{i,j=1}^{d} \sum_{r_{0} \in \Lambda_{0}} \tr\Bigl(U(r_{0}) e_{i,j}
      \Bigr) \otimes u_{ij} \otimes \delta_{r_{0}} \\
      &= \sum_{i,j=1}^{d} \sum_{r_{0} \in \Lambda_{0}} \tr\Bigl(e_{i,j} U(r_{0})
      \Bigr) \otimes u_{ij} \otimes \delta_{r_{0}} \\
      &= (\id \otimes E_{\Lambda_{0}}) \chi.
    \end{split}
  \end{equation}
  This establishes~\eqref{eq:52f9e26b7886c145} and proves
  \ref{item:38dcc9ada3065523}.
\end{proof}

\begin{rema}
  \label{rema:26d6215da29584a1}
  By Lemma~\ref{lemm:96218436efb1ec44} \ref{item:38dcc9ada3065523} and
  Proposition~\ref{prop:56963e422c66f26b}, one can in fact choose any complete
  set \( L \subseteq \Lambda \) of representatives of the left coset space
  \( \Lambda / \Lambda_{0} \), and the character
  formula~\eqref{eq:0391706cde602b93} can then be written more concisely as
  \begin{equation}
    \label{eq:88bd62f11469fd21}
    \chi_{W} =
    \sum_{r \in L} (\id_{A} \otimes E_{r \Lambda_{0} r^{-1}}) \chi_{r}.
  \end{equation}
  In the classical case where \( \mathbb{G} \) is a genuine compact group, one
  can easily check that the usual character formula for the representation
  induced by a representation of an open subgroup takes the
  form~\eqref{eq:88bd62f11469fd21}. The reason we
  prefer~\eqref{eq:0391706cde602b93} is that it does not involve a seemingly
  arbitrary choice of a complete set of representatives \( L \) for
  \( \Lambda / \Lambda_{0} \), and thus, in the author's opinion, is more
  aesthetically pleasing. One might also use this choice of left coset
  representatives to fabric the induced representation. However, in our more
  symmetric approach (cf.\ \S~\ref{sec:3e39eff392b139a7}), everything seems more
  natural, and the underlying group action of \( \Lambda \) on the various
  characters \( \chi_{r}, r \in \Lambda \) becomes more transparent
  in~\eqref{eq:0391706cde602b93}, and we hope this hidden symmetry will keep the
  reader from losing himself/herself in the details of the tedious calculations
  to be presented later.
\end{rema}

\section{Dimension of the intertwiner space of induced representations}
\label{sec:51325cc5d5808588}

Let \( \Theta, \Xi \) be subgroups of \( \Lambda \),
\( U \in \mathcal{B}(\mathscr{H}) \otimes \pol(\mathbb{G}) \otimes C(\Theta) \)
a finite dimensional unitary representation of \( \mathbb{G} \rtimes \Theta \),
\( W \in \mathcal{B}(\mathscr{H}) \otimes \pol(\mathbb{G}) \otimes C(\Xi) \) a
finite dimensional unitary representation of \( \mathbb{G} \rtimes \Xi \). For
the sake of brevity, we denote the induced representation
\( \indrep_{\mathbb{G} \rtimes \Theta}^{\mathbb{G} \rtimes \Lambda}(U) \) simply
by \( \indrep(U) \), and \( \indrep(W) \) has the similar obvious
meaning. Equipped with the character formula established in
\S~\ref{sec:d471d5544009eb55}, one naturally wonders how can we calculate
\( \dim \morph_{\mathbb{G} \rtimes \Lambda}\bigl(\indrep(U), \indrep(W)\bigr) \)
in terms of some simpler data. This section focuses on this calculation, and the
result here will play an important role in proving the irreducibility of some
induced representations (as it turns out, these are all irreducible
representations of \( \mathbb{G} \rtimes \Lambda \) up to equivalence) as well
as our later calculation of the fusion rules.

For any representation \( \rho \), we use \( \chi( \rho ) \) to denote the
character of the representation. We denote the Haar state on \( \mathbb{G} \) by
\( h \), and the Haar state on \( \mathbb{G} \rtimes \Lambda_{0} \) by
\( h^{\Lambda_{0}} \) whenever \( \Lambda_{0} \) is a subgroup of \( \Lambda \).

By the general representation theory of compact quantum groups, we have
\begin{equation}
  \label{eq:a77b87f9f0cd1a15}
  \dim \morph_{\mathbb{G} \rtimes \Lambda}\bigl(\indrep(U), \indrep(W)\bigr) %
  = h^{\Lambda}\bigl({[\chi(\indrep(U))]}^{\ast}{[\chi(\indrep(W))]}\bigr).
\end{equation}
By Proposition~\ref{prop:56963e422c66f26b}, for each \( r \in \Lambda \), we
have a representation \( r \cdot U \) (resp.\ \( r \cdot W \)) of
\( \mathbb{G} \rtimes r \Theta r^{-1} \) (resp.\
\( \mathbb{G} \rtimes r\Xi r^{-1} \)), and combined
with~\eqref{eq:a77b87f9f0cd1a15}, we have
\begin{equation}
  \label{eq:5d0471df9a2c42d1}
  \begin{split}
    & \leadmathskip \dim \morph_{\mathbb{G}}(\indrep(U), \indrep(W)) \\
    &= \frac{1}{\abs*{\Theta} \cdot \abs*{\Xi}} \sum_{r, s \in \Lambda} %
    h^{\Lambda} \left( {[(\id \otimes E_{r \Theta r^{-1}}) \chi(r \cdot
        U)]}^{\ast} %
      {[(\id \otimes E_{s \Xi s^{-1}}) \chi(s \cdot W)]}\right).
  \end{split}
\end{equation}
\begin{nota}
  \label{nota:ce8ce7c22fef0d7f}
  To simplify our notations, let
  \( \Lambda(r,s) := r \Theta r^{-1} \cap s \Xi s^{-1} \) for any
  \( r, s \in \Lambda \).
\end{nota}

\begin{lemm}
  \label{lemm:c118e0a0bf93e9c8}
  Using the above notations, for any \( r, s \in \Lambda \), we have
  \begin{equation}
    \label{eq:959b778f19bf0de1}
    \begin{split}
      & \leadmathskip h^{\Lambda}\left( {[(\id \otimes E_{r \Theta r^{-1}})
          \chi(r \cdot U)]}^{\ast} %
        {[(\id \otimes E_{s \Xi s^{-1}}) \chi(s \cdot W)]}\right) \\
      &= \frac{1}{[\Lambda \colon \Lambda(r, s)]} %
      \dim \morph_{\mathbb{G} \rtimes \Lambda(r,s)} %
      \left((r \cdot U) \vert_{\mathbb{G} \rtimes \Lambda(r,s)}, %
        (s \cdot W) \vert_{\mathbb{G} \rtimes \Lambda(r,s)}\right).
    \end{split}
  \end{equation}
\end{lemm}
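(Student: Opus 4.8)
The plan is to reduce both sides of~\eqref{eq:959b778f19bf0de1} to the \emph{same} weighted sum of Haar-state pairings over $\mathbb{G}$, so that the asserted factor $1/[\Lambda\colon\Lambda(r,s)]$ appears purely as the discrepancy between the two Haar normalizations involved. Since $r\cdot U$ is a representation of $\mathbb{G}\rtimes r\Theta r^{-1}$, its character lies in $\pol(\mathbb{G})\otimes C(r\Theta r^{-1})$, so I would write $\chi(r\cdot U)=\sum_{p}\chi(r\cdot U)_p\otimes\delta_p$ with $\chi(r\cdot U)_p\in\pol(\mathbb{G})$ automatically vanishing for $p\notin r\Theta r^{-1}$, and similarly $\chi(s\cdot W)=\sum_q\chi(s\cdot W)_q\otimes\delta_q$ supported on $s\Xi s^{-1}$. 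The operators $E_{r\Theta r^{-1}}$ and $E_{s\Xi s^{-1}}$ merely reinterpret these as elements of $\pol(\mathbb{G})\otimes C(\Lambda)$ by extension by zero, losing no information.

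For the left side, I would compute the product inside $h^{\Lambda}$. Each $\delta_p$ is a self-adjoint idempotent with $\delta_p\delta_q=\delta_{p,q}\delta_p$ in $C(\Lambda)$, so the product collapses to
\[
[(\id\otimes E_{r\Theta r^{-1}})\chi(r\cdot U)]^{\ast}[(\id\otimes E_{s\Xi s^{-1}})\chi(s\cdot W)]=\sum_{p}\chi(r\cdot U)_p^{\ast}\,\chi(s\cdot W)_p\otimes\delta_p,
\]
where only $p\in r\Theta r^{-1}\cap s\Xi s^{-1}=\Lambda(r,s)$ contribute (every other term carries a vanishing factor). Applying the Haar-state formula~\eqref{eq:60aee55ee4c2168e} for $\mathbb{G}\rtimes\Lambda$ then yields
\[
h^{\Lambda}(\cdots)=\abs*{\Lambda}^{-1}\sum_{p\in\Lambda(r,s)}h\bigl(\chi(r\cdot U)_p^{\ast}\,\chi(s\cdot W)_p\bigr).
\]

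For the right side, the key observation is that characters are natural under the restriction morphism $\varphi$ of Proposition~\ref{prop:e672494fc51990cf} (applied with ambient group $r\Theta r^{-1}$ and subgroup $\Lambda(r,s)$): since $(r\cdot U)\vert_{\mathbb{G}\rtimes\Lambda(r,s)}=(\id\otimes\varphi)(r\cdot U)$ and the character is obtained by applying $\tr$ on the first leg, which commutes with $\id\otimes\varphi$, one gets $\chi\bigl((r\cdot U)\vert_{\mathbb{G}\rtimes\Lambda(r,s)}\bigr)=\sum_{p\in\Lambda(r,s)}\chi(r\cdot U)_p\otimes\delta_p$ in $\pol(\mathbb{G})\otimes C(\Lambda(r,s))$, and likewise for $s\cdot W$. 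Feeding these into the dimension formula $\dim\morph_{\mathbb{G}\rtimes\Lambda(r,s)}(\rho,\sigma)=h^{\Lambda(r,s)}(\chi(\rho)^{\ast}\chi(\sigma))$ (the analogue of~\eqref{eq:a77b87f9f0cd1a15}) and collapsing the $\delta$-products exactly as before gives
\[
\dim\morph_{\mathbb{G}\rtimes\Lambda(r,s)}(\cdots)=\abs*{\Lambda(r,s)}^{-1}\sum_{p\in\Lambda(r,s)}h\bigl(\chi(r\cdot U)_p^{\ast}\,\chi(s\cdot W)_p\bigr).
\]
The two displays agree up to the factor $\abs*{\Lambda(r,s)}/\abs*{\Lambda}=1/[\Lambda\colon\Lambda(r,s)]$, which is precisely~\eqref{eq:959b778f19bf0de1}.

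The argument is essentially bookkeeping with supports; the one point requiring care is the identification of the restricted character with the truncation of $\chi(r\cdot U)$ to its $\Lambda(r,s)$-components, which hinges on the explicit form of $\varphi$ and on the fact that $\chi(r\cdot U)$ is genuinely supported on $r\Theta r^{-1}$. Once this is granted, the mismatch between the two Haar normalizations $\abs*{\Lambda}^{-1}$ and $\abs*{\Lambda(r,s)}^{-1}$ produces the index $[\Lambda\colon\Lambda(r,s)]$ automatically, and no deeper input is needed.
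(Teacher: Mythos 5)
Your proof is correct and follows essentially the same route as the paper's: both collapse the product using the orthogonal idempotents \( \delta_{p} \) so that only \( p \in \Lambda(r,s) \) survives, identify the result with the (extension by zero of the) product of the restricted characters, and obtain the index \( [\Lambda \colon \Lambda(r,s)] \) from the mismatch between the Haar normalizations \( \abs*{\Lambda}^{-1} \) and \( \abs*{\Lambda(r,s)}^{-1} \), together with the formula \( \dim \morph = h(\chi^{\ast}\chi) \). The only difference is presentational: the paper states the normalization comparison abstractly as \( h^{\Lambda} \circ (\id \otimes E_{\Lambda_{0}}) = [\Lambda \colon \Lambda_{0}]^{-1} h^{\Lambda_{0}} \) and then invokes it, whereas you inline the same computation by reducing both sides to an explicit common sum of \( h \)-pairings.
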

\begin{proof}
  For any subgroup \( \Lambda_{0} \) of \( \Lambda \), whenever
  \( f \in \pol(\mathbb{G}) \), \( r_{0} \in \Lambda_{0} \),
  by~\eqref{eq:1c67d64dafdd1948} in \S~\ref{sec:8407664c09390193}, we have
  \begin{equation}
    \label{eq:4428392246ea1824}
    h^{\Lambda}(f \otimes \delta_{r_{0}}) =  \frac{1}{\abs*{\Lambda}} h(f) %
    = \frac{1}{[\Lambda \colon \Lambda_{0}]}
    h^{\Lambda_{0}}(f \otimes \delta_{r_{0}}).
  \end{equation}
  Hence,
  \begin{equation}
    \label{eq:96c92a27cb9c8369}
    h^{\Lambda} \circ (\id \otimes E_{\Lambda_{0}})
    = \frac{1}{[\Lambda \colon \Lambda_{0}]} h^{\Lambda_{0}}.
  \end{equation}
  By definition and a straightforward calculation, we have
  \begin{subequations}
    \begin{equation}
      \label{eq:277754d3c7abc899}
      (\id \otimes E_{r \Theta r^{-1}}) \chi(r \cdot U) %
      = \sum_{t \in r \Theta r^{-1}} (\tr \otimes \id)
      \left({(r \cdot U)}_{\mathbb{G}}\bigl({(r \cdot
          U)}_{r \Theta r^{-1}}(t) \otimes 1\bigr)\right) \otimes \delta_{t},
    \end{equation}
    \begin{equation}
      \label{eq:b7f13b6a452d48ed}
      (\id \otimes E_{s \Xi s^{-1}}) \chi(s \cdot W) %
      = \sum_{t \in s \Xi s^{-1}} (\tr \otimes \id)
      \left( {(s \cdot W)}_{\mathbb{G}}
        \bigl({(s \cdot W)}_{s \Xi s^{-1}}(t)
        \otimes 1\bigr)\right) \otimes \delta_{t}.
    \end{equation}
  \end{subequations}
  It follows from~\eqref{eq:277754d3c7abc899} and~\eqref{eq:b7f13b6a452d48ed}
  that
  \begin{equation}
    \label{eq:413c676ff42e9238}
    \begin{split}
      & \leadmathskip {[(\id \otimes E_{r \Theta r^{-1}}) \chi(r \cdot
        U)]}^{\ast} %
      {[(\id \otimes E_{s \Xi s^{-1}}) \chi(s \cdot W)]} \\
      &= \sum_{t \in \Lambda(r,s)} \Big\{ {\left[ (\tr \otimes \id) \left({(r
              \cdot U)}_{\mathbb{G}}\bigl({(r \cdot U)}_{r \Theta r^{-1}}(t)
            \otimes
            1\bigr)\right) \right]}^{\ast} \\
      & \dmslskip \left[(\tr \otimes \id) \left({(s \cdot
            W)}_{\mathbb{G}}\bigl({(s \cdot W)}_{s \Xi s^{-1}}(t) \otimes
          1\bigr)\right) \right]\Big\} \otimes \delta_{t} \\
      &= (\id \otimes E_{\Lambda(r, s)}) \left( %
        {\left[ \chi\left( (r \cdot U) \vert_{\mathbb{G} \rtimes \Lambda(r, s)}
            \right)\right]}^{\ast} \left[ \chi \left( (s \cdot W)
            \vert_{\mathbb{G} \rtimes \Lambda(r,s)} \right) \right] %
      \right).
    \end{split}
  \end{equation}
  Taking \( \Lambda_{0} = \Lambda(r,s) \) in~\eqref{eq:96c92a27cb9c8369} and
  combining with~\eqref{eq:413c676ff42e9238} proves~\eqref{eq:959b778f19bf0de1}.
\end{proof}

\begin{prop}
  \label{prop:a8bf0fe189bec421}
  Using the above notations, we have
  \begin{equation}
    \label{eq:628ef117a2f86cef}
    \begin{split}
      & \leadmathskip \dim \morph_{\mathbb{G} \rtimes \Lambda} \bigl(\indrep(U),
      \indrep(W)\bigr) \\
      &= \frac{1}{\abs*{\Theta} \cdot \abs*{\Xi}} \sum_{r, s \in \Lambda} %
      \frac{1}{[\Lambda \colon \Lambda(r, s)]} \\
      & \dmslskip \dim \morph_{\mathbb{G} \rtimes \Lambda(r, s)} \left( {(r
          \cdot U)} \vert_{\mathbb{G} \rtimes \Lambda(r, s)}, {(s \cdot W)}
        \vert_{\mathbb{G} \rtimes \Lambda(r, s)} \right).
    \end{split}
  \end{equation}
\end{prop}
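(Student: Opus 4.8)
The plan is to obtain~\eqref{eq:628ef117a2f86cef} by combining the character-level reduction already recorded in~\eqref{eq:5d0471df9a2c42d1} with the pointwise evaluation supplied by Lemma~\ref{lemm:c118e0a0bf93e9c8}; the statement is, at this point, essentially a bookkeeping corollary of these two ingredients. First I would recall that, by the orthogonality of characters for the compact quantum group $\mathbb{G} \rtimes \Lambda$, the dimension $\dim \morph_{\mathbb{G} \rtimes \Lambda}(\indrep(U), \indrep(W))$ equals the Haar pairing $h^{\Lambda}({[\chi(\indrep(U))]}^{\ast}{[\chi(\indrep(W))]})$ of~\eqref{eq:a77b87f9f0cd1a15}. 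Expanding both induced characters by the character formula~\eqref{eq:0391706cde602b93} of Proposition~\ref{prop:56963e422c66f26b}---with $\Lambda_{0} = \Theta$ for $U$ and $\Lambda_{0} = \Xi$ for $W$---and using the linearity of $h^{\Lambda}$ together with the $\ast$-algebra structure of $\pol(\mathbb{G}) \otimes C(\Lambda)$, one arrives precisely at the normalized double sum~\eqref{eq:5d0471df9a2c42d1}, whose $(r,s)$-summand is the Haar pairing $h^{\Lambda}({[(\id \otimes E_{r \Theta r^{-1}}) \chi(r \cdot U)]}^{\ast}{[(\id \otimes E_{s \Xi s^{-1}}) \chi(s \cdot W)]})$.

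The second and final step is to substitute Lemma~\ref{lemm:c118e0a0bf93e9c8} into each of these summands. Since~\eqref{eq:959b778f19bf0de1} identifies the $(r,s)$-th Haar pairing with $[\Lambda : \Lambda(r,s)]^{-1} \dim \morph_{\mathbb{G} \rtimes \Lambda(r,s)}((r \cdot U)\vert_{\mathbb{G} \rtimes \Lambda(r,s)}, (s \cdot W)\vert_{\mathbb{G} \rtimes \Lambda(r,s)})$, inserting this expression termwise into~\eqref{eq:5d0471df9a2c42d1} and retaining the overall normalizing factor $(\abs*{\Theta}\cdot\abs*{\Xi})^{-1}$ yields exactly~\eqref{eq:628ef117a2f86cef}. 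No rearrangement of the index set is needed: both sums run over the same pairs $r,s \in \Lambda$, so the substitution is purely term-by-term.

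I do not expect a genuine obstacle here, since the substantive work lies entirely upstream---in the symmetric character formula of Proposition~\ref{prop:56963e422c66f26b} and, above all, in Lemma~\ref{lemm:c118e0a0bf93e9c8}, which is the quantum analogue of Mackey's intertwining-number theorem and where the intersection subgroups $\Lambda(r,s)$ and the index factors $[\Lambda : \Lambda(r,s)]^{-1}$ first appear. The only point demanding a little care is notational: one must use the ``all of $\Lambda$'' version~\eqref{eq:0391706cde602b93} of the character formula rather than the coset-representative version~\eqref{eq:88bd62f11469fd21}, so that the prefactors $\abs*{\Theta}^{-1}$ and $\abs*{\Xi}^{-1}$ and the double sum over $\Lambda \times \Lambda$ match correctly. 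With that consistency observed, the two-step combination is routine and completes the proof.
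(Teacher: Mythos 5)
Your proof is correct and follows exactly the paper's own argument: the paper's proof of Proposition~\ref{prop:a8bf0fe189bec421} is precisely the termwise substitution of Lemma~\ref{lemm:c118e0a0bf93e9c8} (equation~\eqref{eq:959b778f19bf0de1}) into the double sum~\eqref{eq:5d0471df9a2c42d1}, which was itself obtained from~\eqref{eq:a77b87f9f0cd1a15} and the character formula~\eqref{eq:0391706cde602b93} just as you describe. Your remark about using the symmetric version~\eqref{eq:0391706cde602b93} rather than the coset-representative version~\eqref{eq:88bd62f11469fd21} so that the prefactors match is a sensible point of care, and the rest is the same bookkeeping the paper performs.
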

\begin{proof}
  This follows directly from the formula~\eqref{eq:5d0471df9a2c42d1} and
  Lemma~\ref{lemm:c118e0a0bf93e9c8}.
\end{proof}

\begin{coro}
  \label{coro:a250dbc9bc328b12}
  Let \( \Lambda_{0} \) be a subgroup of \( \Lambda \), \( U \) a unitary
  representation of \( \mathbb{G} \rtimes \Lambda_{0} \), then the following are
  equivalent:
  \begin{enumerate}
  \item \label{item:365832347489d5b6} the unitary representation
    \( \indrep(U) \) of \( \mathbb{G} \rtimes \Lambda \) is irreducible;
  \item \label{item:b73a27a9d6dde808} for any \( r, s \in \Lambda \), posing
    \( \Lambda(r,s) = r\Lambda_{0}r^{-1} \cap s\Lambda_{0}s^{-1} \), we have
    \begin{equation}
      \label{eq:5e8e3176c982537a}
      \dim \morph_{\mathbb{G} \rtimes \Lambda(r, s)}
      \Bigl((r \cdot U) \vert_{\mathbb{G} \rtimes
        \Lambda(r,s)},
      (s \cdot U) \vert_{\mathbb{G} \rtimes \Lambda(r,s)}\Bigr)
      = \delta_{r \Lambda_{0}, s \Lambda_{0}};
    \end{equation}
  \item \label{item:ae7d985ccc925ef9} \( U \) is irreducible, and
    \begin{equation}
      \label{eq:64a005ad1b44f601}
      \begin{split}
        & \forall r, s \in \Lambda, \quad r^{-1} s \notin \Lambda_{0} \\
        &\implies \dim \morph_{\mathbb{G} \rtimes \Lambda(r, s)} \Bigl((r \cdot
        U) \vert_{\mathbb{G} \rtimes \Lambda(r,s)}, (s \cdot U)
        \vert_{\mathbb{G} \rtimes \Lambda(r,s)}\Bigr) = 0.
      \end{split}
    \end{equation}
  \end{enumerate}
  In particular, if any of the above conditions holds, then \( U \) itself is
  irreducible.
\end{coro}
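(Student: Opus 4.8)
The plan is to specialize the intertwiner formula of Proposition~\ref{prop:a8bf0fe189bec421} to the case $W = U$ and $\Theta = \Xi = \Lambda_{0}$, and then read off the three equivalences by analyzing which pairs $(r,s)$ contribute to the resulting sum. Concretely, that proposition yields
\begin{equation*}
  \dim \selfmorph_{\mathbb{G} \rtimes \Lambda}\bigl(\indrep(U)\bigr)
  = \frac{1}{\abs*{\Lambda_{0}}^{2}} \sum_{r,s \in \Lambda}
  \frac{1}{[\Lambda \colon \Lambda(r,s)]}
  \dim \morph_{\mathbb{G} \rtimes \Lambda(r,s)}
  \bigl((r \cdot U)\vert, (s \cdot U)\vert\bigr),
\end{equation*}
a sum of non-negative terms, and $\indrep(U)$ is irreducible precisely when this total equals $1$. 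The key point is that this single numerical identity simultaneously encodes all three conditions, once the \emph{diagonal} contribution is isolated.

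First I would compute the contribution of the pairs $(r,s)$ with $r^{-1}s \in \Lambda_{0}$ (equivalently $r\Lambda_{0} = s\Lambda_{0}$). For such a pair one has $r\Lambda_{0}r^{-1} = s\Lambda_{0}s^{-1}$, so $\Lambda(r,s) = r\Lambda_{0}r^{-1}$ has index $[\Lambda \colon \Lambda_{0}]$ and restriction to $\mathbb{G} \rtimes \Lambda(r,s)$ is no restriction at all. By Lemma~\ref{lemm:96218436efb1ec44}\ref{item:38dcc9ada3065523}, $r \cdot U$ and $s \cdot U$ are then unitarily equivalent representations of $\mathbb{G} \rtimes r\Lambda_{0}r^{-1}$, so the relevant intertwiner dimension equals $\dim\selfmorph(r \cdot U)$. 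Since $\alpha_{r^{-1}}^{\ast} \otimes \adj_{r^{-1}}^{\ast}$ is an isomorphism of compact quantum groups by Proposition~\ref{prop:e4e616da0a0995e6}, the assignment $U \mapsto r \cdot U$ is an equivalence of representation categories, hence $\dim\selfmorph(r \cdot U) = \dim\selfmorph(U)$. There are $\abs*{\Lambda}\abs*{\Lambda_{0}}$ such pairs, and substituting $[\Lambda \colon \Lambda_{0}] = \abs*{\Lambda}/\abs*{\Lambda_{0}}$ shows the diagonal terms sum to exactly $\dim\selfmorph(U)$, a quantity $\geq 1$ that equals $1$ iff $U$ is irreducible.

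Consequently the displayed total equals $\dim\selfmorph(U)$ plus the non-negative off-diagonal contribution coming from pairs with $r^{-1}s \notin \Lambda_{0}$. Therefore $\indrep(U)$ is irreducible (total $= 1$) if and only if $U$ is irreducible and every off-diagonal term vanishes, which is exactly \ref{item:365832347489d5b6} $\Leftrightarrow$ \ref{item:ae7d985ccc925ef9}. For \ref{item:365832347489d5b6} $\Leftrightarrow$ \ref{item:b73a27a9d6dde808}, I would observe that condition \ref{item:b73a27a9d6dde808} is merely a uniform restatement: the Kronecker delta $\delta_{r\Lambda_{0}, s\Lambda_{0}}$ forces the off-diagonal dimensions to be $0$, whereas on the diagonal the equivalence $r \cdot U \simeq s \cdot U$ of Lemma~\ref{lemm:96218436efb1ec44}\ref{item:38dcc9ada3065523} makes the required dimension equal to $\dim\selfmorph(U)$, which is $1$ exactly when $U$ is irreducible; taking $r = s = 1_{\Lambda}$ in \ref{item:b73a27a9d6dde808} recovers irreducibility of $U$ directly. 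The concluding ``in particular'' is then immediate, since each of the three conditions forces $\dim\selfmorph(U) = 1$.

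The main obstacle I anticipate is purely bookkeeping: verifying that the diagonal terms collapse to exactly $\dim\selfmorph(U)$ requires tracking the prefactor $\abs*{\Lambda_{0}}^{-2}$, the index $[\Lambda \colon \Lambda(r,s)] = [\Lambda \colon \Lambda_{0}]$, and the count $\abs*{\Lambda}\abs*{\Lambda_{0}}$ of diagonal pairs so that all cardinalities cancel cleanly. The only genuinely conceptual input is the use of Lemma~\ref{lemm:96218436efb1ec44}\ref{item:38dcc9ada3065523} to identify $(r \cdot U)\vert$ with $(s \cdot U)\vert$ on the diagonal, together with the remark that the quantum-group isomorphism of Proposition~\ref{prop:e4e616da0a0995e6} preserves endomorphism algebras.
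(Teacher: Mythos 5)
Your proposal is correct and follows essentially the same route as the paper's own proof: both specialize Proposition~\ref{prop:a8bf0fe189bec421} to \( \Theta = \Xi = \Lambda_{0} \), \( W = U \), use Lemma~\ref{lemm:96218436efb1ec44}~\ref{item:38dcc9ada3065523} together with Proposition~\ref{prop:e4e616da0a0995e6} to collapse the diagonal pairs \( r^{-1}s \in \Lambda_{0} \) to exactly \( \dim\selfmorph_{\mathbb{G} \rtimes \Lambda_{0}}(U) \) after the cardinalities cancel, and then read off all three equivalences from the resulting identity and the criterion that irreducibility means self-intertwiner dimension \( 1 \). Your handling of the equivalence with \ref{item:b73a27a9d6dde808} (delta forcing off-diagonal vanishing, diagonal terms equal to \( \dim\selfmorph_{\mathbb{G} \rtimes \Lambda_{0}}(U) \)) is a slightly more explicit spelling-out of what the paper leaves implicit, but it is the same argument.
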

\begin{proof}
  If \( r^{-1}s \in \Lambda_{0} \), then
  \( r \Lambda_{0} r^{-1} = s \Lambda_{0} s^{-1} \), so
  \( \Lambda(r, s) = r \Lambda_{0} r^{-1} = s \Lambda_{0} s^{-1} \).  By
  Proposition~\ref{prop:e4e616da0a0995e6}, we see that
  \begin{equation}
    \label{eq:10e362da86ac4546}
    \dim \morph_{\mathbb{G} \rtimes r \Lambda_{0} r^{-1}}
    \bigl(r \cdot U, r \cdot
    U\bigr) %
    = \dim \morph_{\mathbb{G} \rtimes \Lambda_{0}}(U, U).
  \end{equation}
  By Proposition~\ref{prop:a8bf0fe189bec421}, Lemma~\ref{lemm:96218436efb1ec44},
  and the above, we have
  \begin{displaymath}
    \begin{split}
      &\leadmathskip \dim \morph_{\mathbb{G}}\bigl(\indrep(U),
      \indrep(U)\bigr) \\
      &= \frac{1}{\abs*{\Lambda_{0}}^{2}} \sum_{\substack{r, s \in \Lambda, \\
          r^{-1}s \in \Lambda_{0}}} \frac{1}{[\Lambda \colon
        r\Lambda_{0}r^{-1}]}\dim \morph_{\mathbb{G} \rtimes r \Lambda_{0}r^{-1}}
      \bigl( r \cdot U, s \cdot U\bigr) \\
      & \dmslskip + \frac{1}{\abs*{\Lambda_{0}}^{2}} \sum_{\substack{r,s \in
          \Lambda, \\r^{-1}s \notin \Lambda_{0}}} \frac{1}{[\Lambda \colon
        \Lambda(r,s)]} \dim \morph_{\mathbb{G} \rtimes \Lambda(r, s)}\Bigl((r
      \cdot U) \vert_{\mathbb{G} \rtimes \Lambda(r,s)}, (s \cdot U)
      \vert_{\mathbb{G} \rtimes
        \Lambda(r,s)}\Bigr) \\
      &= \frac{1}{\abs*{\Lambda_{0}}^{2}} \sum_{\substack{r, s \in \Lambda, \\
          r^{-1}s \in \Lambda_{0}}} \dim \morph_{\mathbb{G} \rtimes r
        \Lambda_{0}r^{-1}}
      \bigl( r \cdot U, r \cdot U\bigr) \\
      & \dmslskip + \frac{1}{\abs*{\Lambda_{0}}^{2}} \sum_{\substack{r,s \in
          \Lambda, \\r^{-1}s \notin \Lambda_{0}}} \frac{1}{[\Lambda \colon
        \Lambda(r,s)]} \dim \morph_{\mathbb{G} \rtimes \Lambda(r, s)}\Bigl((r
      \cdot U) \vert_{\mathbb{G} \rtimes \Lambda(r,s)}, (s \cdot U)
      \vert_{\mathbb{G} \rtimes
        \Lambda(r,s)}\Bigr) \\
      &= \frac{1}{\abs*{\Lambda_{0}}^{2}} \sum_{\substack{r, s \in \Lambda, \\
          r^{-1}s \in \Lambda_{0}}} \frac{1}{[\Lambda \colon \Lambda_{0}]}
      \dim_{\mathbb{G} \rtimes
        \Lambda_{0}}(U, U) \\
      & \dmslskip + \frac{1}{\abs*{\Lambda_{0}}^{2}} \sum_{\substack{r,s \in
          \Lambda, \\r^{-1}s \notin \Lambda_{0}}} \frac{1}{[\Lambda \colon
        \Lambda(r,s)]} \dim \morph_{\mathbb{G} \rtimes \Lambda(r, s)}\Bigl((r
      \cdot U) \vert_{\mathbb{G} \rtimes \Lambda(r,s)}, (s \cdot U)
      \vert_{\mathbb{G} \rtimes
        \Lambda(r,s)}\Bigr) \\
      &= \frac{\abs*{\Lambda} \cdot \abs*{\Lambda_{0}}}{\abs*{\Lambda_{0}}^{2}
        \cdot [\Lambda \colon
        \Lambda_{0}]} \dim \morph_{\mathbb{G} \rtimes \Lambda_{0}}(U, U) \\
      &\dmslskip + \frac{1}{\abs*{\Lambda_{0}}^{2}} \sum_{\substack{r,s \in
          \Lambda, \\r^{-1}s \notin \Lambda_{0}}} \frac{1}{[\Lambda \colon
        \Lambda(r,s)]} \dim \morph_{\mathbb{G} \rtimes \Lambda(r, s)}\Bigl((r
      \cdot U) \vert_{\mathbb{G} \rtimes \Lambda(r,s)}, (s \cdot U)
      \vert_{\mathbb{G} \rtimes \Lambda(r,s)}\Bigr).
    \end{split}
  \end{displaymath}
  Since
  \( \abs*{\Lambda} \cdot \abs*{\Lambda_{0}} = \abs*{\Lambda_{0}}^{2} \cdot
  [\Lambda \colon \Lambda_{0}] \) and
  \begin{equation}
    \label{eq:b9b1f498b938935c}
    \begin{split}
      \dim\morph_{\mathbb{G} \rtimes r\Lambda_{0}r^{-1}} \bigl(r \cdot U, s
      \cdot U\bigr) %
      &= \dim\morph_{\mathbb{G} \rtimes r\Lambda_{0}r^{-1}} \bigl(r \cdot U, r
      \cdot U\bigr) \\
      &= \dim\morph_{\mathbb{G} \rtimes \Lambda_{0}}(U, U) %
      = \dim\selfmorph_{\mathbb{G} \rtimes \Lambda_{0}}(U)
    \end{split}
  \end{equation}
  whenever \( r^{-1}s \in \Lambda_{0} \) by Lemma~\ref{lemm:96218436efb1ec44}
  and Proposition~\ref{prop:e4e616da0a0995e6}, the above calculation yields
  \begin{equation}
    \label{eq:e005c2718206856e}
    \dim \selfmorph_{\mathbb{G} \rtimes \Lambda}\left(\indrep(U)\right)
    = \dim \selfmorph_{\mathbb{G} \rtimes \Lambda_{0}}(U) +
    \frac{1}{\abs*{\Lambda_{0}}^{2}} %
    \sum_{\substack{r,s \in \Lambda, \\r^{-1}s \notin \Lambda_{0}}}
    {d(r,s)  \over [\Lambda : \Lambda(r,s)]},
  \end{equation}
  where
  \begin{equation}
    \label{eq:49be7159b83b3989}
    d(r,s) :=  \dim \morph_{\mathbb{G} \rtimes \Lambda(r, s)}\Bigl((r \cdot U)
    \vert_{\mathbb{G} \rtimes \Lambda(r,s)}, (s \cdot U) \vert_{\mathbb{G}
      \rtimes \Lambda(r,s)}\Bigr).
  \end{equation}
  The corollary follows from \eqref{eq:b9b1f498b938935c}
  \eqref{eq:e005c2718206856e}, \eqref{eq:49be7159b83b3989} and the fact that a
  representation is irreducible if and only if the dimension of the space of its
  self-intertwiners is \( 1 \).
\end{proof}

\begin{rema}
  \label{rema:9e4edc5eecb3b25c}
  Corollary~\ref{coro:a250dbc9bc328b12} is the quantum analogue for Mackey's
  criterion for irreducibility.
\end{rema}

\section{The \texorpdfstring{\( C^{\ast} \)}{C\*}-tensor category
  \texorpdfstring{\( \mathcal{CSR}_{\Lambda_{0}} \)}{CSR\_Lambda0}}
\label{sec:deea2fa481e1836a}

We begin by recalling the notations in Proposition~\ref{prop:56963e422c66f26b}:
for any unitary representation
\( U_{\mathbb{G}} \in \mathcal{B}(\mathscr{H}) \otimes \pol(\mathbb{G}) \) of
\( \mathbb{G} \) on some finite dimensional Hilbert space \( \mathscr{H} \), and
any \( r \in \Lambda \), let \( r \cdot U_{\mathbb{G}} \) be the unitary
representation
\( (\id_{\mathscr{H}} \otimes \alpha^{\ast}_{r^{-1}})(U_{\mathbb{G}}) \) of
\( \mathbb{G} \) on the same space \( \mathscr{H} \). It is easy to see that
this defines a left group action of \( \Lambda \) on the proper class of all
unitary representations of \( \mathbb{G} \), and by passing to quotients, this
representation induces an action of \( \Lambda \) on \( \irr(\mathbb{G}) \).
From now on, whenever we talk about \( \Lambda \) acting on a unitary
representation \( U_{\mathbb{G}} \) of \( \mathbb{G} \), or on some class
\( x \in \irr(\mathbb{G}) \), we always mean these actions.

\begin{defi}
  \label{defi:5beb1f580d0ddd31}
  A subgroup \( \Lambda_{0} \) of \( \Lambda \) is called a general isotropy
  subgroup if there is some \( n \in \mathbb{N} \), such that \( \Lambda_{0} \)
  is an isotropy subgroup (subgroup of stabilizer for some point) for the
  \( n \)-fold product \( {[\irr(\mathbb{G})]}^{n} \) as a \( \Lambda \)-set; in
  other words, if there exists an \( n \)-tuple \( (x_{1}, \ldots, x_{n}) \)
  with all \( x_{i} \in \irr(\mathbb{G}) \), such that
  \begin{displaymath}
    \Lambda_{0} = \set*{r \in \Lambda \given
      \forall i = 1, \ldots, n, \quad r \cdot x_{i} = x_{i}}
    = \cap_{i=1}^{n} \Lambda_{x_{i}}.
  \end{displaymath}
  The finite (recall that \( \Lambda \) is finite) family of all general
  isotropy subgroups of \( \Lambda \) is denoted by \( \giso(\Lambda) \).
\end{defi}

The following proposition is an immediate consequence of properties of
\( \Lambda \)-sets and Definition~\ref{defi:5beb1f580d0ddd31}.
\begin{prop}
  \label{prop:81d9440166ac6304}
  The family \( \giso(\Lambda) \) is stable under intersection and conjugation
  by elements of \( \Lambda \). \qed
\end{prop}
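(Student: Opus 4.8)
The plan is to reduce both stability properties to entirely formal facts about the $\Lambda$-set structure on the powers $[\irr(\mathbb{G})]^{n}$, since by Definition~\ref{defi:5beb1f580d0ddd31} a general isotropy subgroup is by definition nothing but an intersection of finitely many point-stabilizers $\Lambda_{x_{i}}$ with $x_{i} \in \irr(\mathbb{G})$. Thus I only need to show that the class of such finite intersections of stabilizers is itself closed under intersection and under conjugation.

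For stability under intersection, I would take two general isotropy subgroups $\Lambda_{0} = \bigcap_{i=1}^{n} \Lambda_{x_{i}}$ and $\Lambda_{0}' = \bigcap_{j=1}^{m} \Lambda_{y_{j}}$, witnessed respectively by tuples $(x_{1}, \ldots, x_{n})$ and $(y_{1}, \ldots, y_{m})$ with all entries in $\irr(\mathbb{G})$. Concatenating them produces the $(n+m)$-tuple $(x_{1}, \ldots, x_{n}, y_{1}, \ldots, y_{m}) \in [\irr(\mathbb{G})]^{n+m}$, whose stabilizer for the product $\Lambda$-action is exactly $\Lambda_{0} \cap \Lambda_{0}'$. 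Hence $\Lambda_{0} \cap \Lambda_{0}' \in \giso(\Lambda)$, with no further computation required.

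For stability under conjugation, I would fix $r \in \Lambda$ and $\Lambda_{0} = \bigcap_{i=1}^{n} \Lambda_{x_{i}} \in \giso(\Lambda)$, and invoke the standard transformation law for stabilizers under a group action, namely $\Lambda_{r \cdot x} = r \Lambda_{x} r^{-1}$ for every $x \in \irr(\mathbb{G})$. This is immediate from the definition of the action: an element $s$ lies in $\Lambda_{r \cdot x}$ iff $(sr) \cdot x = r \cdot x$, iff $(r^{-1} s r) \cdot x = x$, iff $r^{-1} s r \in \Lambda_{x}$, iff $s \in r \Lambda_{x} r^{-1}$. Since conjugation by the fixed element $r$ is a group automorphism of $\Lambda$, it commutes with intersections, so that
\[
  r \Lambda_{0} r^{-1} = \bigcap_{i=1}^{n} r \Lambda_{x_{i}} r^{-1} = \bigcap_{i=1}^{n} \Lambda_{r \cdot x_{i}}.
\]
Because $\Lambda$ preserves $\irr(\mathbb{G})$, each $r \cdot x_{i}$ is again in $\irr(\mathbb{G})$, so the tuple $(r \cdot x_{1}, \ldots, r \cdot x_{n})$ witnesses $r \Lambda_{0} r^{-1} \in \giso(\Lambda)$.

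I do not anticipate any genuine obstacle: the only step that is not pure bookkeeping is the stabilizer transformation law $\Lambda_{r \cdot x} = r \Lambda_{x} r^{-1}$, and that is a one-line verification directly from the definition of the $\Lambda$-action on $\irr(\mathbb{G})$. Everything else is a formal manipulation of finite intersections of subgroups, which is precisely why the statement can be recorded as an immediate consequence of the general theory of $\Lambda$-sets.
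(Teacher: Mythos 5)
Your proof is correct and is exactly the argument the paper has in mind: the paper records this proposition without proof as an ``immediate consequence of properties of \( \Lambda \)-sets and Definition~\ref{defi:5beb1f580d0ddd31}'', and your concatenation-of-tuples argument for intersections together with the stabilizer transformation law \( \Lambda_{r \cdot x} = r \Lambda_{x} r^{-1} \) for conjugation is precisely that omitted verification.
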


\begin{defi}
  \label{defi:f58794ba929ea9e8}
  Let \( \Lambda_{0} \) be a general isotropy subgroup of \( \Lambda \). A
  covariant system of representations (or CSR for short) subordinate to
  \( \Lambda_{0} \) is a triple \( (\mathscr{H}, u, w) \), where
  \begin{itemize}
  \item \label{defi:6601d20a6cbd026a} \( \mathscr{H} \) is a finite dimensional
    Hilbert space;
  \item \label{defi:b2c3ce6ab749faa1} \( u \) is a unitary representation of
    \( \mathbb{G} \) on \( \mathscr{H} \);
  \item \label{defi:475cb02743141d73} \( w \) is a unitary representation of
    \( \Lambda_{0} \) on \( \mathscr{H} \),
  \end{itemize}
  such that \( u \) and \( w \) are covariant. In this paper, CSRs are often
  denoted by bold faced uppercase letters like
  \( \mathbf{A}, \mathbf{B}, \mathbf{C}, \ldots \)(mostly \( \mathbf{S} \)) with
  possible subscripts.
\end{defi}

By Proposition~\ref{prop:3693cca5392c7dd3}, the covariant systems of
representations subordinate to a general isotropy subgroup \( \Lambda_{0} \)
correspond bijectively to the class of unitary representations of
\( \mathbb{G} \rtimes \Lambda_{0} \), via
\begin{displaymath}
  (\mathscr{H}, u, w) \mapsto u_{12}w_{13}
\end{displaymath}
in one direction, and
\begin{displaymath}
  U_{\mathscr{H}} \mapsto (\mathscr{H}, U_{\mathscr{H}, \mathbb{G}},
  U_{\mathscr{H}, \Lambda_{0}})
\end{displaymath}
in the other, where \( \mathscr{H} \) is the underlying space of the
representation \( U_{\mathscr{H}} \) of \( \mathbb{G} \rtimes \Lambda_{0} \),
and \( U_{\mathscr{H},\mathbb{G}} \), \( U_{\mathscr{H}, \Lambda_{0}} \) are the
restrictions of \( U_{\mathscr{H}} \) to \( \mathbb{G} \) and \( \Lambda_{0} \)
respectively. Using this bijection, we can transport the rigid
\( C^{\ast} \)-tensor category structure on
\( \repcat(\mathbb{G} \rtimes \Lambda_{0}) \)--the category of all finite
dimensional unitary representations of \( \mathbb{G} \rtimes \Lambda_{0} \), to
the class of covariant systems of representations subordinate to
\( \Lambda_{0} \), thereby getting a rigid \( C^{\ast} \)-tensor category
\( \mathcal{CSR}_{\Lambda_{0}} \) whose objects are CSRs subordinate to
\( \Lambda_{0} \).

To make this transport of categorical structures less tautological, we make a
convenient characterization of the morphisms in
\( \mathcal{CSR}_{\Lambda_{0}} \).

\begin{prop}
  \label{prop:50b58e1fd8af4aa0}
  Fix a general isotropy subgroup \( \Lambda_{0} \) of \( \Lambda \). For
  \( i = 1, 2 \), let \( \mathbf{S}_{i} = (\mathscr{H}_{i}, u_{i}, w_{i}) \) be
  a CSR subordinate to \( \Lambda_{0} \),
  \( U_{i} = {(u_{i})}_{12}{(w_{i})}_{13} \) the corresponding unitary
  representation of \( \mathbb{G} \rtimes \Lambda_{0} \),
  \( S \in \mathcal{B}(\mathscr{H}_{1}, \mathscr{H}_{2}) \). Then
  \( S \in \morph_{\mathbb{G} \rtimes \Lambda_{0}}\bigl(U_{1}, U_{2}\bigr) \) if
  and only if
  \begin{equation}
    \label{eq:0f111a651ccdef1c}
    S \in \morph_{\mathbb{G}}(u_{1}, u_{2}) \cap \morph_{\Lambda_{0}}(w_{1},
    w_{2}).
  \end{equation}
\end{prop}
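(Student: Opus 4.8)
The plan is to prove the two implications separately, using the product decomposition $U_i = {(u_i)}_{12}{(w_i)}_{13}$ together with the counit-restriction maps introduced in \S\ref{sec:cac0f6b0354689f8}. The whole statement is, in effect, a repackaging of Proposition~\ref{prop:3693cca5392c7dd3} in the language of intertwiners, so no genuinely new input is needed beyond careful bookkeeping of the tensor legs.

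For the \emph{if} direction I would assume $S \in \morph_{\mathbb{G}}(u_1, u_2)$ and $S \in \morph_{\Lambda_{0}}(w_1, w_2)$ and verify the single relation $(S \otimes 1 \otimes 1) U_1 = U_2 (S \otimes 1 \otimes 1)$ directly in $\mathcal{B}(\mathscr{H}_1, \mathscr{H}_2) \otimes \pol(\mathbb{G}) \otimes C(\Lambda_{0})$. Lifting the two hypotheses to the three-leg picture gives $(S \otimes 1 \otimes 1){(u_1)}_{12} = {(u_2)}_{12}(S \otimes 1 \otimes 1)$ and $(S \otimes 1 \otimes 1){(w_1)}_{13} = {(w_2)}_{13}(S \otimes 1 \otimes 1)$, since $S \otimes 1 \otimes 1$ commutes past whichever leg it does not act on. Multiplying the first relation on the right by ${(w_1)}_{13}$ and pushing $S$ through with the second then yields the claim in one line.

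For the \emph{only if} direction I would exploit that $u_i = \res_{\mathbb{G}}(U_i)$ and $w_i = \res_{\Lambda_{0}}(U_i)$ are obtained from $U_i$ by applying the Hopf $\ast$-algebra morphisms $\id_{\pol(\mathbb{G})} \otimes \epsilon_{\Lambda_{0}}$ and $\epsilon_{\mathbb{G}} \otimes \id_{C(\Lambda_{0})}$ of Lemma~\ref{lemm:d93132078264edaa} (applied with $\Lambda_{0}$ in place of $\Lambda$), as recorded in \eqref{eq:538a1df8eb219db1} and \eqref{eq:ccf0a0e79115abc1}. Concretely, I would apply $\id \otimes \id \otimes \epsilon_{\Lambda_{0}}$ (resp.\ $\id \otimes \epsilon_{\mathbb{G}} \otimes \id$) to both sides of $(S \otimes 1 \otimes 1) U_1 = U_2 (S \otimes 1 \otimes 1)$. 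Since $S \otimes 1 \otimes 1$ acts trivially on the collapsed leg and the counit of a unitary representation is the identity operator, the left-hand map returns $(S \otimes 1) u_1 = u_2 (S \otimes 1)$ and the right-hand map returns $(S \otimes 1) w_1 = w_2 (S \otimes 1)$; this is exactly the functoriality of restriction along a morphism of Hopf $\ast$-algebras, which carries the intertwiner $S \colon U_1 \to U_2$ to intertwiners $S \colon u_1 \to u_2$ and $S \colon w_1 \to w_2$.

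The only step requiring care is verifying that $\id \otimes \id \otimes \epsilon_{\Lambda_{0}}$ applied to the product ${(u_i)}_{12}{(w_i)}_{13}$ indeed returns $u_i$ (and symmetrically for $w_i$), which rests on $(\id \otimes \epsilon_{\Lambda_{0}})(w_i) = \id_{\mathscr{H}_i}$ and $(\id \otimes \epsilon_{\mathbb{G}})(u_i) = \id_{\mathscr{H}_i}$. These are precisely the computations \eqref{eq:538a1df8eb219db1}--\eqref{eq:ccf0a0e79115abc1}, so I do not expect any real obstacle; the main thing to get right is the leg indexing so that the partial counits collapse the correct factor while leaving $S$ untouched.
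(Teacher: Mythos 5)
Your proposal is correct and follows essentially the same route as the paper's own proof: the sufficiency is the same one-line leg-pushing computation with \( (S \otimes 1 \otimes 1){(u_{1})}_{12}{(w_{1})}_{13} \), and the necessity is the same application of the partial counits \( \id \otimes \id \otimes \epsilon_{\Lambda_{0}} \) and \( \id \otimes \epsilon_{\mathbb{G}} \otimes \id \) to the intertwining relation, using that the counit evaluated on a unitary representation gives the identity operator. No gap.
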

\begin{proof}
  The condition is easily seen to be sufficient. Indeed, if condition
  \eqref{eq:0f111a651ccdef1c} holds, then
  \begin{equation}
    \label{eq:4531bf62f73243c8}
    (S \otimes 1) w_{1} = w_{2} (S \otimes 1), \quad \\
    (S \otimes 1) u_{1} = u_{2} (S \otimes 1).
  \end{equation}
  Thus
  \begin{equation}
    \label{eq:4cf21826d634f124}
    \begin{split}
      (S \otimes 1 \otimes 1) U_{1} &= (S \otimes 1 \otimes 1) {(u_{1})}_{12}
      {(w_{1})}_{13} %
      = {(u_{2})}_{12} (S \otimes 1 \otimes 1) {(w_{1})}_{13} \\
      &= {(u_{2})}_{12} {(w_{2})}_{13} (S \otimes 1 \otimes 1) %
      = U_{2} (S \otimes 1 \otimes 1).
    \end{split}
  \end{equation}
  This means exactly \( S \in \morph_{\mathbb{G}}(U_{1}, U_{2}) \).

  To show the necessity of this condition, let
  \( \epsilon_{\mathbb{G}} \colon \pol(\mathbb{G}) \rightarrow \mathbb{C} \) be
  the counit of the Hopf-\( \ast \)-algebra,
  \( \epsilon_{\Lambda_{0}} \colon C( \Lambda_{0} ) \rightarrow \mathbb{C} \)
  the counit for the Hopf \( \ast \)-algebra \( C(\Lambda_{0}) \). Since
  \( U_{i} \in \mathcal{B}(\mathscr{H}_{i}) \otimes \pol(\mathbb{G}) \otimes
  C(\Lambda_{0}) \) for \( i = 1,2 \) and
  \( S \in \morph_{\mathbb{G} \rtimes \Lambda_{0}}(U_{1}, U_{2}) \), we have
  \begin{equation}
    \label{eq:8ae967a0ff2d6b28}
    (S \otimes 1 \otimes 1)U_{1} = U_{2}(S \otimes 1 \otimes 1).
  \end{equation}
  Applying \( \id \otimes \id \otimes \epsilon_{\Lambda_{0}} \) on both sides
  of~\eqref{eq:8ae967a0ff2d6b28} yields
  \begin{equation}
    \label{eq:f4654de458889d3a}
    (S \otimes 1) u_{1} = u_{2}(S \otimes 1),
  \end{equation}
  which means \( S \in \morph_{\mathbb{G}}(u_{1}, u_{2}) \). Applying
  \( \id \otimes \epsilon_{\mathbb{G}} \otimes \id \) on both sides
  of~\eqref{eq:8ae967a0ff2d6b28} yields
  \begin{equation}
    \label{eq:968fbbfe5ba6c32d}
    (S \otimes 1) w_{1} = w_{1} (S \otimes 1),
  \end{equation}
  which means \( S \in \morph_{\Lambda_{0}}(w_{1}, w_{2}) \).
\end{proof}

We now define a pair of functors,
\begin{displaymath}
  \mathscr{R}_{\Lambda_{0}} \colon \mathcal{CSR}_{\Lambda_{0}}
  \rightarrow \repcat(\mathbb{G} \rtimes
  \Lambda_{0}) \qquad\text{ and }\qquad
  \mathscr{S}_{\Lambda_{0}} \colon
  \repcat(\mathbb{G} \rtimes \Lambda_{0}) \rightarrow
  \mathcal{CSR}_{\Lambda_{0}}
\end{displaymath}
between \( \mathcal{CSR}_{\Lambda_{0}} \) and
\( \repcat(\mathbb{G} \rtimes \Lambda_{0}) \) that reflects the transport of
categorical structures discussed above. On the object level, for any
\( (\mathscr{H}, u, w) \in \mathcal{CSR}_{\Lambda_{0}} \), let
\( \mathscr{R}_{\Lambda_{0}}(u, w) \) be the representation \( u_{12}w_{13} \)
of \( \mathbb{G} \rtimes \Lambda_{0} \) on \( \mathscr{H} \); for any unitary
representation \( U \in \repcat(\mathbb{G} \rtimes \Lambda_{0}) \) on
\( \mathscr{H}_{U} \), let \( \mathscr{S}_{\Lambda_{0}}(U) \) be the CSR
\( (\mathscr{H}_{U}, U_{\mathbb{G}}, U_{\Lambda_{0}}) \) where
\( U_{\mathbb{G}} \) (resp.\ \( U_{\Lambda_{0}} \)) is the restriction of
\( U \) onto \( \mathbb{G} \) (resp.\ \( \Lambda_{0} \)). On the morphism level,
both \( \mathscr{R}_{\Lambda_{0}} \) and \( \mathscr{S}_{\Lambda_{0}} \) act as
identity. By Proposition~\ref{prop:50b58e1fd8af4aa0} and
Proposition~\ref{prop:3693cca5392c7dd3}, \( \mathscr{R}_{\Lambda_{0}} \) and
\( \mathscr{S}_{\Lambda_{0}} \) are indeed well-defined functors inverses to
each other, and they are fiber functors (exact unitary tensor
functors~\cite[\S\S2.1, 2.2]{MR3204665}) simply because the rigid
\( C^{\ast} \)-tensor category structure on \( \mathcal{CSR}_{\Lambda_{0}} \) is
transported from that of \( \repcat(\mathbb{G} \rtimes \Lambda_{0}) \) via
\( \mathscr{S}_{\Lambda_{0}} \).

\begin{prop}
  \label{prop:ce7ae0d12013743b}
  For \( i = 1, 2 \), let
  \( \mathbf{S}_{i} = (\mathscr{H}_{i}, u_{i}, w_{i}) \in
  \mathcal{CSR}_{\Lambda_{0}} \),
  \( U_{i} = \mathscr{R}_{\Lambda_{0}}(\mathbf{S}_{i}) \in \repcat(\mathbb{G}
  \rtimes \Lambda_{0}) \), then
  \begin{equation}
    \label{eq:da2e2f508cac451d}
    \mathscr{S}_{\Lambda_{0}}(U_{1} \times U_{2}) %
    = (\mathscr{H}_{1} \otimes \mathscr{H}_{2},
    u_{1} \times u_{2}, w_{1} \times w_{2}) %
    = \mathbf{S}_{1} \otimes \mathbf{S}_{2} .
  \end{equation}
\end{prop}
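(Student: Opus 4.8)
The plan is to reduce the statement to the single fact that the functors $\mathscr{S}_{\Lambda_0}$ is built from, namely the restrictions $\res_{\mathbb{G}}$ and $\res_{\Lambda_0}$, are monoidal. First I would observe that the last equality in~\eqref{eq:da2e2f508cac451d} carries no content: by construction the tensor product on $\mathcal{CSR}_{\Lambda_0}$ is the one transported from $\repcat(\mathbb{G} \rtimes \Lambda_0)$ along the mutually inverse functors $\mathscr{R}_{\Lambda_0}$, $\mathscr{S}_{\Lambda_0}$, so that $\mathbf{S}_1 \otimes \mathbf{S}_2$ is \emph{defined} to be $\mathscr{S}_{\Lambda_0}(\mathscr{R}_{\Lambda_0}(\mathbf{S}_1) \times \mathscr{R}_{\Lambda_0}(\mathbf{S}_2)) = \mathscr{S}_{\Lambda_0}(U_1 \times U_2)$. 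Thus everything rests on the first equality, i.e.\ on identifying the three components of the CSR $\mathscr{S}_{\Lambda_0}(U_1 \times U_2) = (\mathscr{H}_1 \otimes \mathscr{H}_2, (U_1 \times U_2)_{\mathbb{G}}, (U_1 \times U_2)_{\Lambda_0})$. The underlying space is visibly $\mathscr{H}_1 \otimes \mathscr{H}_2$, so it remains to compute the two restrictions.

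For the restriction to $\mathbb{G}$, I would write the tensor product explicitly as $U_1 \times U_2 = {(U_1)}_{134}{(U_2)}_{234}$ in $\mathcal{B}(\mathscr{H}_1) \otimes \mathcal{B}(\mathscr{H}_2) \otimes \pol(\mathbb{G}) \otimes C(\Lambda_0)$, where legs $3,4$ carry the algebra $\pol(\mathbb{G} \rtimes \Lambda_0) = \pol(\mathbb{G}) \otimes C(\Lambda_0)$. By Proposition~\ref{prop:3693cca5392c7dd3}, $\res_{\mathbb{G}}$ is implemented by $\id \otimes \id \otimes \epsilon_{\Lambda_0}$ on the last leg, and by Lemma~\ref{lemm:d93132078264edaa} the map $\id_{\pol(\mathbb{G})} \otimes \epsilon_{\Lambda_0}$ is a morphism of Hopf $\ast$-algebras, in particular an algebra homomorphism; hence its ampliation is \emph{multiplicative}, and applying it to the product ${(U_1)}_{134}{(U_2)}_{234}$ factorizes as $[\res_{\mathbb{G}}(U_1)]_{13}[\res_{\mathbb{G}}(U_2)]_{23}$. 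Since $U_i = {(u_i)}_{12}{(w_i)}_{13} = \mathscr{R}_{\Lambda_0}(\mathbf{S}_i)$, formula~\eqref{eq:538a1df8eb219db1} gives $\res_{\mathbb{G}}(U_i) = u_i$, so this equals ${(u_1)}_{13}{(u_2)}_{23} = u_1 \times u_2$, as desired.

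The restriction to $\Lambda_0$ is entirely parallel: applying instead $\id \otimes \epsilon_{\mathbb{G}} \otimes \id$ (again an algebra homomorphism by Lemma~\ref{lemm:d93132078264edaa}) to ${(U_1)}_{134}{(U_2)}_{234}$ factorizes over the product, and~\eqref{eq:ccf0a0e79115abc1} gives $\res_{\Lambda_0}(U_i) = w_i$, whence $(U_1 \times U_2)_{\Lambda_0} = {(w_1)}_{13}{(w_2)}_{23} = w_1 \times w_2$. Assembling the three components yields the first equality in~\eqref{eq:da2e2f508cac451d}, and the definitional identity from the first paragraph closes the argument. The only point demanding care---the sole ``obstacle,'' such as it is---is the leg bookkeeping: one must keep track that the algebra of $\mathbb{G} \rtimes \Lambda_0$ occupies two tensor legs, and invoke Lemma~\ref{lemm:d93132078264edaa} precisely to guarantee that the counit-type restriction maps are algebra homomorphisms, which is exactly what makes them distribute over the product $U_{13}V_{23}$ defining $\times$.
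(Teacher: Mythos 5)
Your proposal is correct and follows essentially the same route as the paper's proof: the paper likewise writes \( U_{1} \times U_{2} = {(U_{1})}_{134}{(U_{2})}_{234} \), applies the slice maps \( \id \otimes \id \otimes \id \otimes \epsilon_{\Lambda_{0}} \) and \( \id \otimes \id \otimes \epsilon_{\mathbb{G}} \otimes \id \), uses their multiplicativity to factor over the product, and identifies the results with \( u_{1} \times u_{2} \) and \( w_{1} \times w_{2} \), concluding by the definition of the transported tensor product on \( \mathcal{CSR}_{\Lambda_{0}} \). Your explicit appeals to Lemma~\ref{lemm:d93132078264edaa} and to~\eqref{eq:538a1df8eb219db1} and~\eqref{eq:ccf0a0e79115abc1} simply make precise steps the paper leaves implicit.
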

\begin{proof}
  By definition of the tensor product of representations,
  \( U_{1} \times U_{2} \) is the representation of
  \( \mathbb{G} \rtimes \Lambda_{0} \) defined by
  \begin{equation}
    \label{eq:aa960443aa5b113c}
    U_{1} \times U_{2} = {(U_{1})}_{134} {(U_{2})}_{234}
    \in \mathcal{B}(\mathscr{H}_{1}) \otimes
    \mathcal{B}(\mathscr{H}_{2})
    \otimes \pol(\mathbb{G}) \otimes C(\Lambda_{0}),
  \end{equation}
  where we identified
  \( \mathcal{B}(\mathscr{H}_{1}) \otimes \mathcal{B}(\mathscr{H}_{2}) \) with
  \( \mathcal{B}(\mathscr{H}_{1} \otimes \mathscr{H}_{2}) \) canonically.

  The restriction of \( U_{1} \times U_{2} \) onto \( \mathbb{G} \) is
  \begin{equation}
    \label{eq:a6890a05a4d81dac}
    \begin{split}
      & \leadmathskip (\id \otimes \id \otimes \id \otimes
      \epsilon_{\Lambda_{0}})(U_{1} \times
      U_{2}) \\
      &= (\id \otimes \id \otimes \id \otimes
      \epsilon_{\Lambda_{0}})({(U_{1})}_{134}) %
      (\id \otimes \id \otimes \id \otimes
      \epsilon_{\Lambda_{0}})({(U_{2})}_{234}) \\
      &= {(u_{1})}_{13}{(u_{2})}_{23} %
      = u_{1} \times u_{2} \in \mathcal{B}(\mathscr{H}_{1}) \otimes
      \mathcal{B}(\mathscr{H}_{2}) \otimes \pol(\mathbb{G}).
    \end{split}
  \end{equation}
  Similarly, the restriction of \( U_{1} \times U_{2} \) onto \( \Lambda_{0} \)
  is
  \begin{equation}
    \label{eq:19783806a77c993e}
    \begin{split}
      & \leadmathskip (\id \otimes \id \otimes \epsilon_{\mathbb{G}} \otimes
      \id)
      (U_{1} \times U_{2}) \\
      &= (\id \otimes \id \otimes \epsilon_{\mathbb{G}} \otimes
      \id)({(U_{1})}_{134}) %
      (\id \otimes \id \otimes \epsilon_{\mathbb{G}} \otimes \id)
      ({(U_{2})}_{234}) \\
      &= {(w_{1})}_{13}{(w_{(2)})}_{23} = w_{1} \times w_{2} %
      \in \mathcal{B}(\mathscr{H}_{1}) \otimes \mathcal{B}(\mathscr{H}_{2})
      \otimes C(\Lambda_{0}).
    \end{split}
  \end{equation}
  Now~\eqref{eq:da2e2f508cac451d} follows from \eqref{eq:a6890a05a4d81dac},
  \eqref{eq:19783806a77c993e} and the definition of the tensor product in
  \( \mathcal{CSR}_{\Lambda_{0}} \).
\end{proof}

\begin{prop}
  \label{prop:d6600a425936cd86}
  For \( i = 1, 2 \), let
  \( \mathbf{S}_{i} = (\mathscr{H}, u_{i}, w_{i}) \in
  \mathcal{CSR}_{\Lambda_{0}} \),
  \( U_{i} := \mathscr{R}_{\Lambda_{0}}(\mathbf{S}_{i}) \in \repcat(\mathbb{G}
  \rtimes \Lambda_{0}) \), then
  \begin{equation}
    \label{eq:7bca88cf1365f0f2}
    \mathscr{S}_{\Lambda_{0}}(U_{1} \oplus U_{2})
    = (\mathscr{H}_{1} \oplus \mathscr{H}_{2},
    u_{1} \oplus u_{2}, w_{1} \oplus w_{2})
    = \mathbf{S}_{1} \oplus \mathbf{S}_{2}.
  \end{equation}
\end{prop}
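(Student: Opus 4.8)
The plan is to mirror the proof of Proposition~\ref{prop:ce7ae0d12013743b}, replacing the tensor product by the direct sum; the argument is in fact simpler, since the two summands do not interact. Recall that by definition $\mathscr{S}_{\Lambda_{0}}(U) = (\mathscr{H}_{U}, \res_{\mathbb{G}}(U), \res_{\Lambda_{0}}(U))$, where the two restrictions are obtained from $U$ by applying $\id \otimes \id \otimes \epsilon_{\Lambda_{0}}$ and $\id \otimes \epsilon_{\mathbb{G}} \otimes \id$ respectively (cf.\ Proposition~\ref{prop:3693cca5392c7dd3}). The whole point is that both of these maps act as the identity on the $\mathcal{B}(\mathscr{H})$-leg and merely contract the $C(\Lambda_{0})$- or $\pol(\mathbb{G})$-leg, so they are compatible with the block-diagonal structure carried by a direct sum.

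First I would record the block-diagonal form of $U_{1} \oplus U_{2}$. Writing $\iota_{i} \colon \mathcal{B}(\mathscr{H}_{i}) \hookrightarrow \mathcal{B}(\mathscr{H}_{1} \oplus \mathscr{H}_{2})$ for the corner embeddings attached to the orthogonal decomposition $\mathscr{H}_{1} \oplus \mathscr{H}_{2}$, the categorical direct sum in $\repcat(\mathbb{G} \rtimes \Lambda_{0})$ is realized by
\begin{equation*}
  U_{1} \oplus U_{2}
  = (\iota_{1} \otimes \id \otimes \id)(U_{1})
  + (\iota_{2} \otimes \id \otimes \id)(U_{2})
  \in \mathcal{B}(\mathscr{H}_{1} \oplus \mathscr{H}_{2})
  \otimes \pol(\mathbb{G}) \otimes C(\Lambda_{0}),
\end{equation*}
which is the standard fact that the direct sum of unitary representations is their block-diagonal juxtaposition.

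Next I would compute the two restrictions. Since $\id \otimes \id \otimes \epsilon_{\Lambda_{0}}$ fixes the $\mathcal{B}(\mathscr{H})$-leg, it commutes with each $\iota_{i} \otimes \id \otimes \id$ and with the sum, whence
\begin{equation*}
  \res_{\mathbb{G}}(U_{1} \oplus U_{2})
  = (\iota_{1} \otimes \id)(\res_{\mathbb{G}}(U_{1}))
  + (\iota_{2} \otimes \id)(\res_{\mathbb{G}}(U_{2}))
  = \res_{\mathbb{G}}(U_{1}) \oplus \res_{\mathbb{G}}(U_{2}).
\end{equation*}
Because $\mathscr{S}_{\Lambda_{0}}$ and $\mathscr{R}_{\Lambda_{0}}$ are mutually inverse, $\mathscr{S}_{\Lambda_{0}}(U_{i}) = \mathbf{S}_{i}$, so $\res_{\mathbb{G}}(U_{i}) = u_{i}$ and the right-hand side equals $u_{1} \oplus u_{2}$. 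The identical computation, with $\id \otimes \epsilon_{\mathbb{G}} \otimes \id$ in place of $\id \otimes \id \otimes \epsilon_{\Lambda_{0}}$, gives $\res_{\Lambda_{0}}(U_{1} \oplus U_{2}) = w_{1} \oplus w_{2}$.

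Assembling these two identities through the definition of $\mathscr{S}_{\Lambda_{0}}$ yields $\mathscr{S}_{\Lambda_{0}}(U_{1} \oplus U_{2}) = (\mathscr{H}_{1} \oplus \mathscr{H}_{2}, u_{1} \oplus u_{2}, w_{1} \oplus w_{2})$, which is by definition $\mathbf{S}_{1} \oplus \mathbf{S}_{2}$ in the structure transported onto $\mathcal{CSR}_{\Lambda_{0}}$. I do not expect any genuine obstacle; the only point requiring care is the bookkeeping of tensor legs, namely checking that the counit contractions pass through the corner embeddings $\iota_{i}$. This is immediate, as those contractions are the identity on precisely the $\mathcal{B}(\mathscr{H})$-factor where the block structure lives.
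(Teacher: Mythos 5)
Your proposal is correct and follows exactly the route the paper indicates: the paper's proof simply says to apply the same restriction technique (hitting \( U_{1} \oplus U_{2} \) with \( \id \otimes \id \otimes \epsilon_{\Lambda_{0}} \) and \( \id \otimes \epsilon_{\mathbb{G}} \otimes \id \)) as in Propositions~\ref{prop:50b58e1fd8af4aa0} and~\ref{prop:ce7ae0d12013743b}, noting it is even simpler here. Your write-up just makes explicit the block-diagonal bookkeeping via the corner embeddings \( \iota_{i} \), which the paper leaves implicit.
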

\begin{proof}
  The proof use the same restriction technique as in the proofs of
  Proposition~\ref{prop:50b58e1fd8af4aa0} and
  Proposition~\ref{prop:ce7ae0d12013743b}, which is even simpler in this case.
\end{proof}

Until now, we've shown that the morphisms, tensor products, and direct sums all
behave as expected in \( \mathcal{CSR}_{\Lambda_{0}} \). The description of the
dual of a CSR when \( \mathbb{G} \) is of non-Kac type requires a bit further
work on the so-called modular operator, as we presently discuss.

Recall that the contragredient representation \( U^{c} \) of a \emph{unitary}
representation \( U \) of \( \mathbb{G} \rtimes \Lambda_{0} \) on some finite
dimensional Hilbert space \( \mathscr{H} \) is defined as
\( U^{c} = (j \otimes \id_{\pol(\mathbb{G}) \otimes C(\Lambda_{0})})(U^{\ast})
\), where
\( j \colon \mathcal{B}(\mathscr{H}) \rightarrow
\mathcal{B}(\overline{\mathscr{H}}) \) is defined as
\( T \mapsto \overline{T^{\ast}} \), with \( \overline{\mathscr{H}} \) being the
conjugate Hilbert space of \( \mathscr{H} \), and \( \overline{T^{\ast}} \)
meaning \( T^{\ast} \) viewed as a linear mapping from
\( \overline{\mathscr{H}} \) to \( \overline{\mathscr{H}} \).  Note that
\( j : \mathcal{B}(\mathscr{H}) \rightarrow \mathcal{B}(\overline{\mathscr{H}})
\), \( T \mapsto \overline{T^{\ast}} \) is linear, antimultiplicative and
positive (in particular, it preserves adjoints). If \( \mathbb{G} \) is of
non-Kac type, so is \( \mathbb{G} \rtimes \Lambda_{0} \) by
Proposition~\ref{prop:22073f2b39059e9e}, in which case \( U^{c} \) might not be
unitary, which is exactly why the ``modular'' operator \( \rho_{U} \) is
necessary to express the dual object of \( \mathscr{S}_{\Lambda_{0}}(U) \) in
\( \mathcal{CSR}_{\Lambda_{0}} \) as presented in
Proposition~\ref{prop:1ecfd88218e081c9}.

\begin{prop}
  \label{prop:1ecfd88218e081c9}
  Let \( \mathbf{S} = (\mathscr{H}, u, w) \in \mathcal{CSR}_{\Lambda_{0}} \),
  \( U = \mathscr{R}_{\Lambda_{0}}(\mathbf{S}) \in \repcat(\mathbb{G} \rtimes
  \Lambda_{0}) \), \( U^{c} \) the contragredient representation of \( U \) on
  the conjugate space \( \overline{\mathscr{H}} \) of \( \mathscr{H} \). If
  \( \rho_{U} \) is the unique invertible positive operator in
  \( \morph_{\mathbb{G} \rtimes \Lambda_{0}}(U, U^{cc}) \) (which we call
  modular operator) such that
  \( \tr( \cdot \, \rho_{U}) = \tr( \cdot \, \rho_{U}^{-1}) \) on
  \( \selfmorph_{\mathbb{G} \rtimes \Lambda_{0}}(U) \), so that
  \begin{equation}
    \label{eq:1292a1f6dcc336b4}
    \overline{U}
    = {\bigl\{{[j(\rho_{U})]}^{1/2} \otimes 1_{\pol(\mathbb{G})} \otimes
      1_{C(\Lambda_{0})}\bigr\}} U^{c} {\bigl\{{[j(\rho_{U})]}^{-1/2} \otimes
      1_{\pol(\mathbb{G})} \otimes 1_{C(\Lambda_{0})}\bigr\}}
  \end{equation}
  is the conjugate representation of \( U \), then the dual of \( \mathbf{S} \)
  is given by \( \overline{\mathbf{S}} = (\overline{\mathscr{H}}, u', w') \),
  where
  \begin{equation}
    \label{eq:687776145feb7bb0}
    \begin{split}
      u' %
      &= ({j(\rho_{U})}^{1/2} \otimes 1) u^{c} ({j(\rho_{U})}^{-1/2} \otimes 1),
      \\
      w' %
      &= ({j(\rho_{U})}^{1/2} \otimes 1) w^{c} ({j(\rho_{U})}^{-1/2} \otimes 1).
    \end{split}
  \end{equation}
  Note that \( w^{c} = \overline{w} \) as \( \Lambda \) is a finite (compact)
  group. In particular, if \( \mathbb{G} \) is of Kac-type, then
  \( u^{c} = \overline{u} \), \( \rho_{U} = 1 \), and
  \( \overline{\mathbf{S}} = (\overline{\mathscr{H}}, \overline{u},
  \overline{w}) \).
\end{prop}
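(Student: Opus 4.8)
The plan is to exploit that the rigid \( C^{\ast} \)-tensor structure on \( \mathcal{CSR}_{\Lambda_{0}} \) is transported from \( \repcat(\mathbb{G} \rtimes \Lambda_{0}) \) through the mutually inverse fiber functors \( \mathscr{R}_{\Lambda_{0}} \) and \( \mathscr{S}_{\Lambda_{0}} \). Since these functors preserve duals by construction, the dual of \( \mathbf{S} \) is simply \( \overline{\mathbf{S}} = \mathscr{S}_{\Lambda_{0}}(\overline{U}) \), where \( \overline{U} \) is the conjugate representation given by~\eqref{eq:1292a1f6dcc336b4}. Thus everything reduces to computing the two components of \( \mathscr{S}_{\Lambda_{0}}(\overline{U}) \), namely the restrictions \( \res_{\mathbb{G}}(\overline{U}) \) and \( \res_{\Lambda_{0}}(\overline{U}) \), and checking that they equal \( u' \) and \( w' \).

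For the core computation I would first record that \( \res_{\mathbb{G}} = (\id \otimes \id \otimes \epsilon_{\Lambda_{0}}) \) and \( \res_{\Lambda_{0}} = (\id \otimes \epsilon_{\mathbb{G}} \otimes \id) \) are \( \ast \)-homomorphisms (Lemma~\ref{lemm:d93132078264edaa}), and that they commute with \( (j \otimes \id) \) in the sense that \( (\id \otimes \id \otimes \epsilon_{\Lambda_{0}}) \circ (j \otimes \id \otimes \id) = (j \otimes \id) \circ (\id \otimes \id \otimes \epsilon_{\Lambda_{0}}) \), and similarly for \( \res_{\Lambda_{0}} \); this is checked on simple tensors. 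Combining this with \( \res_{\mathbb{G}}(U) = u \) and \( \res_{\Lambda_{0}}(U) = w \) from Proposition~\ref{prop:3693cca5392c7dd3}, and with \( U^{c} = (j \otimes \id \otimes \id)(U^{\ast}) \), I would obtain \( \res_{\mathbb{G}}(U^{c}) = (j \otimes \id)(u^{\ast}) = u^{c} \) and \( \res_{\Lambda_{0}}(U^{c}) = (j \otimes \id)(w^{\ast}) = w^{c} \). Finally, because \( \res_{\mathbb{G}} \) and \( \res_{\Lambda_{0}} \) are homomorphisms and the outer factors \( j(\rho_{U})^{\pm 1/2} \otimes 1 \otimes 1 \) in~\eqref{eq:1292a1f6dcc336b4} carry \( 1 \) in the \( C(\Lambda_{0}) \)-leg (resp.\ \( \pol(\mathbb{G}) \)-leg), the conjugation passes straight through the restriction, yielding \( \res_{\mathbb{G}}(\overline{U}) = u' \) and \( \res_{\Lambda_{0}}(\overline{U}) = w' \) exactly as in~\eqref{eq:687776145feb7bb0}.

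It then remains to treat the two supplementary assertions. For \( w^{c} = \overline{w} \), I would invoke that \( \Lambda_{0} \), being finite, is a compact group of Kac type, so every unitary representation has unitary contragredient; equivalently the modular operator of \( w \) is trivial and the contragredient already coincides with the conjugate. For the Kac-type case, Proposition~\ref{prop:22073f2b39059e9e} gives that \( \mathbb{G} \rtimes \Lambda_{0} \) is of Kac type, whence \( U^{cc} = U \) canonically and \( u^{c} = \overline{u} \) is unitary; the defining trace condition \( \tr(\,\cdot\,\rho_{U}) = \tr(\,\cdot\,\rho_{U}^{-1}) \) on \( \selfmorph_{\mathbb{G} \rtimes \Lambda_{0}}(U) \), together with positivity and the uniqueness built into the statement, then forces \( \rho_{U} = 1 \), so that \( u' = u^{c} = \overline{u} \) and \( w' = w^{c} = \overline{w} \).

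I expect the only delicate point to be the bookkeeping around the antimultiplicative map \( j \): one must verify carefully that \( (j \otimes \id) \) interacts with the counit restrictions and with the product in~\eqref{eq:1292a1f6dcc336b4} in the claimed way, since \( j \) reverses multiplication on the \( \mathcal{B}(\mathscr{H}) \)-leg while the counit legs behave covariantly. A secondary point worth spelling out is the justification of \( \rho_{U} = 1 \) in the Kac case from the uniqueness assumption. Everything else is a direct consequence of \( \mathscr{S}_{\Lambda_{0}} \) being a dual-preserving equivalence.
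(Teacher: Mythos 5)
Your proposal is correct and follows essentially the same route as the paper: both reduce the claim to computing \( \mathscr{S}_{\Lambda_{0}}(\overline{U}) \) via the two counit restrictions, push these \( \ast \)-homomorphisms through the product in~\eqref{eq:1292a1f6dcc336b4} using that \( j \) acts on a different leg than the counits, and identify the results with \( u' \) and \( w' \), concluding \( \rho_{U} = 1 \) in the Kac case. The only difference is cosmetic: you make explicit the commutation of \( (j \otimes \id) \) with the restrictions and the uniqueness argument for \( \rho_{U} = 1 \), which the paper leaves implicit in its chain of equalities.
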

\begin{proof}
  By definition, \( \overline{S} = \mathscr{S}_{\Lambda_{0}}(\overline{U}) \),
  thus
  \begin{displaymath}
    \begin{split}
      u' &= (\id_{\mathcal{B}(\mathscr{H})} \otimes \id_{\pol(\mathbb{G})}
      \otimes \epsilon_{\Lambda_{0}}) %
      (\overline{U}) \\
      &= (\id \otimes \id \otimes \epsilon_{\Lambda_{0}}) \bigl[
      ({j(\rho_{U})}^{1/2} \otimes 1 \otimes 1) U^{c}
      ({j(\rho_{U})}^{-1/2} \otimes 1 \otimes 1)\bigr] \\
      &= (\id \otimes \id \otimes \epsilon_{\Lambda_{0}}) \Bigl(
      ({j(\rho_{U})}^{1/2} \otimes 1 \otimes 1) \\
      & \dmslmidskip [(j \otimes \id_{\pol(\mathbb{G})} \otimes
      \id_{C(\Lambda_{0})})
      (U^{\ast})]({j(\rho_{U})}^{-1/2} \otimes 1 \otimes 1)\Bigr) \\
      &= ({j(\rho_{U})}^{1/2} \otimes 1) \bigl[(j \otimes \id \otimes
      \epsilon_{\Lambda_{0}})(U^{\ast})\bigr]
      ({j(\rho_{U})}^{-1/2} \otimes 1) \\
      &= ({j(\rho_{U})}^{1/2} \otimes 1) {\bigl[(j \otimes \id \otimes
        \epsilon_{\Lambda_{0}})(U)\bigr]}^{\ast}
      ({j(\rho_{U})}^{-1/2} \otimes 1) \\
      &= ({j(\rho_{U})}^{1/2} \otimes 1) {\bigl[(j \otimes \id)(u)\bigr]}^{\ast}
      ({j(\rho_{U})}^{-1/2} \otimes 1) \\
      &= ({j(\rho_{U})}^{1/2} \otimes 1) \bigl[(j \otimes \id)(u^{\ast})\bigr]
      ({j(\rho_{U})}^{-1/2} \otimes 1) \\
      &= ({j(\rho_{U})}^{1/2} \otimes 1) u^{c} ({j(\rho_{U})}^{-1/2} \otimes 1).
    \end{split}
  \end{displaymath}
  The expression for \( w' \) is proved analogously by applying
  \( \id_{\mathcal{B}(\mathscr{H})} \otimes \epsilon_{\pol(\mathbb{G})} \otimes
  \id_{C(\Lambda_{0})} \) on~\eqref{eq:1292a1f6dcc336b4}.  Finally, if
  \( \mathbb{G} \) is of Kac-type, then \( \rho_{U} = \id_{\mathscr{H}} = 1 \).
\end{proof}

\begin{rema}
  \label{rema:d7b7b5149a93355d}
  The ``modular'' operator \( \rho_{U} \) of the representation \( U \) is
  derived from the representation theory of \( \mathbb{G} \rtimes \Lambda_{0} \)
  instead of the representation theory of \( \mathbb{G} \) and (projective)
  representation theory of \( \Lambda_{0} \). This makes the description of
  \( \overline{\mathbf{S}} \) in Proposition~\ref{prop:1ecfd88218e081c9} quite
  unsatisfactory in the non-unimodular case. This being said, we point out that
  as far as the fusion rules of \( \mathbb{G} \rtimes \Lambda_{0} \) are
  concerned, the duals of a sufficiently large family of CSRs admit a much more
  satisfactory description (see Proposition~\ref{prop:ef1711719362d940}).
\end{rema}

Of course, the description of the dual in \( \mathcal{CSR}_{\Lambda_{0}} \) is
much easier if \( \mathbb{G} \) is of Kac-type, as is clearly seen from the last
part of Proposition~\ref{prop:1ecfd88218e081c9}.

\section{Group actions and projective representations}
\label{sec:0bd5d546e466b2bc}

Fix a \( \Lambda_{0} \in \giso(\Lambda) \).  Via the functors
\( \mathscr{S}_{\Lambda_{0}} \) and \( \mathscr{R}_{\Lambda_{0}} \), we see that
the problem classifying of irreducible representations of
\( \mathbb{G} \rtimes \Lambda_{0} \) are essentially the same as classifying
simple CSRs in \( \mathcal{CSR}_{\Lambda_{0}} \). Thus for the moment, it might
be too much to hope there exists a satisfactory description of \emph{all} simple
CSRs in \( \mathcal{CSR}_{\Lambda_{0}} \). However, as we will see in
\S~\ref{sec:a9de7f0149f3e3a1}, if we restrict our attention to the so-called
\emph{stably pure} simple CSRs in \( \mathcal{CSR}_{\Lambda_{0}} \), then such a
description is indeed achievable via the theory of unitary projective
representations of \( \Lambda_{0} \).  This section studies how such projective
representations arise naturally from the action of \( \Lambda \) on irreducible
representations of \( \mathbb{G} \), as well as establishes some basic
properties of these projective representations. The results here will be used in
\S~\ref{sec:a9de7f0149f3e3a1} to describe the structure of stably pure CSRs in
\( \mathcal{CSR}_{\Lambda_{0}} \) (Proposition~\ref{prop:f8d2ac97bef564cc} and
Proposition~\ref{prop:1911ec535b058627}).

We begin with a simple observation which is a trivial quantum analogue of one of
the most basic ingredients of the Mackey analysis. Let \( U_{\mathbb{G}} \) be a
unitary representation of \( \mathbb{G} \) on some finite dimensional Hilbert
space \( \mathscr{H} \). Since
\( \alpha^{\ast} \colon \Lambda \rightarrow \aut\bigl( C(\mathbb{G}), \Delta
\bigr) \) is an antihomomorphism of groups, we know that
\( {(\id_{\mathcal{B}(\mathscr{H})} \otimes
  \alpha_{r^{-1}}^{\ast})}(U_{\mathbb{G}}) \) is again a unitary representation
of \( \mathbb{G} \) on the same space \( \mathscr{H} \), and we denote this new
representation by \( r \cdot U_{\mathbb{G}} \) as we did in
Proposition~\ref{prop:56963e422c66f26b}. One checks that
\( (r s) \cdot U_{\mathbb{G}} = r \cdot (s \cdot U_{\mathbb{G}}) \). Thus this
defines a left action of the group \( \Lambda \) on the (proper) class of all
unitary representation of \( \mathbb{G} \), which is easily seen to preserve
irreducibility and pass to a well-defined action of \( \Lambda \) on the
\emph{set} \( \irr(\mathbb{G}) \) by letting \( r \cdot [u] = [r \cdot u] \),
where \( r \in \Lambda \), \( u \) is an irreducible unitary representation of
\( \mathbb{G} \) and \( [u] \) is the equivalence class of \( u \) in
\( \irr(\mathbb{G}) \). Take another unitary representation \( W_{\mathbb{G}} \)
of \( \mathbb{G} \) on some other finite dimensional Hilbert space
\( \mathscr{K} \). For any \( r, s \in \Lambda \) and any
\( T \in \mathcal{B}(\mathscr{H}, \mathscr{K}) \), we have
\begin{equation}
  \label{eq:f5ecd1bd13d50e74}
  \begin{split}
    & T \in \morph_{\mathbb{G}}(r \cdot U_{\mathbb{G}}, W_{\mathbb{G}}) \\
    \iff & W_{\mathbb{G}}(T \otimes 1) = (T \otimes 1) (\id \otimes
    \alpha_{r^{-1}}^{\ast})
    (U_{\mathbb{G}}) \\
    \iff & [(\id \otimes \alpha_{s^{-1}}^{\ast})(W_{\mathbb{G}})] (T \otimes 1)
    = (T \otimes 1) (\id \otimes \alpha_{{(sr)}^{-1}}^{\ast})
    (U_{\mathbb{G}}) \\
    \iff & T \in \morph_{\mathbb{G}}(sr \cdot U_{\mathbb{G}}, s \cdot
    W_{\mathbb{G}}).
  \end{split}
\end{equation}

Now take any irreducible unitary representation \( u \) of \( \mathbb{G} \) on
some finite dimensional Hilbert space \( \mathscr{H} \). Let
\( x = [u] \in \irr(\mathbb{G}) \), and
\begin{displaymath}
  \Lambda_{x} = \set*{r \in \Lambda \given r \cdot x = x},
\end{displaymath}
i.e.\ \( \Lambda_{x} \) is the isotropy subgroup of \( \Lambda \) fixing
\( x \). Then for any \( r_{0} \in \Lambda_{x} \), \( u \) and
\( r_{0} \cdot u \) are equivalent by definition, hence there exists a unitary
\( V(r_{0}) \in \mathcal{U}(\mathscr{H}) \) intertwining \( r_{0} \cdot u \) and
\( u \), in other words,
\begin{equation}
  \label{eq:329c09668e71d843}
  \bigl(V(r_{0}) \otimes 1\bigr)( \id \otimes \alpha^{\ast}_{r_{0}^{-1}})(u)
  = u \bigl( V(r_{0})  \otimes 1 \bigr),
\end{equation}
which is clearly equivalent to
\begin{equation}
  \label{eq:ff84fdb6222c5b10}
  \forall r_{0} \in \Lambda_{x}, \quad
  \bigl(V(r_{0}) \otimes 1\bigr) u %
  = [(\id \otimes \alpha_{r_{0}}^{\ast}) (u)] \bigl(V(r_{0}) \otimes 1 \bigr).
\end{equation}
It is remarkable that~\eqref{eq:ff84fdb6222c5b10} takes exactly the same form as
the covariance condition~\eqref{eq:1611871ef2500d9a} when we define covariant
representations in \S~\ref{sec:cac0f6b0354689f8}. Now if we choose a
\begin{equation}
  \label{eq:df18a33e72dd2039}
  V(r_{0}) \in \morph_{\mathbb{G}}(r_{0} \cdot u,
  u) \cap \mathcal{U}(\mathscr{H})
\end{equation}
for each \( r_{0} \in \Lambda_{x} \), then for any \( s_{0} \in \Lambda_{x} \),
by~\eqref{eq:f5ecd1bd13d50e74}, we have
\begin{displaymath}
  V(r_{0}) \in \morph_{\mathbb{G}}(s_{0}r_{0} \cdot u, s_{0} \cdot
  u), \quad
  V(s_{0}) \in \morph_{\mathbb{G}}(s_{0} \cdot u,
  u), \quad
  V(s_{0}r_{0}) \in \morph_{\mathbb{G}}(s_{0}r_{0} \cdot u, u),
\end{displaymath}
thus
\begin{equation}
  \label{eq:12b2a6a90456dd72}
  \forall r_{0}, s_{0} \in \Lambda_{x}, \quad
  V(s_{0}r_{0}) {[V(r_{0})]}^{\ast}{[V(s_{0})]}^{\ast} \in
  \morph_{\mathbb{G}}(u, u) \cap
  \mathcal{U}(\mathscr{H})
  = \mathbb{T} \cdot \id_{\mathscr{H}}.
\end{equation}
This means that \( V \colon \Lambda_{x} \rightarrow \mathcal{U}(\mathscr{H}) \)
is a unitary \textbf{projective representation}
(Definition~\ref{defi:530f76c332f4637b}) of \( \Lambda_{x} \) on
\( \mathscr{H} \), which satisfies the covariant
condition~\eqref{eq:ff84fdb6222c5b10} for each \( r_{0} \in \Lambda_{x} \).

To facilitate our discussion, we digress now to give a brief summary of some
basic terminologies of the theory of group cohomology which we will use (cf.\
\cite{MR1324339}). We regard \( \mathbb{T} \) as a trivial module over any
finite group when considering unitary projective representations of finite
groups. For any finite group \( \Gamma \), an \( n \)-cochain on \( \Gamma \)
with coefficients in \( \mathbb{T} \), or simply an \( n \)-cochain (on
\( \Gamma \)), as we won't consider coefficient module other that the trivial
module \( \mathbb{T} \), is a mapping from the \( n \)-fold product
\( \Gamma^{n} = \Gamma \times \cdots \times \Gamma \) to \( \mathbb{T} \). Let
\( C^{n}(\Gamma, \mathbb{T}) \) be the abelian group of \( n \)-cochains on
\( \Gamma \), \( Z^{2}(\Gamma, \mathbb{T}) \) the subgroup of \( 2 \)-cocycles
on \( \Gamma \), i.e.\ mappings
\( \omega \colon \Gamma \times \Gamma \rightarrow \mathbb{T} \) satisfying the
cocycle condition
\begin{equation}
  \label{eq:b0d6e1765a0eb92b}
  \forall r, s, t \in \Gamma, \quad \omega(r, st) \omega(s, t)
  = \omega(r, s) \omega(rs, t).
\end{equation}
The mapping
\begin{equation}
  \label{eq:774f5d351fbf9b30}
  \begin{split}
    \delta \colon C^{1}( \Gamma, \mathbb{T} )
    & \rightarrow Z^{2}(\Gamma, \mathbb{T}) \\
    \mathsf{b} & \mapsto \left\{(r,s) \in \Gamma \times \Gamma \mapsto
      \frac{\mathsf{b}(r)\mathsf{b}(s)}{\mathsf{b}(rs)} \right\}
  \end{split}
\end{equation}
is easily checked to be a well-defined group morphism. We use
\( B^{2}(\Gamma, \mathbb{T}) \) to denote the image of \( \delta \), and the
\( 2 \)-cocycles in \( B^{2}(\Gamma, \mathbb{T}) \) are called
\( 2 \)-coboundaries of \( \Gamma \). The quotient group
\( Z^{2}(\Gamma, \mathbb{T}) / B^{2}(\Gamma, \mathbb{T}) \) is called the second
cohomology group of \( \Gamma \) with coefficients in the trivial
\( \Gamma \)-module \( \mathbb{T} \), and is denoted by
\( H^{2}(\Gamma, \mathbb{T}) \). Elements in \( H^{2}(\Gamma, \mathbb{T}) \) are
called cohomology class. Note that \( \ker(\delta) \) is exactly the group of
characters on \( \Gamma \), i.e.\ group morphisms from \( \Gamma \) to
\( \mathbb{T} \).

\begin{defi}
  \label{defi:530f76c332f4637b}
  Let \( \Gamma \) be a group, \( \mathscr{H} \) a finite dimensional Hilbert
  space, a projective representation of \( \Gamma \) on \( \mathscr{H} \) is a
  mapping \( V : \Gamma \to \mathcal{U}(\mathscr{H}) \) such that
  \( V(e_{\Gamma}) = \id_{\mathscr{H}} \), and there exists a
  \( 2 \)\nobreakdash-cochain \( \omega \in C^{2}(\Gamma, \mathbb{T}) \), such
  that
  \begin{equation}
    \label{eq:60c95264176fa284}
    \forall r,s \in \Gamma, \qquad
    \omega(r,s) V(r,s) = V(r) V(s).
  \end{equation}
  It is easy to check that such \( \omega \) is uniquely determined by \( V \),
  and it is in fact a \( 2 \)\nobreakdash-cocylce, with the additional property
  (which follows from our assumption \( V(e_{\Gamma}) = \id_{\mathscr{H}} \))
  that
  \begin{equation}
    \label{eq:374eb20adafeefb6}
    \forall \gamma \in \Gamma, \qquad \omega(e_{\Gamma}, \gamma)
    = \omega(\gamma, e_{\Gamma}) = 1 \in \mathbb{T}.
  \end{equation}
  We call \( \omega \) the cocylce (or Schur multiplier after Schur who
  introduced them in his work on projective representations
  \cite{schur1904darstellung}) of the projective representation \( V \).
\end{defi}

We will freely use the character theory and the Peter Weyl theory of projective
representations of finite groups, and we refer the reader to \cite{MR3299063}
for the proofs.

We track here the following easy results for convenience of the reader.

\begin{lemm}
  \label{lemm:cc4e3d9cf9663b2b}
  Let \( \Gamma \) be a finite group,
  \( V \colon \Gamma \rightarrow \mathcal{U}(\mathscr{H}) \) a finite
  dimensional unitary projective representation of \( \Gamma \) with cocycle
  \( \omega \). If \( \omega' \in [\omega] \in H^{2}(\Gamma, \mathbb{T}) \),
  then there exists a mapping
  \( \mathsf{b} \colon \Gamma \rightarrow \mathbb{T} \), such that
  \( \mathsf{b} V \colon \Gamma \rightarrow \mathcal{U}(\mathscr{H}) \),
  \( \gamma \mapsto \mathsf{b}(\gamma)V(\gamma) \) is a unitary projective
  representation with cocycle \( \omega' \).
\end{lemm}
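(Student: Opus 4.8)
The plan is to read off the correct $1$-cochain directly from the coboundary relation linking $\omega$ and $\omega'$, and then twist $V$ by it. First I would record the defining relation of $V$: by Definition~\ref{defi:530f76c332f4637b}, equation~\eqref{eq:60c95264176fa284} gives $V(r)V(s) = \omega(r,s)V(rs)$ for all $r,s \in \Gamma$, and by the remark~\eqref{eq:374eb20adafeefb6} the cocycle $\omega$ is normalized. I take $\omega'$, like $\omega$, to be a normalized representative of its class, as is standard for cocycles of projective representations.

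Since $\omega' \in [\omega] \in H^{2}(\Gamma,\mathbb{T})$, the two cocycles agree modulo $B^{2}(\Gamma,\mathbb{T})$, so the pointwise quotient $\omega'\omega^{-1}$ lies in $B^{2}(\Gamma,\mathbb{T}) = \image(\delta)$. By the definition~\eqref{eq:774f5d351fbf9b30} of $\delta$, this produces a $1$-cochain $\mathsf{b} \in C^{1}(\Gamma,\mathbb{T})$ with
\[
  \frac{\omega'(r,s)}{\omega(r,s)} = \frac{\mathsf{b}(r)\mathsf{b}(s)}{\mathsf{b}(rs)}, \qquad \forall\, r,s \in \Gamma.
\]
Setting $r = s = e_{\Gamma}$ and using that both $\omega$ and $\omega'$ are normalized forces $\mathsf{b}(e_{\Gamma}) = \omega'(e_{\Gamma},e_{\Gamma})/\omega(e_{\Gamma},e_{\Gamma}) = 1$. (The value $\mathsf{b}(e_{\Gamma})$ is in any case determined, since the remaining freedom in $\mathsf{b}$ is multiplication by an element of $\ker(\delta)$, i.e. by a character, and characters take the value $1$ at $e_{\Gamma}$.)

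Then I would define $(\mathsf{b}V)(\gamma) := \mathsf{b}(\gamma)V(\gamma)$ and verify the axioms of Definition~\ref{defi:530f76c332f4637b}. Each $\mathsf{b}(\gamma)V(\gamma)$ is unitary, being a unit scalar times a unitary, and $(\mathsf{b}V)(e_{\Gamma}) = \mathsf{b}(e_{\Gamma})V(e_{\Gamma}) = \id_{\mathscr{H}}$. For the cocycle identity, the computation
\[
  (\mathsf{b}V)(r)\,(\mathsf{b}V)(s) = \mathsf{b}(r)\mathsf{b}(s)\,\omega(r,s)\,V(rs) = \omega'(r,s)\,\mathsf{b}(rs)\,V(rs) = \omega'(r,s)\,(\mathsf{b}V)(rs)
\]
uses the defining relation of $V$ in the first equality and the displayed coboundary identity in the second. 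Hence $\mathsf{b}V \colon \Gamma \to \mathcal{U}(\mathscr{H})$ is a unitary projective representation whose cocycle is exactly $\omega'$, as required.

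There is no substantial obstacle here: the argument is a direct unwinding of the definitions. The two points deserving a moment's care are, first, reading off the coboundary in the correct direction (the quotient $\omega'/\omega$, not $\omega/\omega'$, is the one that must be a coboundary), and second, the normalization $\mathsf{b}(e_{\Gamma}) = 1$, which is what makes $\mathsf{b}V$ a genuine projective representation in the sense of Definition~\ref{defi:530f76c332f4637b}; this normalization is automatic precisely because the representing cocycles are normalized.
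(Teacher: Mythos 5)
Your proof is correct and takes essentially the same route as the paper: the paper's own (one-line) proof likewise writes \( \omega' = (\delta \mathsf{b})\,\omega \) for some \( \mathsf{b} \colon \Gamma \rightarrow \mathbb{T} \) and observes that \( \mathsf{b}V \) is then a unitary projective representation with cocycle \( \omega' \). Your additional attention to normalization --- that \( \mathsf{b}(e_{\Gamma}) = 1 \) is needed so that \( (\mathsf{b}V)(e_{\Gamma}) = \id_{\mathscr{H}} \), and that this is automatic once \( \omega' \) is taken normalized --- is a point the paper's proof leaves implicit, and is a worthwhile clarification rather than a departure.
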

\begin{proof}
  Since \( \omega' \in [\omega] \), there is a mapping
  \( \mathsf{b} \colon \Gamma \rightarrow \mathbb{T} \) such that
  \( \omega' = (\delta \mathsf{b}) \omega \), and obviously, \( \mathsf{b}V \)
  is a unitary projective representation with
  \( (\delta \mathsf{b}) \omega = \omega' \) as its cocycle.
\end{proof}

\begin{lemm}
  \label{lemm:62fa40c770de34e0}
  Let \( \Gamma \) be a finite group,
  \( V \colon \Gamma \rightarrow \mathcal{U}(\mathscr{H}) \) a finite
  dimensional unitary projective representation of~ \( \Gamma \) with cocycle
  \( \omega \), and let \( \mathsf{b} \colon \Gamma \rightarrow \mathbb{T} \) be
  an arbitrary mapping. The following hold:
  \begin{enumerate}
  \item \label{item:084103c034528bd4}
    \( \mathsf{b}V \colon \Gamma \rightarrow \mathcal{U}(\mathscr{H}) \),
    \( \gamma \mapsto \mathsf{b}( \gamma ) V( \gamma ) \) is a projective
    representation with cocycle \( ( \delta \mathsf{b} ) \omega \);
  \item \label{item:427971dc4329cf18} \( \mathsf{b}V \) and \( V \) have the
    same cocycle if and only if \( \mathsf{b} \in \ker( \delta ) \), i.e.\
    \( \mathsf{b} \) is a character of \( \Gamma \);
  \item \label{item:41687ad768888c74} \( \mathsf{b} V \) is irreducible if and
    only if \( V \) is irreducible.
  \end{enumerate}
\end{lemm}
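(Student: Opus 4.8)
The plan is to handle the three parts in sequence, letting part~\ref{item:084103c034528bd4} carry the only real computation and deducing parts~\ref{item:427971dc4329cf18} and~\ref{item:41687ad768888c74} as short consequences. For~\ref{item:084103c034528bd4} I would compute the defining relation for \( \mathsf{b}V \) directly from that of \( V \). Using \( V(r)V(s) = \omega(r,s)V(rs) \), I evaluate
\begin{displaymath}
  (\mathsf{b}V)(r)\,(\mathsf{b}V)(s)
  = \mathsf{b}(r)\mathsf{b}(s)\,V(r)V(s)
  = \mathsf{b}(r)\mathsf{b}(s)\,\omega(r,s)\,V(rs),
\end{displaymath}
and then substitute \( V(rs) = \mathsf{b}(rs)^{-1}(\mathsf{b}V)(rs) \), recognizing the factor \( \mathsf{b}(r)\mathsf{b}(s)\mathsf{b}(rs)^{-1} \) as \( (\delta\mathsf{b})(r,s) \) in the notation of~\eqref{eq:774f5d351fbf9b30}. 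This gives \( (\mathsf{b}V)(r)(\mathsf{b}V)(s) = [(\delta\mathsf{b})\omega](r,s)\,(\mathsf{b}V)(rs) \). Each \( \mathsf{b}(\gamma)V(\gamma) \) is unitary as a unimodular multiple of a unitary, and \( (\mathsf{b}V)(e_{\Gamma}) = \mathsf{b}(e_{\Gamma})\id_{\mathscr{H}} = \id_{\mathscr{H}} \) once \( \mathsf{b} \) is normalized so that \( \mathsf{b}(e_{\Gamma}) = 1 \), which one may always arrange without altering \( \delta\mathsf{b} \). Since \( (\delta\mathsf{b})\omega \) is the pointwise product inside the abelian group \( Z^{2}(\Gamma,\mathbb{T}) \) of a \( 2 \)-coboundary and the \( 2 \)-cocycle \( \omega \), it is again a \( 2 \)-cocycle, so \( \mathsf{b}V \) is a unitary projective representation with cocycle \( (\delta\mathsf{b})\omega \), as asserted.

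For part~\ref{item:427971dc4329cf18} I would simply compare cocycles using~\ref{item:084103c034528bd4}: the cocycle of \( \mathsf{b}V \) equals that of \( V \) exactly when \( (\delta\mathsf{b})\omega = \omega \) pointwise. As every value \( \omega(r,s) \) lies in \( \mathbb{T} \) and is therefore invertible, this is equivalent to \( \delta\mathsf{b} = 1 \), i.e.\ \( \mathsf{b} \in \ker(\delta) \). The discussion preceding Definition~\ref{defi:530f76c332f4637b} identifies \( \ker(\delta) \) with the group of characters of \( \Gamma \), which yields the stated equivalence.

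For part~\ref{item:41687ad768888c74} the decisive observation is that scaling an operator by a nonzero scalar leaves the subspaces it preserves unchanged. Concretely, for any subspace \( \mathscr{K} \subseteq \mathscr{H} \) and any \( \gamma \in \Gamma \) one has \( (\mathsf{b}V)(\gamma)\mathscr{K} = \mathsf{b}(\gamma)V(\gamma)\mathscr{K} = V(\gamma)\mathscr{K} \), since \( \mathsf{b}(\gamma) \in \mathbb{T} \) is a unit; hence \( \mathscr{K} \) is invariant under \( \mathsf{b}V \) if and only if it is invariant under \( V \). The two projective representations thus share the same lattice of invariant subspaces, so one is irreducible precisely when the other is.

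I do not anticipate a genuine obstacle in this lemma. The only points requiring a moment's care are the normalization \( \mathsf{b}(e_{\Gamma}) = 1 \) needed to match Definition~\ref{defi:530f76c332f4637b} verbatim, and the remark that \( (\delta\mathsf{b})\omega \) remains a cocycle; both are immediate from the group structure of \( Z^{2}(\Gamma,\mathbb{T}) \) together with \( B^{2}(\Gamma,\mathbb{T}) \subseteq Z^{2}(\Gamma,\mathbb{T}) \).
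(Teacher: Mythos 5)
Your proposal is correct. For parts \ref{item:084103c034528bd4} and \ref{item:427971dc4329cf18} the paper simply declares them ``direct consequences of the relevant definitions,'' and your computation \( (\mathsf{b}V)(r)(\mathsf{b}V)(s) = [(\delta\mathsf{b})\omega](r,s)\,(\mathsf{b}V)(rs) \), together with the invertibility of the values of \( \omega \), is exactly what that phrase leaves implicit. Part \ref{item:41687ad768888c74} is where you genuinely depart from the paper. The paper's proof is a character computation: the character of \( \mathsf{b}V \) is \( \mathsf{b}\chi_{V} \), and since \( \abs*{\mathsf{b}(\gamma)} = 1 \) for all \( \gamma \), one gets \( \dim\morph_{\Gamma}(\mathsf{b}V, \mathsf{b}V) = \frac{1}{\abs*{\Gamma}}\sum_{\gamma \in \Gamma}\abs*{\chi_{V}(\gamma)}^{2} = \dim\morph_{\Gamma}(V,V) \), so irreducibility (this dimension being \( 1 \)) is preserved. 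You instead observe that \( (\mathsf{b}V)(\gamma)\mathscr{K} = V(\gamma)\mathscr{K} \) for every subspace \( \mathscr{K} \), so \( \mathsf{b}V \) and \( V \) have the same lattice of invariant subspaces. Both arguments are complete. Yours is more elementary: it uses neither the character theory of projective representations imported from \cite{MR3299063} nor the Schur-type identification of irreducibility with \( \dim\selfmorph_{\Gamma}(V) = 1 \), and it works verbatim without unitarity. The paper's method, on the other hand, is the one that scales to comparing \emph{different} representations: the immediately following Remark~\ref{rema:c0671b6740777559} runs the same character computation for \( \dim\morph_{\Gamma}(\mathsf{b}V, V) \), a finer question that the invariant-subspace argument cannot address.

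One small slip in your write-up: you claim one may normalize so that \( \mathsf{b}(e_{\Gamma}) = 1 \) ``without altering \( \delta\mathsf{b} \).'' That is false: since \( \delta \) is a group morphism and \( \delta \) of a constant function \( c \) is the constant \( 2 \)-cochain \( c \), replacing \( \mathsf{b} \) by \( \mathsf{b}/\mathsf{b}(e_{\Gamma}) \) multiplies \( \delta\mathsf{b} \) by \( \mathsf{b}(e_{\Gamma})^{-1} \). The slip is harmless, however, because the hypothesis \( \mathsf{b}(e_{\Gamma}) = 1 \) is already forced by the statement of \ref{item:084103c034528bd4} itself: the cocycle \( \omega' \) of any projective representation in the sense of Definition~\ref{defi:530f76c332f4637b} satisfies \( \omega'(e_{\Gamma}, \gamma) = 1 \), whereas \( [(\delta\mathsf{b})\omega](e_{\Gamma},\gamma) = \mathsf{b}(e_{\Gamma}) \); likewise \( (\mathsf{b}V)(e_{\Gamma}) = \id_{\mathscr{H}} \) forces the same. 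So the lemma's ``arbitrary mapping'' must be read with this implicit normalization---as it is in all of the paper's applications, where \( \mathsf{b} \) is a character or a transitional mapping---and under that reading your normalization remark can simply be deleted, leaving the rest of your argument untouched.
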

\begin{proof}
  It is clear that \ref{item:084103c034528bd4} and \ref{item:427971dc4329cf18}
  are direct consequences of the relevant definitions. We now prove
  \ref{item:41687ad768888c74}. If we denote the character of \( V \) by
  \( \chi_{V} \), then the character of \( \mathsf{b}V \) is
  \( \mathsf{b} \chi_{V} \). Hence
  \begin{equation}
    \label{eq:9455d1b35405a8bb}
    \begin{split}
      & \leadmathskip \dim \morph_{\Gamma}(\mathsf{b}V, \mathsf{b}V) %
      = \frac{1}{\abs*{\Gamma}} \sum_{\gamma \in \Gamma}
      \overline{\mathsf{b}(\gamma) \chi_{V}(\gamma)} %
      \mathsf{b}(\gamma) \chi_{V}(\gamma) \\
      &= \frac{1}{\abs*{\Gamma}} \sum_{\gamma \in \Gamma}
      \overline{\chi_{V}(\gamma)} \chi_{V}(\gamma) %
      = \dim \morph_{\Gamma}(V, V),
    \end{split}
  \end{equation}
  and \( \mathsf{b}V \) is irreducible if and only if \( V \) is.
\end{proof}

\begin{rema}
  \label{rema:c0671b6740777559}
  If \( \mathsf{b} \) is a character of \( \Gamma \), and
  \( V \colon \Gamma \rightarrow \mathcal{U}(\mathscr{H}) \) an irreducible
  unitary projective representation, then \( \mathsf{b}V \) is also an
  irreducible unitary projective representation with the same cocycle as that of
  \( V \). Note that \( \abs*{\mathsf{b}(\gamma)} = 1 \) for all
  \( \gamma \in \Gamma \), we have
  \begin{equation}
    \label{eq:622c6963980432a7}
    \begin{split}
      & \leadmathskip \dim \morph_{\Gamma}(\mathsf{b}V, V) %
      = \frac{1}{\abs*{\Gamma}} \sum_{\gamma \in \Gamma}
      \overline{\mathsf{b}(\gamma)} \chi_{V}(\gamma)
      \overline{\chi_{V}(\gamma)} \\
      &\leq \frac{1}{\abs*{\Gamma}} \sum_{\gamma \in \Gamma} \chi_{V}(\gamma)
      \overline{\chi_{V}(\gamma)} = \dim \morph_{\Gamma}(V, V) = 1.
    \end{split}
  \end{equation}
  with equality holds if and only if \( \mathsf{b}(\gamma) = 1 \) whenever
  \( \chi_{V}(\gamma) \ne 0 \). If equality doesn't hold
  in~\eqref{eq:622c6963980432a7}, then
  \( \dim \morph_{\Gamma}(\mathsf{b}V, V) \) must be \( 0 \) since it is a
  natural number. Therefore, whenever \( \Gamma \) is not trivial, it is
  possible that \( \mathsf{b}V \) and \( V \) are irreducible unitary projective
  representations with the same cocycle but not equivalent. Thus one must be
  careful not to confuse our definition of projective representation with the
  more naive definition where one simply replaces
  \( \generallinear(\mathscr{H}) \) by \( \prglin(\mathscr{H}) \) as the target
  model group. For us, how we lift from \( \prglin(\mathscr{H}) \) to
  \( \generallinear(\mathscr{H}) \) \emph{does} matter, even if we keep the
  cocycle in the process.
\end{rema}

After this digression, we now resume our discussion. Using terminologies in the
theory of group cohomology, and regarding \( \mathbb{T} \) as the trivial
\( \Lambda_{x} \)-module, we see that the \( 2 \)-cocycle
\( \omega_{x} \in C^{2}(\Lambda_{x}, \mathbb{T}) \) of the unitary projective
representation \( V \) of \( \Lambda_{x} \) is determined up to a
\( 2 \)-boundary in \( B^{2}( \Lambda_{x}, \mathbb{T} ) \), because each unitary
operator \( V(r_{0}) \), \( r_{0} \in \Lambda_{x} \) is uniquely determined up
to a scalar multiple in \( \mathbb{T} \) (Schur's lemma plus the unitarity of
\( V(r_{0}) \)). In other words,
\( [\omega_{x}] \in H^{2}(\Lambda_{x}, \mathbb{T}) \) is a well-defined
cohomology class of \( \Lambda_{x} \) with coefficients in \( \mathbb{T} \).

Conversely, let \( u \) be an \emph{irreducible} unitary representation of
\( \mathbb{G} \) on some finite dimensional Hilbert space \( \mathscr{H} \), and
\( x = [u] \in \irr(\mathbb{G}) \). If \( \Lambda_{0} \) is a subgroup of
\( \Lambda \), \( V \colon \Lambda_{0} \rightarrow \mathcal{U}(\mathscr{H}) \) a
unitary projection representation of \( \Lambda_{0} \) such that \( u \) and
\( V \) satisfy the covariance condition~\eqref{eq:ff84fdb6222c5b10}, then
\begin{displaymath}
  \forall r_{0} \in \Lambda_{0}, \quad V(r_{0})
  \in \morph_{\mathbb{G}}(r_{0} \cdot u,
  u).
\end{displaymath}
In particular, \( \Lambda_{0} \) fixes \( x = [u] \) under the action
\( \Lambda \curvearrowright \irr(\mathbb{G}) \). Repeat the above reasoning
shows that~\eqref{eq:12b2a6a90456dd72} still holds.

We summarize the above discussion in the following proposition, which proves
slightly more.

\begin{prop}
  \label{prop:902dca3889872c81}
  Let \( u \) be an irreducible unitary representation of \( \mathbb{G} \) on
  some finite dimensional Hilbert space \( \mathscr{H} \),
  \( x = [u] \in \irr(\mathbb{G}) \), \( \Lambda_{x} \) the isotropy group
  fixing \( x \) (under the action
  \( \Lambda \curvearrowright \irr(\mathbb{G}) \)). For any
  \( r_{0} \in \Lambda_{x} \), choose a unitary \( V(r_{0}) \) according
  to~\eqref{eq:df18a33e72dd2039}. Then
  \begin{enumerate}
  \item \label{item:925fa8aeabb78e4f}
    \( V \colon \Lambda_{x} \rightarrow \mathcal{U}(\mathscr{H}) \),
    \( r_{0} \mapsto V(r_{0}) \) is a unitary projective representation
    satisfying the covariance condition~\eqref{eq:ff84fdb6222c5b10};
  \item \label{item:40e8eb076d946d6c} let
    \( \omega \in C^{2}(\Lambda_{0}, \mathbb{T}) \) be the \( 2 \)-cocycle of
    \( V \), then the cohomology class
    \( c_{x} \colon= [\omega] \in H^{2}(\Lambda_{x}, \mathbb{T}) \) depends only
    on \( x \), i.e.\ it does not depend on any particular choice of
    \( u \in x \).
  \end{enumerate}
  Conversely, if
  \( V_{0} \colon \Lambda_{0} \rightarrow \mathcal{U}(\mathscr{H}) \) is a
  unitary projective representation of some subgroup \( \Lambda_{0} \) of
  \( \Lambda \) that satisfies the covariance
  condition~\eqref{eq:ff84fdb6222c5b10}, then
  \begin{enumerate}[resume]
  \item \label{item:220033c1e00ecd65} for every \( r_{0} \in \Lambda_{0} \), the
    condition~\eqref{eq:df18a33e72dd2039} holds;
  \item \label{item:5240fb4427fd379a} \( \Lambda_{0} \subseteq \Lambda_{x} \);
  \item \label{item:193b3181a694f8e7} there is a choice of
    \( V \colon \Lambda_{x} \rightarrow \mathcal{U}(\mathscr{H}) \)
    satisfying~\eqref{eq:ff84fdb6222c5b10} such that
    \( V \vert_{\Lambda_{0}} = V_{0} \);
  \item \label{item:07624bf2611cbd70} let
    \( \omega_{0} \in C^{2}(\Lambda_{0}, \mathbb{T}) \) be the \( 2 \)-cocycle
    of \( V_{0} \), then \( [\omega_{0}] \) is the image of \( c_{x} \) under
    the morphism of groups
    \begin{displaymath}
      H^{2}( \Lambda_{0} \hookrightarrow \Lambda_{x} ) \colon
      H^{2}( \Lambda_{x}, \mathbb{T}) \rightarrow H^{2}(
      \Lambda_{0}, \mathbb{T}).
    \end{displaymath}
  \end{enumerate}
\end{prop}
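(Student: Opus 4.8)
The plan is to collect the eight assertions, most of which were already verified in the discussion immediately preceding the statement, and to supply the two points that the discussion left implicit: that the class $c_x$ does not depend on the chosen representative $u \in x$, and the explicit extension of $V_0$ in the converse direction.

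For the direct statements, assertion \ref{item:925fa8aeabb78e4f} is exactly the content of \eqref{eq:12b2a6a90456dd72}: once a unitary $V(r_0)$ satisfying \eqref{eq:df18a33e72dd2039} is fixed for each $r_0 \in \Lambda_x$ (and $V(e) = \id_{\mathscr{H}}$ is chosen, which is legitimate since $\id_{\mathscr{H}}$ intertwines $e \cdot u = u$ with $u$), the identity \eqref{eq:f5ecd1bd13d50e74} forces $V(s_0 r_0){[V(r_0)]}^{\ast}{[V(s_0)]}^{\ast}$ to lie in $\mathbb{T} \cdot \id_{\mathscr{H}}$, so $V$ is a unitary projective representation, and \eqref{eq:ff84fdb6222c5b10} holds by the very definition of $V(r_0)$. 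For \ref{item:40e8eb076d946d6c} I would argue in two steps. First, independence from the choice of the intertwiners: any other admissible $V(r_0)$ differs from the given one by a factor in $\mathbb{T}$ by Schur's lemma, i.e.\ by a $1$-cochain $\mathsf{b} \colon \Lambda_x \to \mathbb{T}$, so by Lemma~\ref{lemm:62fa40c770de34e0}\ref{item:084103c034528bd4} the cocycle changes only by the coboundary $\delta \mathsf{b}$, leaving $[\omega]$ unchanged. Second, independence from the representative: given a unitary $T$ with $u'(T \otimes 1) = (T \otimes 1) u$ for another $u' \in x$, I would transport via $V'(r_0) := T V(r_0) T^{\ast}$ and check that $V'$ is an admissible choice for $u'$ with the \emph{same} cocycle $\omega$; the verification uses only that applying $\id \otimes \alpha_{r_0^{-1}}^{\ast}$ to the intertwining relation gives $T \in \morph_{\mathbb{G}}(r_0 \cdot u, r_0 \cdot u')$, and that conjugation by $T$ preserves products. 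Hence $c_x = [\omega]$ is well-defined.

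For the converse, \ref{item:220033c1e00ecd65} is the reverse of the equivalence between \eqref{eq:329c09668e71d843} and \eqref{eq:ff84fdb6222c5b10}: the covariance hypothesis says precisely that each $V_0(r_0)$ is a unitary element of $\morph_{\mathbb{G}}(r_0 \cdot u, u)$, which is \eqref{eq:df18a33e72dd2039}; and \ref{item:5240fb4427fd379a} follows at once, since such a unitary intertwiner exhibits $r_0 \cdot u \simeq u$, whence $r_0 \cdot x = x$. For \ref{item:193b3181a694f8e7} I would define $V$ on $\Lambda_x$ by setting $V \vert_{\Lambda_0} = V_0$ (legitimate by \ref{item:220033c1e00ecd65}) and choosing, for the remaining $r_0 \in \Lambda_x \setminus \Lambda_0$, arbitrary unitary intertwiners satisfying \eqref{eq:df18a33e72dd2039} (which exist since $r_0 \in \Lambda_x$); assertion \ref{item:925fa8aeabb78e4f} applied to this choice makes $V$ a projective representation obeying \eqref{eq:ff84fdb6222c5b10}, and $V \vert_{\Lambda_0} = V_0$ by construction. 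Finally \ref{item:07624bf2611cbd70} is immediate from \ref{item:193b3181a694f8e7}: since $V(r_0) = V_0(r_0)$ for all $r_0 \in \Lambda_0$, the cocycle $\omega$ of $V$ restricts on $\Lambda_0 \times \Lambda_0$ to the cocycle $\omega_0$ of $V_0$, so $[\omega_0]$ is the image of $c_x = [\omega]$ under $H^2(\Lambda_0 \hookrightarrow \Lambda_x)$.

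The only step that is more than bookkeeping is the representative-independence in \ref{item:40e8eb076d946d6c}; the point to be careful about there is that the transport $V \mapsto T V(\cdot) T^{\ast}$ simultaneously preserves the covariance property (where the antihomomorphism property of $\alpha^{\ast}$ enters, exactly as in the equivalence of \eqref{eq:329c09668e71d843} and \eqref{eq:ff84fdb6222c5b10}) and the cocycle. All remaining assertions are direct consequences of Schur's lemma and the definitions already in force.
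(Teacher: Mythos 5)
Your proposal is correct and follows essentially the same route as the paper's proof: the direct assertions and converse items \ref{item:220033c1e00ecd65}, \ref{item:5240fb4427fd379a} are read off from the discussion preceding the statement, \ref{item:193b3181a694f8e7} is obtained by extending \( V_{0} \) with arbitrary admissible intertwiners, \ref{item:07624bf2611cbd70} by restricting the cocycle, and representative-independence in \ref{item:40e8eb076d946d6c} by conjugating \( V \) with a unitary intertwiner, which preserves the cocycle on the nose---exactly the paper's \( V_{w}(r_{0}) = U V(r_{0}) U^{\ast} \) argument. The only addition you make is the explicit normalization \( V(e) = \id_{\mathscr{H}} \), which the paper leaves implicit.
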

\begin{proof}
  The above discussion already establishes \ref{item:925fa8aeabb78e4f},
  \ref{item:220033c1e00ecd65} and \ref{item:5240fb4427fd379a}. Assertion
  \ref{item:193b3181a694f8e7} follows from \ref{item:925fa8aeabb78e4f} and
  \ref{item:220033c1e00ecd65}, while \ref{item:07624bf2611cbd70} follows from
  \ref{item:193b3181a694f8e7}. Moreover, we've seen that
  \( [\omega] \in H^{2}(\Lambda_{x}, \mathbb{T}) \) does not depend on the
  choice of \( V \). For any \( w \in x \), there exists a unitary intertwiner
  \( U \in \morph_{\mathbb{G}}(u, w) \). It is trivial to check that
  \( V_{w}(r_{0}) = U V(r_{0}) U^{\ast} \) defines a unitary projective
  representation of \( \Lambda_{x} \) such that
  \begin{displaymath}
    V_{w}(r_{0}) \in \morph_{\mathbb{G}}(r_{0} \cdot w, w).
  \end{displaymath}
  Since \( V_{w} \) and \( V \) are unitarily equivalent projective
  representations of \( \Lambda_{x} \), the \( 2 \)-cocycle of \( V_{w} \)
  coincides with \( \omega \)---the \( 2 \)-cocycle of \( V \). This proves that
  \( c_{x} = [\omega] \in H^{2}(\Lambda_{x}, \mathbb{T}) \) indeed depends only
  on \( x \) and not on any particular choice of \( u \in x \). This proves
  \ref{item:40e8eb076d946d6c} and finishes the proof of the proposition.
\end{proof}

\begin{defi}
  \label{defi:7ea8675d7dbf16c8}
  Using the notations in Proposition~\ref{prop:902dca3889872c81}, we call the
  cohomology class \( [\omega] \in H^{2}(\Lambda_{x}, \mathbb{T}) \) the
  cohomology class associated with \( x = [u] \in \irr(\mathbb{G}) \), and we
  denote \( [\omega] \) by \( c_{x} \). If \( \Lambda_{0} \) is a subgroup of
  \( \Lambda_{x} \), the cohomology class
  \( [\omega_{0}] \in H^{2}(\Lambda_{0}, \mathbb{T}) \) is called the
  restriction of the cohomology class \( c_{x} \) on \( \Lambda_{0} \), and is
  denoted by \( c_{x, \Lambda_{0}} \).
\end{defi}

Obviously, \( c_{x, \Lambda_{0}} \) depends on \( \Lambda_{0} \) and \( x \),
and \( c_{x, \Lambda_{0}} = c_{x} \) if \( \Lambda_{0} = \Lambda_{x} \). To
apply the character theory of projective representations, we need to suitably
rescale the projective representations in question so that they share the same
cocycle (and not merely the same cohomology class for their cocycles). In the
case where the representation
\( u \in \mathcal{B}(\mathscr{H}) \otimes \pol(\mathbb{G}) \) of
\( \mathbb{G} \) is irreducible, and
\( V \colon \Lambda_{0} \rightarrow \mathcal{U}(\mathscr{H}) \) is a unitary
projective representation satisfying the covariance
condition~\eqref{eq:ff84fdb6222c5b10}, such a rescaling is implicit in the
choice of \( V(r_{0}) \in \morph_{\mathbb{G}}(r_{0} \cdot u, u) \) for each
\( r_{0} \in \Lambda_{0} \). However, Remark~\ref{rema:c0671b6740777559} tells
us we should take extra care if we want to talk about equivalence class of these
projective representations once we do the rescaling.

We finish this section with an easy result.
\begin{prop}
  \label{prop:80a4899eb993b414}
  Let \( x \in \irr(\mathbb{G}) \), \( u \in x \), \( \Lambda_{0} \) a subgroup
  of \( \Lambda_{x} \), \( c_{0} \in H^{2}(\Lambda_{0}, \mathbb{T}) \) is the
  image of the cohomology class \( c_{x} \in H^{2}( \Lambda_{x}, \mathbb{T}) \)
  associated with \( x \) under
  \( H^{2}(\Lambda_{0} \hookrightarrow \Lambda_{x}, \mathbb{T}) \).  Then for
  any \( 2 \)-cocycle \( \omega_{0} \in c_{0} \), there exists a unitary
  projective representation \( V \) of the isotropy subgroup \( \Lambda_{0} \)
  with cocycle \( \omega_{0} \), such that \( V \) and \( u \) are covariant,
  and such \( V \) is unique up to rescaling by a character of
  \( \Lambda_{0} \).
\end{prop}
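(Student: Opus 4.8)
The plan is to produce \( V \) by first obtaining \emph{some} covariant unitary projective representation of \( \Lambda_{0} \) whose cocycle lies in \( c_{0} \), and then rescaling it by a suitable \( \mathbb{T} \)-valued function to land exactly on \( \omega_{0} \). For the existence, I would choose for each \( r_{0} \in \Lambda_{0} \subseteq \Lambda_{x} \) a unitary \( V_{0}(r_{0}) \in \morph_{\mathbb{G}}(r_{0} \cdot u, u) \cap \mathcal{U}(\mathscr{H}) \) as in~\eqref{eq:df18a33e72dd2039} (with \( V_{0}(e) = \id_{\mathscr{H}} \)). By the reasoning preceding Proposition~\ref{prop:902dca3889872c81}---or by applying part~\ref{item:925fa8aeabb78e4f} to \( \Lambda_{x} \) and restricting---the map \( V_{0} \) is a unitary projective representation of \( \Lambda_{0} \) satisfying the covariance condition~\eqref{eq:ff84fdb6222c5b10}, and by Proposition~\ref{prop:902dca3889872c81}\ref{item:07624bf2611cbd70} (together with Definition~\ref{defi:7ea8675d7dbf16c8}) the cohomology class of its cocycle \( \omega_{V_{0}} \) is exactly \( c_{0} \).

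Since \( \omega_{0} \in c_{0} = [\omega_{V_{0}}] \), Lemma~\ref{lemm:cc4e3d9cf9663b2b} supplies a function \( \mathsf{b} \colon \Lambda_{0} \to \mathbb{T} \) such that \( V := \mathsf{b} V_{0} \) is a unitary projective representation with cocycle precisely \( \omega_{0} \). The only thing left to verify is that rescaling preserves covariance; but multiplying \( V_{0}(r_{0}) \) by the scalar \( \mathsf{b}(r_{0}) \) multiplies both sides of~\eqref{eq:ff84fdb6222c5b10} by the same phase, so \( V \) is again covariant with \( u \). This settles existence.

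For uniqueness, suppose \( V \) and \( V' \) are unitary projective representations of \( \Lambda_{0} \), each with cocycle \( \omega_{0} \) and each covariant with \( u \). By~\eqref{eq:329c09668e71d843} the covariance condition is equivalent to \( V(r_{0}), V'(r_{0}) \in \morph_{\mathbb{G}}(r_{0} \cdot u, u) \cap \mathcal{U}(\mathscr{H}) \) for all \( r_{0} \in \Lambda_{0} \). Because \( u \) is irreducible and \( r_{0} \cdot u \simeq u \) for \( r_{0} \in \Lambda_{0} \subseteq \Lambda_{x} \), Schur's lemma makes this set of unitary intertwiners a single \( \mathbb{T} \)-orbit, exactly as in~\eqref{eq:12b2a6a90456dd72}; hence \( V'(r_{0}) = \mathsf{b}(r_{0}) V(r_{0}) \) for a unique \( \mathsf{b}(r_{0}) \in \mathbb{T} \), i.e.\ \( V' = \mathsf{b} V \). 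Finally, since \( V' \) and \( V \) carry the same cocycle \( \omega_{0} \), Lemma~\ref{lemm:62fa40c770de34e0}\ref{item:427971dc4329cf18} forces \( \mathsf{b} \in \ker(\delta) \), that is, \( \mathsf{b} \) is a character of \( \Lambda_{0} \). This is precisely uniqueness up to rescaling by a character.

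I expect no serious obstacle here, as every step reduces to a result already established; the sole point requiring a moment's attention is the compatibility of the phase rescaling with~\eqref{eq:ff84fdb6222c5b10} in the existence step, which is immediate because that relation is homogeneous of degree one in \( V(r_{0}) \). All the real content is carried by Schur's lemma, which turns the relevant intertwiner spaces into \( \mathbb{T} \)-torsors, together with the cohomological bookkeeping in Lemmas~\ref{lemm:cc4e3d9cf9663b2b} and~\ref{lemm:62fa40c770de34e0}.
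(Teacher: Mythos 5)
Your proposal is correct and is essentially the paper's own argument: the paper's proof consists of citing Proposition~\ref{prop:902dca3889872c81}, Lemma~\ref{lemm:cc4e3d9cf9663b2b} and Lemma~\ref{lemm:62fa40c770de34e0}, and your write-up fills in precisely those details (existence by choosing unitary intertwiners and rescaling into the prescribed cocycle, uniqueness by Schur's lemma making each \( \morph_{\mathbb{G}}(r_{0} \cdot u, u) \cap \mathcal{U}(\mathscr{H}) \) a \( \mathbb{T} \)-orbit, then the character criterion of Lemma~\ref{lemm:62fa40c770de34e0}\ref{item:427971dc4329cf18}). No gaps: your observation that rescaling by phases preserves both unitarity and the covariance condition is exactly the point that makes the reduction work.
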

\begin{proof}
  This is clear from Proposition~\ref{prop:902dca3889872c81},
  Lemma~\ref{lemm:cc4e3d9cf9663b2b} and Lemma~\ref{lemm:62fa40c770de34e0}.
\end{proof}

\section{Pure, stable, distinguished CSRs and representation parameters}
\label{sec:a9de7f0149f3e3a1}

Recall that for any finite dimensional representation \( u \) of
\( \mathbb{G} \), the support of \( u \), denoted by \( \supp(u) \), is the set
\begin{displaymath}
  \set*{x \in \irr(\mathbb{G}) \given \dim_{\mathbb{G}}
    \morph_{\mathbb{G}}(x, [u]) \neq 0}
\end{displaymath}
where \( [u] \) is the class of unitary representations of \( \mathbb{G} \)
equivalent to \( u \). We call \( u \) \emph{pure} if \( \supp(u) \) is a
singleton.

\begin{defi}
  \label{defi:db0dba45bc40ce46}
  Fix a \( \Lambda_{0} \in \giso(\Lambda) \),
  \( \mathbf{S} = (\mathscr{H}, u, w) \in \mathcal{CSR}_{\Lambda_{0}} \), we
  call \( \mathbf{S} \)
  \begin{itemize}
  \item \textbf{pure}, if \( u \) is pure;
  \item \textbf{stable}, if \( r \cdot [u] (= [r \cdot u]) = [u] \) for all
    \( r \in \Lambda_{0} \);
  \item \textbf{stably pure}, if it is both pure and stable;
  \item \textbf{maximally stable}, if
    \begin{displaymath}
      \Lambda_{0} = \set*{r \in \Lambda \given r \cdot [u] = [u]};
    \end{displaymath}
  \item \textbf{simple}, if \( \mathbf{S} \) is a simple object in
    \( \mathcal{CSR}_{\Lambda_{0}} \);
  \item \textbf{distinguished}, if it is maximally stable, pure and simple.
  \end{itemize}
\end{defi}

As remarked earlier, while it is not reasonable for the moment to hope for a
satisfactory description of all simple CSRs in
\( \mathcal{CSR}_{\Lambda_{0}} \), it is possible to describe simple CSRs that
are stably pure using unitary projective representations of \( \Lambda_{0} \).
Somewhat surprisingly, one can even describe all stably pure CSRs, even the
non-simple ones, in this way. To achieve the latter, we introduce the following
definitions, which are closely related to the materials in
\S~\ref{sec:0bd5d546e466b2bc}.

\begin{defi}
  \label{defi:0bfa2e98e54ead31}
  Let \( \Lambda_{0} \in \giso(\Lambda) \). Suppose \( u \) is a unitary
  representation of \( \mathbb{G} \) on some finite dimensional Hilbert space
  \( \mathscr{H} \), and
  \( V \colon \Lambda_{0} \rightarrow \mathcal{U}(\mathscr{H}) \) is a unitary
  projective representation of \( \Lambda_{0} \). We say \( u \) and \( V \) are
  covariant if they satisfy the covariance
  condition~\eqref{eq:ff84fdb6222c5b10}, or equivalently
  \( V(r_{0}) \in \morph_{\mathbb{G}}(r_{0} \cdot u, u) \) for all
  \( r_{0} \in \Lambda_{0} \).
\end{defi}

\begin{defi}
  \label{defi:9c6a41a5587c9da0}
  Let \( x \in \irr(\mathbb{G}) \), \( \Lambda_{0} \in \giso(\Lambda) \) with
  \( \Lambda_{0} \subseteq \Lambda_{x} \), \( u \in x \),
  \( \omega_{0} \in c_{x, \Lambda_{0}} \) (see
  Definition~\ref{defi:7ea8675d7dbf16c8}), then a unitary projective
  representation \( V \) of \( \Lambda_{0} \) that is covariant with \( u \) is
  said to be a covariant projective \( \Lambda_{0} \)-representation of \( u \)
  (with cocycle \( \omega_{0} \)).
\end{defi}
\begin{rema}
  \label{rema:984d0f723a1e4fe1}
  In the setting of Definition~\ref{defi:9c6a41a5587c9da0}, fix any covariant
  projective \( \Lambda_{0} \)-representation \( V \) of \( u \) with cocycle
  \( \omega_{0} \), the set of covariant projective
  \( \Lambda_{0} \)-representations of \( u \) with multiplier \( \omega_{0} \)
  is in bijective correspondence with the group of characters of
  \( \Lambda_{0} \), via \( \mathsf{b} \mapsto \mathsf{b}V \) (see
  Lemma~\ref{lemm:cc4e3d9cf9663b2b} and Lemma~\ref{lemm:62fa40c770de34e0}).
\end{rema}

\begin{prop}[Structure of stably pure CSR]
  \label{prop:f8d2ac97bef564cc}
  Fix a \( \Lambda_{0} \in \giso(\Lambda) \), let
  \( \mathbf{S} = (\mathscr{H}, u, w) \) be a stably pure CSR in
  \( \mathcal{CSR}_{\Lambda_{0}} \) \( x \in \irr(\mathbb{G}) \) is the support
  point of \( u \), \( u_{0} \in x \) a representation on some finite
  dimensional Hilbert space \( \mathscr{H}_{0} \), \( n \) is the multiplicity
  of \( u_{0} \) in \( u \), \( V_{0} \) a covariant projective
  \( \Lambda_{0} \)-representation of \( u \), then there exists a unique
  unitary projective representation
  \( v_{0} \colon \Lambda_{0} \rightarrow \mathcal{U}(\mathbb{C}^{n}) \) of
  \( \Lambda_{0} \) on \( \mathbb{C}^{n} \), such that the following hold:
  \begin{enumerate}
  \item \label{item:ecec63f36b0b94c1} \( V_{0} \) and \( v_{0} \) have opposing
    cocycles;
  \item \label{item:3bdd1d604068b729}
    \( \mathbf{S}_{0} = (\mathbb{C}^{n} \otimes \mathscr{H}_{0}, \epsilon_{n}
    \times u_{0}, v_{0} \times V_{0}) \) is a CSR in
    \( \mathcal{CSR}_{\Lambda_{0}} \), where \( \epsilon_{n} \) is the trivial
    representation of \( \mathbb{G} \) on \( \mathbb{C}^{n} \);
  \item \label{item:d5285c841ec062ca} \( \mathbf{S}_{0} \) and \( \mathbf{S} \)
    are isomorphic in \( \mathcal{CSR}_{\Lambda_{0}} \).
  \end{enumerate}
\end{prop}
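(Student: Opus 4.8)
The goal is to decompose the stably pure CSR $\mathbf{S} = (\mathscr{H}, u, w)$ into a tensor-product form where the compact-quantum-group part is concentrated in a single irreducible $u_0$. Since $\mathbf{S}$ is pure with support $\{x\}$ and $u_0 \in x$ appears with multiplicity $n$, I would first choose a unitary intertwiner identifying the $\mathbb{G}$-representation $u$ with the inflated representation $\epsilon_n \times u_0$ on $\mathbb{C}^n \otimes \mathscr{H}_0$, where $\mathscr{H}_0$ carries $u_0$. Concretely, by the Peter-Weyl decomposition for $\mathbb{G}$, there is a unitary $T \colon \mathscr{H} \to \mathbb{C}^n \otimes \mathscr{H}_0$ with $(T \otimes 1)u = (\epsilon_n \times u_0)(T \otimes 1)$. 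I would then transport the projective representation $w$ of $\Lambda_0$ across $T$, setting $\widetilde{w}(r_0) = T\, w(r_0)\, T^{\ast}$, so that $(\mathbb{C}^n \otimes \mathscr{H}_0, \epsilon_n \times u_0, \widetilde{w})$ is a CSR isomorphic to $\mathbf{S}$ via $T$.

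**Extracting $v_0$.** The main work is to show that the transported $\widetilde{w}(r_0)$ factors as $v_0(r_0) \otimes V_0(r_0)$ for some projective representation $v_0$ on $\mathbb{C}^n$, where $V_0$ is the given covariant projective $\Lambda_0$-representation of $u_0$. The covariance condition~\eqref{eq:ff84fdb6222c5b10} for the pair $(\epsilon_n \times u_0, \widetilde{w})$ says each $\widetilde{w}(r_0)$ lies in $\morph_{\mathbb{G}}(r_0 \cdot (\epsilon_n \times u_0), \epsilon_n \times u_0)$. Since $\epsilon_n$ is trivial and $r_0 \cdot u_0$ is intertwined with $u_0$ precisely by the unitary $V_0(r_0)$ (which exists because $\Lambda_0 \subseteq \Lambda_x$ by stability and Proposition~\ref{prop:902dca3889872c81}), the operator $\widetilde{w}(r_0)\,(1 \otimes V_0(r_0))^{\ast}$ commutes with $\epsilon_n \times u_0$, hence by Schur's lemma lies in $\morph_{\mathbb{G}}(\epsilon_n \times u_0, \epsilon_n \times u_0) = \mathcal{B}(\mathbb{C}^n) \otimes \id_{\mathscr{H}_0}$. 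This forces $\widetilde{w}(r_0) = v_0(r_0) \otimes V_0(r_0)$ for a unique $v_0(r_0) \in \mathcal{U}(\mathbb{C}^n)$; unitarity of $v_0(r_0)$ follows since $\widetilde{w}(r_0)$ and $V_0(r_0)$ are unitary.

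**Verifying the cocycle relation and uniqueness.** It remains to check that $v_0$ is a genuine projective representation and that its cocycle is opposite to that of $V_0$. Writing $\omega_w$, $\omega_{V_0}$, $\omega_{v_0}$ for the respective cocycles and using that $\widetilde{w} = v_0 \otimes V_0$ is a (honest) unitary representation of $\Lambda_0$ when $w$ is, one computes $\widetilde{w}(r_0)\widetilde{w}(s_0) = \widetilde{w}(r_0 s_0)$, which upon substituting the tensor factorization yields $\omega_{v_0}(r_0,s_0)\,\omega_{V_0}(r_0,s_0) = 1$, i.e.\ $\omega_{v_0} = \omega_{V_0}^{-1}$; this is exactly condition~\ref{item:ecec63f36b0b94c1}, that $V_0$ and $v_0$ have opposing cocycles. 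Normalizing $v_0(e) = \id_{\mathbb{C}^n}$ (using $V_0(e)=\id$ and $\widetilde{w}(e)=\id$) makes $v_0$ a projective representation in the sense of Definition~\ref{defi:530f76c332f4637b}. Uniqueness of $v_0$ is immediate from the tensor factorization: $V_0(r_0)$ is fixed data and $\widetilde{w}(r_0)$ is determined, so $v_0(r_0)$ is determined as the scalar-free left factor. Finally, $\mathbf{S}_0 = (\mathbb{C}^n \otimes \mathscr{H}_0, \epsilon_n \times u_0, v_0 \times V_0)$ coincides with $(\mathbb{C}^n \otimes \mathscr{H}_0, \epsilon_n \times u_0, \widetilde{w})$, which is isomorphic to $\mathbf{S}$ via $T$, giving~\ref{item:3bdd1d604068b729} and~\ref{item:d5285c841ec062ca}.

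**Main obstacle.** The delicate point is the Schur-lemma argument isolating the left tensor factor: I must verify that $\morph_{\mathbb{G}}(\epsilon_n \times u_0, \epsilon_n \times u_0)$ is exactly $\mathcal{B}(\mathbb{C}^n) \otimes \id_{\mathscr{H}_0}$, and that the covariance condition genuinely places $\widetilde{w}(r_0)\,(1\otimes V_0(r_0))^{\ast}$ into this commutant rather than merely an intertwiner space between inequivalent representations. Here purity (support a singleton) and stability ($\Lambda_0 \subseteq \Lambda_x$) are both essential, and I expect the careful bookkeeping of which operator intertwines which $r_0$-twisted representation—exactly the content of~\eqref{eq:f5ecd1bd13d50e74} and~\eqref{eq:ff84fdb6222c5b10}—to be the step most prone to sign/inverse errors.
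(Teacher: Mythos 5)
Your proposal is correct and follows essentially the same route as the paper: reduce to \( u = \epsilon_{n} \times u_{0} \) by a unitary intertwiner, use the covariance condition together with Schur's lemma (in the form \( \morph_{\mathbb{G}}(r_{0} \cdot u_{0}, u_{0}) = \mathbb{C} V_{0}(r_{0}) \)) to obtain the factorization \( w(r_{0}) = v_{0}(r_{0}) \otimes V_{0}(r_{0}) \), and read off the opposing cocycle from the multiplicativity of \( w \). The only cosmetic difference is in how the factorization is extracted: the paper writes \( w(r_{0}) = \sum_{i} A_{i} \otimes B_{i} \) with the \( A_{i} \) linearly independent and applies slice functionals to show each \( B_{i} \in \mathbb{C} V_{0}(r_{0}) \), whereas you multiply by \( \bigl(1 \otimes V_{0}(r_{0})\bigr)^{\ast} \) and invoke \( \selfmorph_{\mathbb{G}}(\epsilon_{n} \times u_{0}) = \mathcal{B}(\mathbb{C}^{n}) \otimes \mathbb{C}\id_{\mathscr{H}_{0}} \) --- the same Schur-lemma argument in a different guise.
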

\begin{proof}
  Uniqueness is almost clear once we finish the proof of existence, which we do
  now. Let \( U \) be a unitary intertwiner from \( u \) to
  \( \epsilon_{n} \otimes u_{0} \). Noting that \( u \) is pure and replacing
  \( \mathbf{S} \) with \( U \mathbf{S} U^{\ast} \) if necessary, we may assume
  \( \mathscr{H} = \mathbb{C}^{n} \otimes \mathscr{H}_{0} \) and
  \( u = \epsilon_{n} \times u_{0} = {(u_{0})}_{23} \). For any
  \( r_{0} \in \Lambda_{0} \), we claim that there exists a unique
  \( v_{0}(r_{0}) \in \mathcal{B}(\mathbb{C}^{n}) \) such that
  \( w(r_{0}) = v_{0}(r_{0}) \otimes V_{0}(r_{0}) \). Admitting the claim for
  the moment, the unitarity of \( v_{0}(r_{0}) \) follows from the unitarity of
  \( w(r_{0}) \) and \( V_{0}(r_{0}) \), and \( w \) being a representation and
  \( V_{0} \) being a projective representation force \( v_{0} \) to be a
  unitary projective representation with a cocycle opposing to the cocycle of
  \( V_{0} \). Thus the proposition follows from the claim, which we now
  prove. Since
  \( \mathcal{B}(\mathbb{C}^{n} \otimes \mathscr{H}_{0}) =
  \mathcal{B}(\mathbb{C}^{n}) \otimes \mathcal{B}(\mathscr{H}_{0}) \) by the
  usual identification, there exists an \( m \in \mathbb{N} \),
  \( A_{1}, \ldots, A_{m} \in \mathcal{B}(\mathbb{C}^{n}) \) and
  \( B_{1}, \ldots, B_{m} \in \mathcal{B}(\mathscr{H}_{0}) \), such that
  \begin{equation}
    \label{eq:e398eec963eb6512}
    w(r_{0}) = \sum_{i=1}^{m} A_{i} \otimes B_{i}.
  \end{equation}
  Furthermore, we can and do choose these operators so that
  \( A_{1}, \ldots, A_{m} \) are linearly independent in
  \( \mathcal{B}(\mathbb{C}^{n}) \). Since \( u \) and \( w \) are covariant, we
  have
  \begin{equation}
    \label{eq:7f93909bf92d4386}
    \bigl(w(r_{0}) \otimes 1\bigr) u %
    = \bigl[(\id_{\mathscr{H}} \otimes \alpha^{\ast}_{r_{0}})u\bigr] %
    \bigl(w(r_{0}) \otimes 1\bigr).
  \end{equation}
  Substituting \( u = {(u_{0})}_{23} \) and~\eqref{eq:e398eec963eb6512} in
  \eqref{eq:7f93909bf92d4386} yields
  \begin{equation}
    \label{eq:038416680331ec8a}
    \begin{split}
      & \leadmathskip
      \sum_{i=1}^{m} A_{i} \otimes [(B_{i} \otimes 1) u_{0}] \\
      &= \sum_{i=1}^{m} A_{i} \otimes \Bigl(\bigl[%
      (\id_{\mathscr{H}_{0}} \otimes \alpha^{\ast}_{r_{0}})u_{0}\bigr]%
      \bigl(B_{i} \otimes 1\bigr)\Bigr) %
      \in \mathcal{B}(\mathbb{C}^{n}) \otimes \mathcal{B}(\mathscr{H}_{0})
      \otimes \pol(\mathbb{G}).
    \end{split}
  \end{equation}
  Since \( A_{1}, \ldots, A_{m} \) are linearly independent, there exists linear
  functionals \( l_{1}, \ldots, l_{m} \) on \( \mathcal{B}(\mathbb{C}^{n}) \)
  such that \( l_{i}(A_{j}) = \delta_{i,j} \). Applying
  \( l_{i} \otimes \id_{\mathscr{H}_{0}} \otimes \id_{\pol(\mathbb{G})} \) on
  \eqref{eq:038416680331ec8a} shows that for each \( i = 1, \ldots, m \),
  \begin{equation}
    \label{eq:df4eaa8572d55883}
    (B_{i} \otimes 1) u_{0}
    = \bigl[(\id \otimes \alpha^{\ast}_{r_{0}})u_{0}\bigr] %
    (B_{i} \otimes 1),
  \end{equation}
  or equivalently
  \begin{equation}
    \label{eq:152e3338917fa440}
    B_{i} \in \morph_{\mathbb{G}}(r_{0} \cdot u_{0}, u_{0})
    = \mathbb{C} V_{0}(r_{0}).
  \end{equation}
  Now the claim follows from \eqref{eq:e398eec963eb6512}
  and~\eqref{eq:152e3338917fa440}.
\end{proof}

Conversely, we have
\begin{prop}
  \label{prop:1911ec535b058627}
  Fix a \( \Lambda_{0} \in \giso(\Lambda) \), \( x \in \irr(\mathbb{G}) \) with
  \( \Lambda_{0} \subseteq \Lambda_{x} \). Take a \( u \in x \) acting on some
  finite dimensional Hilbert space \( \mathscr{H} \), and a covariant projective
  \( \Lambda_{0} \)-representation \( V \) of \( u \), then for any unitary
  projective representation
  \( v \colon \Lambda_{0} \rightarrow \mathcal{U}(\mathscr{K}) \) of
  \( \Lambda_{0} \) with cocycle opposing the cocycle of \( V \), the unitary
  representation \( v \times V \) of \( \Lambda_{0} \) is covariant with the
  unitary representation
  \( \id_{\mathscr{K}} \otimes u = \epsilon_{\mathscr{K}} \times u \) of
  \( \mathbb{G} \), where \( \epsilon_{\mathscr{K}} \) is the trivial
  representation of \( \mathbb{G} \) on \( \mathscr{K} \), i.e.\
  \( (\mathscr{K} \otimes \mathscr{H}, \epsilon_{\mathscr{K}} \times u, v \times
  V) \) is a stably pure CSR in \( \mathcal{CSR}_{\Lambda_{0}} \).
\end{prop}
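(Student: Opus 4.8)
The plan is to verify directly that the triple $(\mathscr{K} \otimes \mathscr{H}, \epsilon_{\mathscr{K}} \times u, v \times V)$ meets the two axioms of a CSR subordinate to $\Lambda_{0}$ (Definition~\ref{defi:f58794ba929ea9e8}) and is moreover stably pure, so that this proposition becomes the clean converse of Proposition~\ref{prop:f8d2ac97bef564cc}. First I would check that $v \times V$ is a \emph{genuine} unitary representation of $\Lambda_{0}$, not merely a projective one. Writing $\omega$ for the cocycle of $V$, the hypothesis that $v$ has opposing cocycle means $v$ has cocycle $\omega^{-1} = \overline{\omega}$; since $(v \times V)(r_{0}) = v(r_{0}) \otimes V(r_{0})$, the relations $v(r_{0})v(s_{0}) = \omega^{-1}(r_{0},s_{0})\, v(r_{0}s_{0})$ and $V(r_{0})V(s_{0}) = \omega(r_{0},s_{0})\, V(r_{0}s_{0})$ multiply so that the two Schur multipliers cancel, giving $(v \times V)(r_{0})(v \times V)(s_{0}) = (v \times V)(r_{0}s_{0})$. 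Thus $v \times V$ is an honest unitary representation of $\Lambda_{0}$ on $\mathscr{K} \otimes \mathscr{H}$.

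Next I would verify the covariance condition~\eqref{eq:1611871ef2500d9a} between $U_{\mathbb{G}} := \epsilon_{\mathscr{K}} \times u = \id_{\mathscr{K}} \otimes u$ and $U_{\Lambda_{0}} := v \times V$. The decisive simplification is that $\epsilon_{\mathscr{K}}$ is trivial on the $\mathscr{K}$ leg: expanding both sides of~\eqref{eq:1611871ef2500d9a} for $r_{0} \in \Lambda_{0}$ leaves the unitary $v(r_{0})$ acting identically on $\mathscr{K}$ in each, while the action on $\mathscr{H} \otimes \pol(\mathbb{G})$ is carried by $V(r_{0})$ and $u$. After factoring out this common $v(r_{0})$, the condition collapses to
\begin{equation*}
  (V(r_{0}) \otimes 1)\, u = \bigl[(\id_{\mathscr{H}} \otimes \alpha^{\ast}_{r_{0}})(u)\bigr]\,(V(r_{0}) \otimes 1),
\end{equation*}
which is precisely the covariance condition~\eqref{eq:ff84fdb6222c5b10} between $u$ and $V$. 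This holds by hypothesis, since $V$ is a covariant projective $\Lambda_{0}$-representation of $u$ (Definition~\ref{defi:0bfa2e98e54ead31}). Hence $\epsilon_{\mathscr{K}} \times u$ and $v \times V$ are covariant, and the triple is a CSR in $\mathcal{CSR}_{\Lambda_{0}}$.

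It then remains to confirm stable purity. Purity is immediate, as $\epsilon_{\mathscr{K}} \times u \simeq u^{\oplus \dim \mathscr{K}}$ has support $\{x\}$, a singleton. Stability follows from the assumption $\Lambda_{0} \subseteq \Lambda_{x}$: for $r_{0} \in \Lambda_{0}$ one has $r_{0} \cdot (\epsilon_{\mathscr{K}} \times u) = \epsilon_{\mathscr{K}} \times (r_{0} \cdot u)$, and $r_{0} \cdot u \in x$ because $r_{0}$ fixes $x$, so $r_{0} \cdot [\epsilon_{\mathscr{K}} \times u] = [\epsilon_{\mathscr{K}} \times u]$. Both conditions of Definition~\ref{defi:db0dba45bc40ce46} hold, so the CSR is stably pure.

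I do not expect any genuine obstacle here: this is the expected reverse direction of the structure theorem. The only point requiring a little care is the bookkeeping of tensor-leg positions in the covariance step, but once one observes that the trivial representation on the $\mathscr{K}$ factor lets $v(r_{0})$ pass through unchanged, the entire verification reduces to the covariance of $u$ and $V$ that is assumed from the outset.
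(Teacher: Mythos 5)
Your proof is correct and takes essentially the same route as the paper's: the paper's entire argument is ``tensor \( v(r_{0}) \) on the left of the covariance relation \( \bigl(V(r_{0}) \otimes 1\bigr)u = \bigl[(\id \otimes \alpha^{\ast}_{r_{0}})u\bigr]\bigl(V(r_{0}) \otimes 1\bigr) \)'', which is exactly your covariance step read in the other direction. Your extra verifications---the cancellation of opposing cocycles making \( v \times V \) a genuine unitary representation, and the purity/stability check---are details the paper leaves implicit, and they are all correct.
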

\begin{proof}
  Since \( V \) and \( u \) are covariant, for any \( r_{0} \in \Lambda_{0} \),
  we have
  \begin{equation}
    \label{eq:b526cec2bb6ba021}
    \bigl(V(r_{0}) \otimes 1\bigr) u %
    = \bigl[(\id \otimes \alpha^{\ast}_{r_{0}})u\bigr] %
    \bigl(V(r_{0}) \otimes 1\bigr).
  \end{equation}
  The proposition follows by tensoring \( v(r_{0}) \) on the left
  in~\eqref{eq:b526cec2bb6ba021}.
\end{proof}

By Proposition~\ref{prop:f8d2ac97bef564cc} and
Proposition~\ref{prop:1911ec535b058627}, we now have a satisfactory description
of stably pure CSRs in \( \mathcal{CSR}_{\Lambda_{0}} \)---from any irreducible
representation \( u \) of \( \mathbb{G} \) on \( \mathscr{H} \) such that
\( \Lambda_{0} \cdot [u] = [u] \), one choose a covariant projective
\( \Lambda_{0} \)-representation \( V \) of \( u \) with some cocycle
\( \omega \), then any unitary projective representation \( v \) of
\( \Lambda_{0} \) with cocycle \( \omega^{-1} = \overline{\omega} \) gives rise
to a stably pure CSR in \( \mathcal{CSR}_{\Lambda_{0}} \), namely
\( \mathcal{S}(u, V, v) = (\mathscr{K} \otimes \mathscr{H},
\epsilon_{\mathscr{K}} \times u, v \times V) \); and all stably pure CSRs in
\( \mathcal{CSR}_{\Lambda_{0}} \) arise in this way up to isomorphism.

\begin{rema}
  \label{rema:2afa01207639301d}
  Using the above notations, while it is true that \( V \) is determined by
  \( u \) to a great extent due to the restriction of Schur's lemma, it is still
  not completely determined (see Proposition~\ref{prop:80a4899eb993b414}), and a
  choice of this \( V \) is vitally relevant as is demonstrated by
  Remark~\ref{rema:c0671b6740777559} applied to \( v \). This is why \( V \) can
  \emph{not} be suppressed in our notation \( \mathcal{S}(u, V, v) \).
\end{rema}

\begin{defi}
  \label{defi:a5316bfdaeb22cfa}
  Let \( \Lambda_{0} \in \giso(\Lambda) \).  A triple \( (u, V, v) \) is called
  a \textbf{representation parameter} for \( \mathbb{G} \rtimes \Lambda \)
  associated with \( \Lambda_{0} \), if it the following hold:
  \begin{itemize}
  \item \( u \) is an irreducible unitary representation of \( \mathbb{G} \) on
    some finite dimensional Hilbert space \( \mathscr{H} \);
  \item \( V \) is a covariant projective
    \( \Lambda_{0} \)\nobreakdash-representation of \( u \);
  \item \( v \) is a unitary projective representation of \( \Lambda_{0} \)
    (possibly on Hilbert spaces other than \( \mathscr{H} \)), such that \( v \)
    and \( V \) have opposing cocycles.
  \end{itemize}

  If \( (u, V, v) \) is a representation parameter, the stably pure CSR
  \( \mathcal{S}(u, V, v) \) in \( \mathcal{CSR}_{\Lambda_{0}} \) is called the
  CSR parametrized by the representation parameter \( (u, V, v) \). If
  furthermore the unitary projective representation \( v \) is irreducible, we
  say the representation parameter \( (u, V, v) \) is \emph{irreducible}.
\end{defi}

Thus Proposition~\ref{prop:f8d2ac97bef564cc} immediately implies the following
corollary.
\begin{coro}
  \label{coro:f85e8a8060bb9d12}
  Fix a \( \Lambda_{0} \in \giso(\Lambda) \), then every stably pure CSR
  associated with \( \Lambda_{0} \) is parameterised by some representation
  parameter associated with \( \Lambda_{0} \). \qed
\end{coro}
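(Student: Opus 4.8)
The plan is to deduce the statement almost verbatim from the structure theorem Proposition~\ref{prop:f8d2ac97bef564cc}, the point being only to match the data it produces against the three clauses of Definition~\ref{defi:a5316bfdaeb22cfa}. So I start with an arbitrary stably pure CSR $\mathbf{S} = (\mathscr{H}, u, w)$ associated with $\Lambda_{0}$. Purity means $u$ is isotypic, so $\supp(u) = \set*{x}$ for a single $x \in \irr(\mathbb{G})$; I fix a representative $u_{0} \in x$ acting on a Hilbert space $\mathscr{H}_{0}$ and let $n$ be the multiplicity of $u_{0}$ in $u$.

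Next I would extract from stability the inclusion $\Lambda_{0} \subseteq \Lambda_{x}$: by definition of stable, $r \cdot [u] = [u] = x$ for every $r \in \Lambda_{0}$, so every element of $\Lambda_{0}$ fixes $x$ under the action $\Lambda \curvearrowright \irr(\mathbb{G})$. This inclusion is exactly the standing hypothesis of Definition~\ref{defi:9c6a41a5587c9da0}, which makes the notion of a covariant projective $\Lambda_{0}$-representation of $u_{0}$ available; by Proposition~\ref{prop:902dca3889872c81} such a $V_{0}$ exists, and I fix one. Proposition~\ref{prop:f8d2ac97bef564cc} then furnishes a unique unitary projective representation $v_{0} \colon \Lambda_{0} \to \mathcal{U}(\mathbb{C}^{n})$ whose cocycle opposes that of $V_{0}$, such that $\mathbf{S}_{0} = (\mathbb{C}^{n} \otimes \mathscr{H}_{0}, \epsilon_{n} \times u_{0}, v_{0} \times V_{0})$ is a CSR isomorphic to $\mathbf{S}$ in $\mathcal{CSR}_{\Lambda_{0}}$.

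I would then check that the triple $(u_{0}, V_{0}, v_{0})$ satisfies the three bullet points of Definition~\ref{defi:a5316bfdaeb22cfa}. First, $u_{0}$ is an irreducible unitary representation of $\mathbb{G}$ because $u_{0} \in x \in \irr(\mathbb{G})$ (this is where the passage from the isotypic $u$ to its irreducible constituent $u_{0}$ is essential, since a representation parameter demands an irreducible first entry). Second, $V_{0}$ is a covariant projective $\Lambda_{0}$-representation of $u_{0}$ by its very choice. Third, $v_{0}$ is a unitary projective representation of $\Lambda_{0}$ whose cocycle opposes that of $V_{0}$, which is precisely clause~\ref{item:ecec63f36b0b94c1} of Proposition~\ref{prop:f8d2ac97bef564cc}. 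Hence $(u_{0}, V_{0}, v_{0})$ is a representation parameter associated with $\Lambda_{0}$.

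Finally, unwinding the definition $\mathcal{S}(u_{0}, V_{0}, v_{0}) = (\mathbb{C}^{n} \otimes \mathscr{H}_{0}, \epsilon_{\mathbb{C}^{n}} \times u_{0}, v_{0} \times V_{0})$ and comparing with $\mathbf{S}_{0}$, the two CSRs coincide, so $\mathcal{S}(u_{0}, V_{0}, v_{0}) = \mathbf{S}_{0} \cong \mathbf{S}$; thus $\mathbf{S}$ is parametrized up to isomorphism by $(u_{0}, V_{0}, v_{0})$, as required. I expect no genuine obstacle here, as the entire content is already contained in Proposition~\ref{prop:f8d2ac97bef564cc}; the only step requiring a word of care is the explicit derivation $\Lambda_{0} \subseteq \Lambda_{x}$ from stability, without which the phrase \emph{covariant projective $\Lambda_{0}$-representation of $u_{0}$} would not even be defined.
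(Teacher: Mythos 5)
Your proof is correct and follows exactly the paper's route: the paper derives this corollary as an immediate consequence of Proposition~\ref{prop:f8d2ac97bef564cc}, and your argument simply fills in the routine verifications (stability giving \( \Lambda_{0} \subseteq \Lambda_{x} \), existence of \( V_{0} \) via Proposition~\ref{prop:902dca3889872c81}, and matching the output of the structure proposition against Definition~\ref{defi:a5316bfdaeb22cfa}). The only cosmetic slip is writing \( r \cdot [u] = [u] = x \), conflating the class of the isotypic \( u \) with its support point \( x \in \irr(\mathbb{G}) \); the intended inference \( r \cdot x = x \) is nonetheless valid since the support is an invariant of the equivalence class.
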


\begin{defi}
  \label{defi:1b15f5eabe1196e0}
  Fix a \( \Lambda_{0} \in \giso(\Lambda) \). Let \( u \) be an irreducible
  unitary representation of \( \mathbb{G} \) such that
  \( \Lambda_{0} \cdot [u] = [u] \), \( V_{1} \) and \( V_{2} \) are two
  covariant projective \( \Lambda_{0} \)-representations of \( u \), the unique
  mapping \( \mathsf{b} \colon \Lambda_{0} \rightarrow \mathbb{T} \) such that
  \( V_{2} = \mathsf{b} V_{1} \) is called the \( u \)-\textbf{transitional
    mapping} from \( V_{1} \) to \( V_{2} \) (note that we do \emph{not} require
  \( V_{1} \) and \( V_{2} \) to have the same cocycle here).
\end{defi}

\begin{prop}
  \label{prop:7b2a964908c81d83}
  Fix a \( \Lambda_{0} \in \giso(\Lambda) \). For \( i = 1,2 \), let
  \( (u_{i}, V_{i}, v_{i}) \) be a representation parameter associated with
  \( \Lambda_{0} \), \( U_{i} \) denote the unitary representation
  \( \mathscr{R}_{\Lambda_{0}}\Bigl(\mathcal{S}(u_{i}, V_{i},v_{i})\Bigr) \) of
  \( \mathbb{G} \rtimes \Lambda_{0} \), then the following holds:
  \begin{enumerate}
  \item \label{item:cda18708f8635340} if \( [u_{1}] \ne [u_{2}] \) in
    \( \irr(\mathbb{G}) \), then
    \( \dim\morph_{\mathbb{G} \rtimes \Lambda_{0}}(U_{1}, U_{2}) = 0 \);
  \item \label{item:e80ad530289a518d} if \( u_{1} = u_{2} = u \), and
    \( \mathsf{b} \colon \Lambda_{0} \rightarrow \mathbb{T} \) the
    \( u \)-transitional map from \( V_{1} \) to \( V_{2} \), then
    \begin{equation}
      \label{eq:dbf0691d433c1cab}
      \dim\morph_{\mathbb{G} \rtimes \Lambda_{0}}(U_{1}, U_{2}) =
      \dim\morph_{\Lambda_{0}}\left(v_{1}, %
        \mathsf{b}v_{2}\right).
    \end{equation}
  \end{enumerate}
\end{prop}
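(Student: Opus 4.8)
The plan is to push the computation of \( \morph_{\mathbb{G} \rtimes \Lambda_{0}}(U_{1}, U_{2}) \) through the functors into the category \( \mathcal{CSR}_{\Lambda_{0}} \), where Proposition~\ref{prop:50b58e1fd8af4aa0} lets the \( \mathbb{G} \)-part and the \( \Lambda_{0} \)-part of the intertwining condition decouple. Write \( \mathscr{H}_{i} \) for the space of \( u_{i} \) and \( \mathscr{K}_{i} \) for the space of \( v_{i} \), so that \( \mathcal{S}(u_{i}, V_{i}, v_{i}) = (\mathscr{K}_{i} \otimes \mathscr{H}_{i}, \epsilon_{\mathscr{K}_{i}} \times u_{i}, v_{i} \times V_{i}) \). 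By Proposition~\ref{prop:50b58e1fd8af4aa0}, an operator \( S \colon \mathscr{K}_{1} \otimes \mathscr{H}_{1} \to \mathscr{K}_{2} \otimes \mathscr{H}_{2} \) lies in \( \morph_{\mathbb{G} \rtimes \Lambda_{0}}(U_{1}, U_{2}) \) if and only if
\begin{displaymath}
  S \in \morph_{\mathbb{G}}(\epsilon_{\mathscr{K}_{1}} \times u_{1}, \epsilon_{\mathscr{K}_{2}} \times u_{2}) \cap \morph_{\Lambda_{0}}(v_{1} \times V_{1}, v_{2} \times V_{2}).
\end{displaymath}

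For part~\ref{item:cda18708f8635340}, I would observe that \( \epsilon_{\mathscr{K}_{i}} \times u_{i} \) is a direct sum of \( \dim \mathscr{K}_{i} \) copies of the irreducible \( u_{i} \); since \( [u_{1}] \ne [u_{2}] \) in \( \irr(\mathbb{G}) \), Schur's lemma gives \( \morph_{\mathbb{G}}(\epsilon_{\mathscr{K}_{1}} \times u_{1}, \epsilon_{\mathscr{K}_{2}} \times u_{2}) = 0 \). Hence the intersection displayed above, and therefore \( \morph_{\mathbb{G} \rtimes \Lambda_{0}}(U_{1}, U_{2}) \), vanishes.

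For part~\ref{item:e80ad530289a518d}, with \( u_{1} = u_{2} = u \) irreducible on \( \mathscr{H} \), the first step is to determine the shape of the \( \mathbb{G} \)-intertwiners. Writing \( S = \sum_{k} T_{k} \otimes B_{k} \) with the \( T_{k} \in \mathcal{B}(\mathscr{K}_{1}, \mathscr{K}_{2}) \) linearly independent and arguing exactly as in the proof of Proposition~\ref{prop:f8d2ac97bef564cc} (apply dual functionals to the \( T_{k} \)), the condition \( S \in \morph_{\mathbb{G}}(\epsilon_{\mathscr{K}_{1}} \times u, \epsilon_{\mathscr{K}_{2}} \times u) \) forces each \( B_{k} \in \morph_{\mathbb{G}}(u, u) = \mathbb{C}\,\id_{\mathscr{H}} \), so \( S = T \otimes \id_{\mathscr{H}} \) for a unique \( T \in \mathcal{B}(\mathscr{K}_{1}, \mathscr{K}_{2}) \). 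Substituting this into \( S\bigl(v_{1}(r_{0}) \otimes V_{1}(r_{0})\bigr) = \bigl(v_{2}(r_{0}) \otimes V_{2}(r_{0})\bigr)S \) and using \( V_{2} = \mathsf{b} V_{1} \) (Definition~\ref{defi:1b15f5eabe1196e0}) yields
\begin{displaymath}
  T v_{1}(r_{0}) \otimes V_{1}(r_{0}) = \mathsf{b}(r_{0})\, v_{2}(r_{0}) T \otimes V_{1}(r_{0}).
\end{displaymath}
Applying \( \id \otimes \phi \) for a functional \( \phi \) on \( \mathcal{B}(\mathscr{H}) \) with \( \phi(V_{1}(r_{0})) = 1 \) strips off the common factor and leaves \( T v_{1}(r_{0}) = \mathsf{b}(r_{0}) v_{2}(r_{0}) T \), i.e.\ \( T \in \morph_{\Lambda_{0}}(v_{1}, \mathsf{b} v_{2}) \). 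The assignment \( S \mapsto T \) is then a linear isomorphism from \( \morph_{\mathbb{G} \rtimes \Lambda_{0}}(U_{1}, U_{2}) \) onto \( \morph_{\Lambda_{0}}(v_{1}, \mathsf{b} v_{2}) \), which is exactly~\eqref{eq:dbf0691d433c1cab}.

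Finally I would record the cocycle bookkeeping that makes \( \morph_{\Lambda_{0}}(v_{1}, \mathsf{b} v_{2}) \) a legitimate morphism space between projective representations sharing a single cocycle: if \( V_{1} \) has cocycle \( \omega \), then \( v_{1} \) has cocycle \( \overline{\omega} \), while \( V_{2} = \mathsf{b} V_{1} \) has cocycle \( (\delta\mathsf{b})\omega \) by Lemma~\ref{lemm:62fa40c770de34e0}, so \( v_{2} \) has cocycle \( \overline{(\delta\mathsf{b})\omega} \) and hence \( \mathsf{b} v_{2} \) has cocycle \( (\delta\mathsf{b})\,\overline{(\delta\mathsf{b})\omega} = \overline{\omega} \), matching that of \( v_{1} \). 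The only genuinely delicate point is the reduction \( S = T \otimes \id_{\mathscr{H}} \): everything downstream is formal manipulation, but this step is precisely where the irreducibility of \( u \) and the linear-independence argument do the real work, and some care is needed since \( \mathscr{K}_{1} \) and \( \mathscr{K}_{2} \) need not coincide.
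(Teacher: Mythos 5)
Your proof is correct, but it follows a genuinely different route from the paper's. The paper is character-theoretic: it computes \( \dim\morph_{\mathbb{G} \rtimes \Lambda_{0}}(U_{1}, U_{2}) = h_{\Lambda_{0}}\bigl(\overline{\chi_{1}}\chi_{2}\bigr) \), expands both characters in matrix coefficients, and invokes the orthogonality relations of \( \mathbb{G} \) --- the vanishing of \( h\bigl({\bigl(u^{(1)}_{j_{1}k_{1}}\bigr)}^{\ast}u^{(2)}_{j_{2}k_{2}}\bigr) \) for inequivalent irreducibles in part (a), and in part (b) the quantum orthogonality relations \eqref{eq:8f4ad60167971100} involving the modular operator \( \rho \), which it diagonalizes before collapsing the sum using unitarity of \( V \) (after first reducing to \( V_{1} = V_{2} \), \( \mathsf{b} = 1 \) via \( \mathcal{S}(u, V_{2}, v_{2}) = \mathcal{S}(u, V_{1}, \mathsf{b}v_{2}) \)). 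You instead argue structurally: Proposition~\ref{prop:50b58e1fd8af4aa0} decouples the two intertwining conditions, Schur's lemma disposes of part (a), and in part (b) the linear-independence argument borrowed from the proof of Proposition~\ref{prop:f8d2ac97bef564cc} pins every \( \mathbb{G} \)-intertwiner to the form \( T \otimes \id_{\mathscr{H}} \), after which the \( \Lambda_{0} \)-condition becomes exactly \( T \in \morph_{\Lambda_{0}}(v_{1}, \mathsf{b}v_{2}) \); since that computation is reversible, the assignment \( S \mapsto T \) is indeed a bijection, not just an injection. Your route buys a stronger conclusion --- an explicit linear isomorphism \( \morph_{\mathbb{G} \rtimes \Lambda_{0}}(U_{1}, U_{2}) \cong \morph_{\Lambda_{0}}(v_{1}, \mathsf{b}v_{2}) \) rather than a mere equality of dimensions --- and it never touches the modular operator or quantum dimension, so the Kac and non-Kac cases are handled uniformly. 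What the paper's route buys is flexibility: its part (a) argument uses only that the matrix coefficients of \( u_{1} \) and \( u_{2} \) are \( h \)-orthogonal, not irreducibility itself, and this is precisely the form in which it is recycled almost verbatim in the proof of Proposition~\ref{prop:72841057708ae472} for subrepresentations with disjoint supports. Your closing cocycle bookkeeping is also correct and worth keeping, as it is what makes \( \morph_{\Lambda_{0}}(v_{1}, \mathsf{b}v_{2}) \) a legitimate intertwiner space of projective representations sharing a single cocycle.
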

\begin{proof}
  Let \( h \) be the Haar state of \( \mathbb{G} \),
  by~\eqref{eq:1c67d64dafdd1948}, the Haar state \( h_{\Lambda_{0}} \) of
  \( \mathbb{G} \rtimes \Lambda_{0} \) is the linear functional on
  \( A \otimes C(\Lambda_{0}) \) defined by
  \( a \otimes \delta_{r_{0}} \mapsto \abs*{\Lambda_{0}}^{-1} h(a) \), where
  \( a \in A \), \( r_{0} \in \Lambda_{0} \) (recall that
  \( A = C(\mathbb{G}) \)).

  Suppose \( [u_{1}] \neq [u_{2}] \).  For any \( i = 1, 2 \), by choosing a
  Hilbert space basis for the representation of \( u_{i} \), one can write
  \( u_{i} \) as a square matrix \( \left(u^{(i)}_{jk}\right) \) over
  \( \pol(\mathbb{G}) \subseteq A \), and \( V_{i} \) as a matrix
  \( \left(V^{(i)}_{jk}\right) \) over \( C(\Lambda_{0}) \) of the same size of
  \( \left(u^{(i)}_{jk}\right) \). Then the character \( \chi_{i} \) of
  \( U_{i} \) is given by
  \begin{equation}
    \label{eq:2508a507c6a14a5c}
    \chi_{i} = \sum_{r_{0} \in \Lambda_{0}} \sum_{j = 1}^{n_{i}} \tr(v_{i})
    \left(\sum_{k=1}^{n_{i}} V^{(i)}_{kj}(r_{0})u^{(i)}_{jk} \right) \otimes
    \delta_{r_{0}} \in \pol(\mathbb{G}) \otimes C(\Lambda_{0}).
  \end{equation}
  The orthogonality relation for the nonequivalent irreducible representations
  \( u_{1} \) and \( u_{2} \) implies that
  \begin{equation}
    \label{eq:579f67898ee5b2ec}
    \forall j_{1}, k_{1}, j_{2}, k_{2}, \quad %
    h\left({\bigl(u^{(1)}_{j_{1}k_{1}}\bigr)}^{\ast}
      u^{(2)}_{j_{2}k_{2}}\right) = 0.
  \end{equation}
  Hence, by \eqref{eq:2508a507c6a14a5c} and \eqref{eq:579f67898ee5b2ec},
  \begin{equation}
    \label{eq:cfc9ba1665474da9}
    \begin{split}
      & \leadmathskip \dim \morph_{\mathbb{G} \rtimes \Lambda_{0}}(U_{1},
      U_{2}) %
      = h_{\Lambda_{0}}\bigl(\overline{\chi_{1}} \chi_{2}\bigr) \\
      &= \abs*{\Lambda_{0}}^{-1} \sum_{r_{0} \in \Lambda_{0}} %
      \sum_{j_{1}, k_{1} =1}^{n_{1}} \sum_{j_{2}, k_{2} = 1}^{n_{2}} %
      \overline{\tr\bigl(v_{1}(r_{0})\bigr)} \tr\bigl(v_{2}(r_{0})\bigr) \\
      & \dmslskip \overline{V^{(1)}_{k_{1}j_{1}}(r_{0})}
      V^{(2)}_{k_{2}j_{2}}(r_{0}) %
      h\left({\bigl(u^{(1)}_{j_{1}k_{1}}\bigr)}^{\ast}
        u^{(2)}_{j_{2}k_{2}}\right) \\
      &= 0.
    \end{split}
  \end{equation}
  This proves \ref{item:cda18708f8635340}.

  Under the hypothesis of \ref{item:e80ad530289a518d}, using the same notations
  as in the previous paragraph, we have \( n_{1} = n_{2} = \dim U \).  We may
  assume that \( e^{(1)}_{j} = e^{(2)}_{j} = e_{j} \), hence
  \( u_{jk} := u^{(1)}_{jk} = u^{(2)}_{jk} \) for all possible \( j, k \). Note
  that \( V_{2} = \mathsf{b} V_{1} \), and
  \( \mathcal{S}(u, V_{2}, v_{2}) = \mathcal{S}(u, V_{1}, \mathsf{b}v_{2}) \)
  because \( \mathsf{b}V_{1} \times v_{2} = V_{1} \times \mathsf{b} v_{2} \), we
  may assume that \( V_{2} = V_{1} = V \) and \( \mathsf{b} = 1 \), with
  \( V_{jk} := V^{(1)}_{jk} = V^{(2)}_{jk} \in C(\Lambda_{0}) \) for all
  possible \( j, k \). Let \( \rho \) be the unique invertible positive operator
  in \( \morph_{\mathbb{G}}(u, u^{cc}) \) such that
  \( \tr( \cdot \, \rho) = \tr( \cdot \, \rho^{-1}) \) on
  \( \selfmorph_{\mathbb{G}}(u) \). With these assumptions,
  by~\eqref{eq:cfc9ba1665474da9}, the orthogonality relation takes the form
  \begin{equation}
    \label{eq:8f4ad60167971100}
    h(u_{ij}^{\ast} u_{kl}) = \frac{\delta_{j,l}
      {\bigl(\rho^{-1}\bigr)}_{ki}}{\dim_{q}U}
  \end{equation}
  where \( \dim_{q}U = \tr(\rho) = \tr(\rho^{-1}) \) is the quantum dimension of
  \( U \) (see~\cite[\S 1.4]{MR3204665}). Since \( \rho \) is positive, we might
  choose the basis \( e_{1}, \ldots, e_{n} \) to diagonize \( \rho \), so that
  \( \rho_{ki} = {\bigl(\rho^{-1}\bigr)}_{ki} = 0 \) whenever \( k \neq i
  \). Using this basis, \eqref{eq:8f4ad60167971100} and
  \eqref{eq:cfc9ba1665474da9}, we have
  \begin{equation}
    \label{eq:b5147c8b6d5d415c}
    \begin{split}
      & \leadmathskip
      \dim \morph_{\mathbb{G} \rtimes \Lambda_{0}}(U_{1}, U_{2}) \\
      &= \abs*{\Lambda_{0}}^{-1} \sum_{r_{0} \in \Lambda_{0}} %
      \sum_{j_{1}, k_{1} =1}^{n} \sum_{j_{2}, k_{2} = 1}^{n} %
      \overline{\tr\bigl(v_{1}(r_{0})\bigr)} \tr\bigl(v_{2}(r_{0})\bigr) \\
      & \dmsllongskip \cdot \overline{V_{k_{1}j_{1}}(r_{0})}
      V_{k_{2}j_{2}}(r_{0}) %
      h\left({\bigl(u_{j_{1}k_{1}}\bigr)}^{\ast} u_{j_{2}k_{2}}\right) \\
      &= \abs*{\Lambda_{0}}^{-1} \sum_{r_{0} \in \Lambda_{0}} %
      \sum_{j_{1}, k_{1} =1}^{n} \sum_{j_{2}, k_{2} = 1}^{n}
      \overline{\tr\bigl(v_{1}(r_{0})\bigr)} \tr\bigl(v_{2}(r_{0})\bigr) %
      \overline{V_{k_{1}j_{1}}(r_{0})} V_{k_{2}j_{2}}(r_{0})
      \\
      & \dmsllongskip %
      \cdot \frac{\delta_{j_{1},j_{2}} \delta_{k_{1}, k_{2}}
        {\bigl(\rho^{-1}\bigr)}_{j_{2}j_{1}}}{\dim_{q}U}
      \\
      &= \abs*{\Lambda_{0}}^{-1} \overline{\tr\bigl(v_{1}(r_{0})\bigr)}
      \tr\bigl(v_{2}(r_{0})\bigr) %
      \sum_{r_{0} \in \Lambda_{0}} \sum_{j = 1}^{n} \left\{\sum_{k = 1}^{n}
        \overline{V_{kj}(r_{0})} V_{kj}(r_{0})\right\}
      \frac{{\bigl(\rho^{-1}\bigr)}_{jj}}{\dim_{q}U} \\
      & \leadmathskip (\text{Note that \( V(r_{0}) \) is unitary}) \\
      &= \abs*{\Lambda_{0}}^{-1} \sum_{r_{0} \in \Lambda_{0}}
      \overline{\tr\bigl(v_{1}(r_{0})\bigr)} %
      \tr\bigl(v_{2}(r_{0})\bigr) %
      \frac{\sum_{j=1}^{n} {\bigl(\rho^{-1}\bigr)}_{jj}}{\dim_{q} U} \\
      &= \abs*{\Lambda_{0}}^{-1} \sum_{r_{0} \in \Lambda_{0}}
      \overline{\tr\bigl(v_{1}(r_{0})\bigr)} %
      \tr\bigl(v_{2}(r_{0})\bigr) = \dim\morph_{\Lambda_{0}}(v_{1}, v_{2}).
    \end{split}
  \end{equation}
  This proves \ref{item:e80ad530289a518d}.
\end{proof}

The following corollary is now clear.
\begin{coro}
  \label{coro:d09f1fd81ad53d88}
  Fix a \( \Lambda_{0} \in \giso(\Lambda) \). Let \( (u, V, v) \) be a
  representation parameter associated with \( \Lambda_{0} \), then the
  representation \( \mathscr{R}_{\Lambda_{0}}\Bigl(\mathcal{S}(u, V, v)\Bigr) \)
  of \( \mathbb{G} \rtimes \Lambda_{0} \) is irreducible if and only if the
  representation parameter \( (u, V, v) \) is irreducible. \qed
\end{coro}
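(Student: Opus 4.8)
The plan is to obtain this as an immediate specialization of Proposition~\ref{prop:7b2a964908c81d83}~\ref{item:e80ad530289a518d} to the diagonal case. Concretely, I would set both representation parameters equal, taking $(u_{1}, V_{1}, v_{1}) = (u_{2}, V_{2}, v_{2}) = (u, V, v)$, and write $U = \mathscr{R}_{\Lambda_{0}}\bigl(\mathcal{S}(u, V, v)\bigr)$. Since $V_{1} = V_{2} = V$, the $u$\nobreakdash-transitional mapping $\mathsf{b} \colon \Lambda_{0} \to \mathbb{T}$ from $V_{1}$ to $V_{2}$ (Definition~\ref{defi:1b15f5eabe1196e0}) is the unique map with $V = \mathsf{b}V$, hence the constant function $1$. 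Feeding this into part~\ref{item:e80ad530289a518d} of the proposition yields
\begin{displaymath}
  \dim\morph_{\mathbb{G} \rtimes \Lambda_{0}}(U, U)
  = \dim\morph_{\Lambda_{0}}(v, \mathsf{b}v)
  = \dim\morph_{\Lambda_{0}}(v, v).
\end{displaymath}

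The second and final step is to chain this identity with two standard irreducibility criteria. On the quantum-group side, a finite dimensional unitary representation is irreducible if and only if the dimension of its space of self-intertwiners equals $1$; thus $U$ is irreducible precisely when $\dim\morph_{\mathbb{G} \rtimes \Lambda_{0}}(U, U) = 1$. On the projective side, the character-theoretic orthogonality relations for unitary projective representations of finite groups (the Peter--Weyl theory cited as~\cite{MR3299063}, already used in Lemma~\ref{lemm:62fa40c770de34e0}~\ref{item:41687ad768888c74}) give that $v$ is irreducible if and only if $\dim\morph_{\Lambda_{0}}(v, v) = 1$. Combining these with the displayed equation produces the string of equivalences: $U$ irreducible $\iff$ $\dim\morph_{\mathbb{G} \rtimes \Lambda_{0}}(U,U) = 1$ $\iff$ $\dim\morph_{\Lambda_{0}}(v,v) = 1$ $\iff$ $v$ irreducible $\iff$ $(u,V,v)$ irreducible, where the last step is merely the definition of an irreducible representation parameter in Definition~\ref{defi:a5316bfdaeb22cfa}.

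I do not anticipate any genuine obstacle, as the entire substance has been front-loaded into Proposition~\ref{prop:7b2a964908c81d83}. The only point deserving a moment's verification is that the transitional mapping is trivial when $V_{1} = V_{2}$; this is immediate from its definition and uniqueness, since $V = 1 \cdot V$. Everything else is a direct concatenation of the two irreducibility criteria with the displayed dimension formula, which is exactly why the statement can be recorded as a corollary rather than a theorem.
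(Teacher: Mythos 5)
Your proposal is correct and is precisely the argument the paper intends: the corollary is stated with no written proof (``The following corollary is now clear''), the intended content being exactly your diagonal specialization of Proposition~\ref{prop:7b2a964908c81d83}~\ref{item:e80ad530289a518d} with trivial transitional map, chained with the two standard criteria that irreducibility means one-dimensional self-intertwiner space and with Definition~\ref{defi:a5316bfdaeb22cfa}. Your verification that $\mathsf{b}\equiv 1$ when $V_{1}=V_{2}$ is the right point to check, and it holds since each $V(r_{0})$ is unitary, hence nonzero.
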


\section{Distinguished representation parameters and distinguished
  representations}
\label{sec:9a5ddf920bd49589}

Fix a \( \Lambda_{0} \in \giso(\Lambda) \). For any unitary projective
representation \( V \colon \Lambda_{0} \rightarrow \mathcal{U}(\mathscr{H}) \)
of \( \Lambda_{0} \), and any \( r \in \Lambda \), define \( r \cdot V \) to be
the unitary projective representation of \( r\Lambda_{0}r^{-1} \) on
\( \mathscr{H} \) sending \( s_{0} = rr_{0}r^{-1} \in r\Lambda_{0}r^{-1} \) to
\( (V \circ \adj_{r^{-1}})(s_{0}) = V(r_{0}) \). Then
\( (rs) \cdot V = r \cdot (s \cdot V) \) for all \( r, s \in \Lambda \) with
\( 1_{\Lambda} \cdot V = V \), in other words, this defines an action of the
group \( \Lambda \) on the class of all unitary projective representations of
general isotropy subgroups of \( \Lambda \).

It is easy to see from Proposition~\ref{prop:e4e616da0a0995e6} that whenever
\( \mathbf{S} = (\mathscr{H}, u, w) \in \mathcal{CSR}_{\Lambda_{0}} \), the
triple \( r \cdot \mathbf{S} = (\mathscr{H}, r \cdot u, r \cdot w) \) is a CSR
in \( \mathcal{CSR}_{r\Lambda_{0}r^{-1}} \). If
\( U = \mathscr{R}_{\Lambda_{0}}(\mathbf{S}) \) is the unitary representation of
\( \mathbb{G} \rtimes \Lambda_{0} \), then it is easy to see by restriction that
\( \mathscr{R}_{r\Lambda_{0}r^{-1}}(r \cdot \mathbf{S}) \) is the unitary
representation
\( r \cdot U = (\id \otimes \alpha^{\ast}_{r^{-1}} \otimes
\adj^{\ast}_{r^{-1}})(U) \) of \( \mathbb{G} \rtimes r\Lambda_{0}r^{-1} \), as
described in Proposition~\ref{prop:56963e422c66f26b}. Thus by
Corollary~\ref{coro:a2b4b06e5632471c}, we see that \( \indrep(U) \) and
\( \indrep(r \cdot U) \) are equivalent representations of
\( \mathbb{G} \rtimes \Lambda \).

Similarly, for any representation parameter \( (u, V, v) \) associated with
\( \Lambda_{0} \) and any \( r \in \Lambda \), the triple
\( (r \cdot u, r \cdot V, r \cdot v) \) is a representation parameter associated
with \( r \Lambda_{0} r^{-1} \), which we denoted by \( r \cdot (u, V, v) \).
This clearly defines an \( \Lambda \)-action on the proper class of all
representation parameters associated with any group in some conjugacy class of a
general isotropy subgroup of \( \Lambda \). A simple calculation shows that
(recall \( \mathcal{S}(u, V, v) \) is the CSR parameterized by \( (u, V, v) \))
\begin{equation}
  \label{eq:e2a420685bba7c4f}
  \forall r \in \Lambda, \quad r \cdot \mathcal{S}(u, V, v)
  = \mathcal{S}\bigl(r \cdot (u, V, v)\bigr).
\end{equation}

\begin{defi}
  \label{defi:ab1995a9dda3749d}
  Let \( (u, V, v) \) be a representation parameter associated with some
  \( \Lambda_{0} \in \giso(\Lambda) \), the induced representation
  \( \indrep\Bigl(\mathscr{R}_{\Lambda_{0}}\bigl(\mathcal{S}(u, V,
  v)\bigr)\Bigr) \) of \( \mathbb{G} \rtimes \Lambda \) is called the
  representation of \( \mathbb{G} \rtimes \Lambda \) parameterized by
  \( (u, V, v) \).
\end{defi}

\begin{prop}
  \label{prop:49d184eecd55653e}
  Let \( (u, V, v) \) be a representation parameter associated with some
  \( \Lambda_{0} \in \giso(\Lambda) \). Then for any \( r \in \Lambda \), the
  representation parameters \( (u, V, v) \) and \( r \cdot (u, V, v) \)
  parameterize equivalent representations of \( \mathbb{G} \rtimes \Lambda \).
\end{prop}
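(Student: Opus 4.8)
The plan is to unwind the definitions and reduce the claim to the already-established invariance of induced representations under the $\Lambda$-action on parameters. By Definition~\ref{defi:ab1995a9dda3749d}, the representation of $\mathbb{G} \rtimes \Lambda$ parameterized by $(u, V, v)$ is $\indrep(U)$, where $U = \mathscr{R}_{\Lambda_{0}}\bigl(\mathcal{S}(u, V, v)\bigr)$ is the associated unitary representation of $\mathbb{G} \rtimes \Lambda_{0}$; likewise the representation parameterized by $r \cdot (u, V, v)$ is $\indrep\bigl(\mathscr{R}_{r\Lambda_{0}r^{-1}}(\mathcal{S}(r \cdot (u, V, v)))\bigr)$. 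So it suffices to identify this second induced representation with $\indrep(r \cdot U)$.

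First I would invoke the identity~\eqref{eq:e2a420685bba7c4f}, which gives $\mathcal{S}\bigl(r \cdot (u, V, v)\bigr) = r \cdot \mathcal{S}(u, V, v)$ at the level of CSRs. Then, applying the functor $\mathscr{R}_{r\Lambda_{0}r^{-1}}$ and using the compatibility recorded in the discussion preceding this proposition---namely that $\mathscr{R}_{r\Lambda_{0}r^{-1}}(r \cdot \mathbf{S})$ equals $r \cdot U = (\id \otimes \alpha^{\ast}_{r^{-1}} \otimes \adj^{\ast}_{r^{-1}})(U)$ in the sense of Proposition~\ref{prop:56963e422c66f26b}---I would conclude that the representation parameterized by $r \cdot (u, V, v)$ is precisely $\indrep(r \cdot U)$.

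Finally, I would appeal to Corollary~\ref{coro:7b7b679f656174f9}, which asserts that $U$ and $r \cdot U$ induce equivalent unitary representations of $\mathbb{G} \rtimes \Lambda$ (their induced characters coincide by Proposition~\ref{prop:56963e422c66f26b}). Chaining the three steps shows that $(u, V, v)$ and $r \cdot (u, V, v)$ parameterize equivalent representations of $\mathbb{G} \rtimes \Lambda$.

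There is essentially no genuine obstacle here: all of the analytic content has already been packaged into the character formula of Section~\ref{sec:d471d5544009eb55} and its corollary, so the argument is a bookkeeping exercise in chasing the definitions through the functors $\mathscr{R}$ and $\indrep$. The one point that requires a moment's care is tracking the correct isotropy subgroup at each stage---the parameter $r \cdot (u, V, v)$ lives over the conjugate subgroup $r\Lambda_{0}r^{-1}$, so the functor applied must be $\mathscr{R}_{r\Lambda_{0}r^{-1}}$ rather than $\mathscr{R}_{\Lambda_{0}}$---but this is exactly what~\eqref{eq:e2a420685bba7c4f} and the surrounding remarks are set up to handle.
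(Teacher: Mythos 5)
Your proposal is correct and follows essentially the same route as the paper: both proofs chain the identity~\eqref{eq:e2a420685bba7c4f}, the compatibility \( \mathscr{R}_{r\Lambda_{0}r^{-1}}(r \cdot \mathbf{S}) = r \cdot U \) from the discussion preceding the proposition, and the equivalence \( \indrep(U) \simeq \indrep(r \cdot U) \) coming from the character formula, then conclude via Definition~\ref{defi:ab1995a9dda3749d}. Indeed, your citation of Corollary~\ref{coro:7b7b679f656174f9} for the last step is the correct reference (the paper's surrounding text cites Corollary~\ref{coro:a2b4b06e5632471c} at that point, which appears to be a typo).
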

\begin{proof}
  Since \( \mathscr{R}_{\Lambda_{0}}\bigl(\mathcal{S}(u, V, v)\bigr) \) and
  \( \mathscr{R}_{r\Lambda_{0}r^{-1}}\bigl(r \cdot \mathcal{S}(u, V, v)\bigr) \)
  induces equivalent representations of \( \mathbb{G} \rtimes \Lambda \), the
  proposition now follows from equation~\eqref{eq:e2a420685bba7c4f} and
  Definition~\ref{defi:ab1995a9dda3749d}.
\end{proof}

\begin{prop}
  \label{prop:7cb6d8671da22396}
  Fix a \( \Lambda_{0} \in \giso(\Lambda) \). Let \( (u, V, v) \) be an
  irreducible representation parameter associated with \( \Lambda_{0} \),
  \( U \) denote the representation
  \( \mathscr{R}_{\Lambda_{0}}\Bigl(\mathcal{S}(u, V, v)\Bigr) \). If
  \( \Lambda_{0} = \Lambda_{[u]} \), then the the induced representation
  \( \indrep(U) \) of \( \mathbb{G} \rtimes \Lambda \) is irreducible.
\end{prop}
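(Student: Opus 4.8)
The plan is to verify the hypotheses of the Mackey-type irreducibility criterion, Corollary~\ref{coro:a250dbc9bc328b12}, in the form of its condition~\ref{item:ae7d985ccc925ef9}. Writing $\Lambda_{0} = \Lambda_{[u]}$, this reduces the irreducibility of $\indrep(U)$ to two assertions: first, that $U$ itself is irreducible; second, that for all $r, s \in \Lambda$ with $r^{-1}s \notin \Lambda_{0}$ one has
\[
\dim \morph_{\mathbb{G} \rtimes \Lambda(r,s)}\bigl((r \cdot U)\vert_{\mathbb{G} \rtimes \Lambda(r,s)}, (s \cdot U)\vert_{\mathbb{G} \rtimes \Lambda(r,s)}\bigr) = 0,
\]
where $\Lambda(r,s) = r\Lambda_{0} r^{-1} \cap s\Lambda_{0} s^{-1}$ as in Notation~\ref{nota:ce8ce7c22fef0d7f}. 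The first assertion is immediate: since $(u, V, v)$ is an irreducible representation parameter by hypothesis, Corollary~\ref{coro:d09f1fd81ad53d88} gives that $U = \mathscr{R}_{\Lambda_{0}}(\mathcal{S}(u, V, v))$ is irreducible.

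For the second assertion I would first identify the $\mathbb{G}$-restrictions. By the discussion preceding Definition~\ref{defi:ab1995a9dda3749d}, the CSR underlying $r \cdot U$ is $r \cdot \mathcal{S}(u, V, v) = \mathcal{S}\bigl(r \cdot (u, V, v)\bigr)$, whose $\mathbb{G}$-component is $\epsilon_{\mathscr{K}} \times (r \cdot u)$. Hence the restriction of $r \cdot U$ to $\mathbb{G}$ is a direct sum of copies of $r \cdot u$, and so its support is the single point $r \cdot [u] \in \irr(\mathbb{G})$; likewise the restriction of $s \cdot U$ to $\mathbb{G}$ is supported at $s \cdot [u]$.

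The crucial use of the hypothesis $\Lambda_{0} = \Lambda_{[u]}$ now enters. Since $r \cdot [u] = s \cdot [u]$ if and only if $s^{-1}r \in \Lambda_{[u]} = \Lambda_{0}$, equivalently $r^{-1}s \in \Lambda_{0}$, the assumption $r^{-1}s \notin \Lambda_{0}$ forces $r \cdot [u] \neq s \cdot [u]$. I would then feed this disjointness of supports into the restriction technique of Proposition~\ref{prop:50b58e1fd8af4aa0}: any $T$ in the intertwiner space above is in particular a morphism of the two $\mathbb{G} \rtimes \Lambda(r,s)$-representations, and applying $\id \otimes \id \otimes \epsilon_{\Lambda(r,s)}$ to the intertwining relation (exactly as in that proof, and using that successive restrictions to nested principal subgroups compose) shows that $T$ intertwines the $\mathbb{G}$-restrictions of $r \cdot U$ and $s \cdot U$. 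As these are supported at the distinct points $r \cdot [u] \neq s \cdot [u]$, their morphism space vanishes, whence $T = 0$ and the displayed dimension is $0$. Condition~\ref{item:ae7d985ccc925ef9} of Corollary~\ref{coro:a250dbc9bc328b12} is thereby met, and $\indrep(U)$ is irreducible.

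The only genuinely load-bearing step is the equivalence $r \cdot [u] = s \cdot [u] \iff r^{-1}s \in \Lambda_{0}$, which is precisely where the maximal-stability hypothesis $\Lambda_{0} = \Lambda_{[u]}$ is used; everything else is routine support-and-restriction bookkeeping. I do not anticipate a serious obstacle, but I would take care to compute correctly the restriction of $r \cdot U$ all the way down to $\mathbb{G}$ (rather than merely to $\mathbb{G} \rtimes \Lambda(r,s)$), so that the disjointness of supports can be invoked.
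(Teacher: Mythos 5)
Your proposal is correct and follows the same skeleton as the paper's proof: reduce the statement via Corollary~\ref{coro:a250dbc9bc328b12} to the vanishing of the intertwiner spaces over \( \mathbb{G} \rtimes \Lambda(r,s) \), and use the maximality hypothesis \( \Lambda_{0} = \Lambda_{[u]} \) to deduce \( [r \cdot u] \neq [s \cdot u] \) whenever \( r^{-1}s \notin \Lambda_{0} \). The one place you diverge is the final step: the paper simply cites Proposition~\ref{prop:7b2a964908c81d83}~(a), applied to the restricted representation parameters \( \bigl(r \cdot u, (r \cdot V)\vert_{\Lambda(r,s)}, (r \cdot v)\vert_{\Lambda(r,s)}\bigr) \), whose proof runs through Haar-state orthogonality of matrix coefficients of inequivalent irreducibles, whereas you re-derive the vanishing directly by applying \( \id \otimes \id \otimes \epsilon_{\Lambda(r,s)} \) to an intertwiner, observing that the \( \mathbb{G} \)-restrictions are multiples of the inequivalent irreducibles \( r \cdot u \) and \( s \cdot u \), and invoking Schur's lemma. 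Your variant is slightly more self-contained---it needs only the \( \mathbb{G} \)-restriction and avoids verifying that the restrictions to \( \mathbb{G} \rtimes \Lambda(r,s) \) are again parameterized representations---while the paper's citation is shorter but leaves that bookkeeping implicit (it is spelled out only later, in the proof of Theorem~\ref{theo:91ac3dad0bd1219d}). Both arguments turn on exactly the same fact, namely the inequivalence of \( r \cdot u \) and \( s \cdot u \), so the difference is one of mechanism rather than of substance.
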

\begin{proof}
  By Corollary~\ref{coro:a250dbc9bc328b12}, the proposition amounts to show that
  \begin{equation}
    \label{eq:3b9327fe982589d1}
    \begin{split}
      & \forall r, s \in \Lambda, \\
      & r^{-1} s \notin \Lambda_{0} \implies \dim \morph_{\mathbb{G} \rtimes
        \Lambda(r, s)}\Bigl((r \cdot U) \vert_{\mathbb{G} \rtimes \Lambda(r,s)},
      (s \cdot U) \vert_{\mathbb{G} \rtimes \Lambda(r,s)}\Bigr) = 0,
    \end{split}
  \end{equation}
  where \( \Lambda(r, s) = r\Lambda_{0}r^{-1} \cap s\Lambda_{0}s^{-1} \). Since
  \( \Lambda_{0} = \Lambda_{[u]} \), by the definition of \( \Lambda_{[u]} \),
  we have \( [r \cdot u] \neq [s \cdot u] \) whenever
  \( r^{-1}s \notin \Lambda_{0} \). Now condition \eqref{eq:3b9327fe982589d1}
  holds by Proposition~\ref{prop:7b2a964908c81d83}.
\end{proof}

\begin{defi}
  \label{defi:cbe3a197ae3ac6c1}
  Fix a \( \Lambda_{0} \in \giso(\Lambda) \), an \emph{irreducible}
  representation parameter \( (u, V, v) \) associated with \( \Lambda_{0} \) is
  called \textbf{distinguished} if \( \Lambda_{0} = \Lambda_{[u]} \). When this
  is the case, the irreducible unitary representation \( \indrep(U) \) of
  \( \mathbb{G} \rtimes \Lambda \) is called \textbf{distinguished}, where
  \( U \) is the unitary representation
  \( \mathscr{R}_{\Lambda_{0}}\Bigl(\mathcal{S}(u, V, v)\Bigr) \) of
  \( \mathbb{G} \rtimes \Lambda_{0} \).
\end{defi}

\begin{rema}
  \label{rema:d4178ef14b0e8a24}
  The associated group of a distinguished representation parameter must be an
  isotropy subgroup of \( \Lambda \) for the action
  \( \Lambda \curvearrowright \irr(\mathbb{G}) \). More precisely, a
  representation parameter \( (u, V, v) \) is distinguished if and only if its
  associated group is exactly the isotropy subgroup of
  \( [u] \in \irr(\mathbb{G}) \) under the action
  \( \Lambda \curvearrowright \irr(\mathbb{G}) \). As we will see presently, in
  the formulation of our results on the classification of irreducible
  representations of \( \mathbb{G} \rtimes \Lambda \) and the conjugation on
  \( \irr(\mathbb{G}) \), only distinguished representation parameters are
  needed. This makes one wonder why we pose the family of general isotropy
  subgroup \( \giso(\Lambda) \) instead of only isotropy subgroups. The main
  reason we need general isotropy subgroups of \( \Lambda \) is that in proving
  these results, as well as the formulation and the proof of the fusion rules,
  we need to express the dimensions of various intertwiner spaces. The
  calculation of the dimensions of these intertwiner spaces will rely on
  Proposition~\ref{prop:a8bf0fe189bec421}, which clearly requires us to consider
  the intersections of isotropy subgroups, i.e.\ general isotropy subgroups.
\end{rema}

\begin{defi}
  \label{defi:490c0876fe8d24d1}
  Let \( \Lambda_{0} \) be an isotropy subgroup of \( \Lambda \) for the action
  \( \Lambda \curvearrowright \irr(\mathbb{G}) \). Suppose
  \( (u_{1}, V_{1}, v_{1}) \) and \( (u_{2}, V_{2}, v_{2}) \) are two
  distinguished representation parameters associated with \( \Lambda_{0} \). If
  the CSRs \( \mathcal{S}(u_{1}, V_{1}, v_{1}) \) and
  \( \mathcal{S}(u_{2}, V_{2}, v_{2}) \) are isomorphic in
  \( \mathcal{CSR}_{\Lambda_{0}} \), we say \( (u_{1}, V_{1}, v_{1}) \) and
  \( (u_{2}, V_{2}, v_{2}) \) are equivalent.
\end{defi}

The following proposition serves to characterize equivalence of distinguished
representation parameters in some more concrete ways.
\begin{prop}
  \label{prop:01cb8f0781bcbbf7}
  Let \( \Lambda_{0} \) be an isotropy subgroup of \( \Lambda \) for the action
  \( \Lambda \curvearrowright \irr(\mathbb{G}) \), \( (u_{1}, V_{1}, v_{1}) \)
  and \( (u_{2}, V_{2}, v_{2}) \) two distinguished representation parameters
  associated with \( \Lambda_{0} \). The following are equivalent:
  \begin{enumerate}
  \item \label{item:b269e40f7b8257dd} \( (u_{1}, V_{1}, v_{1}) \) and
    \( (u_{2}, V_{2}, v_{2}) \) are equivalent;
  \item \label{item:c9d3eb41af7f039a} \( (u_{1}, V_{1}, v_{1}) \) and
    \( (u_{2}, V_{2}, v_{2}) \) parameterize equivalent representations of
    \( \mathbb{G} \rtimes \Lambda_{0} \);
  \item \label{item:a6f5dee8ea3ee9bd} there exists a mapping
    \( \mathsf{b} \colon \Lambda_{0} \rightarrow \mathbb{T} \) such that
    \( \mathsf{b}V_{1} \) and \( V_{2} \) share the same cocycle, and both
    \( \morph_{\mathbb{G}}(u_{1}, u_{2}) \cap
    \morph_{\Lambda_{0}}(\mathsf{b}V_{1}, V_{2}) \) and
    \( \morph_{\Lambda_{0}}(v_{1}, \mathsf{b}v_{2}) \) are nonzero;
  \item \label{item:795a54c3334933d3} there exists a mapping
    \( \mathsf{b} \colon \Lambda_{0} \rightarrow \mathbb{T} \) such that
    \( \mathsf{b}V_{1} \) and \( V_{2} \) share the same cocycle, and both
    \( \morph_{\mathbb{G}}(u_{1}, u_{2}) \cap
    \morph_{\Lambda_{0}}(\mathsf{b}V_{1}, V_{2}) \) and
    \( \morph_{\Lambda_{0}}(v_{1}, \mathsf{b}v_{2}) \) contain unitary
    operators.
  \end{enumerate}
\end{prop}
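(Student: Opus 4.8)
The plan is to establish the equivalences through the cycle $\ref{item:b269e40f7b8257dd}\Leftrightarrow\ref{item:c9d3eb41af7f039a}$ together with $\ref{item:b269e40f7b8257dd}\Rightarrow\ref{item:795a54c3334933d3}\Rightarrow\ref{item:a6f5dee8ea3ee9bd}\Rightarrow\ref{item:b269e40f7b8257dd}$. The equivalence $\ref{item:b269e40f7b8257dd}\Leftrightarrow\ref{item:c9d3eb41af7f039a}$ is immediate: since $\mathscr{R}_{\Lambda_0}$ and $\mathscr{S}_{\Lambda_0}$ are mutually inverse equivalences of categories, the CSRs $\mathcal{S}(u_1,V_1,v_1)$ and $\mathcal{S}(u_2,V_2,v_2)$ are isomorphic in $\mathcal{CSR}_{\Lambda_0}$ precisely when the associated representations $U_1,U_2$ of $\mathbb{G}\rtimes\Lambda_0$ are unitarily equivalent. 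Throughout I write $\mathcal{S}(u_i,V_i,v_i)=(\mathscr{K}_i\otimes\mathscr{H}_i,\epsilon_{\mathscr{K}_i}\times u_i,v_i\times V_i)$, with $u_i,V_i$ acting on $\mathscr{H}_i$ and $v_i$ on $\mathscr{K}_i$.

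The main step is $\ref{item:b269e40f7b8257dd}\Rightarrow\ref{item:795a54c3334933d3}$. By Proposition~\ref{prop:50b58e1fd8af4aa0} an isomorphism $\mathcal{S}(u_1,V_1,v_1)\to\mathcal{S}(u_2,V_2,v_2)$ is a unitary $S\colon\mathscr{K}_1\otimes\mathscr{H}_1\to\mathscr{K}_2\otimes\mathscr{H}_2$ lying in $\morph_{\mathbb{G}}(\epsilon_{\mathscr{K}_1}\times u_1,\epsilon_{\mathscr{K}_2}\times u_2)\cap\morph_{\Lambda_0}(v_1\times V_1,v_2\times V_2)$. Because $u_1,u_2$ are irreducible, Schur's lemma applied to the isotypic decomposition forces $[u_1]=[u_2]$ (otherwise the $\mathbb{G}$-intertwiner space is $0$, contradicting $S$ unitary) and $S=S_0\otimes T_0$ with $T_0\in\morph_{\mathbb{G}}(u_1,u_2)$ unitary and $S_0\colon\mathscr{K}_1\to\mathscr{K}_2$ unitary. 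Feeding this into the $\Lambda_0$-equivariance gives, for each $r_0\in\Lambda_0$, an identity of nonzero elementary tensors $(S_0v_1(r_0))\otimes(T_0V_1(r_0))=(v_2(r_0)S_0)\otimes(V_2(r_0)T_0)$; since all factors are unitary there is a scalar $\mathsf{b}(r_0)\in\mathbb{T}$ with $S_0v_1(r_0)=(\mathsf{b}v_2)(r_0)S_0$ and $T_0(\mathsf{b}V_1)(r_0)=V_2(r_0)T_0$. Thus $T_0\in\morph_{\mathbb{G}}(u_1,u_2)\cap\morph_{\Lambda_0}(\mathsf{b}V_1,V_2)$ and $S_0\in\morph_{\Lambda_0}(v_1,\mathsf{b}v_2)$ are the desired unitaries, and the existence of the invertible intertwiner $T_0$ between $\mathsf{b}V_1$ and $V_2$ shows these projective representations share a cocycle (whence $v_1$ and $\mathsf{b}v_2$ do too, since $\abs{\mathsf{b}(r_0)}=1$).

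The implication $\ref{item:795a54c3334933d3}\Rightarrow\ref{item:a6f5dee8ea3ee9bd}$ is trivial, as unitaries are nonzero. For $\ref{item:a6f5dee8ea3ee9bd}\Rightarrow\ref{item:b269e40f7b8257dd}$ I reverse the construction: given nonzero $T\in\morph_{\mathbb{G}}(u_1,u_2)\cap\morph_{\Lambda_0}(\mathsf{b}V_1,V_2)$ and nonzero $R\in\morph_{\Lambda_0}(v_1,\mathsf{b}v_2)$, I claim $R\otimes T$ intertwines the two CSRs. The $\mathbb{G}$-equivariance is clear since $u$ acts trivially on the $\mathscr{K}$-factor, while for the $\Lambda_0$-equivariance the phase $\mathsf{b}(r_0)$ picked up from $R$ and the phase $\mathsf{b}(r_0)^{-1}$ picked up from $T$ cancel because $\abs{\mathsf{b}(r_0)}=1$. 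As $u_1,u_2$ are irreducible and $v_1,\mathsf{b}v_2$ are irreducible projective representations sharing a cocycle (the parameters being distinguished, hence irreducible), Schur's lemma renders both $T$ and $R$ invertible, so $R\otimes T$ is an isomorphism in $\mathcal{CSR}_{\Lambda_0}$, giving \ref{item:b269e40f7b8257dd}.

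The hard part is $\ref{item:b269e40f7b8257dd}\Rightarrow\ref{item:795a54c3334933d3}$: one must recognize that $\mathbb{G}$-equivariance pins the intertwiner down to the tensor shape $S=S_0\otimes T_0$, and then extract the single $\mathbb{T}$-valued function $\mathsf{b}$ that simultaneously relates $V_1$ to $V_2$ and $v_1$ to $v_2$ with opposite phases. Checking that $\mathsf{b}$ is genuinely $\mathbb{T}$-valued, and that it matches the cocycles, is exactly where the unitarity of every operator in sight is essential.
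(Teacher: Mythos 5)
Your proposal is correct and takes essentially the same approach as the paper's proof: the key step \ref{item:b269e40f7b8257dd}\(\Rightarrow\)\ref{item:795a54c3334933d3} rests, exactly as in the paper, on the Schur-lemma factorization of the \( \mathbb{G} \)-intertwiner space as \( \mathcal{B}(\mathscr{K}_{1},\mathscr{K}_{2})\otimes\morph_{\mathbb{G}}(u_{1},u_{2}) \) with \( \morph_{\mathbb{G}}(u_{1},u_{2}) \) spanned by a unitary, followed by extraction of the phase function \( \mathsf{b} \), and the converse direction uses the same tensor-product intertwiner \( R\otimes T \) with invertibility supplied by Schur's lemma. Your only departures are organizational and cosmetic---you close the cycle \ref{item:b269e40f7b8257dd}\(\Rightarrow\)\ref{item:795a54c3334933d3}\(\Rightarrow\)\ref{item:a6f5dee8ea3ee9bd}\(\Rightarrow\)\ref{item:b269e40f7b8257dd} instead of also proving \ref{item:a6f5dee8ea3ee9bd}\(\Rightarrow\)\ref{item:795a54c3334933d3} directly by polar decomposition, and you read \( \mathsf{b} \) off the equality of nonzero elementary tensors rather than invoking the \( u_{2} \)-transitional mapping of Definition~\ref{defi:1b15f5eabe1196e0}.
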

\begin{proof}
  The equivalence of \ref{item:b269e40f7b8257dd} and \ref{item:c9d3eb41af7f039a}
  follows directly from the definitions. It is also clear that
  \ref{item:795a54c3334933d3} implies \ref{item:a6f5dee8ea3ee9bd}. If
  \ref{item:a6f5dee8ea3ee9bd} holds, and
  \begin{equation}
    \label{eq:49f40c49afcaaa49}
    \begin{gathered}
      0 \neq S \in \morph_{\mathbb{G}}(u_{1}, u_{2}) \cap
      \morph_{\Lambda_{0}}(\mathsf{b}V_{1}, V_{2}), \\
      \text{and} \quad 0 \neq T \in \morph_{\Lambda_{0}}(v_{1}, \mathsf{b}v_{2})
      = \morph_{\Lambda_{0}}(\mathsf{b}^{-1}v_{1}, v_{2}),
    \end{gathered}
  \end{equation}
  then both \( S \) and \( T \) are invertible by Schur's lemma as
  \( u_{1}, u_{2}, \mathsf{b}^{-1}v_{1}, v_{2} \) are all irreducible. Since
  \( u_{1}, u_{2}, \mathsf{b}V_{1}, V_{2}, v_{1}, \mathsf{b}v_{2} \) are all
  unitary, we have
  \begin{equation}
    \label{eq:996d23f427faa56b}
    0 \neq \Upsilon_{S} \in  \morph_{\mathbb{G}}(u_{1}, u_{2})
    \cap \morph_{\Lambda_{0}}(\mathsf{b}V_{1},
    V_{2}), \quad
    \text{and} \quad 0 \neq \Upsilon_{T} \in \morph_{\Lambda_{0}}(v_{1},
    \mathsf{b}v_{2}),
  \end{equation}
  where \( S = \Upsilon_{S} \abs*{S} \) is the polar decomposition of \( S \),
  and \( T = \Upsilon_{T} \abs*{T} \) the polar decomposition of \( T \). As
  \( S, T \) are invertible, \( \Upsilon_{S} \) and \( \Upsilon_{T} \) are
  unitary. This proves that \ref{item:a6f5dee8ea3ee9bd} implies
  \ref{item:795a54c3334933d3}.

  Let \( \mathscr{K}_{i} \) be the representation space of \( v_{i} \) for
  \( i = 1, 2 \). By definition,
  \( \mathcal{S}(u_{i}, V_{i}, v_{i}) = (\id_{\mathscr{K}_{i}} \otimes u, v_{i}
  \times V_{i}) \), and
  \( \mathsf{b}^{-1}v_{i} \times \mathsf{b}V_{i} = v_{i} \times V_{i} \) for any
  mapping \( \mathsf{b} \colon \Lambda_{0} \rightarrow \mathbb{T} \). If
  \ref{item:a6f5dee8ea3ee9bd} holds, let \( S \), \( T \) be operators as
  in~\eqref{eq:49f40c49afcaaa49}, then
  \begin{equation}
    \label{eq:20ff567e8b68f09f}
    T \otimes S \in \morph_{\mathbb{G}}(\id_{\mathscr{K}_{1}} \otimes u_{1},
    \id_{\mathscr{K}_{2}} \otimes u_{2})
    \cap \morph_{\Lambda_{0}}(v_{1} \times V_{1},
    v_{2} \times V_{2}).
  \end{equation}
  Now \ref{item:b269e40f7b8257dd} follows from \eqref{eq:20ff567e8b68f09f},
  Proposition~\ref{prop:50b58e1fd8af4aa0} and the fact that both \( S \) and
  \( T \) are invertible. Thus \ref{item:a6f5dee8ea3ee9bd} implies
  \ref{item:b269e40f7b8257dd}.

  We conclude the proof by showing \ref{item:b269e40f7b8257dd} implies
  \ref{item:795a54c3334933d3}. By Schur's lemma, and the irreducibility of
  \( u_{1} \) and \( u_{2} \), it is easy to see that
  \begin{equation}
    \label{eq:d512bb297d144b3d}
    \morph_{\mathbb{G}}(\id_{\mathscr{K}_{1}} \otimes u_{1},
    \id_{\mathscr{K}_{2}} \otimes u_{2}) = \mathcal{B}(\mathscr{K}_{1},
    \mathscr{K}_{2}) \otimes \morph_{\mathbb{G}}(u_{1}, u_{2}).
  \end{equation}
  Suppose \ref{item:b269e40f7b8257dd} holds. Then the intertwiner space given by
  the intersection in \eqref{eq:20ff567e8b68f09f} is nonzero, and
  \begin{equation}
    \label{eq:f27cc1a76f9e813a}
    \morph_{\mathbb{G}}(u_{1}, u_{2}) = \mathbb{C} W_{r}
  \end{equation}
  for some unitary operator \( W_{r} \). By \eqref{eq:d512bb297d144b3d} and
  \ref{item:b269e40f7b8257dd}, there exists a unitary
  \( W_{l} \in \mathcal{B}(\mathscr{K}_{1}, \mathscr{K}_{2}) \) such that
  \begin{equation}
    \label{eq:5b7b37993908d266}
    W_{l} \otimes W_{r} \in \morph_{\Lambda_{0}}
    (v_{1} \times V_{1}, v_{2} \times V_{2})
    = \morph_{\Lambda_{0}}\bigl((\mathsf{b}^{-1}v_{1}) \times (\mathsf{b}V_{1}),
    v_{2} \times V_{2} \bigr).
  \end{equation}
  By~\eqref{eq:f27cc1a76f9e813a}, both \( W_{r}V_{1}W_{r}^{\ast} \) and
  \( V_{2} \) are covariant projective \( \Lambda_{0} \)-representations of
  \( u_{2} \). Thus we can take a \( u_{2} \)-transitional mapping
  \( \mathsf{b} \) from \( W_{r}V_{1}W_{r}^{\ast} \) to \( V_{2} \) (see
  Definition~\ref{defi:1b15f5eabe1196e0}), i.e.\ a mapping
  \( \mathsf{b} \colon \Lambda_{0} \rightarrow \mathbb{T} \) such that
  \begin{equation}
    \label{eq:fedff83db892c771}
    W_{r}(\mathsf{b}V_{1}) W_{r}^{\ast}
    =  \mathsf{b}(W_{r}V_{1}W_{r}^{\ast}) = V_{2} ,
  \end{equation}
  which forces the cocycles of \( \mathsf{b}V_{1} \) and \( V_{2} \) coincide,
  and
  \begin{equation}
    \label{eq:7603ec9cdc0d4f11}
    W_{r} \in \morph_{\Lambda_{0}}(\mathsf{b}V_{1}, V_{2}) \cap
    \morph_{\mathbb{G}}(u_{1}, u_{2}).
  \end{equation}
  Now \eqref{eq:5b7b37993908d266} and \eqref{eq:7603ec9cdc0d4f11} forces
  \begin{equation}
    \label{eq:b983406725fb2608}
    W_{l} \in \morph_{\Lambda_{0}}(\mathsf{b}^{-1}v_{1}, v_{2}) =
    \morph_{\Lambda_{0}}(v_{1}, \mathsf{b}v_{2}).
  \end{equation}
  Thus \ref{item:795a54c3334933d3} holds by \eqref{eq:7603ec9cdc0d4f11} and
  \eqref{eq:b983406725fb2608}.
\end{proof}

\section{Density of matrix coefficients of distinguished representations}
\label{sec:51cb2ca9f8d7976c}

The aim of this section is to show that the linear span of matrix coefficients
of distinguished representations of \( \mathbb{G} \rtimes \Lambda \) is exactly
\( \pol(\mathbb{G}) \otimes C(\Lambda) \), hence is dense in
\( C(\mathbb{G} \rtimes \Lambda) = A \otimes C(\Lambda) \) in particular. As a
consequence, any irreducible unitary representation of
\( \mathbb{G} \rtimes \Lambda \) is equivalent to a distinguished one.

The following lemma essentially establishes the density of the linear span of
matrix coefficients of distinguished representations of
\( \mathbb{G} \rtimes \Lambda \) in
\( C(\mathbb{G} \rtimes \Lambda) = A \otimes C(\Lambda) \).

\begin{lemm}
  \label{lemm:be8a473ead6cb802}
  Let \( u \) be an irreducible unitary representation of \( \mathbb{G} \) on
  some finite dimensional Hilbert space \( \mathscr{H} \),
  \( x = [u] \in \irr(\mathbb{G}) \), \( V \) the covariant projective
  \( \Lambda_{x} \)-representation of \( u \) with cocycle \( \omega \). Let
  \( M(u) \) denote the linear subspace of
  \( \pol(\mathbb{G}) \otimes C(\Lambda) \) spanned by matrix coefficients of
  distinguished representations of \( \mathbb{G} \rtimes \Lambda \)
  parameterized by distinguished representation parameters of the form
  \( (u, V, v) \), where \( v \) runs through all irreducible unitary projective
  representations of \( \Lambda_{x} \) with cocycle
  \( \omega^{-1} = \overline{\omega} \). For any \( r \in \Lambda \), suppose
  \( M_{c}(r \cdot u) \) is the linear subspace of \( \pol(\mathbb{G}) \)
  spanned by matrix coefficients of \( r \cdot u \), then
  \begin{equation}
    \label{eq:8d89a7b07bae9ec5}
    M(u) = \sum_{r \in \Lambda} M_{c}(r \cdot u) \otimes C(\Lambda)
    = \left(\sum_{r \in \Lambda} M_{c}(r \cdot
      u)\right) \otimes C(\Lambda).
  \end{equation}
\end{lemm}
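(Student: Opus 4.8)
The plan is to prove the two displayed equalities separately. The second is purely formal: tensoring with the fixed factor $C(\Lambda)$ commutes with the finite sum of subspaces $M_{c}(r\cdot u)\subseteq\pol(\mathbb{G})$, so $\sum_{r}M_{c}(r\cdot u)\otimes C(\Lambda)=\bigl(\sum_{r}M_{c}(r\cdot u)\bigr)\otimes C(\Lambda)$. The real content is the first equality, which I would establish by proving the inclusion $M(u)\subseteq\sum_{r}M_{c}(r\cdot u)\otimes C(\Lambda)$ directly and then matching dimensions to force equality.

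For the inclusion, fix an irreducible $v$ of dimension $n_{v}$ with cocycle $\overline{\omega}$, let $U=\mathscr{R}_{\Lambda_{x}}(\mathcal{S}(u,V,v))$, so that $U_{\mathbb{G}}=\epsilon_{\mathscr{K}}\times u$. By the construction in \S~\ref{sec:3e39eff392b139a7}, the induced representation $W=\indrep(U)$ is the compression of $\widetilde{W}=\sum_{r,s}e_{rs^{-1},r}\otimes[(\id\otimes\alpha^{\ast}_{rs^{-1}})(U_{\mathbb{G}})]\otimes\delta_{s}$ to the invariant subspace $\mathscr{K}_{\mathrm{ind}}\subseteq\ell^{2}(\Lambda)\otimes(\mathscr{K}\otimes\mathscr{H})$. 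Since $\mathscr{K}_{\mathrm{ind}}$ is $\widetilde{W}$-invariant (Lemma~\ref{lemm:8636ddbbcbafe76a}), every matrix coefficient of $W$ is a vector-functional slice of $\widetilde{W}$ by vectors of $\mathscr{K}_{\mathrm{ind}}$; its $\pol(\mathbb{G})$-part therefore lies in the span of the matrix coefficients of $(\id\otimes\alpha^{\ast}_{rs^{-1}})(u)=(rs^{-1})^{-1}\cdot u$. As $r,s$ range over $\Lambda$, the element $(rs^{-1})^{-1}=sr^{-1}$ ranges over all of $\Lambda$, so the matrix coefficients of $W$ lie in $\sum_{t\in\Lambda}M_{c}(t\cdot u)\otimes C(\Lambda)$, giving the inclusion.

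For the reverse inclusion I argue by dimension count. On the right-hand side the $r\cdot u$ are irreducible of common dimension $d=\dim\mathscr{H}$ and fall into $[\Lambda:\Lambda_{x}]$ distinct classes (the orbit of $x$), so orthogonality of matrix coefficients of inequivalent irreducibles gives $\dim\sum_{r}M_{c}(r\cdot u)=[\Lambda:\Lambda_{x}]\,d^{2}$ and hence $\dim\bigl(\sum_{r}M_{c}(r\cdot u)\otimes C(\Lambda)\bigr)=[\Lambda:\Lambda_{x}]\,d^{2}\,\abs{\Lambda}$. On the left-hand side, for each irreducible $\overline{\omega}$-projective $v$ the parameter $(u,V,v)$ is distinguished, so $\indrep(U_{v})$ is irreducible by Proposition~\ref{prop:7cb6d8671da22396}, and a trace computation of the projection $\pi$ of Lemma~\ref{lemm:8636ddbbcbafe76a} gives $\dim\indrep(U_{v})=\tr(\pi)=[\Lambda:\Lambda_{x}]\,n_{v}\,d$. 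Moreover, inequivalent classes $[v]$ yield inequivalent $\indrep(U_{v})$: by the intertwiner formula of Proposition~\ref{prop:a8bf0fe189bec421}, all terms with $r^{-1}s\notin\Lambda_{x}$ vanish since $[r\cdot u]\neq[s\cdot u]$, while the terms with $r^{-1}s\in\Lambda_{x}$ contribute $\dim\morph_{\Lambda_{x}}(v_{1},v_{2})=\delta_{[v_{1}],[v_{2}]}$ by Proposition~\ref{prop:7b2a964908c81d83} and Lemma~\ref{lemm:96218436efb1ec44}. Summing matrix-coefficient spaces over the distinct classes $[v]$ therefore gives $\dim M(u)=\sum_{[v]}([\Lambda:\Lambda_{x}]\,n_{v}\,d)^{2}=[\Lambda:\Lambda_{x}]^{2}d^{2}\sum_{[v]}n_{v}^{2}=[\Lambda:\Lambda_{x}]^{2}d^{2}\abs{\Lambda_{x}}$, where $\sum_{[v]}n_{v}^{2}=\abs{\Lambda_{x}}$ is the projective Peter–Weyl identity. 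Since $[\Lambda:\Lambda_{x}]^{2}\abs{\Lambda_{x}}=[\Lambda:\Lambda_{x}]\,\abs{\Lambda}$, the two dimensions agree, and together with the inclusion this forces $M(u)=\sum_{r}M_{c}(r\cdot u)\otimes C(\Lambda)$.

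The main obstacle is the dimension bookkeeping on the left-hand side: one must know that the $\indrep(U_{v})$ are not merely irreducible but pairwise inequivalent for inequivalent $v$, since only then does the span of their matrix coefficients have dimension $\sum_{[v]}(\dim\indrep(U_{v}))^{2}$ rather than something smaller. This is exactly where the full force of the intertwiner-dimension machinery of \S\S~\ref{sec:d471d5544009eb55}--\ref{sec:51325cc5d5808588} is needed; the remaining ingredient $\sum_{[v]}n_{v}^{2}=\abs{\Lambda_{x}}$ is the standard semisimplicity of the twisted group algebra $\mathbb{C}^{\overline{\omega}}[\Lambda_{x}]$, which has dimension $\abs{\Lambda_{x}}$.
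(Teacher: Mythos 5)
Your proof is correct, but it reaches the substantive (reverse) inclusion by a genuinely different route than the paper. The forward inclusion \( M(u) \subseteq \bigl(\sum_{r} M_{c}(r \cdot u)\bigr) \otimes C(\Lambda) \) is essentially the paper's argument: the paper simply computes the slices of \( \widetilde{W} \) explicitly (culminating in \eqref{eq:0dcb4ec0bb375967}) rather than arguing abstractly from invariance of the subspace, and your abstract version is sound. For the reverse inclusion the paper is constructive: it invokes the projective Peter--Weyl fact that \( \delta_{e} \in C(\Lambda_{x}) \) can be written as a finite sum \( \sum_{i}(\omega_{\xi^{(i)}_{1},\xi^{(i)}_{2}} \otimes \id)(v_{i}) \) of matrix coefficients of irreducible \( \overline{\omega} \)-projective representations, sums the corresponding coefficients of the induced representations, and lands exactly on \( \bigl[(\omega_{\eta_{1},\eta_{2}} \otimes \id)(s^{-1} \cdot u)\bigr] \otimes \delta_{s^{-1}r} \), thereby exhibiting each \( M_{c}(r_{1} \cdot u) \otimes \delta_{r_{2}} \) inside \( M(u) \) directly. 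You instead count dimensions, and your bookkeeping checks out and is non-circular: irreducibility of \( \indrep(U_{v}) \) (Proposition~\ref{prop:7cb6d8671da22396}), pairwise inequivalence across distinct classes \( [v] \) (via Propositions~\ref{prop:a8bf0fe189bec421}, \ref{prop:7b2a964908c81d83}, \ref{prop:e4e616da0a0995e6} and Lemma~\ref{lemm:96218436efb1ec44}), \( \dim \indrep(U_{v}) = \tr(\pi) = [\Lambda \colon \Lambda_{x}]\, n_{v}\, d \), the orthogonality relations for \( \mathbb{G} \rtimes \Lambda \), and \( \sum_{[v]} n_{v}^{2} = \abs{\Lambda_{x}} \); every input precedes the lemma in the paper's logical order. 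The trade-off is clear: the paper's route needs neither the inequivalence of the \( \indrep(U_{v}) \) nor their dimensions, only the completeness half of projective Peter--Weyl, whereas your route uses more of the intertwiner machinery but yields a stronger statement as a by-product---namely that the distinguished representations attached to inequivalent \( v \) are pairwise inequivalent irreducibles whose coefficient spaces give an orthogonal Peter--Weyl decomposition of \( \bigl(\sum_{r} M_{c}(r \cdot u)\bigr) \otimes C(\Lambda) \), information the paper only assembles later, in \S~\ref{sec:f7e4fdcede696c78}. Note that both arguments ultimately rest on projective Peter--Weyl for \( \Lambda_{x} \): the paper on the expansion of \( \delta_{e} \) in coefficients, yours on the dimension identity for the twisted group algebra \( \mathbb{C}^{\overline{\omega}}[\Lambda_{x}] \).
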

\begin{proof}
  Take any irreducible unitary projective representation \( v \) of
  \( \Lambda_{x} \) on some finite dimensional Hilbert space \( \mathscr{K} \)
  with cocycle \( \overline{\omega} \), then \( (u, V, v) \) is a distinguished
  representation parameter. The distinguished CSR \( \mathcal{S}(u, V, v) \)
  subordinate to \( \Lambda_{x} \) parameterized by \( (u, V, v) \) is given by
  \begin{equation}
    \label{eq:6b6644aa7d2b0a12}
    \mathcal{S}(u, V, v) = (\mathscr{K} \otimes \mathscr{H},
    \id_{\mathscr{K}} \otimes u, v \times V)
  \end{equation}
  by definition. Let
  \( U = \mathscr{R}_{\Lambda_{0}}\bigl(\mathcal{S}(u, V, v)\bigr) \), then the
  distinguished representation \( W = \indrep(U) \) of
  \( \mathbb{G} \rtimes \Lambda \) parameterized by \( (u, V, v) \) is obtained
  as follows by the construction of induced representations presented in
  \S~\ref{sec:3e39eff392b139a7}. First we define a unitary representation
  \begin{equation}
    \label{eq:b1567fda3dfa4eeb}
    \begin{split}
      \widetilde{W} = \sum_{r, s \in \Lambda} e_{rs^{-1}, r} \otimes
      \id_{\mathscr{K}} \otimes [(\id_{\mathscr{H}} \otimes
      \alpha^{\ast}_{rs^{-1}})(u)] \otimes
      \delta_{s} \\
      \in \mathcal{B}(\ell^{2}(\Lambda)) \otimes \mathcal{B} (\mathscr{K})
      \otimes\mathcal{B}(\mathscr{H}) \otimes \pol(\mathbb{G}) \otimes
      C(\Lambda)
    \end{split}
  \end{equation}
  of \( \mathbb{G} \rtimes \Lambda \) on
  \( \ell^{2}(\Lambda) \otimes \mathscr{K} \otimes \mathscr{H} \). The subspace
  \begin{equation}
    \label{eq:e43c0735a5c07c84}
    \mathscr{H}_{(u, V, v)}
    = \set*{\sum_{r \in \Lambda} \delta_{r} \otimes \zeta_{r} \given
      \begin{array}{r}
        \zeta_{r} \in \mathscr{K}
        \otimes \mathscr{H}, \text{ and }\; \zeta_{r_{0}r} =
        \bigl(v(r_{0}) \otimes V(r_{0})\bigr) \zeta_{r} \\
        \text{ for all } r_{0} \in
        \Lambda_{0}, r \in \Lambda
      \end{array}
    }
  \end{equation}
  of \( \ell^{2}(\Lambda) \otimes \mathscr{K} \otimes \mathscr{H} \) is
  invariant under \( \widetilde{W} \) and \( W \) is the subrepresentation
  \( \mathscr{H}_{(u, V, v)} \) of \( \widetilde{W} \). Recall
  (Lemma~\ref{lemm:8636ddbbcbafe76a}) that the projection
  \( \pi \in \mathcal{B}(\ell^{2}(\Lambda) \otimes \mathscr{K} \otimes
  \mathscr{H}) \) with range \( \mathscr{H}_{u, V, v} \) is given by
  \begin{equation}
    \label{eq:4d915d223a16e971}
    \pi = \frac{1}{\abs*{\Lambda_{x}}}
    \sum_{r_{0} \in \Lambda_{x}} \sum_{s \in \Lambda} e_{r_{0}s,
      s} \otimes v(r_{0}) \otimes V(r_{0}).
  \end{equation}
  Since vectors of the form \( \delta_{r} \otimes \xi \otimes \eta \),
  \( r \in \Lambda \), \( \xi \in \mathscr{K} \), \( \eta \in \mathscr{H} \)
  span \( \ell^{2}(\Lambda) \otimes \mathscr{K} \otimes \mathscr{H} \), the
  matrix coefficients of \( W \) is spanned by elements of
  \( \pol(\mathbb{G}) \otimes C(\Lambda) \) of the form
  \begin{equation}
    \label{eq:de1bf61dd9881085}
    \begin{split}
      & \leadmathskip c(v; r, s, \xi_{1}, \xi_{2}, \eta_{1}, \eta_{2}) \\
      & = (\omega_{\pi(\delta_{r} \otimes \xi_{1} \otimes \eta_{1}),
        \pi(\delta_{s} \otimes \xi_{2} \otimes \eta_{2})} \otimes
      \id_{\pol(\mathbb{G})}
      \otimes \id_{C(\Lambda)})(W) \\
      &= (\omega_{\delta_{r} \otimes \xi_{1} \otimes \eta_{1}, \delta_{s}
        \otimes \xi_{2} \otimes \eta_{2}} \otimes \id \otimes \id)\bigl((\pi
      \otimes 1 \otimes 1)
      \widetilde{W} (\pi \otimes 1 \otimes 1)\bigr) \\
      &= (\omega_{\delta_{r} \otimes \xi_{1} \otimes \eta_{1}, \delta_{s}
        \otimes \xi_{2} \otimes \eta_{2}} \otimes \id \otimes
      \id)\bigl(\widetilde{W} (\pi \otimes 1 \otimes 1)\bigr),
    \end{split}
  \end{equation}
  where the last equality follows from Lemma~\ref{lemm:8636ddbbcbafe76a}, and
  \( \omega_{x,y} \) is the linear form \( \pairing*{\, \cdot \, x}{y} \).

  By~\eqref{eq:b1567fda3dfa4eeb} and \eqref{eq:4d915d223a16e971}, we see that
  \begin{equation}
    \label{eq:46f7291b743fe368}
    \begin{split}
      & \leadmathskip \abs*{\Lambda_{0}} \cdot \bigl[\widetilde{W}(\pi \otimes 1
      \otimes 1)\bigr](\delta_{r} \otimes
      \xi_{1} \otimes \eta_{1} \otimes 1 \otimes 1) \\
      &= \sum_{\substack{r', s', t \in \Lambda,\\ r_{0} \in \Lambda_{x}}}
      \left[e_{r's'^{-1}, r'} e_{r_{0}t, t} \otimes v(r_{0}) \otimes \Bigl(
        \bigl((s'r'^{-1}) \cdot u\bigr) \bigl(V(r_{0}) \otimes 1\bigr)\Bigr)
        \otimes \delta_{s'}\right] \\
      & \dmsllongskip
      \cdot (\delta_{r} \otimes \xi_{1} \otimes \eta_{1} \otimes 1 \otimes 1) \\
      & \leadmathskip (\text{Only terms with \( t = r \), and
        \( r' = r_{0}t = r_{0}r
        \) can be nonzero}) \\
      &= \sum_{s' \in \Lambda} \sum_{r_{0} \in \Lambda_{x}}
      \delta_{r_{0}rs'^{-1}} \otimes [v(r_{0}) \xi_{1}] \otimes
      \Bigl[\bigl((s'r^{-1}r_{0}^{-1}) \cdot u\bigr)\bigl(V(r_{0}) \eta_{1}
      \otimes 1\bigr)\Bigr] \otimes \delta_{s'}.
    \end{split}
  \end{equation}
  Note that
  \( r_{0}rs'^{-1} = s \iff s' = s^{-1}r_{0}r \iff s'r^{-1}r_{0}^{-1} = s^{-1}
  \), by \eqref{eq:de1bf61dd9881085} and \eqref{eq:46f7291b743fe368}, we have
  \begin{equation}
    \label{eq:0dcb4ec0bb375967}
    c(v; r, s, \xi_{1}, \xi_{2}, \eta_{1}, \eta_{2}) %
    = \sum_{r_{0} \in \Lambda_{x}}
    \omega_{\xi_{1}, \xi_{2}}\bigl(v(r_{0})\bigr) %
    \bigl[(\omega_{V(r_{0}) \eta_{1}, \eta_{2}}
    \otimes \id)(s^{-1} \cdot u)\bigr] %
    \otimes \delta_{s^{-1}r_{0}r}.
  \end{equation}
  For any \( r_{0} \in \Lambda_{x} \), we have
  \begin{equation}
    \label{eq:e784989c4d2a01ed}
    \omega_{\xi_{1}, \xi_{2}}\bigl(v(r_{0})\bigr)
    \in \mathbb{C} \quad \text{and} \quad
    \bigl[(\omega_{V(r_{0})\eta_{1}, \eta_{2}} \otimes \id)
    (s^{-1} \cdot u)\bigr] \in
    M_{c}(s^{-1} \cdot u).
  \end{equation}
  By \eqref{eq:0dcb4ec0bb375967} and \eqref{eq:e784989c4d2a01ed}, we have
  \begin{equation}
    \label{eq:124769da5db1b134}
    c(v; r, s, \xi_{1}, \xi_{2}, \eta_{1}, \eta_{2})
    \in M_{c}(s^{-1} \cdot u) \otimes C(\Lambda),
  \end{equation}
  which proves that
  \begin{equation}
    \label{eq:6e015a7afe36e34d}
    M(u) \subseteq \sum_{r' \in \Lambda} M_{c}(r' \cdot u) \otimes C(\Lambda)
    = \left(\sum_{r' \in \Lambda}M_{c}(r'
      \cdot u)\right) \otimes C(\Lambda).
  \end{equation}
  It remains to establish the reverse inclusion, which is easily seen to be
  equivalent to show that for any \( r_{1}, r_{2} \in \Lambda \), we have
  \begin{equation}
    \label{eq:1ff8e1270b4b575c}
    M(u) \supseteq M_{c}(r_{1} \cdot u) \otimes \delta_{r_{2}}.
  \end{equation}

  By the general theory of projective representations, there exists irreducible
  unitary projective representations \( v_{1}, \ldots, v_{m} \) on
  \( \mathscr{K}_{1}, \ldots, \mathscr{K}_{m} \) respectively, all with cocycle
  \( \overline{\omega} \), and
  \( \xi^{(i)}_{1}, \xi^{(i)}_{2} \in \mathscr{K}_{i} \), such that
  \begin{equation}
    \label{eq:632fd1580b88627d}
    \sum_{i=1}^{m} \left(\omega_{\xi^{(i)}_{1}, \xi^{(i)}_{2}} \otimes
      \id\right)(v_{i}) = \delta_{e} \in C(\Lambda_{x}).
  \end{equation}
  By \eqref{eq:0dcb4ec0bb375967} and \eqref{eq:632fd1580b88627d}, we see that
  for any \( r, s \in \Lambda \), and any
  \( \eta_{1}, \eta_{2} \in \mathscr{H} \), \( M(u) \) contains
  \begin{equation}
    \label{eq:8400ce42f4a3f560}
    \begin{split}
      & \leadmathskip \sum_{i=1}^{n} c(v_{i};r, s \xi^{(i)}_{1}, \xi^{(i)}_{2},
      \eta_{1}, \eta_{2}) \\
      &= \sum_{r_{0} \in \Lambda_{x}} \delta_{e}(r_{0}) \bigl[(\omega_{V(r_{0})
        \eta_{1}, \eta_{2}} \otimes \id)(s^{-1} \cdot u)\bigr] %
      \otimes \delta_{s^{-1}r_{0}r} \\
      & \leadmathskip (\text{Only terms with \( r_{0} = e \) can be nonzero, and
        \( V(e) = \id_{\mathscr{H}} \)}) \\
      &= \bigl[(\omega_{\eta_{1}, \eta_{2}} \otimes \id)(s^{-1} \cdot u)\bigr]
      \otimes \delta_{s^{-1}r}.
    \end{split}
  \end{equation}
  Taking \( s = r_{1}^{-1} \) and \( r = sr_{2} = r_{1}^{-1}r_{2} \) in
  \eqref{eq:8400ce42f4a3f560} proves \eqref{eq:1ff8e1270b4b575c} and finishes
  the proof of the lemma.
\end{proof}

\begin{prop}
  \label{prop:895e4cc33597e3de}
  The linear span of matrix coefficients of distinguished representations of
  \( \mathbb{G} \rtimes \Lambda \) in \( \pol(\mathbb{G}) \otimes C(\Lambda) \)
  is \( \pol(\mathbb{G}) \otimes C(\Lambda) \) itself. In particular, every
  unitary irreducible representation of \( \mathbb{G} \rtimes \Lambda \) is
  unitarily equivalent to a distinguished one.
\end{prop}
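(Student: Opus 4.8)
The plan is to combine Lemma~\ref{lemm:be8a473ead6cb802} with the Peter-Weyl decomposition of \( \pol(\mathbb{G}) \) and the orbit structure of the action \( \Lambda \curvearrowright \irr(\mathbb{G}) \). Write \( M \) for the linear span inside \( \pol(\mathbb{G} \rtimes \Lambda) = \pol(\mathbb{G}) \otimes C(\Lambda) \) of all matrix coefficients of distinguished representations of \( \mathbb{G} \rtimes \Lambda \). First I would observe that, for any irreducible unitary representation \( u \) of \( \mathbb{G} \), the distinguished representations parameterized by \( (u, V, v) \)---with \( V \) the chosen covariant projective \( \Lambda_{[u]} \)-representation of \( u \) and \( v \) running over all irreducible unitary projective representations of \( \Lambda_{[u]} \) with the opposing cocycle---contribute exactly the subspace \( M(u) \) of Lemma~\ref{lemm:be8a473ead6cb802}, so that \( M(u) \subseteq M \). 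Since by Proposition~\ref{prop:49d184eecd55653e} replacing \( u \) by another member of its class (or \( V \) by a character twist) yields equivalent representations, and equivalent representations have identical matrix coefficient spans, every distinguished representation is accounted for this way; hence \( M = \sum_{u} M(u) \), the sum taken over a complete set of representatives \( u \) of \( \irr(\mathbb{G}) \).

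Next I would unwind the content of Lemma~\ref{lemm:be8a473ead6cb802}, namely \( M(u) = \bigl(\sum_{r \in \Lambda} M_{c}(r \cdot u)\bigr) \otimes C(\Lambda) \). The key bookkeeping point is that \( M_{c}(r \cdot u) \) depends only on the class \( [r \cdot u] \in \irr(\mathbb{G}) \), and that as \( r \) ranges over \( \Lambda \) the class \( [r \cdot u] \) sweeps out precisely the \( \Lambda \)-orbit of \( [u] \). Consequently
\begin{equation*}
  \sum_{r \in \Lambda} M_{c}(r \cdot u) = \bigoplus_{y \in \Lambda \cdot [u]} M_{c}(y).
\end{equation*}
Summing this identity over orbit representatives and invoking the disjoint decomposition of \( \irr(\mathbb{G}) \) into \( \Lambda \)-orbits together with the Peter-Weyl decomposition \( \pol(\mathbb{G}) = \bigoplus_{x \in \irr(\mathbb{G})} M_{c}(x) \), I obtain \( \sum_{u} M(u) = \pol(\mathbb{G}) \otimes C(\Lambda) \). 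The reverse inclusion \( M \subseteq \pol(\mathbb{G}) \otimes C(\Lambda) \) holds since all matrix coefficients of representations of \( \mathbb{G} \rtimes \Lambda \) lie in \( \pol(\mathbb{G} \rtimes \Lambda) = \pol(\mathbb{G}) \otimes C(\Lambda) \). This establishes the first assertion \( M = \pol(\mathbb{G}) \otimes C(\Lambda) \).

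For the ``in particular'' clause, I would argue by the orthogonality relations of the Peter-Weyl theory for compact quantum groups. Were there an irreducible unitary representation \( \pi \) of \( \mathbb{G} \rtimes \Lambda \) inequivalent to every distinguished representation, its matrix coefficients would be orthogonal, under the Haar state \( \widetilde{h} \), to the matrix coefficients of all distinguished representations; but these already span \( \pol(\mathbb{G}) \otimes C(\Lambda) = \pol(\mathbb{G} \rtimes \Lambda) \), which contains the (nonzero) matrix coefficients of \( \pi \) itself---a contradiction. Hence every irreducible unitary representation of \( \mathbb{G} \rtimes \Lambda \) is unitarily equivalent to a distinguished one.

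I expect no serious obstacle here: the substantive work is entirely absorbed into Lemma~\ref{lemm:be8a473ead6cb802}, and what remains is organizational. The one step meriting care is the orbit-covering identification in the second paragraph, where one must check that summing \( M(u) \) over orbit representatives genuinely recovers every summand \( M_{c}(x) \) of \( \pol(\mathbb{G}) \) exactly once, so that the Peter-Weyl decomposition can be applied cleanly.
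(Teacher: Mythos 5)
Your proposal is correct and follows essentially the same route as the paper: the paper's own proof simply cites Lemma~\ref{lemm:be8a473ead6cb802} for the first assertion and the orthogonality relations for the second, which is exactly your skeleton, with your extra paragraphs spelling out the orbit/Peter--Weyl bookkeeping that the paper leaves implicit. The only nitpick is that your citation of Proposition~\ref{prop:49d184eecd55653e} (which concerns the \( \Lambda \)-action \( r \cdot (u,V,v) \)) is not quite what justifies replacing \( u \) by an equivalent representative or \( V \) by a character twist---that is Proposition~\ref{prop:01cb8f0781bcbbf7}---but this is immaterial since the first assertion only needs the trivial inclusion \( \sum_{u} M(u) \subseteq M \).
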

\begin{proof}
  The first assertion follows from Lemma~\ref{lemm:be8a473ead6cb802}, and the
  second assertion follows from the first and the orthogonality relations of
  irreducible representations of \( \mathbb{G} \rtimes \Lambda \).
\end{proof}

\section{Classification of irreducible representations of
  \texorpdfstring{\( \mathbb{G} \rtimes \Lambda \)}{the semidirect product}}
\label{sec:f7e4fdcede696c78}

For each isotropy subgroup \( \Lambda_{0} \) of \( \Lambda \), let
\( \mathfrak{D}_{\Lambda_{0}} \)denotes the collection of equivalence classes of
distinguished representation parameters associated with \( \Lambda_{0} \). By
Proposition~\ref{prop:01cb8f0781bcbbf7}, the mapping
\begin{equation}
  \label{eq:5e530c98e39ea9ef}
  \begin{split}
    \Psi_{\Lambda_{0}} \colon \mathfrak{D}_{\Lambda_{0}}
    & \to \irr(\mathbb{G} \rtimes \Lambda) \\
    [(u, V, v)] & \mapsto \mathscr{R}_{\Lambda_{0}}\bigl(\mathcal{S}(u, V,
    v)\bigr)
  \end{split}
\end{equation}
is well-defined and injective. In particular, \( \mathfrak{D}_{\Lambda_{0}} \)
is a set (instead of a proper class). Let \( \mathfrak{D} \) be the collection
of equivalence classes of distinguished representation parameters associated
with any isotropy subgroup of \( \Lambda \). By definition, \( \mathfrak{D} \)
is the disjoint union of \( \mathfrak{D}_{\Lambda_{0}} \) as \( \Lambda_{0} \)
runs through all isotropy subgroups of \( \Lambda \), hence \( \mathfrak{D} \)
is also a set. For any \( [(u, V, v)] \in \mathfrak{D}_{\Lambda_{0}} \) and any
\( r \in \Lambda \), \( r \cdot [(u, V, v)] = [r \cdot (u, V, v)] \) is a
well-defined class in \( \mathfrak{D}_{r\Lambda_{0}r^{-1}} \). This defines an
action of \( \Lambda \) on \( \mathfrak{D} \). We are now ready to state and
prove the classification of irreducible representations of
\( \mathbb{G} \rtimes \Lambda \).

\begin{theo}[Classification of irreducible representations of
  \( \mathbb{G} \rtimes \Lambda \)]
  \label{theo:91ac3dad0bd1219d}
  The mapping
  \begin{equation}
    \label{eq:1b6a87803ff7a68e}
    \begin{split}
      \Psi \colon \mathfrak{D} & \rightarrow \irr(\mathbb{G} \rtimes \Lambda) \\
      [(u, V, v)] \in \mathfrak{D}_{\Lambda_{0}} & \mapsto
      \Psi_{\Lambda_{0}}\bigl([(u, V, v)]\bigr) =
      \indrep\Bigl(\mathscr{R}_{\Lambda_{0}}\bigl(\mathcal{S}(u, V,
      v)\bigr)\Bigr)
    \end{split}
  \end{equation}
  is surjective, and the fibers of \( \Psi \) are exactly the
  \( \Lambda \)-orbits in \( \mathfrak{D} \).
\end{theo}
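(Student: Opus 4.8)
The plan is to split the statement into surjectivity together with the easy inclusion ``$\Lambda$-orbits $\subseteq$ fibers'', and then the delicate converse inclusion. Surjectivity is essentially already packaged in Proposition~\ref{prop:895e4cc33597e3de}: every irreducible unitary representation of $\mathbb{G} \rtimes \Lambda$ is unitarily equivalent to a distinguished one, and a distinguished representation is by construction $\indrep\bigl(\mathscr{R}_{\Lambda_{0}}(\mathcal{S}(u,V,v))\bigr)$ for a distinguished parameter $(u,V,v)$ whose associated group $\Lambda_{0} = \Lambda_{[u]}$ is an isotropy subgroup, so its class in $\mathfrak{D}_{\Lambda_{0}} \subseteq \mathfrak{D}$ maps onto it. (Well-definedness of $\Psi$ follows by composing the already well-defined map of~\eqref{eq:5e530c98e39ea9ef} with the equivalence-preserving functor $\indrep$.) For the inclusion of orbits into fibers I would simply invoke Proposition~\ref{prop:49d184eecd55653e}, which says exactly that $(u,V,v)$ and $r\cdot(u,V,v)$ parameterize equivalent representations of $\mathbb{G}\rtimes\Lambda$, i.e.\ that $\Psi$ is constant on each $\Lambda$-orbit.

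The substance of the theorem is the reverse inclusion: if $\Psi([(u_1,V_1,v_1)]) = \Psi([(u_2,V_2,v_2)])$ then the two classes lie in a single $\Lambda$-orbit. Writing $\Lambda_i = \Lambda_{[u_i]}$ and $U_i = \mathscr{R}_{\Lambda_i}(\mathcal{S}(u_i,V_i,v_i))$, the hypothesis is $\indrep(U_1) \cong \indrep(U_2)$, and both sides are irreducible by Proposition~\ref{prop:7cb6d8671da22396}, so their intertwiner space is one-dimensional. I would feed this into the dimension formula of Proposition~\ref{prop:a8bf0fe189bec421}, which presents $\dim\morph_{\mathbb{G}\rtimes\Lambda}(\indrep(U_1),\indrep(U_2))$ as a non-negative sum over pairs $(r,s)$ of the quantities
\begin{displaymath}
  d(r,s) = \dim\morph_{\mathbb{G}\rtimes\Lambda(r,s)}\bigl((r\cdot U_1)\vert_{\mathbb{G}\rtimes\Lambda(r,s)}, (s\cdot U_2)\vert_{\mathbb{G}\rtimes\Lambda(r,s)}\bigr),
\end{displaymath}
with $\Lambda(r,s) = r\Lambda_1 r^{-1} \cap s\Lambda_2 s^{-1}$. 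Since the total equals $1$, at least one term $d(r_0,s_0)$ is strictly positive, and the whole argument will be built from this single nonzero term.

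The key observation is that positivity of $d(r_0,s_0)$ rigidifies the situation. The group $\Lambda(r_0,s_0)$ is again a general isotropy subgroup (Proposition~\ref{prop:81d9440166ac6304}), and the two restrictions correspond to stably pure CSRs with $\mathbb{G}$-supports $[r_0\cdot u_1]$ and $[s_0\cdot u_2]$; writing each as a representation parameter via Corollary~\ref{coro:f85e8a8060bb9d12} and applying Proposition~\ref{prop:7b2a964908c81d83}\ref{item:cda18708f8635340}, a nonzero intertwiner forces $[r_0\cdot u_1] = [s_0\cdot u_2]$. Consequently $r_0\Lambda_1 r_0^{-1} = \Lambda_{[r_0\cdot u_1]} = \Lambda_{[s_0\cdot u_2]} = s_0\Lambda_2 s_0^{-1}$, so $\Lambda(r_0,s_0)$ coincides with both and the ``restrictions'' are vacuous. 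Transporting through the isomorphism $\alpha_{r_0}^{\ast}\otimes\adj_{r_0}^{\ast}$ of Proposition~\ref{prop:e4e616da0a0995e6}, which sends $r_0\cdot U_1$ to $U_1$ and $s_0\cdot U_2$ to $t\cdot U_2$ with $t = r_0^{-1}s_0$, I obtain $\dim\morph_{\mathbb{G}\rtimes\Lambda_1}(U_1, t\cdot U_2) = d(r_0,s_0) > 0$. Now $t\cdot(u_2,V_2,v_2)$ is a distinguished parameter associated with $t\Lambda_2 t^{-1} = \Lambda_1$, so $U_1$ and $t\cdot U_2$ are irreducible (Corollary~\ref{coro:d09f1fd81ad53d88}), whence $U_1 \cong t\cdot U_2$; Proposition~\ref{prop:01cb8f0781bcbbf7} then upgrades this to equivalence of the parameters $(u_1,V_1,v_1)$ and $t\cdot(u_2,V_2,v_2)$, giving $[(u_1,V_1,v_1)] = t\cdot[(u_2,V_2,v_2)]$ in $\mathfrak{D}$ as desired.

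The main obstacle I anticipate is the group-theoretic bookkeeping in the third paragraph: verifying that a single positive term $d(r_0,s_0)$ already forces $r_0\Lambda_1 r_0^{-1} = s_0\Lambda_2 s_0^{-1}$, so that no genuine restriction to a proper intersection survives, and carefully tracking the conjugation $t = r_0^{-1}s_0$ through the \emph{anti}homomorphisms $\alpha^{\ast}$ and $\adj^{\ast}$ so that $s_0\cdot U_2$ really lands on $t\cdot U_2$. Once this rigidity is secured, the passage from equivalence of the $\mathbb{G}\rtimes\Lambda_1$-representations to equivalence of the distinguished parameters is handled cleanly by Proposition~\ref{prop:01cb8f0781bcbbf7}, and combining the two inclusions shows the fibers of $\Psi$ are exactly the $\Lambda$-orbits.
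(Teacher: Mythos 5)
Your proposal is correct, and it reaches the hard direction by a genuinely different route than the paper. The paper's proof proceeds in two stages: it first shows \( [u_{2}] \in \Lambda \cdot [u_{1}] \) by contradiction (if not, every term in the sum of Proposition~\ref{prop:a8bf0fe189bec421} vanishes by Proposition~\ref{prop:7b2a964908c81d83}, contradicting \( \dim\morph\bigl(\indrep(U_{1}),\indrep(U_{2})\bigr) = 1 \)), then normalizes so that \( [u_{1}] = [u_{2}] \) and \( \Lambda_{1} = \Lambda_{2} = \Lambda_{0} \), and finishes with a counting argument: all cross terms with \( r^{-1}s \notin \Lambda_{0} \) vanish, each surviving term is \( 0 \) or \( 1 \) by irreducibility, and since the number of surviving terms is exactly \( \abs*{\Lambda_{0}}^{2}[\Lambda : \Lambda_{0}] \), the normalization forces \emph{every} term to equal \( 1 \); in particular the term \( r = s = 1_{\Lambda} \) gives \( U_{1} \cong U_{2} \), and Proposition~\ref{prop:01cb8f0781bcbbf7} concludes. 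You instead extract a \emph{single} positive term \( d(r_{0}, s_{0}) > 0 \) and exploit rigidity: nonvanishing forces \( [r_{0} \cdot u_{1}] = [s_{0} \cdot u_{2}] \) (Proposition~\ref{prop:7b2a964908c81d83}\ref{item:cda18708f8635340} applied to the restricted parameters), and since both parameters are \emph{distinguished}, the isotropy groups \( r_{0}\Lambda_{1}r_{0}^{-1} = \Lambda_{[r_{0}\cdot u_{1}]} \) and \( s_{0}\Lambda_{2}s_{0}^{-1} = \Lambda_{[s_{0}\cdot u_{2}]} \) coincide, so the restriction to \( \Lambda(r_{0}, s_{0}) \) is vacuous; transporting by the isomorphism of Proposition~\ref{prop:e4e616da0a0995e6} and applying Schur's lemma to the irreducible representations \( U_{1} \) and \( t \cdot U_{2} \) (\( t = r_{0}^{-1}s_{0} \)) yields equivalence directly, and Proposition~\ref{prop:01cb8f0781bcbbf7} finishes as before. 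Your bookkeeping is sound: the identity \( \Lambda_{[r\cdot u]} = r\Lambda_{[u]}r^{-1} \) and the compatibility \( r_{0}^{-1}\cdot(s_{0}\cdot U_{2}) = t \cdot U_{2} \) both follow from the left-action property \( (rs)\cdot(\,\cdot\,) = r\cdot(s\cdot(\,\cdot\,)) \) established in Proposition~\ref{prop:56963e422c66f26b}. What each approach buys: yours is more economical, avoiding both the preliminary reduction and the counting, and it isolates where maximal stability (distinguishedness) is actually used---namely to convert support equality into equality of conjugated principal subgroups; the paper's counting argument is less surgical but yields the slightly stronger byproduct that \( r \cdot U_{1} \cong s \cdot U_{2} \) for \emph{all} pairs with \( r^{-1}s \in \Lambda_{0} \), information not needed for the theorem.
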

\begin{proof}
  By Proposition~\ref{prop:895e4cc33597e3de}, \( \Psi \) is surjective. By
  Corollary~\ref{coro:7b7b679f656174f9} and \eqref{eq:e2a420685bba7c4f}, each
  \( \Lambda \)-orbits in \( \mathfrak{D} \) maps to the same point under
  \( \Psi \). It remains to show that if \( (u_{i}, V_{i}, v_{i}) \) is a
  distinguished representation parameter with associated subgroup
  \( \Lambda_{i} \) for \( i = 1, 2 \), and
  \begin{equation}
    \label{eq:6c1d2e64c223b5af}
    \Psi\bigl([(u_{1}, V_{1}, v_{1})]\bigr)
    = \Psi\bigl([(u_{2}, V_{2}, v_{2})]\bigr),
  \end{equation}
  then there exists an \( r_{0} \in \Lambda \), such that
  \begin{equation}
    \label{eq:09da9aeaa8bffd22}
    r_{0} \cdot [(u_{1}, V_{1}, v_{1})]
    = [(u_{2}, V_{2}, v_{2})] \in \mathfrak{D}_{\Lambda_{2}}.
  \end{equation}
  Let \( \mathbf{S}_{i} = \mathcal{S}(u_{i}, V_{i}, v_{i}) \),
  \( U_{i} = \mathscr{R}_{\Lambda_{i}}(\mathbf{S}_{i}) \) for \( i = 1, 2 \). If
  \( [u_{2}] \notin \Lambda \cdot [u_{1}] \), then by
  Proposition~\ref{prop:7b2a964908c81d83}, we have
  \begin{equation}
    \label{eq:7af4d48b24b3f200}
    \forall r,s \in \Lambda, \;
    \dim \morph_{\mathbb{G} \rtimes \Lambda(r, s)}
    \Bigl((r \cdot U_{1}) \vert_{\mathbb{G} \rtimes
      \Lambda(r,s)}, (s \cdot U_{2}) \vert_{\mathbb{G} \rtimes \Lambda(r,s)}
    \Bigr) = 0,
  \end{equation}
  where \( \Lambda(r,s) = r\Lambda_{1}r^{-1} \cap s\Lambda_{2}s^{-1} \). This is
  because \( (r \cdot U_{1}) \vert_{\mathbb{G} \rtimes \Lambda(r, s)} \) is
  parameterized by the representation parameter
  \( (u_{1}, V_{1} \vert_{\Lambda(r,s)}, v_{1} \vert_{\Lambda(r,s)}) \)
  associated with \( \Lambda(r, s) \), and a similar assertion holds for
  \( (s \cdot U_{2}) \vert_{\mathbb{G} \rtimes \Lambda(r, s)} \). Thus
  \begin{equation}
    \label{eq:f0989dcb9abacfb2}
    \dim \morph_{\mathbb{G} \rtimes \Lambda}\bigl(\indrep(U_{1}),
    \indrep(U_{2})\bigr) = 0
  \end{equation}
  by Proposition~\ref{prop:a8bf0fe189bec421}, which contradicts
  \eqref{eq:6c1d2e64c223b5af}.

  Thus \( [u_{2}] \in \Lambda \cdot [u_{1}] \), by replacing
  \( [(u_{1}, V_{1}, v_{1})] \) with \( r_{0} \cdot [(u_{1}, V_{1}, v_{1})] \)
  for some \( r_{0} \in \Lambda \) if necessary, we may assume without loss of
  generality that \( [u_{1}] = [u_{2}] \in \irr(\mathbb{G}) \), and
  \( \Lambda_{1} = \Lambda_{2} \), which we now denote by \( \Lambda_{0} \). It
  remains to prove that under this assumption, we have
  \begin{equation}
    \label{eq:2381bc56b18e8a32}
    [(u_{1}, V_{1}, v_{1})]
    = [(u_{2}, V_{2}, v_{2})] \in \mathfrak{D}_{\Lambda_{0}}
  \end{equation}
  Since when \( r^{-1}s \notin \Lambda_{0} \) if and only if
  \( r \cdot [u_{1}] \neq s \cdot [u_{2}] \), we have
  \begin{equation}
    \label{eq:daa9843af0f41714}
    \forall r, s \in \Lambda, \quad r^{-1}s \notin \Lambda_{0} \implies
    \dim \morph_{\mathbb{G} \rtimes \Lambda(r, s)}
    \bigl((r \cdot U_{1})\vert_{\mathbb{G} \rtimes
      \Lambda(r,s)}, (r \cdot U_{2})\vert_{\mathbb{G} \rtimes \Lambda(r,s)}
    \bigr) = 0.
  \end{equation}
  Note that when \( r^{-1}s \in \Lambda_{0} \), we have
  \( \Lambda(r, s) = r\Lambda_{0}r^{-1} = s \Lambda_{0} s^{-1} \), and
  \( [\Lambda \colon \Lambda(r, s)] = [\Lambda \colon \Lambda_{0}] \). By
  \eqref{eq:6c1d2e64c223b5af}, \eqref{eq:daa9843af0f41714} and
  Proposition~\ref{prop:a8bf0fe189bec421}, we have
  \begin{equation}
    \label{eq:3dcf561cb808e7ca}
    1 = \frac{1}{\abs*{\Lambda_{0}}^{2}[\Lambda \colon \Lambda_{0}]}
    \sum_{\substack{r, s \in \Lambda, \\
        r^{-1}s \in \Lambda_{0}}}  \dim
    \morph_{\mathbb{G} \rtimes r\Lambda_{0}r^{-1}}\bigl(r \cdot U_{1}, s \cdot
    U_{2}\bigr).
  \end{equation}
  Since \( r \cdot U_{1} \), \( s \cdot U_{2} \) are both irreducible, we have
  \begin{equation}
    \label{eq:d5d9a29e04ec902e}
    r^{-1}s \in \Lambda_{0} \implies
    \dim\morph_{\mathbb{G} \rtimes r \Lambda_{0}r^{-1}}
    \bigl(r \cdot U_{1}, s \cdot U_{2}\bigr) = 0 \;\text{or}\; 1.
  \end{equation}
  Note that there are
  \( \abs*{\Lambda_{0}}^{2} [\Lambda \colon \Lambda_{0}] = \abs*{\Lambda} \cdot
  \abs*{\Lambda_{0}} \) terms on the right side of \eqref{eq:3dcf561cb808e7ca},
  \eqref{eq:d5d9a29e04ec902e} forces
  \begin{equation}
    \label{eq:9fae2a27c3284eff}
    r^{-1}s \in \Lambda_{0} \implies
    \dim\morph_{\mathbb{G} \rtimes r \Lambda_{0}r^{-1}}
    \bigl(r \cdot U_{1}, s \cdot U_{2}\bigr) = 1.
  \end{equation}
  In particular, taking \( r = s = 1_{\Lambda} \) in \eqref{eq:9fae2a27c3284eff}
  shows that \( U_{1} \) and \( U_{2} \) are equivalent, hence
  \eqref{eq:2381bc56b18e8a32} holds by
  Proposition~\ref{prop:01cb8f0781bcbbf7}. This finishes the proof of the
  theorem.
\end{proof}

\section{The conjugate representation of distinguished representations}
\label{sec:fd70c0937e9538e4}

We now study the conjugation of irreducible representations of
\( \mathbb{G} \rtimes \Lambda \) in terms of the classification presented in
Theorem~\ref{theo:91ac3dad0bd1219d}. There is a small complication here in the
non-Kac type case, where the contragredient of a unitary representation need not
be unitary. Resolving this kind of question involves the modular operator, just
as in Proposition~\ref{prop:1ecfd88218e081c9}.

We begin with a simple lemma on linear operators.

\begin{lemm}
  \label{lemm:02b9a7af62be9f6b}
  Let \( \mathscr{H} \) be a Hilbert space,
  \( U, P \in \mathcal{B}(\mathscr{H}) \) such that \( U \) is unitary, \( P \)
  is invertible and positive, if \( PUP^{-1} \) is unitary, then
  \( PUP^{-1} = U \), i.e.\ \( P \) commutes with \( U \).
\end{lemm}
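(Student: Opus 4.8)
The plan is to extract a purely algebraic identity from the unitarity hypothesis, and then upgrade it from $P^{2}$ to $P$ using uniqueness of positive square roots. First I would unwind what it means for $PUP^{-1}$ to be unitary. Since $P$ is positive it is self-adjoint and invertible with $(P^{-1})^{\ast}=P^{-1}$, so $(PUP^{-1})^{\ast}=P^{-1}U^{\ast}P$, and the identity $(PUP^{-1})^{\ast}(PUP^{-1})=\id_{\mathscr{H}}$ becomes $P^{-1}U^{\ast}P^{2}UP^{-1}=\id_{\mathscr{H}}$. Conjugating both sides by the invertible operator $P$ turns this into $U^{\ast}P^{2}U=P^{2}$, that is, $P^{2}U=UP^{2}$. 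In other words, the first step establishes that $P^{2}$ commutes with $U$.

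The second step passes from $P^{2}$ to $P$. Rewriting $P^{2}U=UP^{2}$ as $UP^{2}U^{-1}=P^{2}$ and using $U^{-1}=U^{\ast}$, I would observe that
\begin{equation*}
  (UPU^{-1})^{2}=UP^{2}U^{-1}=P^{2}.
\end{equation*}
Because $U$ is unitary, the map $T\mapsto UTU^{-1}=UTU^{\ast}$ is a $\ast$-automorphism of $\mathcal{B}(\mathscr{H})$ and hence preserves positivity; thus $UPU^{-1}$ is a positive operator whose square equals $P^{2}$. By the uniqueness of the positive square root of a positive operator (standard continuous functional calculus in a $C^{\ast}$-algebra), $UPU^{-1}=P$, i.e.\ $UP=PU$. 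Consequently $PUP^{-1}=UPP^{-1}=U$, which is the desired conclusion.

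The lemma is elementary, so there is no serious obstacle; the only point requiring care is the transition in the second step. One must resist trying to deduce commutation of $P$ directly from commutation of $P^{2}$ by naive manipulation, and instead invoke the uniqueness of positive square roots—checking along the way that $UPU^{-1}$ really is positive (which is exactly where unitarity of $U$, as opposed to mere invertibility, is used).
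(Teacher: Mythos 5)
Your proof is correct and follows essentially the same route as the paper: both extract from the unitarity hypothesis the commutation of \( P^{2} \) with \( U \) (the paper phrases it as \( U^{\ast} \) commuting with \( P^{2} \), via \( PU^{\ast}P^{-1} = V^{\ast} = P^{-1}U^{\ast}P \)) and then upgrade from \( P^{2} \) to \( P \). The only difference is in the justification of that last step: the paper invokes the standard fact that an operator commuting with a positive operator commutes with its positive square root \( (P^{2})^{1/2} = P \), whereas you re-prove this fact in the case at hand by applying uniqueness of positive square roots to the positive operator \( UPU^{-1} \), which squares to \( P^{2} \).
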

\begin{proof}
  Let \( V = PUP^{-1} \). We have
  \begin{equation}
    \label{eq:9bcc26e060fd68ef}
    PU^{\ast}P^{-1} = PU^{-1}P^{-1} = V^{-1} = V^{\ast}
    = P^{-1}U^{\ast}P.
  \end{equation}
  Thus \( U^{\ast} \) commutes with the positive operator \( P^{2} \). Hence
  \( U^{\ast} \) commutes with \( {(P^{2})}^{1/2} = P \), i.e.\
  \( U^{\ast}P = PU^{\ast} \). Taking adjoints of this proves \( PU = UP \).
\end{proof}

\begin{prop}
  \label{prop:9c465524a5b2723f}
  Let \( u \) be an irreducible unitary representation of \( \mathbb{G} \),
  \( \Lambda_{0} \) a subgroup of the isotropy subgroup \( \Lambda_{[u]} \),
  \( V \) a covariant projective \( \Lambda_{0} \)-representation of \( u \).
  Then any operator \( \rho\in \morph_{\mathbb{G}}(u, u^{cc}) \) commutes with
  \( V \) (i.e.\ \( \rho V(r_{0}) = V(r_{0}) \rho \) for all
  \( r_{0} \in \Lambda_{0} \)).
\end{prop}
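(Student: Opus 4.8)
The plan is to reduce everything, via Schur's lemma, to the single positive modular operator, and then to show that conjugating it by the unitary $V(r_{0})$ leaves it fixed. First I would observe that since $u$ is irreducible, so is its double contragredient $u^{cc}$, and the two are (unitarily) equivalent; hence $\morph_{\mathbb{G}}(u, u^{cc})$ is one dimensional and is spanned by the positive invertible modular operator $\rho_{u}$ normalized by $\tr(\,\cdot\,\rho_{u}) = \tr(\,\cdot\,\rho_{u}^{-1})$ on $\selfmorph_{\mathbb{G}}(u)$, exactly as in Proposition~\ref{prop:1ecfd88218e081c9}. Every $\rho \in \morph_{\mathbb{G}}(u, u^{cc})$ is therefore a scalar multiple of $\rho_{u}$, so it suffices to prove that $\rho_{u}$ commutes with $V(r_{0})$ for each $r_{0} \in \Lambda_{0}$.

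The point I would isolate next is that the relevant intertwiner spaces for $u$ and for $r_{0} \cdot u$ literally coincide as sets of operators in $\mathcal{B}(\mathscr{H})$. Indeed, because $\alpha^{\ast}_{r_{0}}$ acts only on the $\pol(\mathbb{G})$-leg while the conjugation map $j$ acts only on the $\mathcal{B}(\mathscr{H})$-leg, one has $(r_{0} \cdot u)^{cc} = (\id \otimes \alpha^{\ast}_{r_{0}^{-1}})(u^{cc})$; applying $\id \otimes \alpha^{\ast}_{r_{0}}$ to the defining relation of an intertwiner then shows $\morph_{\mathbb{G}}(r_{0} \cdot u, (r_{0} \cdot u)^{cc}) = \morph_{\mathbb{G}}(u, u^{cc})$. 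I would then set $A := V(r_{0})^{\ast} \rho_{u} V(r_{0})$, which is positive and invertible, and verify by a direct computation that $A \in \morph_{\mathbb{G}}(u, u^{cc})$: this uses the covariance relation~\eqref{eq:ff84fdb6222c5b10} of $V(r_{0})$ with $u$, the defining relation $(\rho_{u} \otimes 1) u = u^{cc} (\rho_{u} \otimes 1)$, and the corresponding covariance of $V(r_{0})$ with $u^{cc}$, the latter coming from functoriality of the contragredient (so that $V(r_{0}) \in \morph_{\mathbb{G}}((r_{0} \cdot u)^{cc}, u^{cc})$) together with its compatibility with the leg-wise automorphism $\alpha^{\ast}_{r_{0}}$.

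By the first paragraph, $A = c \rho_{u}$ for some $c > 0$. Taking traces in $V(r_{0})^{\ast} \rho_{u} V(r_{0}) = c \rho_{u}$ gives $\tr(\rho_{u}) = c\,\tr(\rho_{u})$, forcing $c = 1$; equivalently, the normalization characterizing $\rho_{u}$ pins down the scalar. Hence $V(r_{0})^{\ast} \rho_{u} V(r_{0}) = \rho_{u}$, i.e.\ $\rho_{u}$ commutes with $V(r_{0})$, and the proposition follows for all $\rho$. One may also phrase the endgame through Lemma~\ref{lemm:02b9a7af62be9f6b}: once $A = c\rho_{u}$ is known, the conjugate $\rho_{u} V(r_{0}) \rho_{u}^{-1} = c\, V(r_{0})$, and recognizing this operator as unitary lets the lemma identify it with $V(r_{0})$. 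I expect the main obstacle to be the computation that $A$ lands in $\morph_{\mathbb{G}}(u, u^{cc})$, which hinges on correctly propagating the covariance of $V(r_{0})$ through the contragredient functor and checking its commutation with the leg-wise action of $\alpha^{\ast}_{r_{0}}$; the morphism-space identification of the second paragraph is precisely what makes the final scalar computation immediate.
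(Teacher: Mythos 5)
Your proof is correct, but it follows a genuinely different route from the paper's. The paper passes to contragredients: it derives \( (V^{c}(r_{0}) \otimes 1)(r_{0}\cdot u^{c}) = u^{c}(V^{c}(r_{0})\otimes 1) \), forms \( \overline{V} = \bigl({j(\rho_{u})}^{1/2}\otimes 1\bigr)V^{c}\bigl({j(\rho_{u})}^{-1/2}\otimes 1\bigr) \), invokes Schur's lemma to see that \( \overline{V}(r_{0}) \) is a scalar multiple of a unitary, uses a determinant computation to show that scalar has modulus one, and then applies Lemma~\ref{lemm:02b9a7af62be9f6b} to conclude \( V^{c} = \overline{V} \), finally transporting the commutation back through \( j\otimes\id \). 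You instead conjugate \( \rho_{u} \) directly: using the equality \( \morph_{\mathbb{G}}(r_{0}\cdot u, {(r_{0}\cdot u)}^{cc}) = \morph_{\mathbb{G}}(u, u^{cc}) \) and the fact that \( V(r_{0}) \in \morph_{\mathbb{G}}({(r_{0}\cdot u)}^{cc}, u^{cc}) \), you place \( A = {V(r_{0})}^{\ast}\rho_{u}V(r_{0}) \) in the one-dimensional space \( \mathbb{C}\rho_{u} \) and pin the scalar to \( 1 \) by unitary invariance of the trace. Your argument is shorter: it avoids the conjugate representation, the determinant step, and Lemma~\ref{lemm:02b9a7af62be9f6b} altogether. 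What the paper's detour buys is reuse: the covariance of \( V^{c} \) with \( u^{c} \) and the operator \( \overline{V} \) are exactly the ingredients recycled in Proposition~\ref{prop:ef1711719362d940} to show that \( (\overline{u}, V^{c}, v^{c}) \) is again a representation parameter. Note also that the double-contragredient fact you cite as ``functoriality'' (that \( V(r_{0}) \) intertwines \( {(r_{0}\cdot u)}^{cc} \) with \( u^{cc} \)) is precisely what the paper proves by hand, by taking adjoints and inverses and applying \( j \otimes \id \) twice; so your proof has the same computational core, just organized more economically.

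Two small blemishes, neither fatal. First, \( u \) and \( u^{cc} \) are equivalent but in general \emph{not} unitarily equivalent (the intertwiner \( \rho_{u} \) is positive, and is unitary only in the Kac case), so the parenthetical ``(unitarily)'' should be dropped; nothing in your argument uses it. Second, your alternative endgame via Lemma~\ref{lemm:02b9a7af62be9f6b} is circular as stated: from \( \rho_{u}V(r_{0})\rho_{u}^{-1} = cV(r_{0}) \) with \( c > 0 \), the operator \( cV(r_{0}) \) is unitary if and only if \( c = 1 \), which is the very thing to be proved, so the lemma's hypothesis cannot be ``recognized'' without an extra step (e.g.\ the determinant argument the paper uses). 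Your primary trace argument, however, is complete and needs no repair.
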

\begin{proof}
  Since \( u \) is irreducible, \( \morph_{\mathbb{G}}(u, u^{cc}) \) is a one
  dimensional space spanned by an invertible positive operator (\cite[Lemma
  1.3.12]{MR3204665}).  By definition (see~\cite[Proposition 1.4.4 and
  Definition 1.4.5]{MR3204665}), the conjugation \( \overline{u} \) of \( u \)
  is given by
  \begin{equation}
    \label{eq:779418ead6089548}
    \overline{u} = \bigl({j(\rho_{u})}^{1/2} \otimes
    1\bigr)u^{c}\bigl({j(\rho_{u})}^{-1/2} \otimes 1\bigr),
  \end{equation}
  where \( \rho_{u} \) is the unique \emph{positive} operator in
  \( \morph_{\mathbb{G}}(u, u^{cc}) \) with
  \( \tr(\rho_{u}) = \tr(\rho_{u}^{-1}) \). Since
  \( \morph_{\mathbb{G}}(u, u^{cc}) = \mathbb{C} \rho_{u} \), it suffices to
  show that \( \rho_{u} \) commutes with \( V \).

  Since \( u \), \( V \) are covariant, we have
  \begin{equation}
    \label{eq:630e384e8d91b821}
    \forall r_{0} \in \Lambda_{0}, \;
    \bigl(V(r_{0}) \otimes 1\bigr) (r_{0} \cdot  u) = u \bigl(V(r_{0})
    \otimes 1\bigr).
  \end{equation}
  Taking the adjoint of both sides of \eqref{eq:630e384e8d91b821} then applying
  \( j \otimes \id \), we get
  \begin{equation}
    \label{eq:c37e02aeb0b4ee2d}
    \forall r_{0} \in \Lambda_{0}, \;
    \bigl(V^{c}(r_{0}) \otimes 1\bigr) (r_{0} \cdot  u^{c})
    = u^{c} \bigl(V^{c}(r_{0}) \otimes 1\bigr),
  \end{equation}
  where
  \begin{equation}
    \label{eq:a57a14f5b27f77c0}
    V^{c} = (j \otimes \id)(V^{-1}) = (j \otimes \id)(V^{\ast})
  \end{equation}
  is the contragredient of \( V \), and
  \begin{equation}
    \label{eq:fc3b3bd021d1a480}
    u^{c} = (j \otimes \id)(u^{-1}) = (j \otimes \id)(u^{\ast})
  \end{equation}
  the contragredient of \( u \). We pose
  \begin{equation}
    \label{eq:ba91c9f504635eb1}
    \overline{V} = \bigl({j(\rho_{u})}^{1/2} \otimes 1\bigr)
    V^{c} \bigl({j(\rho_{u})}^{-1/2} \otimes 1\bigr),
  \end{equation}
  then by \eqref{eq:c37e02aeb0b4ee2d} and \eqref{eq:779418ead6089548}, we have
  \begin{equation}
    \label{eq:ee1e03f0aa27c5ad}
    \forall r_{0} \in \Lambda_{0}, \; \bigl(\overline{V}(r_{0}) \otimes 1\bigr)
    (r_{0} \cdot
    \overline{u}) = \overline{u} \bigl(\overline{V}(r_{0}) \otimes 1\bigr).
  \end{equation}
  Thus for any \( r_{0} \in \Lambda_{0} \),
  \( \overline{V}(r_{0}) \in \morph_{\mathbb{G}}(r_{0} \cdot \overline{u},
  \overline{u}) \), which is a one dimensional space spanned by a unitary
  operator since both \( r_{0} \cdot \overline{u} \) and \( \overline{u} \) are
  irreducible unitary representations of \( \mathbb{G} \). Note that
  \( V^{c}(r_{0}) = j\bigl({V(r_{0})}^{\ast}\bigr) \) is unitary, by
  \eqref{eq:ba91c9f504635eb1}, we have
  \begin{equation}
    \label{eq:271dd114db8136d8}
    \det\bigl(\overline{V}(r_{0})\bigr) = \det\bigl({j(\rho_{u})}^{1/2}
    V^{c}(r_{0}) {j(\rho_{u})}^{-1/2}\bigr) =
    \det\bigl(V^{c}(r_{0})\bigr) \in \mathbb{T}.
  \end{equation}
  This forces \( \overline{V}(r_{0}) \) to be unitary since it is a scalar
  multiple of a unitary operator. Applying Lemma~\ref{lemm:02b9a7af62be9f6b}
  to~\eqref{eq:ba91c9f504635eb1} (evaluated on each
  \( r_{0} \in \Lambda_{0} \)), we see that
  \begin{equation}
    \label{eq:0d0c406d77493efc}
    V^{c} = \overline{V}
    = \bigl({j(\rho_{u})}^{1/2} \otimes 1\bigr)
    V^{c} \bigl({j(\rho_{u})}^{-1/2} \otimes 1\bigr).
  \end{equation}
  Applying \( j \otimes \id \) to the inverse of both sides of
  \eqref{eq:0d0c406d77493efc} and note that \( V^{cc} = V \), we see that
  \begin{equation}
    \label{eq:cabe366bb8425892}
    V = V^{cc} = (\rho_{u}^{1/2} \otimes 1) V^{cc} (\rho_{u}^{-1/2} \otimes 1)
    = (\rho_{u}^{1/2} \otimes 1) V (\rho_{u}^{-1/2} \otimes 1),
  \end{equation}
  i.e.\ \( \rho_{u}^{1/2} \) (hence \( \rho_{u} \)) commutes with \( V \).
\end{proof}

\begin{prop}
  \label{prop:ef1711719362d940}
  Let \( (u, V, v) \) be a representation parameter associated with some
  \( \Lambda_{0} \in \giso(\Lambda) \), \( U \) is the unitary representation of
  \( \mathbb{G} \rtimes \Lambda_{0} \) parameterized by \( (u, V, v) \), then
  the following hold:
  \begin{enumerate}
  \item \label{item:8749303f3fc0ece3} \( (\overline{u}, V^{c}, v^{c}) \) is also
    a representation parameter;
  \item \label{item:75a9391ad5acc60a}
    \( \rho_{U} = \id_{\mathscr{H}_{v}} \otimes \rho_{u} \), where
    \( \rho_{U} \) (resp.\ \( \rho_{u} \)) is the modular operator for the
    representation \( U \) (resp.\ \( u \));
  \item \label{item:ee918374fdbf59bb} \( \overline{U} \) is parameterized by
    \( (\overline{u}, V^{c}, v^{c}) \).
  \end{enumerate}
\end{prop}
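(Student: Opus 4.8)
The plan is to dispatch the three assertions in turn, the real content being carried by the commutation statement of Proposition~\ref{prop:9c465524a5b2723f} and the conjugation formula of Proposition~\ref{prop:1ecfd88218e081c9}. Throughout I write $\mathscr{H}$ for the space of $u$ and $\mathscr{H}_{v}$ for the space of $v$, so that $U = \mathscr{R}_{\Lambda_{0}}(\mathcal{S}(u,V,v))$ acts on $\mathscr{H}_{v} \otimes \mathscr{H}$ with restrictions $U_{\mathbb{G}} = \epsilon_{\mathscr{H}_{v}} \times u$ and $U_{\Lambda_{0}} = v \times V$. Note that, the cocycles of $v$ and $V$ being opposite, $U_{\Lambda_{0}}$ is an \emph{honest} unitary representation of the finite group $\Lambda_{0}$, hence of Kac type.

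For \ref{item:8749303f3fc0ece3}, the representation $\overline{u}$ is again irreducible and unitary, as conjugation preserves both properties. Since $\Lambda_{0}$ is finite, the contragredients $V^{c}$ and $v^{c}$ agree with the conjugates $\overline{V}$ and $\overline{v}$ and are genuine unitary projective representations; passing to contragredients inverts cocycles, so the cocycles of $V^{c}$ and $v^{c}$ remain opposite. The only substantive point is that $V^{c}$ is covariant with $\overline{u}$, and this is already contained in the proof of Proposition~\ref{prop:9c465524a5b2723f}: equation~\eqref{eq:0d0c406d77493efc} gives $V^{c} = \overline{V}$, while \eqref{eq:ee1e03f0aa27c5ad} gives $\overline{V}(r_{0}) \in \morph_{\mathbb{G}}(r_{0} \cdot \overline{u}, \overline{u})$ for every $r_{0} \in \Lambda_{0}$, which is exactly the covariance condition.

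For \ref{item:75a9391ad5acc60a}, I would verify that $\id_{\mathscr{H}_{v}} \otimes \rho_{u}$ meets the three defining properties of $\rho_{U}$ and conclude by uniqueness. It is positive and invertible since $\rho_{u}$ is. By Proposition~\ref{prop:50b58e1fd8af4aa0} (whose counit argument is insensitive to unitarity of the target) it lies in $\morph_{\mathbb{G} \rtimes \Lambda_{0}}(U, U^{cc})$ once it intertwines each factor: the $\mathbb{G}$-factor is immediate from $\rho_{u} \in \morph_{\mathbb{G}}(u, u^{cc})$, giving $\id_{\mathscr{H}_{v}} \otimes \rho_{u} \in \morph_{\mathbb{G}}(\epsilon_{\mathscr{H}_{v}} \times u, \epsilon_{\mathscr{H}_{v}} \times u^{cc})$; for the $\Lambda_{0}$-factor one has $(U_{\Lambda_{0}})^{cc} = U_{\Lambda_{0}}$ by Kac type, so it suffices that $\id_{\mathscr{H}_{v}} \otimes \rho_{u}$ commute with $v(r_{0}) \otimes V(r_{0})$, which holds because $\rho_{u}$ commutes with $V$ by Proposition~\ref{prop:9c465524a5b2723f}. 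For the normalization, Proposition~\ref{prop:50b58e1fd8af4aa0} and the irreducibility of $u$ identify $\selfmorph_{\mathbb{G} \rtimes \Lambda_{0}}(U) = \selfmorph_{\Lambda_{0}}(v) \otimes \id_{\mathscr{H}}$; for $T = A \otimes \id_{\mathscr{H}}$ in this algebra one computes $\tr(T(\id \otimes \rho_{u})) = \tr(A)\tr(\rho_{u})$ and $\tr(T(\id \otimes \rho_{u})^{-1}) = \tr(A)\tr(\rho_{u}^{-1})$, so the trace condition reduces to $\tr(\rho_{u}) = \tr(\rho_{u}^{-1})$, valid by the definition of $\rho_{u}$. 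Uniqueness then forces $\rho_{U} = \id_{\mathscr{H}_{v}} \otimes \rho_{u}$.

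For \ref{item:ee918374fdbf59bb}, I would apply Proposition~\ref{prop:1ecfd88218e081c9} to $\mathbf{S} = \mathscr{S}_{\Lambda_{0}}(U)$, inserting $\rho_{U} = \id_{\mathscr{H}_{v}} \otimes \rho_{u}$ from \ref{item:75a9391ad5acc60a}. Under the canonical identification $\overline{\mathscr{H}_{v} \otimes \mathscr{H}} = \overline{\mathscr{H}_{v}} \otimes \overline{\mathscr{H}}$ one has $j(\rho_{U})^{1/2} = \id_{\overline{\mathscr{H}_{v}}} \otimes j(\rho_{u})^{1/2}$, together with $(U_{\mathbb{G}})^{c} = \epsilon_{\overline{\mathscr{H}_{v}}} \times u^{c}$ and $(U_{\Lambda_{0}})^{c} = v^{c} \times V^{c}$. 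Conjugating factorwise, the $\mathbb{G}$-component $u'$ of $\overline{\mathbf{S}}$ becomes $\epsilon_{\overline{\mathscr{H}_{v}}} \times \bigl[(j(\rho_{u})^{1/2} \otimes 1)\, u^{c}\, (j(\rho_{u})^{-1/2} \otimes 1)\bigr] = \epsilon_{\overline{\mathscr{H}_{v}}} \times \overline{u}$, while the $\Lambda_{0}$-component sends $r_{0}$ to $v^{c}(r_{0}) \otimes \bigl(j(\rho_{u})^{1/2} V^{c}(r_{0}) j(\rho_{u})^{-1/2}\bigr)$; here equation~\eqref{eq:0d0c406d77493efc} (the commutation of $\rho_{u}$ with $V^{c}$) collapses the second factor to $V^{c}(r_{0})$, so $w' = v^{c} \times V^{c}$. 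Hence $\overline{\mathbf{S}} = (\overline{\mathscr{H}_{v}} \otimes \overline{\mathscr{H}}, \epsilon_{\overline{\mathscr{H}_{v}}} \times \overline{u}, v^{c} \times V^{c}) = \mathcal{S}(\overline{u}, V^{c}, v^{c})$, which is exactly the statement that $\overline{U}$ is parameterized by $(\overline{u}, V^{c}, v^{c})$. The main obstacle is the normalization step in \ref{item:75a9391ad5acc60a}: everything there depends on correctly pinning down $\selfmorph_{\mathbb{G} \rtimes \Lambda_{0}}(U)$ and on the commutation $\rho_{u} V = V \rho_{u}$ of Proposition~\ref{prop:9c465524a5b2723f}, which is precisely what makes $\id_{\mathscr{H}_{v}} \otimes \rho_{u}$ simultaneously a morphism and correctly normalized; once \ref{item:75a9391ad5acc60a} is secured, parts \ref{item:8749303f3fc0ece3} and \ref{item:ee918374fdbf59bb} are bookkeeping over the tensor factors.
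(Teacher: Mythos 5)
Your proof is correct and follows essentially the same route as the paper's: part \ref{item:8749303f3fc0ece3} via the covariance of \( V^{c} = \overline{V} \) with \( \overline{u} \) extracted from Proposition~\ref{prop:9c465524a5b2723f}, part \ref{item:75a9391ad5acc60a} by verifying the two defining properties of the modular operator using the commutation \( \rho_{u} V = V \rho_{u} \) together with \( \tr(\rho_{u}) = \tr(\rho_{u}^{-1}) \), and part \ref{item:ee918374fdbf59bb} by feeding \( \rho_{U} = \id \otimes \rho_{u} \) into Proposition~\ref{prop:1ecfd88218e081c9} and collapsing \( {j(\rho_{u})}^{1/2} V^{c}(r_{0}) {j(\rho_{u})}^{-1/2} = V^{c}(r_{0}) \) via~\eqref{eq:0d0c406d77493efc}. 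The only cosmetic difference is in part \ref{item:75a9391ad5acc60a}: you verify the intertwining relation factorwise through the morphism criterion of Proposition~\ref{prop:50b58e1fd8af4aa0}, which implicitly relies on the factorization \( U^{cc} = (\id \otimes u^{cc} \otimes 1)v_{14}V_{24} \) (the paper's~\eqref{eq:561749f196e30f45}, whose proof indeed never uses unitarity), whereas the paper computes \( (\id \otimes \rho_{u} \otimes 1 \otimes 1)U = U^{cc}(\id \otimes \rho_{u} \otimes 1 \otimes 1) \) directly from that explicit formula.
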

\begin{proof}
  As we've seen in Proposition~\ref{prop:9c465524a5b2723f} and its proof, we
  have \( \overline{V}(r_{0}) \in \morph_{\mathbb{G}}(r_{0} \cdot u, u) \) for
  all \( r_{0} \in \Lambda_{0} \), thus \( V^{c} = \overline{V} \) is covariant
  with \( \overline{u} \). Since
  \begin{equation}
    \label{eq:21ed7a51560a7d99}
    \begin{split}
      \forall r_{0} \in \Lambda_{0}, \qquad (v^{c} \times V^{c})(r_{0}) &=
      j\bigl({[v(r_{0})]}^{-1} \otimes
      {[V(r_{0})]}^{-1}\bigr) \\
      &= j\bigl(\{[{(v \times V)}](r_{0})\}^{-1}\bigr)
    \end{split}
  \end{equation}
  \( v^{c} \times V^{c} \) is the contragredient of the unitary representation
  \( v \times V \) of \( \Lambda_{0} \), hence is a unitary representation
  itself. Thus \( v^{c} \) and \( V^{c} \) are unitary projective
  representations with opposing cocycles. This proves
  \ref{item:8749303f3fc0ece3}.

  To prove \ref{item:75a9391ad5acc60a}, by the characterizing property of
  \( \rho_{U} \), it suffices to show that the invertible positive operator
  \( \id \otimes \rho_{u} \) satisfies
  \begin{equation}
    \label{eq:50f9f3d0d00e78f4}
    \id \otimes \rho_{u} \in \morph_{\mathbb{G}}(U, U^{cc})
  \end{equation}
  and (by Proposition~\ref{prop:50b58e1fd8af4aa0} and Schur's lemma applied to
  the irreducible representation \( u \))
  \begin{equation}
    \label{eq:4d6438289d07877c}
    \begin{split}
      & \leadmathskip \tr(( \cdot ) (\id \otimes \rho_{u}))
      = \tr(( \cdot )(\id \otimes \rho_{u}^{-1})) \\
      & \in \selfmorph_{\mathbb{G} \rtimes \Lambda_{0}}(U) =
      \selfmorph_{\mathbb{G}}(\id \otimes u) \cap \selfmorph_{\Lambda_{0}}(v
      \times V) \subseteq \mathcal{B}(\mathscr{H}_{v}) \otimes \mathbb{C} \id.
    \end{split}
  \end{equation}
  Since \( \tr(\rho_{u}) = \tr(\rho_{u}^{-1}) \), \eqref{eq:4d6438289d07877c}
  holds. We now prove~\eqref{eq:50f9f3d0d00e78f4}.  As is seen in the proof of
  Proposition~\ref{prop:9c465524a5b2723f}, condition~\eqref{eq:630e384e8d91b821}
  holds, and a similar calculation by applying \( j \otimes \id \) to the
  inverse of both sides of \eqref{eq:630e384e8d91b821} yields (note that
  \( V^{cc} = V \)),
  \begin{equation}
    \label{eq:9d8984165d848a38}
    \forall r_{0} \in \Lambda_{0}, \;
    \bigl(V(r_{0}) \otimes 1\bigr)(r_{0} \cdot u^{cc}) =
    u^{cc} \bigl(V(r_{0}) \otimes 1\bigr).
  \end{equation}
  By definition, we have
  \begin{equation}
    \label{eq:3290c1f8095bc211}
    \begin{split}
      U &= {(\id \otimes u)}_{123}{(v \times V)}_{124}
      = (\id \otimes u \otimes 1) v_{14} V_{24}  \\
      &\in \mathcal{B}(\mathscr{H}_{v}) \otimes \mathcal{B}(\mathscr{H}_{u})
      \otimes \pol(\mathbb{G}) \otimes C(\Lambda_{0}).
    \end{split}
  \end{equation}
  Thus
  \begin{equation}
    \label{eq:e761183ab609bf71}
    U^{c} = (j \otimes j \otimes \id \otimes \id)(U^{-1}) %
    = (\id \otimes u^{c} \otimes 1)v^{c}_{14}V^{c}_{24},
  \end{equation}
  and
  \begin{equation}
    \label{eq:561749f196e30f45}
    U^{cc} = (\id \otimes u^{cc} \otimes 1)v^{cc}_{14}
    V^{cc}_{24} = (\id \otimes u^{cc}
    \otimes 1)v_{14}V_{24}.
  \end{equation}
  By~\eqref{eq:3290c1f8095bc211}, \eqref{eq:561749f196e30f45} and
  Proposition~\ref{prop:9c465524a5b2723f}, we have
  \begin{equation}
    \label{eq:8fe68e92d7edf090}
    \begin{split}
      &\leadmathskip (\id \otimes \rho_{u} \otimes 1 \otimes 1) U %
      = (\id \otimes \rho_{u} \otimes 1 \otimes 1)
      (\id \otimes u \otimes 1) v_{14} V_{24} \\
      &= (\id \otimes u^{cc} \otimes 1)v_{14}[(\id \otimes \rho_{u} \otimes 1
      \otimes 1)V_{24}] \\
      &= (\id \otimes u^{cc} \otimes 1)v_{14}V_{24}
      (\id \otimes \rho_{u} \otimes 1 \otimes 1) \\
      &= U^{cc}(\id \otimes \rho_{u} \otimes 1 \otimes 1).
    \end{split}
  \end{equation}
  This proves~\eqref{eq:50f9f3d0d00e78f4} and finishes the proof of
  \ref{item:75a9391ad5acc60a}.

  By Proposition~\ref{prop:1ecfd88218e081c9} and \ref{item:75a9391ad5acc60a},
  \( \overline{U} \) corresponds to the CSR
  \( (\overline{\mathscr{H}_{u}}, u', w') \) in
  \( \mathcal{CSR}_{\Lambda_{0}} \), where \( \mathscr{H}_{u} \) is the
  underlying finite dimensional Hilbert space of \( u \),
  \begin{equation}
    \label{eq:6d1f358c37d9cf35}
    u' = ({\id \otimes j(\rho)}^{1/2} \otimes 1)(\id \otimes u^{c})%
    (\id \otimes {j(\rho)}^{-1/2} \otimes 1) = \id \otimes \overline{u},
  \end{equation}
  and
  \begin{equation}
    \label{eq:dc7bec653268cb6d}
    \begin{split}
      w' &= (\id \otimes {j(\rho)}^{1/2} \otimes 1) (v^{c}_{13}V^{c}_{23}) %
      (\id \otimes {j(\rho)}^{-1/2} \otimes 1) \\
      &= v^{c}_{13} [(\id \otimes {j(\rho)}^{1/2} \otimes 1) V^{c}_{23} %
      (\id \otimes {j(\rho)}^{1/2} \otimes 1)] \\
      &= v^{c}_{13}V^{c}_{23} = v^{c} \times V^{c}.
    \end{split}
  \end{equation}
  Thus the CSR \( (\overline{\mathscr{H}_{u}}, u', w') \), and consequently
  \( \overline{U} \), is indeed parameterized by
  \( (\overline{u}, v^{c}, V^{c}) \), which proves \ref{item:ee918374fdbf59bb}.
\end{proof}

Proposition~\ref{prop:9c465524a5b2723f} motivates the following definition.

\begin{defi}
  \label{defi:c2787a2357939225}
  Let \( (u, V, v) \) be a representation parameter associated with some
  \( \Lambda_{0} \in \giso(\Lambda) \), the representation parameter
  \( (\overline{u}, V^{c}, v^{c}) \) is called the \textbf{conjugate} of
  \( (u, V, v) \).
\end{defi}

By Proposition~\ref{prop:01cb8f0781bcbbf7} and
Corollary~\ref{coro:d09f1fd81ad53d88}, it is clear that the conjugation of an
irreducible representation parameter is irreducible, and
\( \overline{[(u, V, v)]} = [(\overline{u}, V^{c}, v^{c})] \) gives a
well-defined mapping
\( \overline{( \cdot )} : \mathfrak{D} \rightarrow \mathfrak{D} \). The
following theorem describes how the conjugate representation of irreducible
(unitary) representation of \( \mathbb{G} \rtimes \Lambda \) looks like in terms
of the classification given in Theorem~\ref{theo:91ac3dad0bd1219d}.

\begin{theo}
  \label{theo:e6d75eb838fadd50}
  Let \( [(u, V, v)] \in \mathfrak{D} \),
  \( x = \Psi([(u, V, v)]) \in \irr(\mathbb{G} \rtimes \Lambda) \), then
  \begin{equation}
    \label{eq:96c4babd2d220a8b}
    \overline{x} = \Psi(\overline{[(u, V, v)]}) = \Psi([(\overline{u},
    V^{c}, v^{c})]).
  \end{equation}
\end{theo}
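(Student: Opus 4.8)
The plan is to reduce the theorem to the single statement that induction commutes with conjugation, and then to verify that statement at the level of characters. First I would set $U = \mathscr{R}_{\Lambda_{0}}(\mathcal{S}(u,V,v))$, so that $x = \Psi([(u,V,v)]) = \indrep(U)$ and $\overline{x} = \overline{\indrep(U)}$. By Proposition~\ref{prop:ef1711719362d940}\ref{item:ee918374fdbf59bb}, the representation of $\mathbb{G} \rtimes \Lambda_{0}$ parameterized by the conjugate parameter $(\overline{u}, V^{c}, v^{c})$ is precisely $\overline{U}$, whence $\Psi([(\overline{u}, V^{c}, v^{c})]) = \indrep(\overline{U})$; moreover $[(\overline{u}, V^{c}, v^{c})] \in \mathfrak{D}$ by the remark following Definition~\ref{defi:c2787a2357939225}. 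Thus the assertion $\overline{x} = \Psi([(\overline{u}, V^{c}, v^{c})])$ is equivalent to the single unitary equivalence $\overline{\indrep(U)} \simeq \indrep(\overline{U})$.

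Next I would record the elementary fact that $\chi(\overline{W}) = [\chi(W)]^{\ast}$ for every finite dimensional unitary representation $W$ of a compact quantum group. Indeed, writing $\overline{W} = (j(\rho_{W})^{1/2} \otimes 1) W^{c} (j(\rho_{W})^{-1/2} \otimes 1)$ and using the trace property to cancel the conjugating factors gives $\chi(\overline{W}) = \chi(W^{c})$; and $\chi(W^{c}) = [\chi(W)]^{\ast}$ because $W^{c} = (j \otimes \id)(W^{\ast})$ has, in the conjugate basis, matrix coefficients adjoint to those of $W$ (equivalently, $S(\chi_{W}) = \chi_{W}^{\ast}$ for the antipode $S$). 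This requires no Kac-type assumption, so the non-unimodular subtlety is already absorbed into the modular operator $\rho_{U} = \id \otimes \rho_{u}$ supplied by Proposition~\ref{prop:ef1711719362d940}\ref{item:75a9391ad5acc60a}.

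The core computation then proceeds through the character formula of Proposition~\ref{prop:56963e422c66f26b}. Let $\chi$ be the character of $U$, and for $r \in \Lambda$ put $\chi_{r} = \chi(r \cdot U) = (\alpha_{r^{-1}}^{\ast} \otimes \adj_{r^{-1}}^{\ast})(\chi)$ as in~\eqref{eq:1e1851295aeba376}; let $\overline{\chi}_{r} = \chi(r \cdot \overline{U})$ be the analogous characters attached to $\overline{U}$. Since $\alpha_{r^{-1}}^{\ast}$ and $\adj_{r^{-1}}^{\ast}$ are $\ast$-homomorphisms and $\chi(\overline{U}) = \chi^{\ast}$ by the previous paragraph, I obtain $\overline{\chi}_{r} = (\alpha_{r^{-1}}^{\ast} \otimes \adj_{r^{-1}}^{\ast})(\chi^{\ast}) = \chi_{r}^{\ast}$. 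Substituting into~\eqref{eq:0391706cde602b93} and using that each $\id_{A} \otimes E_{r\Lambda_{0}r^{-1}}$ is a $\ast$-morphism of $C^{\ast}$-algebras then yields $\chi(\indrep(\overline{U})) = \abs*{\Lambda_{0}}^{-1}\sum_{r}(\id \otimes E_{r\Lambda_{0}r^{-1}})(\chi_{r}^{\ast}) = \bigl[\abs*{\Lambda_{0}}^{-1}\sum_{r}(\id \otimes E_{r\Lambda_{0}r^{-1}})(\chi_{r})\bigr]^{\ast} = [\chi(\indrep(U))]^{\ast} = \chi(\overline{\indrep(U)})$. As finite dimensional unitary representations with equal characters are unitarily equivalent (cf.~\eqref{eq:a77b87f9f0cd1a15}), this gives $\indrep(\overline{U}) \simeq \overline{\indrep(U)}$, and hence the theorem.

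I expect the only genuinely delicate point to be keeping the three operations---conjugation, the twist by $\alpha_{r^{-1}}^{\ast} \otimes \adj_{r^{-1}}^{\ast}$, and the adjoint $(\cdot)^{\ast}$---mutually compatible throughout the character manipulation; once the identity $\chi(\overline{W}) = [\chi(W)]^{\ast}$ is established and Proposition~\ref{prop:ef1711719362d940} has identified $\overline{U}$ with the representation parameterized by $(\overline{u}, V^{c}, v^{c})$, the remaining steps are formal.
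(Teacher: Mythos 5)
Your proposal is correct and takes essentially the same approach as the paper: the paper's own proof is a one-liner that combines Proposition~\ref{prop:ef1711719362d940} (identifying \( \overline{U} \) as the representation parameterized by \( (\overline{u}, V^{c}, v^{c}) \)) with the character formula~\eqref{eq:0391706cde602b93}, which is precisely the reduction you carry out. The only difference is that you make explicit the character bookkeeping the paper leaves implicit, namely \( \chi(\overline{W}) = {[\chi(W)]}^{\ast} \) and the fact that \( \id \otimes E_{r\Lambda_{0}r^{-1}} \) and \( \alpha_{r^{-1}}^{\ast} \otimes \adj_{r^{-1}}^{\ast} \) are \( \ast \)-morphisms, so that conjugation commutes with induction at the level of characters.
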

\begin{proof}
  This follows immediately from Proposition~\ref{prop:ef1711719362d940} and the
  character formula \eqref{eq:0391706cde602b93} for representations induced from
  representations of principal subgroups of \( \mathbb{G} \rtimes \Lambda \).
\end{proof}

\section{The incidence numbers}
\label{sec:76ceda69b980b4d5}

We now turn our attention to the fusion rules of
\( \mathbb{G} \rtimes \Lambda \). We define and study incidence numbers in this
section, and use these numbers to express the fusion rules in
\S~\ref{sec:20a173060e7c2f82}.

\begin{defi}
  \label{defi:969734aca7f85d19}
  For \( i = 1, 2, 3 \), let \( \Lambda_{i} \in \giso(\Lambda) \). Suppose
  \( U_{i} \) is a unitary representation of
  \( \mathbb{G} \rtimes \Lambda_{i} \), and \( r_{i} \in \Lambda \), then the
  \textbf{incidence number} of \( (r_{1}, r_{2}, r_{3}) \) relative to
  \( (U_{1}, U_{2}, U_{3}) \), denoted by
  \( m_{U_{1}, U_{2}, U_{3}}(r_{1}, r_{2}, r_{3}) \), is defined by
  \begin{equation}
    \label{eq:ba5678d8c0bd5298}
    \begin{split}
      & \leadmathskip m_{U_{1}, U_{2}, U_{3}}(r_{1}, r_{2}, r_{3}) \\
      &= \dim \morph_{\mathbb{G} \rtimes \Lambda_{0}}\left((r_{1} \cdot
        U_{1})\vert_{\mathbb{G} \rtimes \Lambda_{0}}, (r_{2} \cdot
        U_{2})\vert_{\mathbb{G} \rtimes \Lambda_{0}} \times (r_{3} \cdot
        U_{3})\vert_{\mathbb{G} \rtimes \Lambda_{0}}\right),
    \end{split}
  \end{equation}
  where \( \Lambda_{0} = \cap_{i=1}^{3}r_{i} \Lambda_{i} r_{i}^{-1} \).
\end{defi}

We now aim to express the incidence numbers in terms of characters. Let
\( \Theta, \Xi \) be two subgroups of \( \Lambda \) with
\( \Theta \subseteq \Xi \). Recall that \( C(\mathbb{G}) = A \). Suppose
\( F = \sum_{r \in \Xi} a_{r} \otimes \delta_{r} \), \( a_{r} \in A \) is an
element of \( C(\mathbb{G}) \otimes C(\Xi) = A \otimes C(\Xi) \). We use
\( F \vert_{\mathbb{G} \rtimes \Theta} \) to denote the element
\( \sum_{r \in \Theta} a_{r} \otimes \delta_{r} \) in
\( \mathbb{G} \rtimes \Theta \), and call it the restriction of \( F \) to
\( \mathbb{G} \rtimes \Theta \). A simple calculation shows that this
restriction operation gives a surjective unital morphism of
{\( C^{\ast} \)}\nobreakdash-algebras from
\( C(\mathbb{G} \rtimes \Xi) = A \otimes C(\Xi) \) to
\( C(\mathbb{G} \rtimes \Theta) = A \otimes C(\Theta) \) that also preserves
comultiplication, thus allows us to view \( \mathbb{G} \rtimes \Theta \) as a
closed subgroup of \( \mathbb{G} \rtimes \Xi \) in the sense of
Definition~\ref{defi:2f7e12385791f036}. Recall that we also have the extension
morphism \( E_{\Lambda_{0}} \colon C(\Lambda_{0}) \rightarrow C(\Lambda) \),
\( \delta_{r_{0}} \mapsto \delta_{r_{0}} \) for every subgroup \( \Lambda_{0} \)
of \( \Lambda \), which simply sends each function in \( C(\Lambda_{0}) \) to
its unique extension in \( C(\Lambda) \) that vanishes outside
\( \Lambda_{0} \). Finally, we use \( h^{\Lambda_{0}} \) to denote the Haar
state on \( \mathbb{G} \rtimes \Lambda_{0} \). For \( i = 1, 2, 3 \), let
\( \Lambda_{i} \in \giso(\Lambda) \). Suppose \( U_{i} \) is a unitary
representation of \( \mathbb{G} \rtimes \Lambda_{i} \), \( \chi_{i} \) is the
character of \( U_{i} \). Let
\( \Lambda_{0} = \cap_{i=1}^{3}r_{i}\Lambda_{i}r_{i}^{-1} \). Then we have the
following formula to calculate the incidence numbers in terms of characters.
\begin{equation}
  \label{eq:6c5ca51e74d82015}
  \begin{split}
    \forall r_{1}, r_{2}, r_{3} \in \Lambda, \quad & \leadmathskip m_{U_{1},
      U_{2}, U_{3}}(r_{1}, r_{2}, r_{3}) \\
    &= h^{\Lambda_{0}}\left( \overline{(r_{1} \cdot \chi_{1})\vert_{\mathbb{G}
          \rtimes \Lambda_{0}}} (r_{2} \cdot \chi_{2})\vert_{\mathbb{G} \rtimes
        \Lambda_{0}}(r_{3} \cdot \chi_{3})\vert_{\mathbb{G} \rtimes
        \Lambda_{0}}\right).
  \end{split}
\end{equation}

\begin{prop}
  \label{prop:356dd34562dc20c4}
  Using the above notations, the incidence number
  \( m_{U_{1},U_{2},U_{3}}(s_{1},s_{2},s_{3}) \) depends only on the classes
  \( [U_{1}] \), \( [U_{2}] \), \( [U_{3}] \) of equivalent unitary
  representations and the left cosets \( r_{1}\Lambda_{1} \),
  \( r_{2}\Lambda_{2} \), \( r_{3}\Lambda_{3} \).
\end{prop}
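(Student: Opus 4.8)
The plan is to establish the two asserted invariances separately: invariance under replacing each \( U_i \) by an equivalent representation, and invariance under replacing each \( r_i \) by another representative of the same left coset \( r_i \Lambda_i \). The crucial preliminary observation is that the subgroup \( \Lambda_0 = \cap_{i=1}^{3} r_i \Lambda_i r_i^{-1} \) over which the intertwiner space is computed is itself unchanged under both operations: it plainly does not see the choice of representatives \( U_i \), and if \( r_i' = r_i s_i \) with \( s_i \in \Lambda_i \), then \( r_i' \Lambda_i r_i'^{-1} = r_i (s_i \Lambda_i s_i^{-1}) r_i^{-1} = r_i \Lambda_i r_i^{-1} \), so each of the three conjugates entering the intersection is untouched. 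Thus throughout the argument the ambient quantum group \( \mathbb{G} \rtimes \Lambda_0 \) and the restriction functor to it stay fixed, and it suffices to check that each of the three arguments of \( \morph_{\mathbb{G} \rtimes \Lambda_0}(\,\cdot\,, \,\cdot\, \times \,\cdot\,) \) is replaced by a unitarily equivalent representation.

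For the coset invariance I would invoke Lemma~\ref{lemm:96218436efb1ec44}\ref{item:38dcc9ada3065523}. Fixing \( i \) and writing \( r_i' = r_i s_i \) with \( s_i \in \Lambda_i \), we have \( r_i^{-1} r_i' \in \Lambda_i \), so the lemma (applied to the representation \( U_i \) of \( \mathbb{G} \rtimes \Lambda_i \) and the elements \( r_i, r_i' \)) gives that \( r_i \cdot U_i \) and \( r_i' \cdot U_i \) are unitarily equivalent representations of the one and the same quantum group \( \mathbb{G} \rtimes r_i \Lambda_i r_i^{-1} = \mathbb{G} \rtimes r_i' \Lambda_i r_i'^{-1} \). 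Since \( \Lambda_0 \subseteq r_i \Lambda_i r_i^{-1} \), restriction to the principal subgroup \( \mathbb{G} \rtimes \Lambda_0 \) (Corollary~\ref{coro:75db6f2d370caba0} and Definition~\ref{defi:3cf91dacf8289790}) is induced by a morphism of Hopf \( \ast \)-algebras and therefore carries unitarily equivalent representations to unitarily equivalent ones; hence \( (r_i \cdot U_i)\vert_{\mathbb{G} \rtimes \Lambda_0} \) and \( (r_i' \cdot U_i)\vert_{\mathbb{G} \rtimes \Lambda_0} \) are equivalent. Doing this for \( i = 1, 2, 3 \), and using that the tensor product of equivalent representations is equivalent to handle the second and third slots, leaves the dimension of the intertwiner space unchanged.

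The equivalence invariance in the \( U_i \) is even more direct: if \( U_i \simeq U_i' \), then \( r_i \cdot U_i \simeq r_i \cdot U_i' \) because \( r_i \cdot (\,\cdot\,) \) is implemented by the quantum-group isomorphism of Proposition~\ref{prop:e4e616da0a0995e6} (equivalently by the construction in Proposition~\ref{prop:56963e422c66f26b}), and again restriction and tensor products preserve equivalence, while \( \dim \morph \) is manifestly an invariant of the equivalence classes of its two arguments. I do not anticipate a genuine obstacle here; the only point demanding care is the bookkeeping of which quantum group each representation lives on, together with the verification that \( \Lambda_0 \) is stable under coset shifts, so that Lemma~\ref{lemm:96218436efb1ec44}\ref{item:38dcc9ada3065523} may legitimately be applied on the fixed group \( \mathbb{G} \rtimes r_i \Lambda_i r_i^{-1} \) before restricting down to \( \mathbb{G} \rtimes \Lambda_0 \).
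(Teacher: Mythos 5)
Your proposal is correct and follows essentially the same route as the paper: both proofs rest on the observation that \( \Lambda_{0} = \cap_{i=1}^{3} r_{i}\Lambda_{i}r_{i}^{-1} \) is unchanged by coset shifts together with Lemma~\ref{lemm:96218436efb1ec44}~\ref{item:38dcc9ada3065523}. The only (cosmetic) difference is that the paper packages the equivalence-invariance through the character formula~\eqref{eq:6c5ca51e74d82015}, whereas you verify it directly at the level of intertwiners, restriction to the principal subgroup, and tensor products.
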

\begin{proof}
  Note that for any \( i = 1, 2, 3 \),
  \( s_{i}\Lambda_{i}s_{i}^{-1} = r_{i}\Lambda_{i}r_{i}^{-1} \) whenever
  \( r_{i}^{-1}s_{i} \in \Lambda_{i} \). The proposition follows
  from~\eqref{eq:6c5ca51e74d82015} and Lemma~\ref{lemm:96218436efb1ec44}
  \ref{item:38dcc9ada3065523}.
\end{proof}

By Proposition~\ref{prop:356dd34562dc20c4}, we see immediately that the
following definition is well-defined.

\begin{defi}
  \label{defi:b0695ae26a3edb1d}
  For \( i = 1, 2, 3 \), let \( \Lambda_{i} \in \giso(\Lambda) \). Suppose
  \( x_{i} \) is a class of equivalent unitary representations of
  \( \mathbb{G} \rtimes \Lambda_{i} \), and
  \( z_{i} \in \Lambda / \Lambda_{i} \) is a left coset of \( \Lambda_{i} \) in
  \( \Lambda \), then the \textbf{incidence number} of
  \( (z_{1}, z_{2}, z_{3}) \) relative to \( (x_{1}, x_{2}, x_{3}) \), denoted
  by \( m_{x_{1}, x_{2}, x_{3}}(z_{1}, z_{2}, z_{3}) \), is defined by
  \begin{equation}
    \label{eq:e9b55f739bf5a902}
    m_{x_{1},x_{2},x_{3}}(z_{1},z_{2},z_{3}) %
    =
    m_{U_{1}, U_{2}, U_{3}}(r_{1}, r_{2}, r_{3}) %
  \end{equation}
  where \( U_{i} \in x_{i} \), \( r_{i} \in z_{i} \) for \( i = 1, 2, 3 \).
\end{defi}

The rest of this section is devoted to the calculation of the incidence number
\eqref{eq:e9b55f739bf5a902} in terms of more basic ingredients when
\( x_{i} = \Phi_{\Lambda_{i}}(\mathfrak{p}_{i}) \) for some
\( \mathfrak{p}_{i} \in \mathfrak{D}_{\Lambda_{i}} \) (see
\S~\ref{sec:f7e4fdcede696c78}), as this will be the case we need in the
calculation of fusion rules for \( \mathbb{G} \rtimes \Lambda \) in
\S~\ref{sec:20a173060e7c2f82}. We begin with a result on the structure of
unitary projective representations of some \( \Lambda_{0} \in \giso(\Lambda) \)
that are covariant with some unitary representation of \( \mathbb{G} \).

\begin{lemm}
  \label{lemm:627a277420a433bc}
  Fix a \( \Lambda_{0} \in \giso(\Lambda) \). Let \( u_{0} \) be an irreducible
  unitary representation of \( \mathbb{G} \), \( [u_{0}] \in \irr(\mathbb{G}) \)
  the class of \( u_{0} \), such that
  \( \Lambda_{0} \subseteq \Lambda_{[u_{0}]} \). Suppose \( u \) is a unitary
  representation of \( \mathbb{G} \),
  \( V \colon \Lambda_{0} \rightarrow \mathcal{U}(\mathscr{H}_{u}) \) is a
  unitary projective representation covariant with \( u \), \( p \) is the
  minimal central projection in \( \selfmorph_{\mathbb{G}}(u) \) corresponding
  to the maximal pure subrepresentation of \( u \) supported by
  \( [u_{0}] \in \irr(\mathbb{G}) \). Let \( q = 1 - p \), then \( V \) is
  diagonalizable along \( p \) in the sense that
  \begin{equation}
    \label{eq:9d4194791588813c}
    \begin{split}
      & (p \otimes 1) V = V (p \otimes 1), \quad%
      (q \otimes 1)V = V(q \otimes 1), \\
      & \text{and }\; (p \otimes 1) V (q \otimes 1) = (q \otimes 1) V (p \otimes
      1) = 0.
    \end{split}
  \end{equation}
\end{lemm}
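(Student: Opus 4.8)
The plan is to reduce everything to the single claim that $p$ commutes with $V(r_0)$ for every $r_0 \in \Lambda_0$; once this is known, all four identities in \eqref{eq:9d4194791588813c} follow at once from $q = 1-p$ and $p = p^2$. So fix $r_0 \in \Lambda_0$. By the covariance of $u$ and $V$ (condition \eqref{eq:ff84fdb6222c5b10}, cf.\ \eqref{eq:df18a33e72dd2039}), the unitary $V(r_0)$ lies in $\morph_{\mathbb{G}}(r_0 \cdot u, u)$, where $r_0 \cdot u = (\id \otimes \alpha^{\ast}_{r_0^{-1}})(u)$; being unitary, $V(r_0)$ is an \emph{isomorphism} of the two unitary representations $r_0 \cdot u$ and $u$ of $\mathbb{G}$.

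First I would record that $p\mathscr{H}_u$ and $q\mathscr{H}_u$ are subrepresentations of $r_0 \cdot u$ as well as of $u$. Indeed, $p \in \selfmorph_{\mathbb{G}}(u)$ says $(p \otimes 1)u = u(p \otimes 1)$; applying the $\ast$-automorphism $\id \otimes \alpha^{\ast}_{r_0^{-1}}$, which acts trivially on the $\mathcal{B}(\mathscr{H}_u)$-leg, gives $(p \otimes 1)(r_0 \cdot u) = (r_0 \cdot u)(p \otimes 1)$, so $p, q \in \selfmorph_{\mathbb{G}}(r_0 \cdot u)$ too. The key step is then to identify $p$ as the $[u_0]$-isotypic projection \emph{simultaneously} for $u$ and for $r_0 \cdot u$. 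For $u$ this is the hypothesis. For $r_0 \cdot u$ I use that restriction to the invariant subspaces commutes with the twist, i.e.\ $(r_0 \cdot u)\vert_{p\mathscr{H}_u} = r_0 \cdot (u\vert_{p\mathscr{H}_u})$ and likewise along $q$, together with the general identity $\supp(r_0 \cdot w) = r_0 \cdot \supp(w)$ for any representation $w$ of $\mathbb{G}$. Since $r_0 \in \Lambda_0 \subseteq \Lambda_{[u_0]}$ we have $r_0 \cdot [u_0] = [u_0]$, and as $r_0 \cdot (\,\cdot\,)$ is a bijection of $\irr(\mathbb{G})$, the class $[u_0]$ lies in $\supp\bigl((r_0 \cdot u)\vert_{p\mathscr{H}_u}\bigr)$ but not in $\supp\bigl((r_0 \cdot u)\vert_{q\mathscr{H}_u}\bigr)$. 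Hence $p\mathscr{H}_u$ is exactly the $[u_0]$-isotypic component of $r_0 \cdot u$.

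To finish, I would invoke the standard fact that an isomorphism of $\mathbb{G}$-representations conjugates isotypic projections to isotypic projections of the same class: if $T \colon \sigma \to \tau$ is an isomorphism, then $T\,p^{\sigma}_{x}\,T^{-1} = p^{\tau}_{x}$ for each $x \in \irr(\mathbb{G})$, because conjugation by $T$ carries $\selfmorph_{\mathbb{G}}(\sigma)$ onto $\selfmorph_{\mathbb{G}}(\tau)$ preserving minimal central projections. Applied to $T = V(r_0)$, $\sigma = r_0 \cdot u$, $\tau = u$ and $x = [u_0]$ — where the $[u_0]$-isotypic projection is $p$ on both sides by the previous paragraph — this yields $V(r_0) p = p V(r_0)$. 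Alternatively, and more hands-on, one checks by the same intertwining manipulation that $p V(r_0) q \in \morph_{\mathbb{G}}\bigl((r_0 \cdot u)\vert_{q\mathscr{H}_u},\, u\vert_{p\mathscr{H}_u}\bigr)$, which vanishes since these two subrepresentations have disjoint support, and symmetrically $q V(r_0) p = 0$.

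I expect the genuinely delicate point to be the middle step: verifying that the isotypic projection $p$ for $u$ is still the $[u_0]$-isotypic projection \emph{after} twisting by $r_0$. This is precisely where the hypothesis $\Lambda_0 \subseteq \Lambda_{[u_0]}$ is used, via the fixed-point relation $r_0 \cdot [u_0] = [u_0]$ and the bijectivity of the $\Lambda$-action on $\irr(\mathbb{G})$; everything else is routine semisimple representation theory of $\mathbb{G}$.
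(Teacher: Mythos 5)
Your proposal is correct and in substance coincides with the paper's proof: the paper likewise observes that \( p, q \in \selfmorph_{\mathbb{G}}(r_{0} \cdot u) \), shows by the covariance relation that \( p V(r_{0}) q \) (resp.\ \( q V(r_{0}) p \)) intertwines \( r_{0} \cdot u_{q} \) and \( u_{p} \) (resp.\ the other pair), and concludes these vanish because the supports are disjoint, which is exactly where \( r_{0} \cdot [u_{0}] = [u_{0}] \) enters — this is precisely your ``hands-on'' alternative. Your primary packaging, via the fact that the unitary isomorphism \( V(r_{0}) \colon r_{0} \cdot u \to u \) conjugates the \( [u_{0}] \)-isotypic projection of the source to that of the target (both being \( p \)), is only a mild reformulation of the same argument.
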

\begin{proof}
  Since \( V \) and \( u \) are covariant, we have
  \begin{equation}
    \label{eq:3c3719019e34e564}
    \forall r_{0} \in \Lambda_{0}, \;
    \bigl(V(r_{0}) \otimes 1\bigr) (r_{0} \cdot u) = u
    \bigl(V(r_{0}) \otimes 1\bigr).
  \end{equation}
  Note that
  \( p \in \selfmorph_{\mathbb{G}}(u) = \selfmorph_{\mathbb{G}}(r_{0} \cdot u)
  \) (see \eqref{eq:f5ecd1bd13d50e74}), then for every
  \( r_{0} \in \Lambda_{0} \), it follows that
  \begin{equation}
    \label{eq:17b79106eef14e43}
    \begin{split}
      & \leadmathskip \bigl([p V(r_{0}) q] \otimes 1\bigr) [(q \otimes 1)(r_{0}
      \cdot u)] %
      = (p \otimes 1) \bigl(V(r_{0})
      \otimes 1\bigr) (q \otimes 1) (r_{0} \cdot u) \\
      &= (p \otimes 1) \bigl(V(r_{0}) \otimes 1\bigr)(r_{0} \cdot u)(q \otimes
      1) %
      = (p \otimes 1) u \bigl(V(r_{0}) \otimes 1\bigr) (q \otimes 1) \\
      &= [(p \otimes 1)u] (p \otimes 1) \bigl(V(r_{0}) \otimes 1\bigr) (q
      \otimes 1) %
      = [(p \otimes 1)u] \bigl([p V(r_{0}) q] \otimes 1\bigr).
    \end{split}
  \end{equation}
  Let \( u_{p} \) (resp.\ \( u_{q} \)) be the subrepresentation of \( u \)
  corresponding to \( p \) (resp.\ \( q \)), then \( r_{0}^{-1} \cdot u_{p} \)
  is equivalent to \( u_{p} \) for all \( r_{0} \in \Lambda_{0} \) since
  \( \Lambda_{0} \subseteq \Lambda_{[u]} \), and
  \begin{equation}
    \label{eq:dfb7241d3fcc6167}
    \morph_{\mathbb{G}}(r_{0} \cdot u_{q}, u_{p}) %
    = \morph_{\mathbb{G}}(u_{q}, r_{0}^{-1} \cdot u_{p}) %
    = \morph_{\mathbb{G}}(u_{q}, u_{p}) = 0.
  \end{equation}
  By \eqref{eq:17b79106eef14e43}, the operator \( p V(r_{0}) q \), when viewed
  as an operator from \( p(\mathscr{H}_{u}) \) to \( q(\mathscr{H}_{u}) \),
  intertwines \( r_{0} \cdot u_{q} \) and \( u_{p} \). Thus by
  \eqref{eq:dfb7241d3fcc6167},
  \begin{equation}
    \label{eq:443e1050525d2dbf}
    \forall r_{0} \in \Lambda_{0}, \quad p V(r_{0}) q = 0.
  \end{equation}
  Similarly,
  \begin{equation}
    \label{eq:54afba55baafd4c2}
    \forall r_{0} \in \Lambda_{0}, \quad q V(r_{0}) p = 0.
  \end{equation}
  Hence
  \begin{equation}
    \label{eq:95370d0136ae21af}
    pV(r_{0}) = pV(r_{0})(p + q) = pV(r_{0})p = (p + q)V(r_{0})p = V(r_{0})p,
  \end{equation}
  and similarly,
  \begin{equation}
    \label{eq:ec5c39b6909ec8b2}
    qV(r_{0}) = qV(r_{0})(p + q) = qV(r_{0})q = (p + q) V(r_{0}) q = V(r_{0})q.
  \end{equation}
  Now \eqref{eq:9d4194791588813c} follows from equations
  \eqref{eq:443e1050525d2dbf}, \eqref{eq:54afba55baafd4c2},
  \eqref{eq:95370d0136ae21af}, and \eqref{eq:ec5c39b6909ec8b2}.
\end{proof}

We also need to generalize the notion of representation parameter a little, as
the natural candidate of the ``tensor product'' of two representation parameters
need not be a representation parameter, but it still possesses the same
covariant property.

\begin{defi}
  \label{defi:45b581db456c6b41}
  Let \( \Lambda_{0} \in \giso(\Lambda) \), we call a triple \( (u, V, v) \) a
  \textbf{generalized representation parameter} (\textbf{(GRP)} for short)
  associated with \( \Lambda_{0} \), if the following hold:
  \begin{enumerate}
  \item \label{item:06768f2a5ad600fe} \( V \) is a unitary projective
    representation of \( \Lambda_{0} \) on \( \mathscr{H}_{u} \), such that
    \begin{equation}
      \label{eq:f6e1f3308142f678}
      \forall r_{0} \in \Lambda_{0}, \quad V(r_{0}) %
      \in \morph_{\mathbb{G}}(r_{0} \cdot u, u);
    \end{equation}
  \item \label{item:191e6543be37041e} \( v \) is a unitary projective
    representation (on some other finite dimensional Hilbert space
    \( \mathscr{H}_{v} \)) of \( \Lambda_{0} \), such that the cocycles of
    \( v \) and \( V \) are opposite to each other.
  \end{enumerate}
\end{defi}

\begin{prop}
  \label{prop:c465e919e5fa2023}
  If \( (u, V, v) \) is a GRP associated with some
  \( \Lambda_{0} \in \giso(\Lambda) \), then
  \( (\mathscr{H}_{v} \otimes \mathscr{H}_{u}, \id \otimes u, v \times V) \in
  \mathcal{CSR}_{\Lambda_{0}} \).
\end{prop}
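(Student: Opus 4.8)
The plan is to verify directly the three defining conditions of a covariant system of representations subordinate to $\Lambda_{0}$ (Definition~\ref{defi:f58794ba929ea9e8}) for the triple $(\mathscr{H}_{v} \otimes \mathscr{H}_{u}, \id \otimes u, v \times V)$. The underlying space $\mathscr{H}_{v} \otimes \mathscr{H}_{u}$ is finite dimensional, so the first condition is automatic. The content of the proposition is that the remaining two conditions hold, and the argument is a verbatim copy of the proof of Proposition~\ref{prop:1911ec535b058627}, the only difference being that here $u$ is \emph{not} assumed irreducible; since that proof never used the irreducibility of $u$, nothing is lost.

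Next I would check that $\id_{\mathscr{H}_{v}} \otimes u$ is a unitary representation of $\mathbb{G}$ on $\mathscr{H}_{v} \otimes \mathscr{H}_{u}$: this is immediate, as $u$ is one and tensoring on the left by the trivial representation $\epsilon_{\mathscr{H}_{v}}$ preserves both unitarity and the representation identity. The key point, and the place where condition~\ref{item:191e6543be37041e} of Definition~\ref{defi:45b581db456c6b41} is used, is that $v \times V$ is an honest (non-projective) unitary representation of $\Lambda_{0}$. Writing $\omega$ for the cocycle of $V$, so that $v$ has cocycle $\overline{\omega}$, one computes for $r_{0}, s_{0} \in \Lambda_{0}$ that
\[
  (v \times V)(r_{0})\,(v \times V)(s_{0})
  = \bigl(v(r_{0})v(s_{0})\bigr) \otimes \bigl(V(r_{0})V(s_{0})\bigr)
  = \overline{\omega}(r_{0},s_{0})\,\omega(r_{0},s_{0})\,(v \times V)(r_{0}s_{0})
  = (v \times V)(r_{0}s_{0}),
\]
the two Schur multipliers cancelling; together with $(v \times V)(e) = \id$ (as $v(e) = \id$ and $V(e) = \id$) this shows $v \times V$ is a genuine unitary representation of $\Lambda_{0}$.

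Finally I would verify the covariance of $\id \otimes u$ and $v \times V$, i.e.\ condition~\eqref{eq:1611871ef2500d9a}. By hypothesis~\eqref{eq:f6e1f3308142f678}, for each $r_{0} \in \Lambda_{0}$ we have $V(r_{0}) \in \morph_{\mathbb{G}}(r_{0} \cdot u, u)$, which spelled out reads
\[
  \bigl(V(r_{0}) \otimes 1\bigr) u
  = \bigl[(\id \otimes \alpha^{\ast}_{r_{0}})(u)\bigr]\bigl(V(r_{0}) \otimes 1\bigr)
\]
in $\mathcal{B}(\mathscr{H}_{u}) \otimes \pol(\mathbb{G})$. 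Tensoring $v(r_{0})$ onto the $\mathscr{H}_{v}$-leg on both sides, and using that this leg carries the trivial $\mathbb{G}$-action (so that $\id \otimes \alpha^{\ast}_{r_{0}}$ leaves it untouched), yields exactly
\[
  \bigl((v \times V)(r_{0}) \otimes 1\bigr)(\id \otimes u)
  = \bigl[(\id \otimes \alpha^{\ast}_{r_{0}})(\id \otimes u)\bigr]\bigl((v \times V)(r_{0}) \otimes 1\bigr),
\]
which is the covariance condition~\eqref{eq:1611871ef2500d9a} for the pair $(\id \otimes u, v \times V)$. Hence all three conditions of Definition~\ref{defi:f58794ba929ea9e8} hold and the triple lies in $\mathcal{CSR}_{\Lambda_{0}}$.

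There is no serious obstacle here: the argument is entirely formal. The only step requiring genuine care is the cancellation of the two opposing Schur multipliers, which is precisely what upgrades the \emph{a priori} projective $v \times V$ to an honest representation; and the only subtlety relative to Proposition~\ref{prop:1911ec535b058627} is the observation that its proof is insensitive to whether $u$ is irreducible, so that the same computation applies to an arbitrary $u$ appearing in a generalized representation parameter.
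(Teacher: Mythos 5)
Your proposal is correct and follows essentially the same route as the paper: the paper's proof simply notes that the argument of Proposition~\ref{prop:1911ec535b058627} applies almost verbatim (irreducibility of \( u \) being unused), which is exactly the covariance computation obtained by tensoring \( v(r_{0}) \) on the left of the intertwining relation \( \bigl(V(r_{0}) \otimes 1\bigr) u = \bigl[(\id \otimes \alpha^{\ast}_{r_{0}})(u)\bigr]\bigl(V(r_{0}) \otimes 1\bigr) \). Your additional explicit check that the opposing cocycles cancel, so that \( v \times V \) is a genuine unitary representation, is a detail the paper leaves implicit but is entirely consistent with its argument.
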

\begin{proof}
  The proof of Proposition~\ref{prop:1911ec535b058627} applies almost verbatim
  here.
\end{proof}

\begin{defi}
  \label{defi:d8aeb561dd005934}
  If \( (u, V, v) \) is a GRP associated with
  \( \Lambda_{0} \in \giso(\Lambda) \), then the CSR
  \( \mathbf{S} := (\mathscr{H}_{v} \otimes \mathscr{H}_{u}, \id \otimes u, v
  \times V) \) associated with \( \Lambda_{0} \) and the unitary representation
  \( \mathscr{R}_{\Lambda_{0}}(\mathbf{S}) \) of
  \( \mathbb{G} \rtimes \Lambda_{0} \) are said to be parameterized by
  \( (u, V, v) \).
\end{defi}

We now describe a reduction process for generalized representation parameters,
which leads to our desired calculation of the incidence numbers using more basic
ingredients---the dimension of a certain intertwiner space of two projective
representations of some generalized isotropy subgroup of \( \Lambda \).

\begin{prop}
  \label{prop:5a82a06194fb75ea}
  Fix a \( \Lambda_{0} \in \giso(\Lambda) \). Let \( (u, V, v) \) be a GRP
  associated with \( \Lambda_{0} \), \( x \in \irr(\mathbb{G}) \) such that
  \( \Lambda_{0} \subseteq \Lambda_{x} \), and \( u_{0} \in x \). Suppose
  \( p \) is the minimal central projection of \( \selfmorph_{\mathbb{G}}(u) \)
  corresponding to the maximal pure subrepresentation of \( u \) supported by
  \( x \). The following holds:
  \begin{enumerate}
  \item \label{item:ff73a94b08e8bbb8} \( (u_{p}, V_{p}, v) \) is a GRP, where
    \( u_{p} \) (resp.\ \( V_{p} \)) is the subrepresentation of \( u \) (resp.\
    \( V \)) on \( p(\mathscr{H}_{u}) \);
  \item \label{item:61b4524c2f8e930a} let \( n \in \mathbb{N} \) be the
    multiplicity of \( x \) in \( u \), \( V_{0} \) a covariant projective
    \( \Lambda_{0} \)-representation of \( u_{0} \), then \emph{up to unitary
      equivalence}, there exists a unique unitary projective representation
    \( V_{1} \) of \( \Lambda_{0} \) on \( \mathbb{C}^{n} \), such that
    \( V_{p} \) is unitarily equivalent to \( V_{1} \times V_{0} \);
  \item \label{item:9fe46e8db326eca7} \( (u_{0}, V_{0}, v \times V_{1}) \) is
    representation parameter, and the CSR
    \( (\mathscr{H}_{v} \otimes p(\mathscr{H}_{u}),\id \otimes u_{p}, v \times
    V_{p}) \) parameterized by \( (u_{p}, V_{p}, v) \) is isomorphic to the CSR
    \( \mathcal{S}(u_{0}, V_{0}, v \times V_{1}) \) parameterized by
    \( (u_{0}, V_{0}, v \times V_{1}) \) in the category
    \( \mathcal{CSR}_{\Lambda_{0}} \). In particular, the representation
    parameter \( (u_{0}, V_{0}, v \times V_{1}) \) and the GRP
    \( (u_{p}, V_{p}, v \times V_{p}) \) parameterize equivalent unitary
    representations of \( \mathbb{G} \rtimes \Lambda_{0} \).
  \end{enumerate}
\end{prop}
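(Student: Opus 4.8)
The plan is to establish \ref{item:ff73a94b08e8bbb8}, \ref{item:61b4524c2f8e930a}, \ref{item:9fe46e8db326eca7} in order, the substance lying in \ref{item:61b4524c2f8e930a}, which is a projective re-run of Proposition~\ref{prop:f8d2ac97bef564cc}. For \ref{item:ff73a94b08e8bbb8} the decisive input is Lemma~\ref{lemm:627a277420a433bc}: since $\Lambda_{0} \subseteq \Lambda_{x} = \Lambda_{[u_{0}]}$ and $(u, V, v)$ is a GRP, $V$ is diagonalizable along $p$, so each $V(r_{0})$ commutes with $p \otimes 1$ and restricts to a unitary $V_{p}(r_{0})$ on $p(\mathscr{H}_{u})$. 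I would first compress the projective relation $V(r_{0})V(s_{0}) = \omega_{V}(r_{0}, s_{0}) V(r_{0}s_{0})$ by $p$ to see that $V_{p}$ is a unitary projective representation of $\Lambda_{0}$ with the \emph{same} cocycle $\omega_{V}$ and $V_{p}(e) = \id$. Then I would compress the covariance identity $(V(r_{0}) \otimes 1)(r_{0} \cdot u) = u (V(r_{0}) \otimes 1)$ by $p \otimes 1$, using $p \in \selfmorph_{\mathbb{G}}(u) = \selfmorph_{\mathbb{G}}(r_{0} \cdot u)$ (cf.~\eqref{eq:f5ecd1bd13d50e74}) and the commutation just recorded, to obtain $V_{p}(r_{0}) \in \morph_{\mathbb{G}}(r_{0} \cdot u_{p}, u_{p})$. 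Since $V_{p}$ keeps the cocycle of $V$ and $v$ opposes $V$, the triple $(u_{p}, V_{p}, v)$ meets both clauses of Definition~\ref{defi:45b581db456c6b41}.

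For \ref{item:61b4524c2f8e930a}, write $\mathscr{H}_{0}$ for the space of $u_{0}$ and fix a unitary $U \colon p(\mathscr{H}_{u}) \to \mathbb{C}^{n} \otimes \mathscr{H}_{0}$ realizing $u_{p} \simeq \epsilon_{n} \times u_{0} = {(u_{0})}_{23}$, which exists as $u_{p}$ is pure with support $x$ and multiplicity $n$. Transporting $V_{p}$ by $U$ and expanding $U V_{p}(r_{0}) U^{\ast} = \sum_{i} A_{i} \otimes B_{i}$ with the $A_{i} \in \mathcal{B}(\mathbb{C}^{n})$ linearly independent, the covariance of $V_{p}$ with $u_{p}$ forces, by applying to both sides linear functionals dual to the $A_{i}$ exactly as in the proof of Proposition~\ref{prop:f8d2ac97bef564cc}, each $B_{i} \in \morph_{\mathbb{G}}(r_{0} \cdot u_{0}, u_{0}) = \mathbb{C} V_{0}(r_{0})$; hence $U V_{p}(r_{0}) U^{\ast} = V_{1}(r_{0}) \otimes V_{0}(r_{0})$ for a unique $V_{1}(r_{0}) \in \mathcal{B}(\mathbb{C}^{n})$. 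Unitarity of $V_{p}(r_{0})$ and $V_{0}(r_{0})$ makes $V_{1}(r_{0})$ unitary with $V_{1}(e) = \id$, and a one-line cocycle computation gives that $V_{1}$ is a projective representation with cocycle $\omega_{V}\overline{\omega_{V_{0}}}$. For uniqueness up to unitary equivalence I would use Schur's lemma in the form $\selfmorph_{\mathbb{G}}(\epsilon_{n} \times u_{0}) = \mathcal{B}(\mathbb{C}^{n}) \otimes \id$ (cf.~\eqref{eq:d512bb297d144b3d}): two admissible choices of $U$ differ by some $Z \otimes \id$ with $Z$ unitary, which replaces $V_{1}$ by $Z V_{1} Z^{\ast}$.

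For \ref{item:9fe46e8db326eca7}, I would first check that $(u_{0}, V_{0}, v \times V_{1})$ is a representation parameter: $u_{0}$ is irreducible, $V_{0}$ is covariant with $u_{0}$ by hypothesis, and since $v$ opposes $V$ (so $\omega_{v} = \overline{\omega_{V}}$) the cocycle of $v \times V_{1}$ is $\overline{\omega_{V}} \cdot \omega_{V} \overline{\omega_{V_{0}}} = \overline{\omega_{V_{0}}}$, opposite to that of $V_{0}$. It then remains to identify the two CSRs. Unwinding Definition~\ref{defi:a5316bfdaeb22cfa}, $\mathcal{S}(u_{0}, V_{0}, v \times V_{1})$ equals $(\mathscr{H}_{v} \otimes \mathbb{C}^{n} \otimes \mathscr{H}_{0},\, \id_{\mathscr{H}_{v} \otimes \mathbb{C}^{n}} \otimes u_{0},\, (v \times V_{1}) \times V_{0})$, whereas the CSR parameterized by $(u_{p}, V_{p}, v)$ is carried by $\id_{\mathscr{H}_{v}} \otimes U$ onto $(\mathscr{H}_{v} \otimes \mathbb{C}^{n} \otimes \mathscr{H}_{0},\, \id_{\mathscr{H}_{v}} \otimes (\epsilon_{n} \times u_{0}),\, v \times (V_{1} \times V_{0}))$. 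Associativity of $\times$ makes the two triples coincide, so $\id_{\mathscr{H}_{v}} \otimes U$ is an isomorphism in $\mathcal{CSR}_{\Lambda_{0}}$ (it intertwines the $\mathbb{G}$-part and the $\Lambda_{0}$-part simultaneously, as Proposition~\ref{prop:50b58e1fd8af4aa0} demands); applying $\mathscr{R}_{\Lambda_{0}}$ yields the final ``in particular'' clause.

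I expect the genuine obstacle to be the cocycle bookkeeping running through \ref{item:61b4524c2f8e930a} and \ref{item:9fe46e8db326eca7}---tracking which cocycle each of $V$, $V_{0}$, $V_{1}$, $v$ carries and pinning down the exact sense in which $V_{1}$ is unique---rather than any single estimate; the tensor-decomposition step itself is a faithful transcription of the argument already given for Proposition~\ref{prop:f8d2ac97bef564cc}, and the final matching of CSRs is purely formal once associativity of $\times$ is invoked.
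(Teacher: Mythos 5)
Your proposal is correct and follows essentially the same route as the paper's own proof: part~\ref{item:ff73a94b08e8bbb8} via Lemma~\ref{lemm:627a277420a433bc}, part~\ref{item:61b4524c2f8e930a} by re-running the tensor-decomposition argument of Proposition~\ref{prop:f8d2ac97bef564cc} with the projective representation \( V_{p} \) in place of the unitary representation \( w \), and part~\ref{item:9fe46e8db326eca7} by the purely formal identification \( v \times V_{p} = v \times V_{1} \times V_{0} \). The only difference is that you spell out details the paper leaves implicit (the compression of the cocycle relation by \( p \), the explicit cocycle bookkeeping \( \omega_{V_{1}} = \omega_{V}\overline{\omega_{V_{0}}} \), and the fact that changing the intertwiner \( U \) conjugates \( V_{1} \) by a unitary), which is a faithful elaboration rather than a different argument.
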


\begin{proof}
  By Lemma~\ref{lemm:627a277420a433bc}, \( u_{p} \) and \( V_{p} \) are
  covariant. Since \( V_{p} \) is a subrepresentation of \( V \), it has the
  same cocycle as \( V \), hence \( V_{p} \) and \( v \) have opposing
  cocycles. This proves \ref{item:ff73a94b08e8bbb8}.

  The proof of \ref{item:61b4524c2f8e930a} parallels that of
  Proposition~\ref{prop:f8d2ac97bef564cc}. Since \( u_{p} \) is equivalent to a
  direct sum of \( n \) copies of \( u_{0} \), thus there exists a unitary
  operator
  \( U \in \morph_{\mathbb{G}}(\id_{\mathbb{C}^{n}} \otimes u_{0}, u_{p})
  \). Replace \( (u_{p}, V_{p}, v) \) with
  \( (U^{\ast}u_{p}U, U^{\ast}V_{p}U, v) \) if necessary, we may assume
  \( u_{p} = \mathbb{C}^{n} \otimes u_{0} \). Repeat the proof of
  Proposition~\ref{prop:f8d2ac97bef564cc} with the small modification of
  replacing the unitary representation \( w \) there with the unitary projective
  representation \( V_{p} \), we see that there exists a unique unitary
  projective representation
  \( V_{1} \colon \Lambda_{0} \rightarrow \mathcal{U}(\mathbb{C}^{n}) \), such
  that \( V_{p} = V_{1} \times V_{0} \). This proves
  \ref{item:61b4524c2f8e930a}.

  By \ref{item:61b4524c2f8e930a} and its proof, we may suppose
  \( u_{p} = \id_{\mathbb{C}^{n}} \otimes u_{0} \). Note that the CSR
  parameterized by \( (u_{p}, V_{p}, v) \) is exactly
  \( (\id_{\mathscr{H}_{v}} \otimes \id_{\mathbb{C}^{n}} \otimes u_{0}, v \times
  V_{p}) \), which coincides exactly with the CSR parameterized by
  \( (\id_{\mathbb{C}^{n} \otimes \mathscr{H}_{v}} \otimes u_{0}, V_{0}, v
  \times V_{1}) \) since \( v \times V_{p} = v \times V_{1} \times V_{0}
  \). This proves \ref{item:9fe46e8db326eca7}.
\end{proof}

\begin{defi}
  \label{defi:36e39769ee182b6c}
  Using the notation of Proposition~\ref{prop:5a82a06194fb75ea}, the
  representation parameter \( (u_{0}, V_{0}, v \times V_{1}) \) is called a
  \textbf{reduction} of the GRP \( (u, V, v) \) along \( (u_{0}, V_{0}) \).
\end{defi}
\begin{rema}
  \label{rema:a02167e7e8d94e72}
  Since \( V_{1} \) is determined up to unitary equivalence, so is the reduction
  \( (u_{0}, V_{0}, v \times V_{1}) \).
\end{rema}

The following result describes the incidence numbers
\( m_{[U_{1}], [U_{2}], [{U_{3}}]}(z_{1}, z_{2}, z_{3}) \) in terms of the
dimension of the intertwiner space of some projective representations of
\( \Lambda_{0} \).

\begin{prop}
  \label{prop:72841057708ae472}
  Suppose we are given the following data for each \( i = 1, 2, 3 \):
  \begin{itemize}
  \item a \( \Lambda_{i} \in \giso(\Lambda) \), a left coset \( z_{i} \) in
    \( \Lambda / \Lambda_{i} \) and a \( r_{i} \in z_{i} \);
  \item a representation parameter \( (u_{i}, V_{i}, v_{i}) \) associated with
    \( \Lambda_{i} \);
  \item the unitary representation \( U_{i} \) of
    \( \mathbb{G} \rtimes \Lambda_{i} \) parameterized by
    \( (u_{i}, V_{i}, v_{i}) \).
  \end{itemize}
  Let
  \( \Lambda_{0} = \cap_{i=1}^{3}r_{i}\Lambda_{i}r_{i}^{-1} =
  \cap_{i=1}^{3}z_{i}\Lambda_{i}z_{i}^{-1} \). Suppose
  \begin{displaymath}
    \bigl(r_{1} \cdot u_{1}, (r_{1} \cdot V_{1}) \vert_{\Lambda_{0}}, (r_{2}
    \cdot v_{2})\vert_{\Lambda_{0}}
    \times (r_{3} \cdot v_{3}) \vert_{\Lambda_{0}}
    \times V\bigr)
  \end{displaymath}
  is the reduction of the GRP
  \begin{displaymath}
    \bigl((r_{2} \cdot u_{2}) \times (r_{3} \cdot u_{3}), (r_{2} \cdot
    V_{2})\vert_{\Lambda_{0}} \times (r_{3} \cdot V_{3})\vert_{\Lambda_{0}},
    (r_{2} \cdot v_{2})\vert_{\Lambda_{0}} \times (r_{3} \cdot v_{3})
    \vert_{\Lambda_{0}}\bigr)
  \end{displaymath}
  along
  \( \bigl(r_{1} \cdot u_{1}, (r_{1} \cdot V_{1})\vert_{\Lambda_{0}}\bigr)
  \). Then the unitary projective representations
  \( (r_{1} \cdot v_{1})\vert_{\Lambda_{0}} \) and
  \begin{displaymath}
    (r_{2} \cdot v_{2})\vert_{\Lambda_{0}} \times (r_{3} \cdot
    v_{3})\vert_{\Lambda_{0}} \times V
  \end{displaymath}
  of \( \Lambda_{0} \) have the same cocycle, and
  \begin{equation}
    \label{eq:3d42fbf9ee798922}
    \begin{split}
      & \leadmathskip m_{[U_{1}], [U_{2}],[U_{3}]}(z_{1}, z_{2}, z_{3}) \\
      &= \dim \morph_{\Lambda_{0}}\bigl((r_{1} \cdot
      v_{1})\vert_{\Lambda_{0}}, %
      (r_{2} \cdot v_{2})\vert_{\Lambda_{0}} \times (r_{3} \cdot
      v_{3})\vert_{\Lambda_{0}} \times V\bigr).
    \end{split}
  \end{equation}
\end{prop}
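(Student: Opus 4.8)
The plan is to reduce the incidence number, which is by definition the dimension of a certain intertwiner space over \( \mathbb{G} \rtimes \Lambda_{0} \), to the dimension of an intertwiner space of projective representations of \( \Lambda_{0} \), by exploiting the reduction process of Proposition~\ref{prop:5a82a06194fb75ea} together with the transparent behaviour of intertwiners in \( \mathcal{CSR}_{\Lambda_{0}} \) established in Proposition~\ref{prop:50b58e1fd8af4aa0} and Proposition~\ref{prop:7b2a964908c81d83}. First I would unwind the definition: by Definition~\ref{defi:969734aca7f85d19},
\begin{displaymath}
  m_{[U_{1}],[U_{2}],[U_{3}]}(z_{1},z_{2},z_{3})
  = \dim \morph_{\mathbb{G} \rtimes \Lambda_{0}}\bigl((r_{1} \cdot U_{1})\vert_{\mathbb{G} \rtimes \Lambda_{0}}, (r_{2} \cdot U_{2})\vert_{\mathbb{G} \rtimes \Lambda_{0}} \times (r_{3} \cdot U_{3})\vert_{\mathbb{G} \rtimes \Lambda_{0}}\bigr).
\end{displaymath}
The restriction to \( \mathbb{G} \rtimes \Lambda_{0} \) of \( r_{i} \cdot U_{i} \) is, via the functors \( \mathscr{S}_{\Lambda_{0}}, \mathscr{R}_{\Lambda_{0}} \) and the discussion in \S~\ref{sec:9a5ddf920bd49589}, parameterized by the restricted representation parameter \( \bigl(r_{i} \cdot u_{i}, (r_{i} \cdot V_{i})\vert_{\Lambda_{0}}, (r_{i} \cdot v_{i})\vert_{\Lambda_{0}}\bigr) \); and by Proposition~\ref{prop:ce7ae0d12013743b} the tensor product on the right is parameterized by the GRP obtained by tensoring the two parameters for \( i = 2, 3 \), which is exactly the GRP appearing in the hypothesis.

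Next I would apply the reduction of Proposition~\ref{prop:5a82a06194fb75ea}\ref{item:9fe46e8db326eca7} to that tensor-product GRP along \( \bigl(r_{1} \cdot u_{1}, (r_{1} \cdot V_{1})\vert_{\Lambda_{0}}\bigr) \). The crucial point is that the left-hand representation \( (r_{1} \cdot U_{1})\vert_{\mathbb{G} \rtimes \Lambda_{0}} \) is \emph{pure}, supported on the single class \( r_{1} \cdot [u_{1}] \in \irr(\mathbb{G}) \); consequently any intertwiner into the right-hand representation must factor through the maximal pure subrepresentation of the right-hand \( \mathbb{G} \)-representation supported by that same class, i.e.\ through the projection \( p \) of Proposition~\ref{prop:5a82a06194fb75ea}. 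This is the analogue of orthogonality of matrix coefficients of inequivalent irreducibles (as used already in Proposition~\ref{prop:7b2a964908c81d83}\ref{item:cda18708f8635340}): the components of the right-hand \( \mathbb{G} \)-representation not supported by \( r_{1} \cdot [u_{1}] \) contribute nothing to the intertwiner space. Thus the incidence number equals
\begin{displaymath}
  \dim \morph_{\mathbb{G} \rtimes \Lambda_{0}}\Bigl((r_{1}\cdot U_{1})\vert_{\mathbb{G} \rtimes \Lambda_{0}}, \mathscr{R}_{\Lambda_{0}}\bigl(\mathcal{S}(r_{1}\cdot u_{1}, (r_{1}\cdot V_{1})\vert_{\Lambda_{0}}, v')\bigr)\Bigr),
\end{displaymath}
where \( v' = (r_{2}\cdot v_{2})\vert_{\Lambda_{0}} \times (r_{3}\cdot v_{3})\vert_{\Lambda_{0}} \times V \) is precisely the third entry of the reduction, and the reduced representation is genuinely parameterized by a representation parameter with the same irreducible \( u_{0} = r_{1}\cdot u_{1} \) and the same covariant \( V_{0} = (r_{1}\cdot V_{1})\vert_{\Lambda_{0}} \) on both sides.

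Finally, with both sides sharing the identical irreducible \( \mathbb{G} \)-representation \( r_{1}\cdot u_{1} \) and the identical covariant projective representation \( (r_{1}\cdot V_{1})\vert_{\Lambda_{0}} \), I would invoke Proposition~\ref{prop:7b2a964908c81d83}\ref{item:e80ad530289a518d} with transitional map \( \mathsf{b} = 1 \), which collapses the \( \mathbb{G} \rtimes \Lambda_{0} \)-intertwiner space to the \( \Lambda_{0} \)-intertwiner space of the two \( v \)-components, giving
\begin{displaymath}
  m_{[U_{1}],[U_{2}],[U_{3}]}(z_{1},z_{2},z_{3})
  = \dim \morph_{\Lambda_{0}}\bigl((r_{1}\cdot v_{1})\vert_{\Lambda_{0}}, (r_{2}\cdot v_{2})\vert_{\Lambda_{0}} \times (r_{3}\cdot v_{3})\vert_{\Lambda_{0}} \times V\bigr).
\end{displaymath}
The matching of cocycles asserted in the statement is automatic: the cocycle of \( (r_{1}\cdot v_{1})\vert_{\Lambda_{0}} \) opposes that of \( V_{0} = (r_{1}\cdot V_{1})\vert_{\Lambda_{0}} \), while by Proposition~\ref{prop:5a82a06194fb75ea}\ref{item:9fe46e8db326eca7} the third entry \( v' \) of the reduction is exactly a \( v \)-component for the \emph{same} covariant \( V_{0} \), so its cocycle also opposes that of \( V_{0} \) and hence coincides with that of \( (r_{1}\cdot v_{1})\vert_{\Lambda_{0}} \); only then is the \( \morph_{\Lambda_{0}} \) on the right side meaningful. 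The main obstacle I anticipate is the bookkeeping in the second step: carefully verifying that purity of the left factor really does annihilate the \( q \)-part of the right-hand \( \mathbb{G} \)-representation (using Lemma~\ref{lemm:627a277420a433bc} to see that the covariant projective representation respects the central projection \( p \)), and that after this reduction the two sides present \emph{literally} the same pair \( (u_{0}, V_{0}) \) so that Proposition~\ref{prop:7b2a964908c81d83} applies with a single common \( V_{0} \) rather than merely two covariant projective representations in the same cohomology class.
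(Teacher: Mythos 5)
Your proposal is correct and follows essentially the same route as the paper's own proof: decompose the right-hand side along the central projection \( p \) (and \( q = 1 - p \)) furnished by Lemma~\ref{lemm:627a277420a433bc}, kill the \( q \)-part by the orthogonality argument of Proposition~\ref{prop:7b2a964908c81d83}~\ref{item:cda18708f8635340}, and then apply Proposition~\ref{prop:7b2a964908c81d83}~\ref{item:e80ad530289a518d} (with trivial transitional map, via the reduction of Proposition~\ref{prop:5a82a06194fb75ea}~\ref{item:9fe46e8db326eca7}) to the \( p \)-part, with the same cocycle-matching observation at the end. No gaps.
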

\begin{proof}
  It is easy to check that
  \( \bigl((r_{2} \cdot u_{2}) \times (r_{3} \cdot u_{3}), (r_{2} \cdot
  V_{2})\vert_{\Lambda_{0}} \times (r_{3} \cdot V_{3})\vert_{\Lambda_{0}},
  (r_{2} \cdot v_{2})\vert_{\Lambda_{0}} \times (r_{3} \cdot v_{3})
  \vert_{\Lambda_{0}}\bigr) \) is indeed a generalized representation
  parameter. Take the minimal central projection \( p \) of
  \( \selfmorph_{\mathbb{G}}\bigl((r_{2} \cdot u_{2}) \times (r_{3} \cdot
  u_{3})\bigr) \) corresponding to the maximal pure subrepresentation
  \( u_{p} \) of \( (r_{2} \cdot u_{2}) \times (r_{3} \cdot u_{3}) \) that is
  supported by \( [r_{1} \cdot u_{1}] \in \irr(\mathbb{G}) \). Suppose
  \( q = 1 - p \). By Lemma~\ref{lemm:627a277420a433bc}, \( q \) also
  corresponds to a subrepresentation \( u_{q} \) of
  \( (r_{2} \cdot u_{2}) \times (r_{3} \cdot u_{3}) \) on
  \( q(\mathscr{H}_{u_{2}} \otimes \mathscr{H}_{u_{3}}) \). Similarly, let
  \( V_{p} \) (resp.\ \( V_{q} \)) be the subrepresentation of the unitary
  projective representation
  \( (r_{2} \cdot v_{2})\vert_{\Lambda_{0}} \times (r_{3} \cdot
  v_{3})\vert_{\Lambda_{0}} \) on
  \( p(\mathscr{H}_{u_{2}} \otimes \mathscr{H}_{u_{3}}) \) (resp.\
  \( q(\mathscr{H}_{u_{2}} \otimes \mathscr{H}_{3}) \)). Let \( U_{p} \) (resp.\
  \( U_{q} \)) be the representation of \( \mathbb{G} \rtimes \Lambda_{0} \)
  parameterized by the GRP
  \( (u_{p}, V_{p}, (r_{2} \cdot v_{2})\vert_{\Lambda_{0}} \times (r_{3} \cdot
  v_{3})\vert_{\Lambda_{0}}) \) (resp.\
  \( (u_{q}, V_{q}, (r_{2} \cdot v_{2})\vert_{\Lambda_{0}} \times (r_{3} \cdot
  v_{3})\vert_{\Lambda_{0}}) \)). By construction, the unitary representation
  \( U \) of \( \mathbb{G} \rtimes \Lambda_{0} \) parameterized by
  \( \bigl((r_{2} \cdot u_{2}) \times (r_{3} \cdot u_{3}), (r_{2} \cdot
  V_{2})\vert_{\Lambda_{0}} \times (r_{3} \cdot V_{3})\vert_{\Lambda_{0}},
  (r_{2} \cdot v_{2})\vert_{\Lambda_{0}} \times (r_{3} \cdot v_{3})
  \vert_{\Lambda_{0}}\bigr) \) is the direct sum of \( U_{p} \) and \( U_{q}
  \). By definition,
  \begin{equation}
    \label{eq:990c0eae1b949b7e}
    \begin{split}
      m_{[U_{1}], [U_{2}], [U_{3}]}(z_{1}, z_{2}, z_{3}) %
      &= \dim\morph_{\mathbb{G} \rtimes \Lambda_{0}}(U_{1}, U) \\
      &= \dim\morph_{\mathbb{G} \rtimes \Lambda_{0}}(U_{1}, U_{p}) +
      \dim\morph_{\mathbb{G} \rtimes \Lambda_{0}}(U_{1}, U_{q}).
    \end{split}
  \end{equation}
  From our construction, the matrix coefficients of \( u_{p} \) and \( u_{q} \)
  are orthogonal with respect to the Haar state \( h \) of \( \mathbb{G} \).
  Thus the proof of Proposition~\ref{prop:7b2a964908c81d83}
  \ref{item:cda18708f8635340} applies almost verbatim, and shows that
  \begin{equation}
    \label{eq:956c193930da137f}
    \dim\morph_{\mathbb{G} \rtimes \Lambda_{0}}(U_{1}, U_{q}) = 0.
  \end{equation}
  On the other hand, the cocycles of both
  \( (r_{1} \cdot v_{1})\vert_{\Lambda_{0}} \) and
  \( (r_{2} \cdot v_{2})\vert_{\Lambda_{0}} \times (r_{3} \cdot
  v_{3})\vert_{\Lambda_{0}} \times V \) are both opposite to that of
  \( (r_{1} \cdot V_{1})\vert_{\Lambda_{0}} \) by the reduction process
  described above, hence these cocycles coincide. By
  Proposition~\ref{prop:7b2a964908c81d83} \ref{item:e80ad530289a518d} and
  Proposition~\ref{prop:5a82a06194fb75ea}~\ref{item:9fe46e8db326eca7}, we have
  \begin{equation}
    \label{eq:20e79b5c884d7b1a}
    \dim\morph_{\mathbb{G} \rtimes \Lambda_{0}}(U_{1}, U_{p})
    = \dim \morph_{\Lambda_{0}}\bigl((r_{1} \cdot v_{1})\vert_{\Lambda_{0}}, %
    (r_{2} \cdot v_{2})\vert_{\Lambda_{0}}
    \times (r_{3} \cdot v_{3})\vert_{\Lambda_{0}} \times V\bigr).
  \end{equation}
  Now \eqref{eq:3d42fbf9ee798922} follows from~\eqref{eq:956c193930da137f} and
  \eqref{eq:20e79b5c884d7b1a}.
\end{proof}

\section{Fusion rules}
\label{sec:20a173060e7c2f82}

We now calculate the fusion rules of \( \mathbb{G} \rtimes \Lambda \). From the
classification theorem (Theorem~\ref{theo:91ac3dad0bd1219d}), up to unitary
equivalence, all unitary irreducible representations of
\( \mathbb{G} \rtimes \Lambda \) are distinguished. Thus the task falls to the
calculation of
\begin{equation}
  \label{eq:7b05fc7c1a4d6f3a}
  \dim \morph_{\mathbb{G}}\bigl(\indrep(U_{1}),
  \indrep(U_{2}) \times \indrep(U_{3})\bigr),
\end{equation}
where, for \( i = 1, 2, 3 \), \( U_{i} \) is the irreducible unitary
representation of \( \mathbb{G} \rtimes \Lambda_{i} \) parameterized (see
Definition~\ref{defi:ab1995a9dda3749d} and
Definition~\ref{defi:cbe3a197ae3ac6c1}) by some \emph{distinguished}
representation parameter \( (u_{i}, V_{i}, v_{i}) \) associated with
\( \Lambda_{i} \) (recall that \( \Lambda_{i} = \Lambda_{[u_{i}]} \) since
\( (u_{i}, V_{i}, v_{i}) \) is distinguished). Let \( h \) be the Haar state on
\( C(\mathbb{G}) = A \). For any subgroup \( \Lambda_{0} \) of \( \Lambda \), we
use \( h^{\Lambda_{0}} \) to denote the Haar state on
\( C(\mathbb{G} \rtimes \Lambda_{0}) = A \otimes C(\Lambda_{0}) \), and
\( E_{\Lambda_{0}} \colon C(\Lambda_{0}) \rightarrow C(\Lambda) \) denotes the
linear embedding such that
\( \delta_{r_{0}} \in C(\Lambda_{0}) \mapsto \delta_{r_{0}} \in C(\Lambda)
\)(the extension of functions in \( C(\Lambda_{0}) \) to functions in
\( C(\Lambda) \) that vanishes outside \( \Lambda_{0} \)). In particular,
\( h^{\Lambda} \) is the Haar state on
\( C(\mathbb{G} \rtimes \Lambda) = A \otimes C(\Lambda) \). For
\( i = 1, 2, 3 \), let
\( \chi_{i} = (\tr \otimes \id)(U_{i}) \in A \otimes C(\Lambda_{i}) \) be the
character of \( U_{i} \), and \( r \cdot \chi_{i} \) is defined to be the
character of the representation \( r \cdot U_{i} \) of
\( \mathbb{G} \rtimes r\Lambda_{i}r^{-1} \).

Using these notations, by Proposition~\ref{prop:56963e422c66f26b}, we have the
following formula for the character of \( \indrep(U_{i}) \),
\begin{equation}
  \label{eq:07e981fc699ac67f}
  \forall i = 1, 2, 3, \quad
  \chi(\indrep(U_{i}))
  = \abs*{\Lambda_{i}}^{-1} \sum_{r_{i} \in \Lambda} (\id \otimes
  E_{r_{i}\Lambda_{i}r_{i}^{-1}})(r_{i} \cdot \chi_{i}).
\end{equation}
Thus
\begin{equation}
  \label{eq:6814153fdb9f279c}
  \begin{split}
    & \leadmathskip \dim\morph_{\mathbb{G} \rtimes \Lambda}
    \bigl(\indrep(U_{1}),
    \indrep(U_{2}) \times \indrep(U_{3})\bigr) \\
    &= h^{\Lambda}\bigl(\overline{\chi(\indrep(U_{1}))}
    [\chi(\indrep(U_{2}))][\chi(\indrep(U_{3}))]\bigr)
    \\
    &= \sum_{r_{1},r_{2},r_{3}} %
    { h^{\Lambda}\bigl( \chi(r_{1}, r_{2}, r_{3}) \bigr) \over
      \abs*{\Lambda_{1}} \cdot \abs*{\Lambda_{2}} \cdot \abs*{\Lambda_{3}} },
  \end{split}
\end{equation}
where
\begin{equation}
  \label{eq:299ff078893d6877}
  \begin{split}
    & \leadmathskip \chi(r_{1}, r_{2}, r_{3}) \\
    &= \overline{(\id \otimes E_{r_{1}\Lambda_{1}r_{1}^{-1}})(r_{1} \cdot
      \chi_{1})} [(\id \otimes E_{r_{2}\Lambda_{2}r_{2}^{-1}})(r_{2} \cdot
    \chi_{2})] [(\id \otimes E_{r_{3}\Lambda_{3}r_{3}^{-1}})(r_{3} \cdot
    \chi_{3})].
  \end{split}
\end{equation}

If \( \Theta, \Xi \) are subgroups of \( \Lambda \) with
\( \Theta \subseteq \Xi \), and \( \sum_{r \in \Xi} a_{r} \otimes \delta_{r} \)
is an arbitrary element of \( A \otimes C(\Xi) \) with all \( a_{r} \in A \), we
call the element \( \sum_{r \in \Theta} a_{r} \otimes \delta_{r} \) of
\( A \otimes C(\Theta) \) the restriction of
\( \sum_{r \in \Xi}a_{r} \otimes \delta_{r} \) and denote it by
\( {\left(\sum_{r \in \Xi} a_{r} \otimes \delta_{r}\right)}\vert_{\mathbb{G}
  \rtimes \Theta} \). Recall that
\begin{equation}
  \label{eq:0c3499016bd1e713}
  h^{\Lambda_{0}} = [\Lambda \colon \Lambda_{0}]
  \cdot  h^{\Lambda} \circ (\id \otimes E_{\Lambda_{0}})
\end{equation}
for any subgroup \( \Lambda_{0} \) of \( \Lambda \), posing
\begin{equation}
  \label{eq:2957551cbac25074}
  \Lambda(r_{1},r_{2},r_{3}) = \bigcap_{i=1}^{3} r_{i} \Lambda_{i}r_{i}^{-1},
\end{equation}
we have
\begin{equation}
  \label{eq:3ab41cdd093f9099}
  \begin{split}
    & \leadmathskip h^{\Lambda}\bigl( \chi(r_{1}, r_{2}, r_{3}) \bigr) =
    h^{\Lambda} \left( \chi(r_{1}, r_{2}, r_{3})
      \vert_{\mathbb{G} \rtimes \Lambda(r_{1}, r_{2}, r_{3})}\right) \\
    &= { %
      h^{\Lambda(r_{1},r_{2},r_{3})} \left(\overline{(r_{1} \cdot
          \chi_{1})\vert_{\mathbb{G} \rtimes \Lambda(r_{1}, r_{2},
            r_{3})}}(r_{2} \cdot \chi_{2})\vert_{\mathbb{G} \rtimes
          \Lambda(r_{1}, r_{2}, r_{3})}(r_{3} \cdot \chi_{3})\vert_{\mathbb{G}
          \rtimes \Lambda(r_{1}, r_{2}, r_{3})}\right) %
      \over %
      [\Lambda \colon \Lambda(r_{1}, r_{2},r_{3})] %
    } \\
    &= {[\Lambda \colon \Lambda(r_{1}, r_{2},r_{3})]}^{-1}
    m_{U_{1},U_{2},U_{3}}(r_{1}, r_{2}, r_{3}),
  \end{split}
\end{equation}
where \( m_{U_{1},U_{2},U_{3}}(r_{1}, r_{2}, r_{3}) \) is the incidence number
of \( (r_{1}, r_{2}, r_{3}) \) relative to \( (U_{1}, U_{2}, U_{3}) \).

By~\eqref{eq:6814153fdb9f279c} and~\eqref{eq:3ab41cdd093f9099}, we have
\begin{equation}
  \label{eq:58bd98ab763a843b}
  \begin{split}
    & \leadmathskip \dim\morph_{\mathbb{G} \rtimes \Lambda}
    \bigl(\indrep(U_{1}), \indrep(U_{2}) \times
    \indrep(U_{3})\bigr) \\
    &= \sum_{r_{1},r_{2},r_{3} \in \Lambda} \frac{m_{U_{1}, U_{2},
        U_{3}}(r_{1},r_{2},r_{3})}{\abs*{\Lambda_{1}} \cdot \abs*{\Lambda_{2}}
      \cdot \abs*{\Lambda_{3}} \cdot [\Lambda \colon \Lambda(r_{1}, r_{2},
      r_{3})]}.
  \end{split}
\end{equation}
As we've seen in Definition~\ref{defi:b0695ae26a3edb1d} and the discussion
before it, we have
\begin{equation}
  \label{eq:d1853f2a7698f76d}
  \begin{split}
    & (\forall i = 1,2,3, r_{i} \in z_{i} \in \Lambda / \Lambda_{i}) \\
    &\implies %
    m_{[U_{1}], [U_{2}], [U_{3}]}(z_{1}, z_{2}, z_{3}) = m_{U_{1}, U_{2},
      U_{3}}(r_{1}, r_{2}, r_{3}),
  \end{split}
\end{equation}
where
\( \Lambda(z_{1}, z_{2}, z_{3}) := \cap_{i=1}^{3}r_{i}\Lambda_{i}r_{i}^{-1} \)
does not depend on the choices for \( r_{i} \in z_{i} \), \( i = 1, 2, 3 \).
Thus \eqref{eq:58bd98ab763a843b} can be written more succinctly as
\begin{equation}
  \label{eq:8f942d15000dbdd5}
  \begin{split}
    & \leadmathskip \dim \morph_{\mathbb{G} \rtimes
      \Lambda}\bigl(\indrep(U_{1}), \indrep(U_{2}) \times
    \indrep(U_{3})\bigr) \\
    &= \sum_{z_{1} \in \Lambda / \Lambda_{1}} \sum_{z_{2} \in \Lambda /
      \Lambda_{2}} \sum_{z_{3} \in \Lambda / \Lambda_{3}} \frac{m_{[U_{1}],
        [U_{2}], [U_{3}]}(z_{1}, z_{2}, z_{3})}{[\Lambda \colon \Lambda(z_{1},
      z_{2}, z_{3})]}.
  \end{split}
\end{equation}

We formalize the above calculation as the following theorem, which describes the
fusion rules of \( \mathbb{G} \rtimes \Lambda \) in terms of the more basic
ingredients of incidence numbers, which in turn is completely determined by the
representation theory of \( \mathbb{G} \), the action of \( \Lambda \) on
\( \irr(\mathbb{G}) \), and various unitary projective representations of some
naturally appeared subgroups in \( \giso(\Lambda) \).

\begin{theo}
  \label{theo:ff9839aa14503b7c}
  The fusion rules for \( \mathbb{G} \rtimes \Lambda \) is given as the
  following. For \( i = 1, 2, 3 \), let \( W_{i} \) be an irreducible
  representation of \( \mathbb{G} \rtimes \Lambda \). Suppose \( U_{i} \) is the
  distinguished representation parameterized by some distinguished
  representation parameter \( (u_{i}, V_{i}, v_{i}) \) associated with some
  isotropy subgroup \( \Lambda_{i} \) of \( \Lambda \), such that \( W_{i} \) is
  equivalent to \( \indrep(U_{i}) \), then
  \begin{equation}
    \label{eq:ce4eba7dca3e5343}
    \begin{split}
      & \leadmathskip \dim \morph_{\mathbb{G} \rtimes \Lambda}(W_{1}, W_{2}
      \times W_{3}) \\
      &= \sum_{z_{1} \in \Lambda / \Lambda_{1}} \sum_{z_{2} \in \Lambda /
        \Lambda_{2}} \sum_{z_{3} \in \Lambda / \Lambda_{3}} \frac{m_{[U_{1}],
          [U_{2}], [U_{3}]}(z_{1}, z_{2}, z_{3})}{[\Lambda \colon \Lambda(z_{1},
        z_{2}, z_{3})]}.
    \end{split}
  \end{equation}
  Here the incidence numbers
  \begin{equation}
    \label{eq:a9bcde9ea36d8909}
    \begin{split}
      &\leadmathskip m_{[U_{1}], [U_{2}], [U_{3}]}(z_{1}, z_{2}, z_{3}) %
      = m_{U_{1}, U_{2}, U_{3}}(r_{1}, r_{2}, r_{3}) \\
      &= \dim \morph_{\Lambda(z_{1}, z_{2}, z_{3})}\bigl((r_{1} \cdot
      v_{1})\vert_{\Lambda_{0}}, %
      (r_{2} \cdot v_{2})\vert_{\Lambda_{0}} \times (r_{3} \cdot v_{3})\vert_{\Lambda_{0}} \times V\bigr),
    \end{split}
  \end{equation}
  where \( r_{i} \in z_{i} \) for \( i = 1, 2, 3 \), and the unitary projective
  representation \( V \) of \( \Lambda(z_{1}, z_{2}, z_{3}) \) is taken from the
  reduction
  \begin{displaymath}
    \bigl(r_{1} \cdot u_{1}, (r_{1} \cdot V_{1}) \vert_{\Lambda_{0}}, (r_{2}
    \cdot v_{2})\vert_{\Lambda_{0}}
    \times (r_{3} \cdot v_{3}) \vert_{\Lambda_{0}}
    \times V\bigr)
  \end{displaymath}
  of the generalized representation parameter
  \begin{displaymath}
    \bigl((r_{2} \cdot u_{2}) \times (r_{3} \cdot u_{3}), (r_{2} \cdot
    V_{2})\vert_{\Lambda_{0}} \times (r_{3} \cdot V_{3})\vert_{\Lambda_{0}},
    (r_{2} \cdot v_{2})\vert_{\Lambda_{0}} \times (r_{3} \cdot v_{3})
    \vert_{\Lambda_{0}}\bigr)
  \end{displaymath}
  along \( \bigl(r_{1} \cdot u_{1}, (r_{1} \cdot V_{1})\vert_{\Lambda_{0}}\bigr) \).
\end{theo}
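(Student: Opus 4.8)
The plan is to recognize that the theorem records, in closed form, the computation already carried out in the preceding paragraphs of this section, so the proof reduces to assembling the intermediate identities and citing the structural results that justify each passage. First I would invoke Theorem~\ref{theo:91ac3dad0bd1219d} to arrange that each \( W_i \simeq \indrep(U_i) \), where \( U_i \) is parameterized by the distinguished representation parameter \( (u_i, V_i, v_i) \) associated with the isotropy subgroup \( \Lambda_i = \Lambda_{[u_i]} \). Using the orthogonality relations for the compact quantum group \( \mathbb{G} \rtimes \Lambda \), I would express the sought dimension as a Haar integral of a product of three characters,
\[
\dim \morph_{\mathbb{G} \rtimes \Lambda}(W_1, W_2 \times W_3) = h^{\Lambda}\bigl(\overline{\chi(\indrep(U_1))}\,\chi(\indrep(U_2))\,\chi(\indrep(U_3))\bigr),
\]
and substitute the induced-character formula of Proposition~\ref{prop:56963e422c66f26b}, i.e.\ equation~\eqref{eq:07e981fc699ac67f}.

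The central step is to unfold the resulting triple sum over \( (r_1, r_2, r_3) \in \Lambda^3 \). Each summand is the Haar integral of a product of three extended characters \( (\id \otimes E_{r_i \Lambda_i r_i^{-1}})(r_i \cdot \chi_i) \). I would observe that such a product is supported on the intersection \( \Lambda(r_1, r_2, r_3) = \bigcap_{i=1}^{3} r_i \Lambda_i r_i^{-1} \), so that restricting to this subgroup loses no information; the passage from the global Haar state \( h^{\Lambda} \) to the local Haar state \( h^{\Lambda(r_1,r_2,r_3)} \) via~\eqref{eq:0c3499016bd1e713} then identifies each summand with \( [\Lambda \colon \Lambda(r_1,r_2,r_3)]^{-1} \) times the incidence number \( m_{U_1, U_2, U_3}(r_1, r_2, r_3) \), which is exactly~\eqref{eq:3ab41cdd093f9099}.

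Next I would exploit the coset-invariance of the incidence numbers established in Proposition~\ref{prop:356dd34562dc20c4}: since both \( m_{U_1, U_2, U_3}(r_1, r_2, r_3) \) and the subgroup \( \Lambda(r_1, r_2, r_3) \) depend only on the left cosets \( z_i = r_i \Lambda_i \), the sum over \( \Lambda^3 \) collapses to a sum over \( \prod_{i} \Lambda/\Lambda_i \), yielding~\eqref{eq:8f942d15000dbdd5} and hence the first displayed formula of the theorem. Finally, the explicit evaluation~\eqref{eq:a9bcde9ea36d8909} of the incidence numbers is precisely the content of Proposition~\ref{prop:72841057708ae472}, obtained by reducing the generalized representation parameter attached to \( \bigl((r_2 \cdot u_2) \times (r_3 \cdot u_3), \dots\bigr) \) along \( \bigl(r_1 \cdot u_1, (r_1 \cdot V_1)\vert_{\Lambda_0}\bigr) \), with \( \Lambda_0 = \Lambda(r_1, r_2, r_3) \).

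The step I expect to be most delicate is the second one: verifying that multiplying the three extended characters genuinely confines the support to \( \Lambda(r_1, r_2, r_3) \), and that the normalization constants produced by~\eqref{eq:0c3499016bd1e713} combine correctly with the prefactor \( (\abs*{\Lambda_1}\abs*{\Lambda_2}\abs*{\Lambda_3})^{-1} \) to reproduce exactly the weight \( [\Lambda \colon \Lambda(z_1,z_2,z_3)]^{-1} \). Once this bookkeeping is settled, all remaining steps are formal substitutions of results already established, so the theorem follows without further analysis.
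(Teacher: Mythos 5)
Your proposal is correct and follows essentially the same route as the paper: the paper's own proof consists precisely of the chain of identities \eqref{eq:6814153fdb9f279c}, \eqref{eq:3ab41cdd093f9099}, \eqref{eq:8f942d15000dbdd5} developed immediately before the theorem (character formula from Proposition~\ref{prop:56963e422c66f26b}, support confinement to \( \Lambda(r_{1},r_{2},r_{3}) \) with the Haar-state rescaling \eqref{eq:0c3499016bd1e713}, then collapsing over cosets via Proposition~\ref{prop:356dd34562dc20c4}), followed by citing Proposition~\ref{prop:72841057708ae472} for \eqref{eq:a9bcde9ea36d8909}. Your bookkeeping observation that each coset triple is hit \( \abs*{\Lambda_{1}}\cdot\abs*{\Lambda_{2}}\cdot\abs*{\Lambda_{3}} \) times, cancelling the prefactor, is exactly the normalization the paper relies on.
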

\begin{proof}
  The above calculation proves \eqref{eq:ce4eba7dca3e5343}, and
  \eqref{eq:a9bcde9ea36d8909} follows from
  Proposition~\ref{prop:72841057708ae472}.
\end{proof}

\begin{bibdiv}
\begin{biblist}

\bib{MR1235438}{article}{
      author={Baaj, Saad},
      author={Skandalis, Georges},
       title={Unitaires multiplicatifs et dualit\'{e} pour les produits
  crois\'{e}s de {$C^*$}-alg\`ebres},
        date={1993},
        ISSN={0012-9593},
     journal={Ann. Sci. \'{E}cole Norm. Sup. (4)},
      volume={26},
      number={4},
       pages={425\ndash 488},
         url={http://www.numdam.org/item?id=ASENS_1993_4_26_4_425_0},
      review={\MR{1235438}},
}

\bib{MR2115071}{article}{
      author={Baaj, Saad},
      author={Vaes, Stefaan},
       title={Double crossed products of locally compact quantum groups},
        date={2005},
        ISSN={1474-7480},
     journal={J. Inst. Math. Jussieu},
      volume={4},
      number={1},
       pages={135\ndash 173},
         url={https://doi.org/10.1017/S1474748005000034},
      review={\MR{2115071}},
}

\bib{MR1324339}{book}{
      author={Brown, Kenneth~S.},
       title={Cohomology of groups},
      series={Graduate Texts in Mathematics},
   publisher={Springer-Verlag, New York},
        date={1994},
      volume={87},
        ISBN={0-387-90688-6},
        note={Corrected reprint of the 1982 original},
      review={\MR{1324339}},
}

\bib{MR3299063}{article}{
      author={Cheng, Chuangxun},
       title={A character theory for projective representations of finite
  groups},
        date={2015},
        ISSN={0024-3795},
     journal={Linear Algebra Appl.},
      volume={469},
       pages={230\ndash 242},
         url={https://doi.org/10.1016/j.laa.2014.11.027},
      review={\MR{3299063}},
}

\bib{MR2980506}{article}{
      author={Daws, Matthew},
      author={Kasprzak, Pawe{\l}},
      author={Skalski, Adam},
      author={So{\l}~tan, Piotr},
       title={Closed quantum subgroups of locally compact quantum groups},
        date={2012},
        ISSN={0001-8708},
     journal={Adv. Math.},
      volume={231},
      number={6},
       pages={3473\ndash 3501},
         url={https://doi.org/10.1016/j.aim.2012.09.002},
      review={\MR{2980506}},
}

\bib{MR936629}{book}{
      author={Fell, J. M.~G.},
      author={Doran, R.~S.},
       title={Representations of {$^*$}-algebras, locally compact groups, and
  {B}anach {$^*$}-algebraic bundles. {V}ol. 2},
      series={Pure and Applied Mathematics},
   publisher={Academic Press, Inc., Boston, MA},
        date={1988},
      volume={126},
        ISBN={0-12-252722-4},
         url={https://doi.org/10.1016/S0079-8169(09)60018-0},
        note={Banach $^*$-algebraic bundles, induced representations, and the
  generalized Mackey analysis},
      review={\MR{936629}},
}

\bib{2018arXiv181204078F}{article}{
      author={{Fima}, Pierre},
      author={{Wang}, Hua},
       title={{Rapid decay and polynomial growth for bicrossed products}},
        date={2018},
     journal={arXiv e-prints,\space{}to appear in Journal of Noncommutative
  Geometry},
      eprint={1812.04078},
         url={https://arxiv.org/abs/1812.04078},
}

\bib{MR3552528}{article}{
      author={Kalantar, Mehrdad},
      author={Kasprzak, Pawe{\l}},
      author={Skalski, Adam},
       title={Open quantum subgroups of locally compact quantum groups},
        date={2016},
        ISSN={0001-8708},
     journal={Adv. Math.},
      volume={303},
       pages={322\ndash 359},
         url={https://doi.org/10.1016/j.aim.2016.08.010},
      review={\MR{3552528}},
}

\bib{MR3012851}{book}{
      author={Kaniuth, Eberhard},
      author={Taylor, Keith~F.},
       title={Induced representations of locally compact groups},
      series={Cambridge Tracts in Mathematics},
   publisher={Cambridge University Press, Cambridge},
        date={2013},
      volume={197},
        ISBN={978-0-521-76226-7},
      review={\MR{3012851}},
}

\bib{MR1934609}{article}{
      author={Kustermans, Johan},
       title={Induced corepresentations of locally compact quantum groups},
        date={2002},
        ISSN={0022-1236},
     journal={J. Funct. Anal.},
      volume={194},
      number={2},
       pages={410\ndash 459},
         url={https://doi.org/10.1006/jfan.2002.3953},
      review={\MR{1934609}},
}

\bib{MR1832993}{article}{
      author={Kustermans, Johan},
      author={Vaes, Stefaan},
       title={Locally compact quantum groups},
        date={2000},
        ISSN={0012-9593},
     journal={Ann. Sci. \'{E}cole Norm. Sup. (4)},
      volume={33},
      number={6},
       pages={837\ndash 934},
         url={https://doi.org/10.1016/S0012-9593(00)01055-7},
      review={\MR{1832993}},
}

\bib{MR0031489}{article}{
      author={Mackey, George~W.},
       title={Imprimitivity for representations of locally compact groups.
  {I}},
        date={1949},
        ISSN={0027-8424},
     journal={Proc. Nat. Acad. Sci. U. S. A.},
      volume={35},
       pages={537\ndash 545},
         url={https://doi.org/10.1073/pnas.35.9.537},
      review={\MR{0031489}},
}

\bib{MR44536}{article}{
      author={Mackey, George~W.},
       title={Induced representations of locally compact groups. {I}},
        date={1952},
        ISSN={0003-486X},
     journal={Ann. of Math. (2)},
      volume={55},
       pages={101\ndash 139},
         url={https://doi.org/10.2307/1969423},
      review={\MR{44536}},
}

\bib{MR0098328}{article}{
      author={Mackey, George~W.},
       title={Unitary representations of group extensions. {I}},
        date={1958},
        ISSN={0001-5962},
     journal={Acta Math.},
      volume={99},
       pages={265\ndash 311},
         url={https://doi.org/10.1007/BF02392428},
      review={\MR{0098328}},
}

\bib{MR1098985}{incollection}{
      author={Majid, Shahn},
       title={More examples of bicrossproduct and double cross product {H}opf
  algebras},
        date={1990},
      volume={72},
       pages={133\ndash 148},
         url={https://doi.org/10.1007/BF02764616},
        note={Hopf algebras},
      review={\MR{1098985}},
}

\bib{MR1045735}{article}{
      author={Majid, Shahn},
       title={Physics for algebraists: noncommutative and noncocommutative
  {H}opf algebras by a bicrossproduct construction},
        date={1990},
        ISSN={0021-8693},
     journal={J. Algebra},
      volume={130},
      number={1},
       pages={17\ndash 64},
         url={https://doi.org/10.1016/0021-8693(90)90099-A},
      review={\MR{1045735}},
}

\bib{MR1092128}{article}{
      author={Majid, Shahn},
       title={Hopf-von {N}eumann algebra bicrossproducts, {K}ac algebra
  bicrossproducts, and the classical {Y}ang-{B}axter equations},
        date={1991},
        ISSN={0022-1236},
     journal={J. Funct. Anal.},
      volume={95},
      number={2},
       pages={291\ndash 319},
         url={https://doi.org/10.1016/0022-1236(91)90031-Y},
      review={\MR{1092128}},
}

\bib{MR3204665}{book}{
      author={Neshveyev, Sergey},
      author={Tuset, Lars},
       title={Compact quantum groups and their representation categories},
      series={Cours Sp\'{e}cialis\'{e}s [Specialized Courses]},
   publisher={Soci\'{e}t\'{e} Math\'{e}matique de France, Paris},
        date={2013},
      volume={20},
        ISBN={978-2-85629-777-3},
      review={\MR{3204665}},
}

\bib{MR0353003}{article}{
      author={Rieffel, Marc~A.},
       title={Induced representations of {$C\sp{\ast} $}-algebras},
        date={1974},
        ISSN={0001-8708},
     journal={Advances in Math.},
      volume={13},
       pages={176\ndash 257},
         url={https://doi.org/10.1016/0001-8708(74)90068-1},
      review={\MR{0353003}},
}

\bib{schur1904darstellung}{article}{
      author={Schur, J},
       title={{\"U}ber die darstellung der endlichen gruppen durch gebrochen
  lineare substitutionen.},
        date={1904},
     journal={Journal f{\"u}r die reine und angewandte Mathematik},
      volume={127},
       pages={20\ndash 50},
         url={http://resolver.sub.uni-goettingen.de/purl?GDZPPN002165511},
}

\bib{MR0450380}{book}{
      author={Serre, Jean-Pierre},
       title={Linear representations of finite groups},
   publisher={Springer-Verlag, New York-Heidelberg},
        date={1977},
        ISBN={0-387-90190-6},
        note={Translated from the second French edition by Leonard L. Scott,
  Graduate Texts in Mathematics, Vol. 42},
      review={\MR{0450380}},
}

\bib{MR320056}{article}{
      author={Singer, William~M.},
       title={Extension theory for connected {H}opf algebras},
        date={1972},
        ISSN={0021-8693},
     journal={J. Algebra},
      volume={21},
       pages={1\ndash 16},
         url={https://doi.org/10.1016/0021-8693(72)90031-2},
      review={\MR{320056}},
}

\bib{MR611561}{article}{
      author={Takeuchi, Mitsuhiro},
       title={Matched pairs of groups and bismash products of {H}opf algebras},
        date={1981},
        ISSN={0092-7872},
     journal={Comm. Algebra},
      volume={9},
      number={8},
       pages={841\ndash 882},
         url={https://doi.org/10.1080/00927878108822621},
      review={\MR{611561}},
}

\bib{MR2397671}{book}{
      author={Timmermann, Thomas},
       title={An invitation to quantum groups and duality},
    subtitle={From hopf algebras to multiplicative unitaries and beyond},
      series={EMS Textbooks in Mathematics},
   publisher={European Mathematical Society (EMS), Z\"{u}rich},
        date={2008},
        ISBN={978-3-03719-043-2},
         url={https://doi.org/10.4171/043},
      review={\MR{2397671}},
}

\bib{MR2182592}{article}{
      author={Vaes, Stefaan},
       title={A new approach to induction and imprimitivity results},
        date={2005},
        ISSN={0022-1236},
     journal={J. Funct. Anal.},
      volume={229},
      number={2},
       pages={317\ndash 374},
         url={https://doi.org/10.1016/j.jfa.2004.11.016},
      review={\MR{2182592}},
}

\bib{MR1970242}{article}{
      author={Vaes, Stefaan},
      author={Vainerman, Leonid},
       title={Extensions of locally compact quantum groups and the bicrossed
  product construction},
        date={2003},
        ISSN={0001-8708},
     journal={Adv. Math.},
      volume={175},
      number={1},
       pages={1\ndash 101},
         url={https://doi.org/10.1016/S0001-8708(02)00040-3},
      review={\MR{1970242}},
}

\bib{MR1220906}{article}{
      author={Van~Daele, A.},
       title={Multiplier {H}opf algebras},
        date={1994},
        ISSN={0002-9947},
     journal={Trans. Amer. Math. Soc.},
      volume={342},
      number={2},
       pages={917\ndash 932},
         url={https://doi.org/10.2307/2154659},
      review={\MR{1220906}},
}

\bib{MR1378538}{article}{
      author={Van~Daele, A.},
       title={Discrete quantum groups},
        date={1996},
        ISSN={0021-8693},
     journal={J. Algebra},
      volume={180},
      number={2},
       pages={431\ndash 444},
         url={https://doi.org/10.1006/jabr.1996.0075},
      review={\MR{1378538}},
}

\bib{MR1658585}{article}{
      author={Van~Daele, A.},
       title={An algebraic framework for group duality},
        date={1998},
        ISSN={0001-8708},
     journal={Adv. Math.},
      volume={140},
      number={2},
       pages={323\ndash 366},
         url={https://doi.org/10.1006/aima.1998.1775},
      review={\MR{1658585}},
}

\bib{MR1347410}{article}{
      author={Wang, Shuzhou},
       title={Tensor products and crossed products of compact quantum groups},
        date={1995},
        ISSN={0024-6115},
     journal={Proc. London Math. Soc. (3)},
      volume={71},
      number={3},
       pages={695\ndash 720},
         url={https://doi.org/10.1112/plms/s3-71.3.695},
      review={\MR{1347410}},
}

\bib{MR901157}{article}{
      author={Woronowicz, S.~L.},
       title={Compact matrix pseudogroups},
        date={1987},
        ISSN={0010-3616},
     journal={Comm. Math. Phys.},
      volume={111},
      number={4},
       pages={613\ndash 665},
         url={http://projecteuclid.org/euclid.cmp/1104159726},
      review={\MR{901157}},
}

\bib{MR943923}{article}{
      author={Woronowicz, S.~L.},
       title={Tannaka-{K}re\u{\i}n duality for compact matrix pseudogroups.
  {T}wisted {$\mathrm{SU}(N)$} groups},
        date={1988},
        ISSN={0020-9910},
     journal={Invent. Math.},
      volume={93},
      number={1},
       pages={35\ndash 76},
         url={https://doi.org/10.1007/BF01393687},
      review={\MR{943923}},
}

\bib{MR1616348}{incollection}{
      author={Woronowicz, S.~L.},
       title={Compact quantum groups},
        date={1998},
   booktitle={Sym\'{e}tries quantiques ({L}es {H}ouches, 1995)},
   publisher={North-Holland, Amsterdam},
       pages={845\ndash 884},
      review={\MR{1616348}},
}

\bib{MR1745504}{article}{
      author={Yamanouchi, Takehiko},
       title={{$W^*$}-quantum groups arising from matched pairs of groups},
        date={2000},
        ISSN={0385-4035},
     journal={Hokkaido Math. J.},
      volume={29},
      number={1},
       pages={73\ndash 101},
         url={https://doi.org/10.14492/hokmj/1350912958},
      review={\MR{1745504}},
}

\end{biblist}
\end{bibdiv}

\end{document}